\def\namedlabel#1#2{\begingroup
	#2%
	\def\@currentlabel{#2}%
	\phantomsection\label{#1}\endgroup
}
\numberwithin{equation}{section}
\DeclareMathOperator{\NW}{NW}
\DeclareMathOperator{\SW}{SW}
\DeclareMathOperator{\NE}{NE}
\DeclareMathOperator{\SE}{SE}
\DeclareMathOperator{\aug}{aug}
\newtheorem{thm}{Theorem}[section]
\newtheorem{prop}[thm]{Proposition}
\newtheorem{lem}[thm]{Lemma}
\newtheorem{cor}[thm]{Corollary}
\theoremstyle{remark}
\newtheorem{rem}[thm]{Remark}
\theoremstyle{definition}
\newtheorem{definition}[thm]{Definition}
\newtheorem{example}[thm]{Example}
\title{Arctic Boundaries of the Ice Model on Three-Bundle Domains}
\author{Amol Aggarwal}
\begin{document}

\begin{abstract} 
	
	In this paper we consider the six-vertex model at ice point on an arbitrary three-bundle domain, which is a generalization of the domain-wall ice model on the square (or, equivalently, of a uniformly random alternating sign matrix). We show that this model exhibits the arctic boundary phenomenon, whose boundary is given by a union of explicit algebraic curves. This was originally predicted by Colomo-Sportiello in 2016 as one of the initial applications of a general heuristic that they introduced for locating arctic boundaries, called the (geometric) tangent method. Our proof uses a probabilistic analysis of non-crossing directed path ensembles to provide a mathematical justification of their tangent method heuristic in this case, which might be of independent interest. 
	
\end{abstract}

\maketitle

\tableofcontents

\section{Introduction} 

\label{Model}

\subsection{Preface}

Although the six-vertex model has long been cited as an archetypal example of an exactly solvable model in two-dimensional statistical mechanics \cite{ESMSM}, little has been mathematically established about its geometry. This is in deep contrast with random tiling models, whose geometric understanding has seen considerable advances over the last two decades. 

The earliest result in this direction was due to Jockusch-Propp-Shor \cite{RTAC}, who proved that a uniformly random domino tiling of an Aztec diamond exhibits a phase transition across the inscribed circle of the diamond. More specifically, they showed that with high probability the tiling is deterministic, or \emph{frozen}, outside of this circle due to the influence of the boundary, but that it is random inside of it. They referred to this circle as an \emph{arctic boundary} separating a frozen region from a \emph{liquid} one. 

This phenomenon was soon observed to be ubiquitous within the context of highly correlated statistical mechanical systems; see, for instance, \cite{IFTIAC,NSVMDWBC,DPP,DRYG,ARTR,LTECM,VPT,TSPP,TACDWSVM,AC,TLSLASM,TSVMD,ACFFSVMD,ACSVMGD,SVMDWBCA,TMACFB,TMACWP,ACPASP,ACTM,ACOE,DI,GACPCMT,RT,NSMDWB,LSCE,DA,PSSVMBC,ACTG,ACASM,DSVM}. In particular, Cohn-Kenyon-Propp developed a variational principle \cite{VPT} that prescribes a law of large numbers for random domino tilings on almost arbitrary domains, which was used effectively by Kenyon-Okounkov \cite{LSCE} to explicitly determine the arctic boundaries of uniformly random lozenge tilings on polygonal domains. The proof of this variational principle was based on the free-fermionic (determinantal) structure underlying these tiling models \cite{SDL}, so it was also applicable to more general dimer models \cite{DA} where such structure persists. 

However, these methods do not seem to apply to the six-vertex model, whose solvability is of a substantially different nature and can be attributed to a one-parameter family of mutually commuting transfer operators \cite{ESMSM}, which can be diagonalized through the quantum inverse scattering method (algebraic Bethe ansatz) \cite{QISMCF}. It was observed by Korepin \cite{NWF} that the quantum inverse scattering method is particularly applicable under a certain class of boundary data, later called \emph{domain-wall boundary data}, which also happens to be quite appealing from a physical perspective. Using the quantum inverse scattering method, Izergin \cite{PFSVMFV} and Izergin-Coker-Korepin \cite{DSV} showed that the partition function (and also some correlation functions \cite{QISMCF}) of the domain-wall six-vertex model can be expressed as a determinant. Yet, in spite of these striking algebraic and analytic advances, these results implied little about the geometry of the six-vertex model. 

Still, extensive simulations \cite{DI,DSVM,NSVMDWBC,SVMDWBCA,NSMDWB,PSSVMBC,RT} over the past two decades have provided strong numerical evidence indicating the existence of an arctic boundary for the domain-wall six-vertex model. Based on earlier free energy results due to Lieb \cite{RESI} and Sutherland-Yang-Yang \cite{ESMTFAEEF}, variational principles have also been proposed for the six-vertex model with general boundary conditions \cite{BCSVM,SVMFBC,ILSSVM}. However, these variational principles are intricate, and it remains unknown whether they can be used to derive explicit predictions for arctic boundaries (even in the case of domain-wall boundary data). 

Through a series of works \cite{AC,EFPDWSVM,TLSLASM,TACDWSVM} starting around 2008, Colomo-Pronko provided such a prediction. In particular, in \cite{AC}, they introduced a nonlocal correlation function called the \emph{emptiness formation probability} and explained how one can derive arctic boundaries from its asymptotic behavior. Using the quantum inverse scattering method, they provided an exact identity for this probability in \cite{EFPDWSVM}, which they then formally asymptotically analyzed in \cite{TACDWSVM,TLSLASM} to provide explicit predictions for the arctic curve of the domain-wall six-vertex model. Unfortunately, the identities obtained in \cite{EFPDWSVM} are intricate, and it remains unknown how to mathematically justify the formal analysis used in \cite{TACDWSVM,TLSLASM} to study them. 

Recently, Colomo-Sportiello \cite{ACSVMGD} introduced the \emph{(geometric) tangent method} to provide an alternative, still heuristic, derivation of these arctic boundaries. Stated very briefly (see \Cref{OutlineTangent} below for a more detailed explanation), this proceeds by first introducing a new, ``augmented'' vertex model $\mathcal{P}^{\aug}$ by adding an additional path to the original model $\mathcal{P}$. If one understands the asymptotics of a certain, often accessible, quantity called the \emph{(singly) refined correlation function}, one can determine the initial part of the limiting trajectory of the new (bottommost) path in $\mathcal{P}^{\aug}$. Now, the belief is that, in the continuum limit, this part of the trajectory will be a line segment tangent to the arctic boundary of $\mathcal{P}$. Assuming this to be true, one can then use one's knowledge of the new path's asymptotic trajectory to characterize the arctic boundary of the vertex model. 

The tangent method was implemented in \cite{ACSVMGD} to predict the arctic boundary of the domain-wall six-vertex model; the result matched earlier predictions from \cite{TACDWSVM}. It was also used later to heuristically derive the arctic boundaries for the six-vertex model on other domains \cite{ACSVMGD,ACFFSVMD} by Colomo-Sportiello and Colomo-Pronko-Sportiello; for vertically symmetric alternating sign matrices \cite{ACTM} by Di Francesco-Lapa; various classes of non-intersecting path models \cite{CAM,TMACFB,TMACWP,ACPASP,TACRM,ACTM} by Debin-Granet-Ruelle, Debin-Ruelle, Di Francesco-Guitter, and Di Francisco-Lapa; for twenty-vertex models by Debin-Di Francesco-Guitter \cite{ACVMDWB}; and for random lecture hall tableaux by Corteel-Keating-Nicoletti \cite{ACBHT}. 

In this paper we use a probabilistic analysis of non-crossing directed path ensembles to provide the first mathematical justification of the geometric tangent method. Instead of attempting to implement this in the fullest possible generality, for the sake of specificity we only focus on a particular example. Perhaps the simplest (non free-fermionic) case would be the domain-wall six-vertex model at ice point, where the weights of all six vertex types are equal; this model received considerable interest over the past three decades due to its relationship with alternating sign matrices \cite{TLSLASM,ASME,ASPASMRT,PPASM,ASMDDP,PASM,PASMR}. In order to demonstrate the versatility of this framework, and since it will change little in the proof, we will in fact analyze a more general situation, given by the ice model on the \emph{three-bundle domain}. 

Introduced by Cantini-Sportiello in Section 4.2 of \cite{PRC}, this is a domain $\mathcal{T} = \mathcal{T}_{A, B, C}$ (dependent on three integers $A, B, C \ge 0$) formed by intersecting three families of $A + B$, $A + C$, and $B + C$ parallel curves. In particular, by setting $A = 0 = B$, it becomes a $C \times C$ square, and the associated six-vertex model degenerates to the domain-wall ice model; thus, the model we consider comprises a two parameter deformation of the domain-wall ice model. This three-bundle domain is also of further combinatorial interest since it is an example of a dihedral domain on which the refined Razumov-Stroganov correspondence can be established \cite{PRC,P,CGVLM}; indeed, this point served as the original stimulus in \cite{PRC} to define the ice model on this domain. As one of the initial applications of the tangent method, Colomo-Sportiello predicted the arctic boundary of the ice model on the three-bundle domain in Section 5.4 of \cite{ACSVMGD}. 

Our result, given by \Cref{boundaryb} below, confirms this prediction. We will also explain in \Cref{AB0Domain} below how this theorem can be degenerated in the $A = 0 = B$ case to yield the arctic boundary of the domain-wall ice model, thereby confirming an earlier prediction of Colomo-Pronko \cite{TLSLASM}. An alternative route towards a mathematical proof of the latter result (that is, the arctic boundary prediction in the alternating sign matrix case) is currently work in progress by Colomo-Sportiello \cite{D}. We will elaborate on this point further below, directly before \Cref{ModelDomain}, and explain how their results and methods compare to ours. 

Now let us take a moment to very briefly describe some of the probabilistic considerations we must account for in the proof of \Cref{boundaryb}; we will provide a more detailed outline in \Cref{Outline} below. To that end, recall that the assumption underlying the tangent method was that the bottommost path of the augmented model $\mathcal{P}^{\aug}$ is, in the continuum limit, tangent to the arctic boundary of $\mathcal{P}$. However, there are several issues with establishing this statement. 

The first is that it is not transparent to us that the this arctic boundary has a deterministic continuum limit. Indeed, although concentration estimates for the height function of vertex models have been proven to hold in considerable generality \cite{LSRT,VPT}, they do not appear to immediately imply that the bottommost path in a six-vertex model concentrates (and this is what one would require in order to implement the tangent method). So, one must instead show that the first (bottommost) path $\textbf{p}_1^{\aug}$ of $\mathcal{P}^{\aug}$ is approximately tangent to the second path $\textbf{p}_2^{\aug}$ of $\mathcal{P}^{\aug}$. However, this then gives rise to two issues. The first is to establish this claim, and the second is to show that the arctic boundary of $\mathcal{P}$ can be approximated by $\textbf{p}_2^{\aug}$ in some suitable sense.  

To resolve these issues, we make use of two properties satisfied by the model. The first is its Gibbs property, and the second is that its Glauber dynamics are monotone (order-preserving). The latter in particular implies the existence of monotone couplings between two ice models with different boundary data, which are both used to limit the possible long-range correlations between distant paths of six-vertex ensembles and also to compare $\textbf{p}_2^{\aug}$ to the arctic boundary of $\mathcal{P}$. The Gibbs property is used to show that distant paths of the six-vertex model are nearly linear, which is useful both for establishing the approximate tangency between $\textbf{p}_1^{\aug}$ and $\textbf{p}_2^{\aug}$ and also again for comparing $\textbf{p}_2^{\aug}$ to the arctic boundary of $\mathcal{P}$. 
 
Let us mention that earlier works \cite{LE,ELE} of Corwin-Hammond also used and developed some of the above probabilistic ideas (namely, the Gibbs property and monotonicity) to analyze different classes of line ensembles. However, their settings in those papers are different from ours. Their goal was to gain a refined qualitative understanding about fluctuations in a certain line ensemble, given some initial quantitative behavior about the geometry of the ensemble's extremal curve (deduced from the integrability of their model). Ours is to gain a quantitative understanding about the limit shape of the extremal path in our ensemble, given some initial enumerative information about the number of such path ensembles.

Before continuing, we state three additional points. The first concerns possible extensions of our methods to other systems. Although in this paper we adhere to the specific example of the ice model on the three-bundle domain, the main model-specific probabilistic feature in our proof is the monotonicity (described above) of the ice model on $\mathcal{T}$. This property is in fact quite common in the context of vertex models, so it is plausible that our methods can applied to significantly wider classes of exactly solvable systems in statistical mechanics. 

In particular, one might ask whether the methods used this paper can also be applied to the domain-wall six-vertex model with more general weights. As explained earlier, our proofs are based on a justification of the tangent method, which requires access to asymptotics for the singly refined correlation function of the vertex model of interest. At ice point, these quantities can be expressed exactly due to the works \cite{PRC} of Cantini-Sportiello on the three-bundle domain and \cite{PASMR} of Zeilberger in the domain-wall case. 

For the six-vertex model with general weights on arbitrary three-bundle domains, such a quantity is not yet available. However, for the domain-wall six-vertex model, it was explained in Appendix B of \cite{TACDWSVM} how to express and asymptotically analyze such a quantity in terms of an Izergin-Korepin determinant. Although the asymptotic analysis in that work is formal, precise mathematical asymptotic analyses on similar Izergin-Korepin type determinants were implemented by Bleher-Fokin in \cite{ESSVMDWBC} through a matrix model interpretation due to Zinn-Justin \cite{SVMDWBCMM}. If this framework can also be applied to understand asymptotics of the refined correlation function, then the probabilistic analysis in this paper would allow one to confirm the predictions from \cite{TACDWSVM,ACSVMGD} for the arctic curves of more general domain-wall six-vertex models, at least for some ranges of the weights that ensure monotonicity of the model. 

The second point concerns other variants of the tangent method. In this paper we proceed through the geometric tangent method, which is the one that was described in \cite{ACSVMGD}. However, Sportiello-Colomo have also produced a \emph{doubly refined tangent method}, which appears to be simpler to mathematically justify but requires (possibly more precise) asymptotic access to a quantity called the \emph{doubly refined correlation function}. This quantity generalizes the singly refined correlation function required for the geometric tangent method, in that asymptotics for the former imply asymptotics for the latter, but the reverse does not hold. 

In fact, asymptotics for the doubly refined correlation function do not appear to be known in many cases of interest. In the special case of the domain-wall ice model, they can be derived \cite{ACASM} through a recursion of Stroganov \cite{DTR}. Using this fact, Sportiello outlined a potential route to establish the arctic boundary for the domain-wall ice model in his slides \cite{ACASM} from 2015; this will appear in forthcoming work of Colomo-Sportiello \cite{D}. However, this doubly refined tangent method does not seem to apply to the ice model on the three-bundle domain, since the doubly refined correlation function is not known in that case. 

The third point concerns our definition of the arctic boundary, which will be described in more detail in \Cref{Boundary} below. Much of our terminology is modeled off of that from the original work of Jockusch-Propp-Shor \cite{RTAC}. For us (and for them), the ``frozen region'' of a tiling consists of all vertices $v$ in the domain such that all vertices either to the northeast, northwest, southeast, or southwest of $v$ are fixed in the tiling and determined by the boundary conditions. One might view this as an ``external frozen region,'' and our results characterize it completely for the ice model on the three-bundle domain $\mathcal{T}$.  

However, it does not (nor does any variant of the tangent method) show that there are no internal regions of $\mathcal{T}$ that are frozen with high probability. We might mention that the original work \cite{RTAC} also did not explicitly forbid such a possibility in their situation; this was later addressed in the paper of Cohn-Elkies-Propp \cite{LSRT}. In fact, it is not apparent to us that such internal frozen regions cannot exist for the model we consider. To the contrary, it was claimed in Section 5.4 of \cite{ACSVMGD} (see in particular Figure 8 there) that such regions might exist for certain ranges (or possibly a limit degeneration) of the parameters. To the best of our knowledge, there is neither a proof that such an interior arctic boundary exists, nor a prediction that explicitly parameterizes it. 

Now let us proceed to describe our results in more detail; the remainder of this section is organized as follows.  We will provide a definition of the six-vertex model on the three-bundle domain in \Cref{ModelDomain}. We will then explain our results on its arctic boundary in \Cref{Boundary} and detail the degeneration to the domain-wall ice model in \Cref{AB0Domain}.

\subsection{Six-Vertex Ensembles on the Three-Bundle Domain}

\label{ModelDomain}

In this section we define the model of interest to us, which is the six-vertex model (at ice point) on the three-bundle domain. Throughout this section, $A, B \ge 0$ and $C \ge 1$ are integers. 

The \emph{three-bundle domain} $\mathcal{T} = \mathcal{T}_{A, B, C}$ (with parameters $A$, $B$, and $C$) is defined to be the following directed planar graph. The vertex set of $\mathcal{T}$ consists of all lattice points $(x, y) \in \mathbb{Z}_{> 0}^2$ such that either $(x, y) \in [1, A + 2 B + C] \times [1, A + C]$ or $(x, y) \in [A + B + 1, A + 2B + C] \times [1, 2A + B + C]$. The edge set of $\mathcal{T}$ consists of all edges connecting $v_1 = (x_1, y_1) \in \mathcal{T}$ to $v_2 = (x_2, y_2) \in \mathcal{T}$ satisfying either one of the following two conditions. The first is that $(x_2, y_2) \in \big\{ (x_1, y_1 + 1), (x_1 + 1, y_1) \big\}$. The second is that there exists some $m \in [1, A + B]$ such that $v_1 = (A + B - m + 1, A + C)$ and $v_2 = (A + B + 1, A + C + m)$; in this case, the edge connecting $v_1$ to $v_2$ is called \emph{diagonal}. 

We refer to the right side of \Cref{vertexdomain} for a depiction of the three-bundle domain $\mathcal{T}_{2, 3, 4}$. Observe in particular that all faces of $\mathcal{T}_{A, B, C}$ are quadrilaterals, except for one that is a triangle. One might view the graph $\mathcal{T}_{A, B, C}$ as coming from the intersections between three sets consisting of $A + B$, $A + C$, and $B + C$ parallel curves.

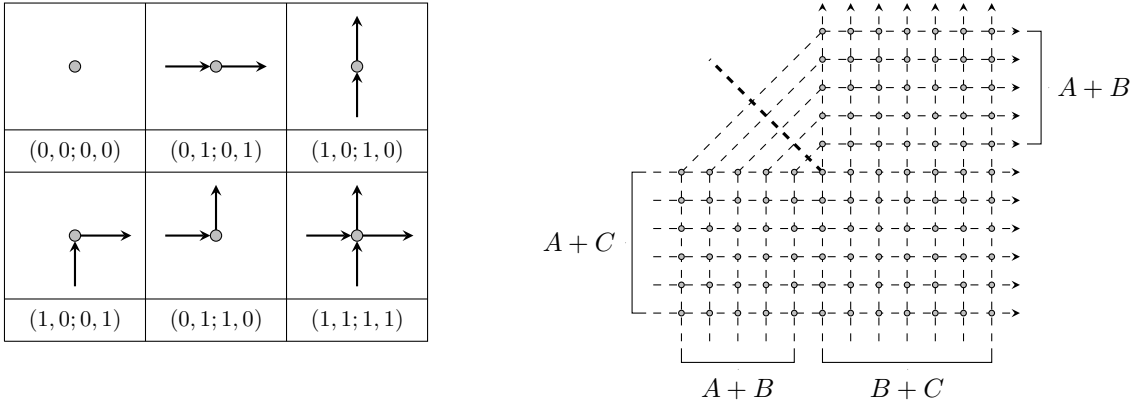
\begin{figure}[t]
	
	\begin{center}
		
		\begin{tikzpicture}[
		>=stealth,
		scale = .75 	
		]
		
		\draw[] (-11.5, 0) -- (-4, 0) -- (-4, 6) -- (-11.5, 6) -- (-11.5, 0	);
		
		\draw[] (-11.5, 3) -- (-4, 3);
		\draw[] (-9, 0	) -- (-9, 6);
		\draw[] (-6.5, 0) -- (-6.5, 6);
		\draw[] (-11.5, .75) -- (-4, .75);
		\draw[] (-11.5, 3.75) -- (-4, 3.75);
		
		\draw[] (-10.25, 3.375) circle [radius = 0] node[scale = .85]{$(0, 0; 0, 0)$};
		\draw[] (-7.75, 3.375) circle [radius = 0] node[scale = .85]{$(0, 1; 0, 1)$};
		\draw[] (-5.25, 3.375) circle [radius = 0] node[scale = .85]{$(1, 0; 1, 0)$};
		
		\draw[] (-10.25, .375) circle [radius = 0] node[scale = .85]{$(1, 0; 0, 1)$};
		\draw[] (-7.75, .375) circle [radius = 0] node[scale = .85]{$(0, 1; 1, 0)$};
		\draw[] (-5.25, .375) circle [radius = 0] node[scale = .85]{$(1, 1; 1, 1)$};

		\draw[->, black,  thick] (-8.65, 4.875) -- (-7.85, 4.875);
		\draw[->, black,  thick] (-7.65, 4.875) -- (-6.85, 4.875);
		
		\draw[->, black,  thick] (-5.25, 3.975) -- (-5.25, 4.775);
		\draw[->, black,  thick] (-5.25, 4.975) -- (-5.25, 5.775);

		\draw[->, black,  thick] (-10.15, 1.875) -- (-9.25, 1.875);
		\draw[->, black,  thick] (-10.25, .975) -- (-10.25, 1.775);

		\draw[->, black,  thick] (-8.65, 1.875) -- (-7.85, 1.875);
		\draw[->, black,  thick] (-7.75, 1.975) -- (-7.75, 2.775);
		
		\draw[->, black,  thick] (-6.15, 1.875) -- (-5.35, 1.875);
		\draw[->, black,  thick] (-5.15, 1.875) -- (-4.25, 1.875);
		\draw[->, black,  thick] (-5.25, .975) -- (-5.25, 1.775);
		\draw[->, black,  thick] (-5.25, 1.975) -- (-5.25, 2.775);
		
		\filldraw[fill=gray!50!white, draw=black] (-10.25, 1.875) circle [radius=.1];
		\filldraw[fill=gray!50!white, draw=black] (-10.25, 4.875) circle [radius=.1];
		\filldraw[fill=gray!50!white, draw=black] (-7.75, 1.875) circle [radius=.1];
		\filldraw[fill=gray!50!white, draw=black] (-7.75, 4.875) circle [radius=.1];
		\filldraw[fill=gray!50!white, draw=black] (-5.25, 1.875) circle [radius=.1];
		\filldraw[fill=gray!50!white, draw=black] (-5.25, 4.875) circle [radius=.1];

		\draw[->, black, dashed] (0, .5) -- (6.5, .5);	
		\draw[->, black, dashed] (0, 1) -- (6.5, 1);	
		\draw[->, black, dashed] (0, 1.5) -- (6.5, 1.5);	
		\draw[->, black, dashed] (0, 2) -- (6.5, 2);
		\draw[->, black, dashed] (0, 2.5) -- (6.5, 2.5);
		\draw[->, black, dashed] (0, 3) -- (6.5, 3);

		\draw[->, black, dashed] (3, 3.5) -- (6.5, 3.5);	
		\draw[->, black, dashed] (3, 4) -- (6.5, 4);	
		\draw[->, black, dashed] (3, 4.5) -- (6.5, 4.5);	
		\draw[->, black, dashed] (3, 5) -- (6.5, 5);
		\draw[->, black, dashed] (3, 5.5) -- (6.5, 5.5);
		
		\draw[-, black, dashed, very thick] (3, 3) -- (1, 5);

		\draw[-, black, dashed] (2.5, 3) -- (3, 3.5);
		\draw[-, black, dashed] (2, 3) -- (3, 4);
		\draw[-, black, dashed] (1.5, 3) -- (3, 4.5);
		\draw[-, black, dashed] (1, 3) -- (3, 5);
		\draw[-, black, dashed] (.5, 3) -- (3, 5.5);

		\draw[-, black, dashed] (.5, 0) -- (.5, 3);
		\draw[-, black, dashed] (1, 0) -- (1, 3);	
		\draw[-, black, dashed] (1.5, 0) -- (1.5, 3);	
		\draw[-, black, dashed] (2, 0) -- (2, 3);	
		\draw[-, black, dashed] (2.5, 0) -- (2.5, 3);
		
		\draw[->, black, dashed] (3, 0) -- (3, 6);
		\draw[->, black, dashed] (3.5, 0) -- (3.5, 6);	
		\draw[->, black, dashed] (4, 0) -- (4, 6);	
		\draw[->, black, dashed] (4.5, 0) -- (4.5, 6);	
		\draw[->, black, dashed] (5, 0) -- (5, 6);
		\draw[->, black, dashed] (5.5, 0) -- (5.5, 6);
		\draw[->, black, dashed] (6, 0) -- (6, 6);

		\filldraw[draw=black] (1.5, -.5) circle[radius = 0] node[scale = 1, below = 0]{$A + B$};
		\filldraw[draw=black] (4.5, -.5) circle[radius = 0] node[scale = 1, below = 0]{$B + C$};
		\filldraw[draw=black] (7, 4.5) circle[radius = 0] node[scale = 1, right = 0]{$A + B$};
		\filldraw[draw=black] (-.5, 1.75) circle[radius = 0] node[scale = 1, left = 0]{$A + C$};
		\draw[] (.5, -.125) -- (.5, -.375) -- (2.5, -.375) -- (2.5, -.125); 
		\draw[] (3, -.125) -- (3, -.375) -- (6, -.375) -- (6, -.125); 
		\draw[] (-.125, .5) -- (-.375, .5) -- (-.375, 3) -- (-.125, 3); 
		\draw[] (6.625, 3.5) -- (6.875, 3.5) -- (6.875, 5.5) -- (6.625, 5.5);

		\filldraw[fill=gray!50!white, draw=black] (.5, .5) circle [radius=.05];
		\filldraw[fill=gray!50!white, draw=black] (.5, 1) circle [radius=.05];
		\filldraw[fill=gray!50!white, draw=black] (.5, 1.5) circle [radius=.05];
		\filldraw[fill=gray!50!white, draw=black] (.5, 2) circle [radius=.05];
		\filldraw[fill=gray!50!white, draw=black] (.5, 2.5) circle [radius=.05];
		\filldraw[fill=gray!50!white, draw=black] (.5, 3) circle [radius=.05];
		
		\filldraw[fill=gray!50!white, draw=black] (1, .5) circle [radius=.05];
		\filldraw[fill=gray!50!white, draw=black] (1, 1) circle [radius=.05];
		\filldraw[fill=gray!50!white, draw=black] (1, 1.5) circle [radius=.05];
		\filldraw[fill=gray!50!white, draw=black] (1, 2) circle [radius=.05];
		\filldraw[fill=gray!50!white, draw=black] (1, 2.5) circle [radius=.05];
		\filldraw[fill=gray!50!white, draw=black] (1, 3) circle [radius=.05];

		\filldraw[fill=gray!50!white, draw=black] (1.5, .5) circle [radius=.05];
		\filldraw[fill=gray!50!white, draw=black] (1.5, 1) circle [radius=.05];
		\filldraw[fill=gray!50!white, draw=black] (1.5, 1.5) circle [radius=.05];
		\filldraw[fill=gray!50!white, draw=black] (1.5, 2) circle [radius=.05];
		\filldraw[fill=gray!50!white, draw=black] (1.5, 2.5) circle [radius=.05];
		\filldraw[fill=gray!50!white, draw=black] (1.5, 3) circle [radius=.05];

		\filldraw[fill=gray!50!white, draw=black] (2, .5) circle [radius=.05];
		\filldraw[fill=gray!50!white, draw=black] (2, 1) circle [radius=.05];
		\filldraw[fill=gray!50!white, draw=black] (2, 1.5) circle [radius=.05];
		\filldraw[fill=gray!50!white, draw=black] (2, 2) circle [radius=.05];
		\filldraw[fill=gray!50!white, draw=black] (2, 2.5) circle [radius=.05];
		\filldraw[fill=gray!50!white, draw=black] (2, 3) circle [radius=.05];

		\filldraw[fill=gray!50!white, draw=black] (2.5, .5) circle [radius=.05];
		\filldraw[fill=gray!50!white, draw=black] (2.5, 1) circle [radius=.05];
		\filldraw[fill=gray!50!white, draw=black] (2.5, 1.5) circle [radius=.05];
		\filldraw[fill=gray!50!white, draw=black] (2.5, 2) circle [radius=.05];
		\filldraw[fill=gray!50!white, draw=black] (2.5, 2.5) circle [radius=.05];
		\filldraw[fill=gray!50!white, draw=black] (2.5, 3) circle [radius=.05];

		\filldraw[fill=gray!50!white, draw=black] (3, .5) circle [radius=.05];
		\filldraw[fill=gray!50!white, draw=black] (3, 1) circle [radius=.05];
		\filldraw[fill=gray!50!white, draw=black] (3, 1.5) circle [radius=.05];
		\filldraw[fill=gray!50!white, draw=black] (3, 2) circle [radius=.05];
		\filldraw[fill=gray!50!white, draw=black] (3, 2.5) circle [radius=.05];
		\filldraw[fill=gray!50!white, draw=black] (3, 3) circle [radius=.05];
		\filldraw[fill=gray!50!white, draw=black] (3, 3.5) circle [radius=.05];
		\filldraw[fill=gray!50!white, draw=black] (3, 4) circle [radius=.05];
		\filldraw[fill=gray!50!white, draw=black] (3, 4.5) circle [radius=.05];
		\filldraw[fill=gray!50!white, draw=black] (3, 5) circle [radius=.05];
		\filldraw[fill=gray!50!white, draw=black] (3, 5.5) circle [radius=.05];

		\filldraw[fill=gray!50!white, draw=black] (3.5, .5) circle [radius=.05];
		\filldraw[fill=gray!50!white, draw=black] (3.5, 1) circle [radius=.05];
		\filldraw[fill=gray!50!white, draw=black] (3.5, 1.5) circle [radius=.05];
		\filldraw[fill=gray!50!white, draw=black] (3.5, 2) circle [radius=.05];
		\filldraw[fill=gray!50!white, draw=black] (3.5, 2.5) circle [radius=.05];
		\filldraw[fill=gray!50!white, draw=black] (3.5, 3) circle [radius=.05];
		\filldraw[fill=gray!50!white, draw=black] (3.5, 3.5) circle [radius=.05];
		\filldraw[fill=gray!50!white, draw=black] (3.5, 4) circle [radius=.05];
		\filldraw[fill=gray!50!white, draw=black] (3.5, 4.5) circle [radius=.05];
		\filldraw[fill=gray!50!white, draw=black] (3.5, 5) circle [radius=.05];
		\filldraw[fill=gray!50!white, draw=black] (3.5, 5.5) circle [radius=.05];

		\filldraw[fill=gray!50!white, draw=black] (4, .5) circle [radius=.05];
		\filldraw[fill=gray!50!white, draw=black] (4, 1) circle [radius=.05];
		\filldraw[fill=gray!50!white, draw=black] (4, 1.5) circle [radius=.05];
		\filldraw[fill=gray!50!white, draw=black] (4, 2) circle [radius=.05];
		\filldraw[fill=gray!50!white, draw=black] (4, 2.5) circle [radius=.05];
		\filldraw[fill=gray!50!white, draw=black] (4, 3) circle [radius=.05];
		\filldraw[fill=gray!50!white, draw=black] (4, 3.5) circle [radius=.05];
		\filldraw[fill=gray!50!white, draw=black] (4, 4) circle [radius=.05];
		\filldraw[fill=gray!50!white, draw=black] (4, 4.5) circle [radius=.05];
		\filldraw[fill=gray!50!white, draw=black] (4, 5) circle [radius=.05];
		\filldraw[fill=gray!50!white, draw=black] (4, 5.5) circle [radius=.05];

		\filldraw[fill=gray!50!white, draw=black] (4.5, .5) circle [radius=.05];
		\filldraw[fill=gray!50!white, draw=black] (4.5, 1) circle [radius=.05];
		\filldraw[fill=gray!50!white, draw=black] (4.5, 1.5) circle [radius=.05];
		\filldraw[fill=gray!50!white, draw=black] (4.5, 2) circle [radius=.05];
		\filldraw[fill=gray!50!white, draw=black] (4.5, 2.5) circle [radius=.05];
		\filldraw[fill=gray!50!white, draw=black] (4.5, 3) circle [radius=.05];
		\filldraw[fill=gray!50!white, draw=black] (4.5, 3.5) circle [radius=.05];
		\filldraw[fill=gray!50!white, draw=black] (4.5, 4) circle [radius=.05];
		\filldraw[fill=gray!50!white, draw=black] (4.5, 4.5) circle [radius=.05];
		\filldraw[fill=gray!50!white, draw=black] (4.5, 5) circle [radius=.05];
		\filldraw[fill=gray!50!white, draw=black] (4.5, 5.5) circle [radius=.05];

		\filldraw[fill=gray!50!white, draw=black] (5, .5) circle [radius=.05];
		\filldraw[fill=gray!50!white, draw=black] (5, 1) circle [radius=.05];
		\filldraw[fill=gray!50!white, draw=black] (5, 1.5) circle [radius=.05];
		\filldraw[fill=gray!50!white, draw=black] (5, 2) circle [radius=.05];
		\filldraw[fill=gray!50!white, draw=black] (5, 2.5) circle [radius=.05];
		\filldraw[fill=gray!50!white, draw=black] (5, 3) circle [radius=.05];
		\filldraw[fill=gray!50!white, draw=black] (5, 3.5) circle [radius=.05];
		\filldraw[fill=gray!50!white, draw=black] (5, 4) circle [radius=.05];
		\filldraw[fill=gray!50!white, draw=black] (5, 4.5) circle [radius=.05];
		\filldraw[fill=gray!50!white, draw=black] (5, 5) circle [radius=.05];
		\filldraw[fill=gray!50!white, draw=black] (5, 5.5) circle [radius=.05];

		\filldraw[fill=gray!50!white, draw=black] (5.5, .5) circle [radius=.05];
		\filldraw[fill=gray!50!white, draw=black] (5.5, 1) circle [radius=.05];
		\filldraw[fill=gray!50!white, draw=black] (5.5, 1.5) circle [radius=.05];
		\filldraw[fill=gray!50!white, draw=black] (5.5, 2) circle [radius=.05];
		\filldraw[fill=gray!50!white, draw=black] (5.5, 2.5) circle [radius=.05];
		\filldraw[fill=gray!50!white, draw=black] (5.5, 3) circle [radius=.05];
		\filldraw[fill=gray!50!white, draw=black] (5.5, 3.5) circle [radius=.05];
		\filldraw[fill=gray!50!white, draw=black] (5.5, 4) circle [radius=.05];
		\filldraw[fill=gray!50!white, draw=black] (5.5, 4.5) circle [radius=.05];
		\filldraw[fill=gray!50!white, draw=black] (5.5, 5) circle [radius=.05];
		\filldraw[fill=gray!50!white, draw=black] (5.5, 5.5) circle [radius=.05];

		\filldraw[fill=gray!50!white, draw=black] (6, .5) circle [radius=.05];
		\filldraw[fill=gray!50!white, draw=black] (6, 1) circle [radius=.05];
		\filldraw[fill=gray!50!white, draw=black] (6, 1.5) circle [radius=.05];
		\filldraw[fill=gray!50!white, draw=black] (6, 2) circle [radius=.05];
		\filldraw[fill=gray!50!white, draw=black] (6, 2.5) circle [radius=.05];
		\filldraw[fill=gray!50!white, draw=black] (6, 3) circle [radius=.05];
		\filldraw[fill=gray!50!white, draw=black] (6, 3.5) circle [radius=.05];
		\filldraw[fill=gray!50!white, draw=black] (6, 4) circle [radius=.05];
		\filldraw[fill=gray!50!white, draw=black] (6, 4.5) circle [radius=.05];
		\filldraw[fill=gray!50!white, draw=black] (6, 5) circle [radius=.05];
		\filldraw[fill=gray!50!white, draw=black] (6, 5.5) circle [radius=.05];
		\end{tikzpicture}
		
	\end{center}	
	
	\caption{\label{vertexdomain} The chart to the left shows all six possible arrow configurations. The three-bundle domain $\mathcal{T}_{A, B, C}$, with $(A, B, C) = (2, 3, 4)$, is depicted to the right; the defect line is shown as thick and dashed.} 
\end{figure}

Next, let us define six-vertex ensembles on $\mathcal{T}$, which were originally considered in Section 4.2 of \cite{PRC} and then later studied in Section 5 of \cite{ACSVMGD}. To that end, an \emph{arrow configuration} is a quadruple $(i_1, j_1; i_2, j_2)$ such that $i_1, j_1, i_2, j_2 \in \{ 0, 1 \}$ and $i_1 + j_1 = i_2 + j_2$. We view such a quadruple as an assignment of arrows to a vertex $v \in \mathcal{T}$. Specifically, $i_1$ and $j_1$ denote the numbers of vertical and horizontal arrows entering $v$, respectively; similarly, $i_2$ and $j_2$ denote the numbers of vertical and horizontal arrows exiting $v$, respectively. The fact that the $i_k$ and $j_k$ are in $\{ 0, 1 \}$ means that at most one arrow is permitted along any edge, and the fact that $i_1 + j_1 = i_2 + j_2$ means that the numbers of incoming and outgoing arrows at $v$ are equal. There are six possible arrow configurations, which are depicted on the left side of \Cref{vertexdomain}. 

Now, a \emph{(defective) six-vertex ensemble} on $\mathcal{T}$ is defined to be an assignment of an arrow configuration to each vertex of $\mathcal{T}$, satisfying the following two (in)consistency relations.

\begin{enumerate}
	\item If $v_1, v_2 \in \mathcal{T}$ are two adjacent vertices connected by a non-diagonal edge, then their arrow configurations are \emph{consistent}; this means that there is an edge to $v_2$ in the arrow configuration at $v_1$ if and only if there is an edge from $v_1$ in the arrow configuration at $v_2$.
	
	\item  If $v_1, v_2 \in \mathcal{T}$ are two adjacent vertices connected by a diagonal edge, then their arrow configurations are not consistent.
\end{enumerate}

To depict the second condition above diagrammatically, we draw a \emph{defect line} through the vertex $(A + B + 1, A + C)$ and orthogonal to each diagonal edge, across which the inconsistency of adjacent arrow configurations occurs; we refer to \Cref{figurevertexwedge} for an example. Observe in particular that the arrows in a (defective) six-vertex ensemble form directed paths connecting vertices of $\mathcal{T}$, which both emanate from and end at either the defect line or a boundary vertex of $\mathbb{Z}^2 \setminus \mathcal{T}$. 

\emph{Boundary data} for a (defective) six-vertex ensemble is prescribed by dictating which boundary vertices are entrance and exit sites for a directed path. We will be particularly interested in \emph{domain-wall boundary data}, in which $A + C$ paths horizontally enter $\mathcal{T}$ from sites of the form $(0, m)$ for $m \in [1, A + C]$, and $B + C$ paths vertically exit $\mathcal{T}$ at sites of the form $(A + B + m, 2A + B + C + 1)$ for $m \in [1, B + C]$; this is depicted in \Cref{figurevertexwedge}. 
	
In the special case $A = 0 = B$, the domain $\mathcal{T}$ becomes a $C \times C$ square $\mathcal{S} = \mathcal{S}_C$, and the domain-wall boundary data defined above becomes the more commonly studied domain-wall boundary data on $\mathcal{S}$, in which arrows enter from all sites along the left boundary and exit at all sites along the top boundary. We will discuss this point in more detail in \Cref{AB0Domain}.

\begin{rem}
	
	\label{apaths}
	
	The (in)consistency conditions quickly imply that, in any (defective) six-vertex ensemble on $\mathcal{T}_{A, B, C}$ with domain-wall boundary data, $A$ paths must end at the defect line and $B$ paths must start from it. For example, \Cref{figurevertexwedge} indicates that $A = 2$ paths end at and $B = 3$ paths start from the defect line. 
	
\end{rem}

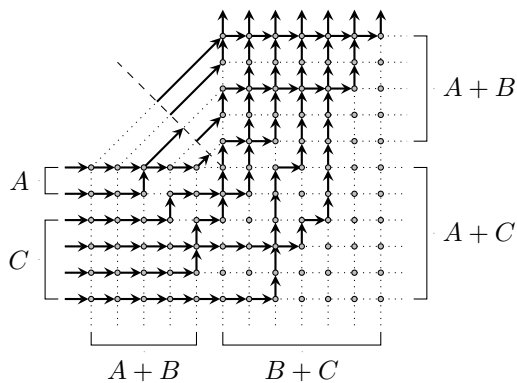
\begin{figure}[t]
	
	\begin{center}
		
		\begin{tikzpicture}[
		>=stealth,
		scale = .7
		]
		
		\draw[-, black, dotted] (0, .5) -- (6.5, .5);	
		\draw[-, black, dotted] (0, 1) -- (6.5, 1);	
		\draw[-, black, dotted] (0, 1.5) -- (6.5, 1.5);	
		\draw[-, black, dotted] (0, 2) -- (6.5, 2);
		\draw[-, black, dotted] (0, 2.5) -- (6.5, 2.5);
		\draw[-, black, dotted] (0, 3) -- (6.5, 3);

		\draw[-, black, dotted] (3, 3.5) -- (6.5, 3.5);	
		\draw[-, black, dotted] (3, 4) -- (6.5, 4);	
		\draw[-, black, dotted] (3, 4.5) -- (6.5, 4.5);	
		\draw[-, black, dotted] (3, 5) -- (6.5, 5);
		\draw[-, black, dotted] (3, 5.5) -- (6.5, 5.5);
		
		\draw[-, black, dashed] (3, 3) -- (1, 5);

		\draw[-, black, dotted] (2.5, 3) -- (3, 3.5);
		\draw[-, black, dotted] (2, 3) -- (2.5, 3.5);
		\draw[-, black, dotted] (1.5, 3) -- (3, 4.5);
		\draw[-, black, dotted] (1, 3) -- (2, 4);
		\draw[-, black, dotted] (.5, 3) -- (1.75, 4.25);

		\draw[-, black, dotted] (.5, 0) -- (.5, 3);
		\draw[-, black, dotted] (1, 0) -- (1, 3);	
		\draw[-, black, dotted] (1.5, 0) -- (1.5, 3);	
		\draw[-, black, dotted] (2, 0) -- (2, 3);	
		\draw[-, black, dotted] (2.5, 0) -- (2.5, 3);
		
		\draw[-, black, dotted] (3, 0) -- (3, 6);
		\draw[-, black, dotted] (3.5, 0) -- (3.5, 6);	
		\draw[-, black, dotted] (4, 0) -- (4, 6);	
		\draw[-, black, dotted] (4.5, 0) -- (4.5, 6);	
		\draw[-, black, dotted] (5, 0) -- (5, 6);
		\draw[-, black, dotted] (5.5, 0) -- (5.5, 6);
		\draw[-, black, dotted] (6, 0) -- (6, 6);

		\filldraw[draw=black] (-.5, 1.25) circle[radius = 0] node[scale = 1, left = 0]{$C$};
		\filldraw[draw=black] (-.5, 2.75) circle[radius = 0] node[scale = 1, left = 0]{$A$};
		\filldraw[draw=black] (1.5, -.5) circle[radius = 0] node[scale = 1, below = 0]{$A + B$};
		\filldraw[draw=black] (4.5, -.5) circle[radius = 0] node[scale = 1, below = 0]{$B + C$};
		\filldraw[draw=black] (7, 4.5) circle[radius = 0] node[scale = 1, right = 0]{$A + B$};
		\filldraw[draw=black] (7, 1.75) circle[radius = 0] node[scale = 1, right = 0]{$A + C$};
		\draw[] (.5, -.125) -- (.5, -.375) -- (2.5, -.375) -- (2.5, -.125); 
		\draw[] (3, -.125) -- (3, -.375) -- (6, -.375) -- (6, -.125); 
		\draw[] (-.125, .5) -- (-.375, .5) -- (-.375, 2) -- (-.125, 2); 
		\draw[] (-.125, 2.5) -- (-.375, 2.5) -- (-.375, 3) -- (-.125, 3); 
		\draw[] (6.625, 3.5) -- (6.875, 3.5) -- (6.875, 5.5) -- (6.625, 5.5);
		\draw[] (6.625, .5) -- (6.875, .5) -- (6.875, 3) -- (6.625, 3);

		\draw[->, black, thick] (0, .5) -- (.45, .5);	
		\draw[->, black, thick] (0, 1) -- (.45, 1);
		\draw[->, black, thick] (0, 1.5) -- (.45, 1.5);
		\draw[->, black, thick] (0, 2) -- (.45, 2);
		\draw[->, black, thick] (0, 2.5) -- (.45, 2.5);
		\draw[->, black, thick] (0, 3) -- (.45, 3);
		
		\draw[->, black, thick] (.55, .5) -- (.95, .5);	
		\draw[->, black, thick] (.55, 1) -- (.95, 1);
		\draw[->, black, thick] (.55, 1.5) -- (.95, 1.5);
		\draw[->, black, thick] (.55, 2) -- (.95, 2);
		\draw[->, black, thick] (.55, 2.5) -- (.95, 2.5);
		\draw[->, black, thick] (.55, 3) -- (.95, 3);
		
		\draw[->, black, thick] (1.05, .5) -- (1.45, .5);	
		\draw[->, black, thick] (1.05, 1) -- (1.45, 1);
		\draw[->, black, thick] (1.05, 1.5) -- (1.45, 1.5);
		\draw[->, black, thick] (1.05, 2) -- (1.45, 2);
		\draw[->, black, thick] (1.05, 2.5) -- (1.45, 2.5);
		\draw[->, black, thick] (1.05, 3) -- (1.45, 3);
		
		\draw[->, black, thick] (1.5, 2.55) -- (1.5, 2.95); 
		
		\draw[->, black, thick] (1.55, .5) -- (1.95, .5);	
		\draw[->, black, thick] (1.55, 1) -- (1.95, 1);
		\draw[->, black, thick] (1.55, 1.5) -- (1.95, 1.5);
		\draw[->, black, thick] (1.55, 2) -- (1.95, 2);
		\draw[->, black, thick] (1.55, 3) -- (1.95, 3);
		
		\draw[->, black, thick] (2.05, .5) -- (2.45, .5);	
		\draw[->, black, thick] (2.05, 1) -- (2.45, 1);
		\draw[->, black, thick] (2.05, 1.5) -- (2.45, 1.5);
		\draw[->, black, thick] (2.05, 2.5) -- (2.45, 2.5);
		\draw[->, black, thick] (2.05, 3) -- (2.45, 3);
		
		\draw[->, black, thick] (2.55, .5) -- (2.95, .5);
		\draw[->, black, thick] (2.55, 1.5) -- (2.95, 1.5);
		\draw[->, black, thick] (2.55, 2) -- (2.95, 2);
		\draw[->, black, thick] (2.55, 2.5) -- (2.95, 2.5);
		
		\draw[->, black, thick] (2, 2.05) -- (2, 2.45);
		
		\draw[->, black, thick] (2.5, 1.05) -- (2.5, 1.45);
		\draw[->, black, thick] (2.5, 1.55) -- (2.5, 1.95);
		
		\draw[->, black, thick] (2.5, 3) -- (2.75, 3.25);
		\draw[->, black, thick] (1.5, 3) -- (2.25, 3.75);
		
		\draw[->, black, thick] (2.5, 3.5) -- (2.965, 3.965);	
		\draw[->, black, thick] (2, 4) -- (2.965, 4.965);
		\draw[->, black, thick] (1.75, 4.25) -- (2.965, 5.465);

		\draw[->, black, thick] (3, 5.55) -- (3, 6);
		\draw[->, black, thick] (3.5, 5.55) -- (3.5, 6);
		\draw[->, black, thick] (4, 5.55) -- (4, 6);
		\draw[->, black, thick] (4.5, 5.55) -- (4.5, 6);
		\draw[->, black, thick] (5, 5.55) -- (5, 6);
		\draw[->, black, thick] (5.5, 5.55) -- (5.5, 6);
		\draw[->, black, thick] (6, 5.55) -- (6, 6);
		
		\draw[->, black, thick] (3, 5.05) -- (3, 5.45);
		\draw[->, black, thick] (3.5, 5.05) -- (3.5, 5.45);
		\draw[->, black, thick] (4, 5.05) -- (4, 5.45);
		\draw[->, black, thick] (4.5, 5.05) -- (4.5, 5.45);
		\draw[->, black, thick] (5, 5.05) -- (5, 5.45);
		\draw[->, black, thick] (5.5, 5.05) -- (5.5, 5.45);
		
		\draw[->, black, thick] (3.05, 5.5) -- (3.45, 5.5);
		\draw[->, black, thick] (3.55, 5.5) -- (3.95, 5.5);
		\draw[->, black, thick] (4.05, 5.5) -- (4.45, 5.5);
		\draw[->, black, thick] (4.55, 5.5) -- (4.95, 5.5);
		\draw[->, black, thick] (5.05, 5.5) -- (5.45, 5.5);
		\draw[->, black, thick] (5.55, 5.5) -- (5.95, 5.5);

		\draw[->, black, thick] (3.55, 4.5) -- (3.95, 4.5);
		\draw[->, black, thick] (4.05, 4.5) -- (4.45, 4.5);
		\draw[->, black, thick] (4.55, 4.5) -- (4.95, 4.5);
		\draw[->, black, thick] (5.05, 4.5) -- (5.45, 4.5);

		\draw[->, black, thick] (3, 4.05) -- (3, 4.45);
		\draw[->, black, thick] (3.5, 4.05) -- (3.5, 4.45);
		\draw[->, black, thick] (4, 4.05) -- (4, 4.45);
		\draw[->, black, thick] (4.5, 4.05) -- (4.5, 4.45);
		\draw[->, black, thick] (5, 4.05) -- (5, 4.45);

		\draw[->, black, thick] (3.5, 3.55) -- (3.5, 3.95);
		\draw[->, black, thick] (3.5, 3.05) -- (3.5, 3.45);
		\draw[->, black, thick] (3.5, 2.55) -- (3.5, 2.95);

		\draw[->, black, thick] (4, 3.55) -- (4, 3.95);
		
		\draw[->, black, thick] (3.55, 3.5) -- (3.95, 3.5);
		
		\draw[->, black, thick] (3, 3.05) -- (3, 3.45);

		\draw[->, black, thick] (4.5, 3.55) -- (4.5, 3.95);
		\draw[->, black, thick] (4.5, 3.05) -- (4.5, 3.45);
		
		\draw[->, black, thick] (4, 2.55) -- (4, 2.95);
		\draw[->, black, thick] (4, 2.05) -- (4, 2.55);
		\draw[->, black, thick] (4, 1.55) -- (4, 1.95);
		\draw[->, black, thick] (4, 1.05) -- (4, 1.55);
		\draw[->, black, thick] (4, .55) -- (4, .95);
		
		\draw[->, black, thick] (4.05, 3) -- (4.5, 3);

		\draw[->, black, thick] (3.55, .5) -- (3.95, .5); 
		\draw[->, black, thick] (3.55, 1.5) -- (3.95, 1.5);

		\draw[->, black, thick] (4.05, 1.5) -- (4.45, 1.5);

		\draw[->, black, thick] (4.5, 1.55) -- (4.5, 1.95);
		
		\draw[->, black, thick] (5, 2.05) -- (5, 2.45);

		\draw[->, black, thick] (4.55, 2) -- (4.95, 2);

		\draw[->, black, thick] (5, 3.55) -- (5, 3.95);
		\draw[->, black, thick] (5, 3.05) -- (5, 3.45);
		\draw[->, black, thick] (5, 2.55) -- (5, 2.95);

		\draw[->, black, thick] (3.5, 4.55) -- (3.5, 4.95);
		\draw[->, black, thick] (4, 4.55) -- (4, 4.95);
		\draw[->, black, thick] (4.5, 4.55) -- (4.5, 4.95);
		\draw[->, black, thick] (5, 4.55) -- (5, 4.95);
		\draw[->, black, thick] (5.5, 4.55) -- (5.5, 4.95);
		
		\draw[->, black, thick] (3.05, 4.5) -- (3.45, 4.5);
		\draw[->, black, thick] (3.05, 3.5) -- (3.45, 3.5);
		\draw[->, black, thick] (3.05, 2.5) -- (3.45, 2.5);
		\draw[->, black, thick] (3.05, 1.5) -- (3.45, 1.5);
		\draw[->, black, thick] (3.05, .5) -- (3.45, .5);

		\draw[->, black, thick] (3, 2.05) -- (3, 2.45);
		\draw[->, black, thick] (3, 2.55) -- (3, 2.95);

		\filldraw[fill=gray!50!white, draw=black] (.5, .5) circle [radius=.05];
		\filldraw[fill=gray!50!white, draw=black] (.5, 1) circle [radius=.05];
		\filldraw[fill=gray!50!white, draw=black] (.5, 1.5) circle [radius=.05];
		\filldraw[fill=gray!50!white, draw=black] (.5, 2) circle [radius=.05];
		\filldraw[fill=gray!50!white, draw=black] (.5, 2.5) circle [radius=.05];
		\filldraw[fill=gray!50!white, draw=black] (.5, 3) circle [radius=.05];
		
		\filldraw[fill=gray!50!white, draw=black] (1, .5) circle [radius=.05];
		\filldraw[fill=gray!50!white, draw=black] (1, 1) circle [radius=.05];
		\filldraw[fill=gray!50!white, draw=black] (1, 1.5) circle [radius=.05];
		\filldraw[fill=gray!50!white, draw=black] (1, 2) circle [radius=.05];
		\filldraw[fill=gray!50!white, draw=black] (1, 2.5) circle [radius=.05];
		\filldraw[fill=gray!50!white, draw=black] (1, 3) circle [radius=.05];

		\filldraw[fill=gray!50!white, draw=black] (1.5, .5) circle [radius=.05];
		\filldraw[fill=gray!50!white, draw=black] (1.5, 1) circle [radius=.05];
		\filldraw[fill=gray!50!white, draw=black] (1.5, 1.5) circle [radius=.05];
		\filldraw[fill=gray!50!white, draw=black] (1.5, 2) circle [radius=.05];
		\filldraw[fill=gray!50!white, draw=black] (1.5, 2.5) circle [radius=.05];
		\filldraw[fill=gray!50!white, draw=black] (1.5, 3) circle [radius=.05];

		\filldraw[fill=gray!50!white, draw=black] (2, .5) circle [radius=.05];
		\filldraw[fill=gray!50!white, draw=black] (2, 1) circle [radius=.05];
		\filldraw[fill=gray!50!white, draw=black] (2, 1.5) circle [radius=.05];
		\filldraw[fill=gray!50!white, draw=black] (2, 2) circle [radius=.05];
		\filldraw[fill=gray!50!white, draw=black] (2, 2.5) circle [radius=.05];
		\filldraw[fill=gray!50!white, draw=black] (2, 3) circle [radius=.05];

		\filldraw[fill=gray!50!white, draw=black] (2.5, .5) circle [radius=.05];
		\filldraw[fill=gray!50!white, draw=black] (2.5, 1) circle [radius=.05];
		\filldraw[fill=gray!50!white, draw=black] (2.5, 1.5) circle [radius=.05];
		\filldraw[fill=gray!50!white, draw=black] (2.5, 2) circle [radius=.05];
		\filldraw[fill=gray!50!white, draw=black] (2.5, 2.5) circle [radius=.05];
		\filldraw[fill=gray!50!white, draw=black] (2.5, 3) circle [radius=.05];

		\filldraw[fill=gray!50!white, draw=black] (3, .5) circle [radius=.05];
		\filldraw[fill=gray!50!white, draw=black] (3, 1) circle [radius=.05];
		\filldraw[fill=gray!50!white, draw=black] (3, 1.5) circle [radius=.05];
		\filldraw[fill=gray!50!white, draw=black] (3, 2) circle [radius=.05];
		\filldraw[fill=gray!50!white, draw=black] (3, 2.5) circle [radius=.05];
		\filldraw[fill=gray!50!white, draw=black] (3, 3) circle [radius=.05];
		\filldraw[fill=gray!50!white, draw=black] (3, 3.5) circle [radius=.05];
		\filldraw[fill=gray!50!white, draw=black] (3, 4) circle [radius=.05];
		\filldraw[fill=gray!50!white, draw=black] (3, 4.5) circle [radius=.05];
		\filldraw[fill=gray!50!white, draw=black] (3, 5) circle [radius=.05];
		\filldraw[fill=gray!50!white, draw=black] (3, 5.5) circle [radius=.05];

		\filldraw[fill=gray!50!white, draw=black] (3.5, .5) circle [radius=.05];
		\filldraw[fill=gray!50!white, draw=black] (3.5, 1) circle [radius=.05];
		\filldraw[fill=gray!50!white, draw=black] (3.5, 1.5) circle [radius=.05];
		\filldraw[fill=gray!50!white, draw=black] (3.5, 2) circle [radius=.05];
		\filldraw[fill=gray!50!white, draw=black] (3.5, 2.5) circle [radius=.05];
		\filldraw[fill=gray!50!white, draw=black] (3.5, 3) circle [radius=.05];
		\filldraw[fill=gray!50!white, draw=black] (3.5, 3.5) circle [radius=.05];
		\filldraw[fill=gray!50!white, draw=black] (3.5, 4) circle [radius=.05];
		\filldraw[fill=gray!50!white, draw=black] (3.5, 4.5) circle [radius=.05];
		\filldraw[fill=gray!50!white, draw=black] (3.5, 5) circle [radius=.05];
		\filldraw[fill=gray!50!white, draw=black] (3.5, 5.5) circle [radius=.05];

		\filldraw[fill=gray!50!white, draw=black] (4, .5) circle [radius=.05];
		\filldraw[fill=gray!50!white, draw=black] (4, 1) circle [radius=.05];
		\filldraw[fill=gray!50!white, draw=black] (4, 1.5) circle [radius=.05];
		\filldraw[fill=gray!50!white, draw=black] (4, 2) circle [radius=.05];
		\filldraw[fill=gray!50!white, draw=black] (4, 2.5) circle [radius=.05];
		\filldraw[fill=gray!50!white, draw=black] (4, 3) circle [radius=.05];
		\filldraw[fill=gray!50!white, draw=black] (4, 3.5) circle [radius=.05];
		\filldraw[fill=gray!50!white, draw=black] (4, 4) circle [radius=.05];
		\filldraw[fill=gray!50!white, draw=black] (4, 4.5) circle [radius=.05];
		\filldraw[fill=gray!50!white, draw=black] (4, 5) circle [radius=.05];
		\filldraw[fill=gray!50!white, draw=black] (4, 5.5) circle [radius=.05];

		\filldraw[fill=gray!50!white, draw=black] (4.5, .5) circle [radius=.05];
		\filldraw[fill=gray!50!white, draw=black] (4.5, 1) circle [radius=.05];
		\filldraw[fill=gray!50!white, draw=black] (4.5, 1.5) circle [radius=.05];
		\filldraw[fill=gray!50!white, draw=black] (4.5, 2) circle [radius=.05];
		\filldraw[fill=gray!50!white, draw=black] (4.5, 2.5) circle [radius=.05];
		\filldraw[fill=gray!50!white, draw=black] (4.5, 3) circle [radius=.05];
		\filldraw[fill=gray!50!white, draw=black] (4.5, 3.5) circle [radius=.05];
		\filldraw[fill=gray!50!white, draw=black] (4.5, 4) circle [radius=.05];
		\filldraw[fill=gray!50!white, draw=black] (4.5, 4.5) circle [radius=.05];
		\filldraw[fill=gray!50!white, draw=black] (4.5, 5) circle [radius=.05];
		\filldraw[fill=gray!50!white, draw=black] (4.5, 5.5) circle [radius=.05];

		\filldraw[fill=gray!50!white, draw=black] (5, .5) circle [radius=.05];
		\filldraw[fill=gray!50!white, draw=black] (5, 1) circle [radius=.05];
		\filldraw[fill=gray!50!white, draw=black] (5, 1.5) circle [radius=.05];
		\filldraw[fill=gray!50!white, draw=black] (5, 2) circle [radius=.05];
		\filldraw[fill=gray!50!white, draw=black] (5, 2.5) circle [radius=.05];
		\filldraw[fill=gray!50!white, draw=black] (5, 3) circle [radius=.05];
		\filldraw[fill=gray!50!white, draw=black] (5, 3.5) circle [radius=.05];
		\filldraw[fill=gray!50!white, draw=black] (5, 4) circle [radius=.05];
		\filldraw[fill=gray!50!white, draw=black] (5, 4.5) circle [radius=.05];
		\filldraw[fill=gray!50!white, draw=black] (5, 5) circle [radius=.05];
		\filldraw[fill=gray!50!white, draw=black] (5, 5.5) circle [radius=.05];

		\filldraw[fill=gray!50!white, draw=black] (5.5, .5) circle [radius=.05];
		\filldraw[fill=gray!50!white, draw=black] (5.5, 1) circle [radius=.05];
		\filldraw[fill=gray!50!white, draw=black] (5.5, 1.5) circle [radius=.05];
		\filldraw[fill=gray!50!white, draw=black] (5.5, 2) circle [radius=.05];
		\filldraw[fill=gray!50!white, draw=black] (5.5, 2.5) circle [radius=.05];
		\filldraw[fill=gray!50!white, draw=black] (5.5, 3) circle [radius=.05];
		\filldraw[fill=gray!50!white, draw=black] (5.5, 3.5) circle [radius=.05];
		\filldraw[fill=gray!50!white, draw=black] (5.5, 4) circle [radius=.05];
		\filldraw[fill=gray!50!white, draw=black] (5.5, 4.5) circle [radius=.05];
		\filldraw[fill=gray!50!white, draw=black] (5.5, 5) circle [radius=.05];
		\filldraw[fill=gray!50!white, draw=black] (5.5, 5.5) circle [radius=.05];

		\filldraw[fill=gray!50!white, draw=black] (6, .5) circle [radius=.05];
		\filldraw[fill=gray!50!white, draw=black] (6, 1) circle [radius=.05];
		\filldraw[fill=gray!50!white, draw=black] (6, 1.5) circle [radius=.05];
		\filldraw[fill=gray!50!white, draw=black] (6, 2) circle [radius=.05];
		\filldraw[fill=gray!50!white, draw=black] (6, 2.5) circle [radius=.05];
		\filldraw[fill=gray!50!white, draw=black] (6, 3) circle [radius=.05];
		\filldraw[fill=gray!50!white, draw=black] (6, 3.5) circle [radius=.05];
		\filldraw[fill=gray!50!white, draw=black] (6, 4) circle [radius=.05];
		\filldraw[fill=gray!50!white, draw=black] (6, 4.5) circle [radius=.05];
		\filldraw[fill=gray!50!white, draw=black] (6, 5) circle [radius=.05];
		\filldraw[fill=gray!50!white, draw=black] (6, 5.5) circle [radius=.05];

		\end{tikzpicture}
		
	\end{center}	
	
	\caption{\label{figurevertexwedge} 	A (defective) six-vertex ensemble on the three-bundle domain $\mathcal{T} = \mathcal{T}_{2, 3, 4}$ is depicted above. } 
\end{figure}

\begin{rem} 

One might observe that the domain $\mathcal{T} = \mathcal{T}_{A, B, C}$ is a bit reminiscent of the \emph{L-shaped domain} studied in several previous works \cite{TSVMD,ACFFSVMD} on the six-vertex model. However, we should clarify that these domains are not equivalent, since $\mathcal{T}$ includes diagonal edges, while the L-shaped domain does not. Furthermore, the six-vertex model on $\mathcal{T}$ involves defects, a property which is not shared by the six-vertex model on the L-shaped domain. These two points contribute to the differences between the arctic boundaries for the ice models on these domains.

\end{rem}

\subsection{The Arctic Boundary of the Ice Model on \texorpdfstring{$\mathcal{T}$}{}} 

\label{Boundary}

In this paper we will be interested in the geometry of a uniformly random (defective) six-vertex ensemble on $\mathcal{T}_{A, B, C}$ with domain-wall boundary data, as $A$, $B$, and $C$ tend to $\infty$. That this probability measure is uniform on the set of such ensembles corresponds to the \emph{ice point} of the six-vertex model on $\mathcal{T}$, in which each of the six arrow configurations depicted on the left side of \Cref{vertexdomain} are given equal weight.

More specifically, fix positive real numbers $a, b, c \ge 0$ such that $a + b + c = 1$, and let $N \ge 1$ be an integer. Define $A = \lfloor a N \rfloor$, $B = \lfloor b N \rfloor$, and $C = \lfloor c N \rfloor$, and consider a defective random six-vertex ensemble $\mathcal{E}$ with domain-wall boundary data on $\mathcal{T} = \mathcal{T}_{A, B, C}$. Since paths only enter and exit $\mathcal{T}$ through its left and top boundaries, respectively, one might expect (with high probability) that no paths in $\mathcal{E}$ should exist around some neighborhood of the southeast vertex $(A + 2B + C, 1)$ of $\mathcal{T}$. Stated equivalently, all vertices in $\mathcal{T}$ near its southeast corner should all be assigned arrow configuration $(0, 0; 0, 0)$; see, for instance, \Cref{figurevertexwedge}. 

Through similar reasoning, one might expect all vertices near the southwest corner $(1, 1)$ of $\mathcal{T}$ to be assigned $(0, 1; 0, 1)$ and all vertices near the northeast corner $(A + 2B + C, 2A + B + C)$ of $\mathcal{T}$ to be assigned $(1, 0; 1, 0)$. In addition, one might expect all vertices near one of the two northwest corners of $\mathcal{T}$ (namely $(1, A + C)$ or $(A + B + 1, 2A + B + C)$) to be assigned $(1, 1; 1, 1)$. However, in view of the fact that arrow configurations are inconsistent across the defect line, at most one of these two vertices can support the ``full'' arrow configuration $(1, 1; 1, 1)$. 

The existence of these ``frozen facets'' suggests that the (defective) six-vertex model on $\mathcal{T}$ exhibits an arctic boundary separating these frozen regions from ``liquid'' ones. The goal of this paper will be to understand this boundary explicitly. 

To make this more precise, we require some additional notation. To that end, for any $v = (x, y) \in \mathbb{R}^2$, define the four sets 
\begin{flalign} 
\label{vquadrants}
\begin{aligned}
& \text{NE} (v) = \big\{ (x', y') \in \mathbb{R}^2: x' \ge x, y' \ge y \big\}; \qquad \text{NW} (v) = \big\{ (x', y') \in \mathbb{R}^2: x' \le x, y' \ge y \big\}; \\
& \text{SE} (v) = \big\{ (x', y') \in \mathbb{R}^2: x' \ge x, y' \le y \big\}; \qquad \text{SW} (v) = \big\{ (x', y') \in \mathbb{R}^2: x' \le x, y' \le y \big\}.
\end{aligned}
\end{flalign}

\noindent In particular, $\text{NE} (v)$, $\text{NW} (v)$, $\text{SE} (v)$, and $\text{SW} (v)$ denote the subsets of $\mathbb{R}^2$ to the northeast, northwest, southeast, and southwest of $v$, respectively; see the left side of \Cref{avbv} for an example. 

\begin{figure}[t]
	
	\begin{center}
		
		\begin{tikzpicture}[
		>=stealth,
		]

		\fill[fill=white!65!gray] (3.5, -1.5) -- (4, -1.5) -- (7, 1.5) -- (7, 2) -- (3.5, 2) -- (3.5, -1.5);
		\fill[fill=white!65!gray] (4, -2) -- (4, -1.5) -- (7, 1.5) -- (7.5, 1.5) -- (7.5, -2) -- (4, -2);

		\fill[fill=white!65!gray] (8.5, 1.5) -- (9, 1.5) -- (12, -1.5) -- (12, -2) -- (8.5, -2) -- (8.5, 1.5);
		\fill[fill=white!65!gray] (9, 2) -- (9, 1.5) -- (12, -1.5) -- (12.5, -1.5) -- (12.5, 2) -- (9, 2);

		\draw[-, black] (-2, 0) -- (2, 0);
		\draw[-, black] (0, -2) -- (0, 2);

		\draw[-, black] (9, 1.5) -- (12, -1.5);

		\draw[fill=white, draw=black] (0, 0) circle [radius=.1] node[below = 5, left = 0, scale = .8]{$v$};
		\draw[fill=white, draw=black] (1, 1) circle [radius=0] node[scale = .8]{$\text{NE}(v)$};
		\draw[fill=white, draw=black] (-1, 1) circle [radius=0] node[scale = .8]{$\text{NW}(v)$};
		\draw[fill=white, draw=black] (1, -1) circle [radius=0] node[scale = .8]{$\text{SE}(v)$};
		\draw[fill=white, draw=black] (-1, -1) circle [radius=0] node[scale = .8]{$\text{SW}(v)$};
		
		\draw[-, black] (4, -1.5) -- (7, 1.5); 
		
		\draw[fill=white, draw=black] (4, -1.5) circle [radius=0] node[left = 6, below = 0,  scale = .8]{$\mathfrak{C}$};
		\draw[fill=white, draw=black] (6, -1) circle [radius=0] node[scale = .8]{$\text{SE}(\mathfrak{C})$};
		\draw[fill=white, draw=black] (5, 1) circle [radius=0] node[scale = .8]{$\text{NW}(\mathfrak{C})$};
		
		\draw[fill=white, draw=black] (9, 1.5) circle [radius=0] node[left = 6, above = 0, scale = .8]{$\mathfrak{C}$};
		\draw[fill=white, draw=black] (11, 1) circle [radius=0] node[scale = .8]{$\text{NE}(\mathfrak{C})$};
		\draw[fill=white, draw=black] (10, -1) circle [radius=0] node[scale = .8]{$\text{SW}(\mathfrak{C})$};
		
		\end{tikzpicture}
		
	\end{center}	
	
	\caption{\label{avbv} Depicted to the left are the four sets $\text{NE} (v)$, $\text{NW} (v)$, $\text{SE} (v)$, and $\text{SW} (v)$ at some $v \in \mathbb{R}^2$. Depicted in the middle is a nondecreasing curve $\mathfrak{C}$ and the two (shaded) sets $\text{NW} (\mathfrak{C})$ and $\text{SE} (\mathfrak{C})$. Depicted to the right is a nonincreasing curve $\mathfrak{C}$ and the two (shaded) sets $\text{NE} (\mathfrak{C})$ and $\text{SW} (\mathfrak{C})$.} 
\end{figure}
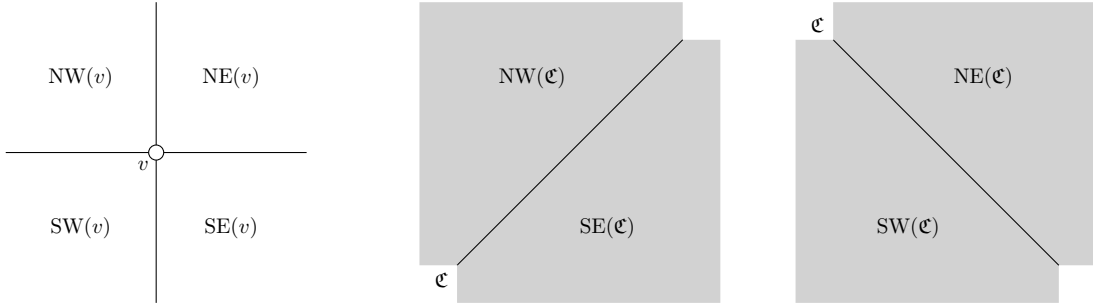

It will be useful to extend the definitions of these sets to situations in which $v$ is replaced by a curve. To that end, if $\mathfrak{C}$ is a nondecreasing curve in $\mathbb{R}^2$ (meaning that, if $v_1, v_2 \in \mathfrak{C}$, then $v_2 \in \text{NE} (v_1) \cup \text{SW} (v_1)$), then let
\begin{flalign} 
\label{c1quadrant} 
\begin{aligned}
\text{NW} (\mathfrak{C}) = \bigcup_{v \in \mathfrak{C}} \text{NW} (v); \qquad \text{SE} (\mathfrak{C}) = \bigcup_{v \in \mathfrak{C}} \text{SE} (v).
\end{aligned}
\end{flalign}

\noindent Similarly, if $\mathfrak{C}$ is a nonincreasing curve in $\mathbb{R}^2$ (meaning that, if $v_1, v_2 \in \mathfrak{C}$, then $v_2 \in \text{NW} (v_1) \cup \text{SE} (v_1)$), then let
\begin{flalign} 
\label{c2quadrant} 
\begin{aligned}
& \text{NE} (\mathfrak{C}) = \bigcup_{v \in \mathfrak{C}} \text{NE} (v); \qquad \text{SW} (\mathfrak{C}) = \bigcup_{v \in \mathfrak{C}} \text{SW} (v).
\end{aligned}
\end{flalign}

\noindent We refer to the middle and right sides of \Cref{avbv} for examples. 

Now, we can define the liquid and frozen regions of a (defective) six-vertex ensemble on $\mathcal{T}$, in a way similar to in \cite{RTAC}.

\begin{definition} 
	
\label{frozenregiondefinition}

Let $\mathcal{E}$ denote a (defective) six-vertex ensemble on $\mathcal{T}$ with domain-wall boundary data. Then, the \emph{frozen region of $\mathcal{E}$} consists of all vertices $v \in \mathcal{T}$ such that at least one of the following four conditions holds. 

\begin{enumerate}
	
	\item If $v' \in \text{NE} (v) \cap \mathcal{T}$, then the arrow configuration assigned to $v'$ is $(1, 0; 1, 0)$. 
	
	\item If $v' \in \text{NW} (v) \cap \mathcal{T}$, then the arrow configuration assigned to $v'$ is $(1, 1; 1, 1)$. 
	
	\item If $v' \in \text{SE} (v) \cap \mathcal{T}$, then the arrow configuration assigned to $v'$ is $(0, 0; 0, 0)$. 
	
	\item If $v' \in \text{SW} (v) \cap \mathcal{T}$, then the arrow configuration assigned to $v'$ is $(0, 1; 0, 1)$. 
\end{enumerate}

\noindent The \emph{liquid} region of $\mathcal{E}$ is defined to be the complement in $\mathcal{T}$ of the frozen region of $\mathcal{E}$. 

\end{definition}

\begin{rem}
	
\label{pathboundary} 

Observe in particular that the southeast part of the boundary between the liquid and frozen regions of a (defective) six-vertex ensemble $\mathcal{E}$ is given by the rightmost directed path of $\mathcal{E}$.
\end{rem}

We would like to explicitly evaluate the arctic boundary of a typical (defective) six-vertex model $\mathcal{E}$ on $\mathcal{T}$, namely the boundary between the liquid and frozen regions of $\mathcal{E}$. Upon scaling $\mathcal{T}$ by $\frac{1}{N}$, this will be the union of four algebraic curves, which were originally predicted by equations (5.10) and (5.11) of \cite{ACSVMGD} and are given by the following definition. 

\begin{definition} 

\label{zetacurves} 

Fix real numbers $a, b, c \ge 0$ such that $a + b + c = 1$. Define the subset $\mathfrak{T} = \mathfrak{T}_{a, b, c} \subset \mathbb{R}^2$ by 
\begin{flalign*}
\mathfrak{T} = \big( [0, 1 + b] \times [0, a + c] \big) \cup \big( [a + b, 1 + b] \times [a + c, 1 + a] \big).
\end{flalign*}

\noindent Observe in particular that this is the limit of the domain $N^{-1} \mathcal{T}_{A, B, C}$ as $N$ tends to $\infty$, where $A = \lfloor a N \rfloor$, $B = \lfloor b N \rfloor$, and $C = \lfloor c N \rfloor$, as above. 

Now, for any real number $z > 0$, define the function $\zeta (z) = \zeta (z; a, b, c)$ by 
\begin{flalign}
\label{zeta1z}
\zeta (z) = \sqrt{z^2 + z + 1} + \displaystyle\frac{\sqrt{(zb + zc + a + c)^2 - 4abz}}{2} + \displaystyle\frac{(b - c) z - a - c}{2} - 1, 
\end{flalign} 

\noindent so that 
\begin{flalign*}
\zeta' (z) = \displaystyle\frac{2z + 1}{2 \sqrt{z^2 + z + 1}} + \displaystyle\frac{(b + c)^2 z - ab + ac + bc + c^2}{2 \sqrt{(zb + zc + a + c)^2 - 4abz}} + \displaystyle\frac{b - c}{2}. 
\end{flalign*} 

\noindent Further define the functions $x (z) = x_{\text{SE}} (z) = x_{\text{SE}} (z; a, b, c)$ and $y (z) = y_{\text{SE}} (z) = y_{\text{SE}} (z; a, b, c)$ by
\begin{flalign}
\label{x1y1z}
x (z) = \zeta' (z); \qquad y(z) = z \zeta' (z) - \zeta (z),
\end{flalign}

\noindent and additionally set
\begin{flalign}
\label{xequations} 
\begin{aligned}
& x_{\text{SW}} (z) = y (z; b, c, a); \qquad \qquad \qquad y_{\text{SW}} (z) = 1 + c - x (z; b, c, a); \\
& x_{\text{NW}}^{(\text{W})} (z) = 1 + a - x (z; c, a, b); \qquad y_{\text{NW}}^{(\text{W})} (z) = 1 + c - y (z; c, a, b);  \\
& x_{\text{NW}}^{(\text{N})} (z) = 1 + b - x (z; b, c, a); \qquad y_{\text{NW}}^{(\text{N})} (z) = 1 + a - y (z; b, c, a);  \\
& x_{\text{NE}} (z) = 1 + b - y (z; c, a, b); \qquad \quad y_{\text{NE}} (z) = x (z; c, a, b). 
\end{aligned}
\end{flalign}

\noindent Now define the curves 
\begin{flalign}
\label{bcurvesequations} 
\begin{aligned}
& \mathfrak{B} = \mathfrak{B}_{\text{SE}} = \Big\{ \big( x (z), y(z) \big) \Big\}_{z \in [0, \infty]}; \\
& \mathfrak{B}_{\text{SW}} = \Big\{ \big( x_{\text{SW}} (z), y_{\text{SW}} (z) \big) \Big\}_{z \in [0, \infty]} \cap \mathfrak{T}; \qquad  \mathfrak{B}_{\text{NE}} = \Big\{ \big( x_{\text{NE}} (z), y_{\text{NE}} (z) \big) \Big\}_{z \in [0, \infty]} \cap \mathfrak{T}; \\
& \mathfrak{B}_{\text{NW}}^{(\text{W})} = \Big\{ \big( x_{\text{NW}}^{(\text{W})} (z), y_{\text{NW}}^{(\text{W})} (z) \big) \Big\}_{z \in [0, \infty]} \cap \mathfrak{T}; \qquad \mathfrak{B}_{\text{NW}}^{(\text{N})} = \Big\{ \big( x_{\text{NW}}^{(\text{N})} (z), y_{\text{NW}}^{(\text{N})} (z) \big) \Big\}_{z \in [0, \infty]} \cap \mathfrak{T},
\end{aligned}
\end{flalign}

\noindent and let $\mathfrak{B}_{\text{NW}} = \mathfrak{B}_{\text{NW}}^{(\text{N})} \cup \mathfrak{B}_{\text{NW}}^{(\text{W})}$.  
\end{definition}

\begin{rem}
	
	The curve $\mathfrak{B}$ is the Legendre transform of $\zeta$, formed by the convex envelope of the family of lines $\big\{ y = zx - \zeta (z) \big\}_{z > 0}$; see \Cref{lztangentb} below. 
	
\end{rem}

It can be quickly seen that $\zeta (z)$ and $\zeta' (z)$ both have finite limits as $z$ tends to $\infty$, so these curves are well-defined and closed. The curves $\mathfrak{B}_{\text{SE}}$, $\mathfrak{B}_{\text{SW}}$, $\mathfrak{B}_{\text{NE}}$, and $\mathfrak{B}_{\text{NW}}$ will be the southeast, southwest, northeast, and northwest parts of the arctic boundary. They are depicted in \Cref{c1c2c3figure2}, where on the left $(a, b, c) = \big( \frac{1}{2}, \frac{1}{4}, \frac{1}{4} \big)$ and on the right  $(a, b, c) = \big( \frac{1}{4}, \frac{1}{2}, \frac{1}{4} \big)$.

\begin{figure}[t]
	
	\begin{center}
		
		\begin{tikzpicture}[
		>=stealth,
		scale = 4
		]

		\draw[-, black] (0, 0) -- (1.25, 0) -- (1.25, 1.5) -- (.75, 1.5) -- (.75, .75) -- (0, .75) -- (0, 0);
		
		\filldraw[fill = black] (0, .5833) circle[radius = .01] node[right, scale = .75]{$\big(0, \frac{1}{2} + \frac{bc}{a + b} \big)$};
		\filldraw[fill = black] (1.25, .75) circle[radius = .01] node[right, scale = .75]{$\big(1 + b, \frac{1}{2} + \frac{ab}{b + c} \big)$};
		\filldraw[fill = black] (.5833, 0) circle[radius = .01] node[below, scale = .75]{$\big(0, \frac{1}{2} + \frac{bc}{a + c} \big)$};
		
		\filldraw[-,black, fill = white!75!gray] (0.583333, 0) -- (0.605406, 0.000274323) -- (0.626723, 0.00107214) -- (0.667136, 0.00409099) -- (0.73952, 0.0148578) -- (0.801643, 0.0303083) -- (0.8548, 0.0488442) -- (0.900267, 0.0692449) -- (0.95655, 0.101446) -- (1.00143, 0.133858) -- (1.04793, 0.175488) -- (1.08323, 0.214168) -- (1.0907, 0.223316) -- (1.12179, 0.265859) -- (1.14492, 0.303311) -- (1.16251, 0.336195) -- (1.1869, 0.390601) -- (1.20252, 0.433275) -- (1.22048, 0.495097) -- (1.22995, 0.537274) -- (1.23553, 0.567699) -- (1.24037, 0.600061) -- (1.24315, 0.622748) -- (1.24531, 0.64416) -- (1.24704, 0.665528) -- (1.24852, 0.689857) -- (1.24994, 0.737594) -- (1.25, .75) -- (1.25, 0) -- (.583333, 0);
		
		\filldraw[-,black, fill = white!50!gray] (0, 0.583333) -- (0.000273084, 0.561335) -- (0.0010626, 0.540233) -- (0.00402017, 0.500611) -- (0.0143703, 0.430935) -- (0.0288901, 0.37251) -- (0.0459412, 0.323587) -- (0.0643352, 0.282578) -- (0.0927054, 0.232962) -- (0.120554, 0.194386) -- (0.155421, 0.155421) -- (0.187017, 0.126571) -- (0.194386, 0.120554) -- (0.228189, 0.0958474) -- (0.257362, 0.077825) -- (0.282578, 0.0643352) -- (0.323587, 0.0459412) -- (0.355229, 0.0343596) -- (0.400428, 0.021223) -- (0.430935, 0.0143703) -- (0.452821, 0.0103614) -- (0.476024, 0.00688475) -- (0.492257, 0.00490181) -- (0.507566, 0.00335639) -- (0.522838, 0.00211757) -- (0.540233, 0.0010626) -- (.5833, 0) -- (0, 0) -- (0, .583333);
		
		\filldraw[-, black, fill = white!50!gray]	(1.25, 0.75) -- (1.24962, 0.780662) -- (1.24852, 0.810143) -- (1.24438, 0.865569) -- (1.22995, 0.962726) -- (1.20996, 1.0432) -- (1.1869, 1.1094) -- (1.16251, 1.16381) -- (1.12579, 1.22806) -- (1.0907, 1.27668) -- (1.04793, 1.32451) -- (1.01013, 1.35904) -- (1.00143, 1.36614) -- (0.962046, 1.39494) -- (0.928683, 1.41555) -- (0.900267, 1.43076) -- (0.8548, 1.45116) -- (0.820294, 1.46379) -- (0.771784, 1.47789) -- (0.75, 1.48296) -- (.75, 1.5) -- (1.25, 1.5) -- (1.25, .75);
		
		\filldraw[-, black, fill = white!25!gray] (0, .75) -- (0.0170366, 0.75) -- (0.010658, 0.716595) -- (0.00704441, 0.692481) -- (0.00499734, 0.675724) -- (0.00341033, 0.660005) -- (0.00214451, 0.644401) -- (0.00107214, 0.626723) -- (0.0000445001, 0.592254) -- (0, .5883) -- (0, 0) -- (0, .75);

		\draw[black] (1, .16) circle [radius = 0] node[left]{$\mathfrak{B}_{\text{SE}}$};
		\draw[black] (.17, .16) circle [radius = 0] node[right]{$\mathfrak{B}_{\text{SW}}$};
		\draw[black] (0, .685) circle [radius = 0] node[right]{$\mathfrak{B}_{\text{NW}}$};
		\draw[black] (.97, 1.35) circle [radius = 0] node[below]{$\mathfrak{B}_{\text{NE}}$};
		
		\draw[black] (0, 0) circle [radius = 0] node[below, scale = .75]{$(0, 0)$};
		\draw[black] (1.25, 0) circle [radius = 0] node[below, scale = .75]{$(1 + b, 0)$};
		\draw[black] (0, .75) circle [radius = 0] node[above, scale = .75]{$(0, a + c)$};
		\draw[black] (1.25, 1.5) circle [radius = 0] node[above, scale = .75]{$(1 + b, 1 + a)$};
		\draw[black] (.75, 1.5) circle [radius = 0] node[above, scale = .75]{$(a + b, 1 + a)$};

		\draw[-, black] (2, 0) -- (3.5, 0) -- (3.5, 1.25) -- (2.75, 1.25) -- (2.75, .5) -- (2, .5) -- (2, 0);

		\filldraw[fill = black] (2.9167, 1.25) circle[radius = .01] node[above, scale = .75]{$\big( \frac{1}{2} + b - c + \frac{bc}{a + b}, 1 + a \big)$};
		\filldraw[fill = black] (3.5, .6667) circle[radius = .01] node[left, scale = .75]{$\big(1 + b, \frac{1}{2} + \frac{ab}{b + c} \big)$};
		\filldraw[fill = black] (2.75, 0) circle[radius = .01] node[below, scale = .75]{$\big(0, \frac{1}{2} + \frac{bc}{a + c} \big)$};
		
		\filldraw[-, black,fill= white!75!gray] (2.75, 0) -- (2.78066, 0.000380822) -- (2.81014, 0.0014839) -- (2.86557, 0.00562129) -- (2.96273, 0.0200468) -- (3.0049, 0.02952) -- (3.0432, 0.0400353) -- (3.1094, 0.0630976) -- (3.16381, 0.0874917) -- (3.22806, 0.124209) -- (3.26191, 0.147856) -- (3.32451, 0.202073) -- (3.35904, 0.239874) -- (3.36614, 0.248572) -- (3.39494, 0.287954) -- (3.41555, 0.321317) -- (3.43076, 0.349733) -- (3.45116, 0.3952) -- (3.46379, 0.429706) -- (3.47789, 0.478216) -- (3.48514, 0.51048) -- (3.48934, 0.533405) -- (3.49296, 0.557519) -- (3.495, 0.574276) -- (3.49659, 0.589995) -- (3.49786, 0.605599) -- (3.49893, 0.623277) -- (3.5, .6667) -- (3.5, 0) -- (2.75, 0);
		
		\filldraw[-, black, fill = white!50!gray] (2, .5) -- (2.01702, 0.5) -- (2.03031, 0.448357) -- (2.04884, 0.3952) -- (2.06924, 0.349733) -- (2.10145, 0.29345) -- (2.17549, 0.202073) -- (2.21417, 0.166766) -- (2.22332, 0.159297) -- (2.26586, 0.128213) -- (2.30331, 0.105081) -- (2.33619, 0.0874917) -- (2.3906, 0.0630976) -- (2.43327, 0.0474815) -- (2.4951, 0.02952) -- (2.53727, 0.0200468) -- (2.5677, 0.0144743) -- (2.60006, 0.00962539) -- (2.62275, 0.00685428) -- (2.64416, 0.00469272) -- (2.66553, 0.00295948) -- (2.68986, 0.0014839) -- (2.75, 0) -- (2, 0) -- (2, .5);

		\filldraw[-, black, fill= white!50!gray] (3.5, 0.666667) -- (3.49973, 0.688665) -- (3.49894, 0.709767) -- (3.49598, 0.749389) -- (3.48563, 0.819065) -- (3.47111, 0.87749) -- (3.45406, 0.926413) -- (3.43566, 0.967422) -- (3.40729, 1.01704) -- (3.38127, 1.05333) -- (3.34458, 1.09458) -- (3.31298, 1.12343) -- (3.30561, 1.12945) -- (3.27181, 1.15415) -- (3.24264, 1.17218) -- (3.21742, 1.18566) -- (3.17641, 1.20406) -- (3.14477, 1.21564) -- (3.09957, 1.22878) -- (3.06907, 1.23563) -- (3.04718, 1.23964) -- (3.02398, 1.24312) -- (3.00774, 1.2451) -- (2.99243, 1.24664) -- (2.97716, 1.24788) -- (2.95977, 1.24894) -- (2.9167, 1.25) -- (3.5, 1.25) -- (3.5, .666667);
		
		\filldraw[-, black, fill=white!25!gray] (2.916667, 1.25) -- (2.894594, 1.24973) -- (2.873277, 1.24893) -- (2.832864, 1.24591) -- (2.76048, 1.23514) -- (2.75, 1.23296) -- (2.75, 1.25) -- (2.916667, 1.25);

		\draw[black] (3.25, .18) circle [radius = 0] node[left]{$\mathfrak{B}_{\text{SE}}$};
		\draw[black] (2.22, .18) circle [radius = 0] node[right]{$\mathfrak{B}_{\text{SW}}$};
		\draw[black] (2.87, 1.25) circle [radius = 0] node[below]{$\mathfrak{B}_{\text{NW}}$};
		\draw[black] (3.25, 1.15) circle [radius = 0] node[below]{$\mathfrak{B}_{\text{NE}}$};

		\end{tikzpicture}
		
	\end{center}	
	
	\caption{\label{c1c2c3figure2} Depicted above are the four curves $\mathfrak{B}_{\text{SE}}$, $\mathfrak{B}_{\text{SW}}$, $\mathfrak{B}_{\text{NE}}$, and $\mathfrak{B}_{\text{NW}}$. The situation to the left is a case when $a > b$, so that $\mathfrak{B}_{\text{NW}} = \mathfrak{B}_{\text{NW}}^{(\text{W})}$. The situation to the left is a case when $a < b$, so that $\mathfrak{B}_{\text{NW}} = \mathfrak{B}_{\text{NW}}^{(\text{N})}$.} 
\end{figure}
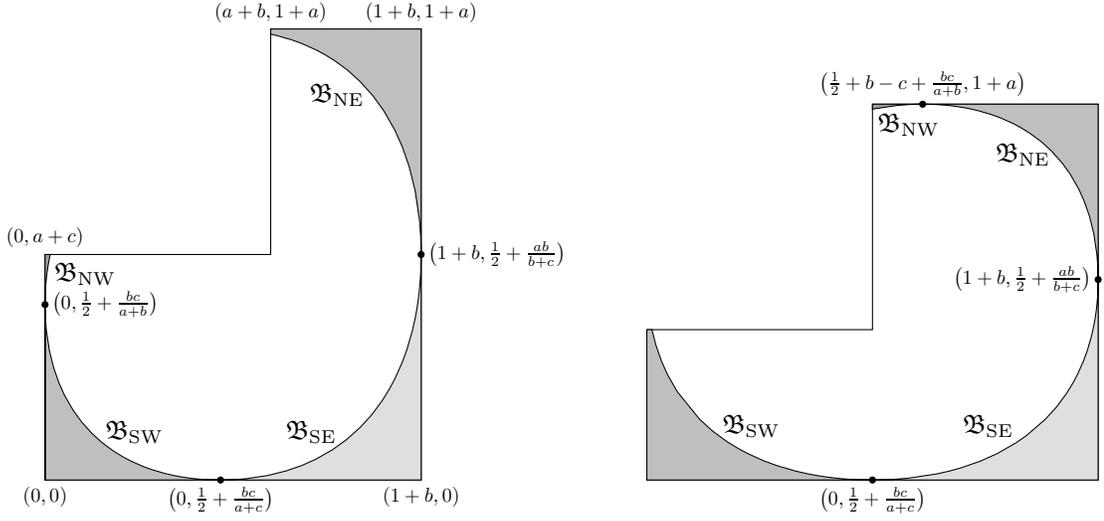

\begin{rem}
	
	\label{bcurves1}
	
	It can quickly be seen that $\mathfrak{B} = \mathfrak{B}_{\text{SE}}$ is convex and increasing and, moreover, that it is tangent to the $x$-axis and the line $x = 1 + b$ at 
	\begin{flalign*} 
	\big( x(0), y(0) \big) = \left( \frac{1}{2} + \frac{bc}{a + c}, 0 \right) \quad \text{and} \quad \lim_{z \rightarrow \infty} \big( x (z), y(z) \big) = \left( 1 + b, \frac{1}{2} + \frac{ab}{b + c} \right),
	\end{flalign*}
	
	\noindent respectively; see \Cref{c1c2c3figure2}. The fact that these two endpoints lie on the boundary of $\mathfrak{T}$ in particular implies that $\mathfrak{B} \subset \mathfrak{T}$. 

	Then the identity \eqref{xequations} implies that $\mathfrak{B}_{\text{SW}}$ and $\mathfrak{B}_{\text{NE}}$ are decreasing and that $\mathfrak{B}_{\text{NW}}$ is increasing. They furthermore imply that $\mathfrak{B}_{\text{SW}}$ and $\mathfrak{B}_{\text{SE}}$ are both tangent to the $x$-axis at the same point and that $\mathfrak{B}_{\text{NE}}$ and $\mathfrak{B}_{\text{SE}}$ are both tangent to the line $x = 1 + b$ at the same point. See \Cref{c1c2c3figure2}. 
\end{rem}

\begin{rem}
	
	\label{bcurves2}

	It can also be quickly verified from \eqref{xequations} and \Cref{bcurves1} that, if $a > b$, then $\mathfrak{B}_{\text{NE}}$ does not intersect the line $y = 1 + a$; that $\mathfrak{B}_{\text{NW}}^{(\text{N})}$ is empty (meaning that $\mathfrak{B}_{\text{NW}} = \mathfrak{B}_{\text{NW}}^{(\text{W})}$); and that $\mathfrak{B}_{\text{SW}}$ and $\mathfrak{B}_{\text{NW}}$ are both tangent to the $y$-axis at $\big( 0, \frac{1}{2} + \frac{bc}{a + b}\big)$. It can similarly be checked that, if $a < b$, then $\mathfrak{B}_{\text{SW}}$ does not intersect the $y$-axis; that $\mathfrak{B}_{\text{NW}}^{(\text{W})}$ is empty (meaning that $\mathfrak{B}_{\text{NW}} = \mathfrak{B}_{\text{NW}}^{(\text{N})}$); and that $\mathfrak{B}_{\text{NE}}$ and $\mathfrak{B}_{\text{NW}}$ are both tangent to the line $y = 1 + a$ at $\big( \frac{1}{2} + b - c + \frac{bc}{a + b}, 1 + a\big)$. If $a = b$, then $\mathfrak{B}_{\text{NW}}$ is empty; $\mathfrak{B}_{\text{SW}}$ is tangent to the $y$-axis at $\big( 0, \frac{1}{2} + \frac{c}{2}\big)$; and $\mathfrak{B}_{\text{NE}}$ is tangent to the line $y = 1 + a$ at $\big( \frac{1}{2} + a - \frac{c}{2}, 1 + a \big)$. 
	
	We refer to \Cref{c1c2c3figure2} for examples of the first and second of these three phenomena.  
	
\end{rem}

Now the following theorem establishes that $\mathfrak{B}_{\text{SE}} \cup \mathfrak{B}_{\text{SW}} \cup \mathfrak{B}_{\text{NE}} \cup \mathfrak{B}_{\text{NW}}$ is the arctic boundary of the (defective) ice-model on $\mathcal{T}_{A, B, C}$; this was originally predicted in Section 5.4 of \cite{ACSVMGD}. In what follows, $d (v, u) = \| v - u \|_2$ denotes the Euclidean distance between any two points $v, u \in \mathbb{R}^2$ and, for any sets $\mathcal{S}_1, \mathcal{S}_2 \subseteq \mathbb{R}^2$, we set $d (\mathcal{S}_1, \mathcal{S}_2) = \inf_{v_1 \in \mathcal{S}_1} \inf_{v_2 \in \mathcal{S}_2} d (v_1, v_2)$. Furthermore, for any subset $\mathcal{S} \subseteq \mathbb{R}^2$ and constant $R > 0$, let $R \mathcal{S}$ denote the set of points of the form $(Rx, Ry)$ for some $(x, y) \in \mathcal{S}$.

\begin{thm}
	
	\label{boundaryb}
	
	Fix real numbers $a, b, c \in (0, 1)$ such that $a + b + c = 1$; let $N \ge 1$ be an integer; denote $A = \lfloor a N \rfloor$, $B = \lfloor b N \rfloor$, and $C = \lfloor c N \rfloor$; and let $\delta \in (N^{-1 / 30}, 1)$ be a real number (possibly dependent on $N$). There exists a (small) constant $\gamma = \gamma (a, b, c) > 0$, only dependent on $a$, $b$, and $c$, such that the following holds. 
	
	Let $\mathcal{E}$ be a (defective) six-vertex ensemble on $\mathcal{T} = \mathcal{T}_{A, B, C}$ with domain-wall boundary data, chosen uniformly at random. Then, off of an event with probability at most $\gamma^{-1} \exp \big(- \gamma \delta^{24} N \big)$, the following statement holds. 
	
	Let $v \in \mathcal{T}$ be any vertex such that 
	\begin{flalign*}
	d \big( N^{-1} v, \mathfrak{B}_{\text{\emph{SE}}} \cup \mathfrak{B}_{\text{\emph{SW}}} \cup \mathfrak{B}_{\text{\emph{NE}}} \cup \mathfrak{B}_{\text{\emph{NW}}} \big) > \delta.
	\end{flalign*}
	
	\noindent If $v$ is outside of the curve $N \mathfrak{B}_{\text{\emph{SE}}} \cup N \mathfrak{B}_{\text{\emph{SW}}} \cup N \mathfrak{B}_{\text{\emph{NE}}} \cup N \mathfrak{B}_{\text{\emph{NW}}}$, that is, if
	\begin{flalign*}
	N^{-1} v \in \text{\emph{SE}} \big( \mathfrak{B}_{\text{\emph{SE}}} \big) \cup \text{\emph{SW}} \big( \mathfrak{B}_{\text{\emph{SW}}} \big) \cup\text{\emph{NE}} \big( \mathfrak{B}_{\text{\emph{NE}}} \big) \cup \text{\emph{NW}} \big( \mathfrak{B}_{\text{\emph{NW}}} \big), 
	\end{flalign*}
	
	\noindent then $v$ is in the frozen region of $\mathcal{E}$. Otherwise, $v$ is in the liquid region of $\mathcal{E}$. 
	
\end{thm}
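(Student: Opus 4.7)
The plan is to locate the rightmost directed path of the ensemble $\mathcal{E}$ and invoke Remark \ref{pathboundary}, which identifies this path with the southeast part of the arctic boundary. Once $\mathfrak{B}_{\text{SE}}$ has been placed, the remaining three curves $\mathfrak{B}_{\text{SW}}$, $\mathfrak{B}_{\text{NE}}$, $\mathfrak{B}_{\text{NW}}$ follow from \eqref{xequations} by applying the same argument to the symmetric roles of the three families of lines defining $\mathcal{T}_{A,B,C}$, that is, by permuting $(a,b,c)$ and reflecting. So it is enough to show, off an event of probability at most $\gamma^{-1}\exp(-\gamma\delta^{24}N)$, that the rightmost path $\textbf{p}$ of $\mathcal{E}$ lies within Euclidean distance $O(\delta N)$ of $N\mathfrak{B}_{\text{SE}}$.

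I would implement the geometric tangent method. For each slope parameter $z\in(0,\infty)$, introduce an augmented ensemble $\mathcal{E}^{\text{aug}}(z)$ by attaching a new bottommost path to $\mathcal{E}$, entering from a distant site along a ray of slope $z$ and forced to exit through the southeast corner of $\mathcal{T}$. In the resulting ensemble, this new path $\textbf{p}_1^{\text{aug}}$ is the rightmost, and the original $\textbf{p}$ becomes the second-to-rightmost path $\textbf{p}_2^{\text{aug}}$. The exact identity of Cantini-Sportiello \cite{PRC} for the singly refined correlation function of the ice model on $\mathcal{T}_{A,B,C}$ gives, as a tilting of the uniform measure on $\mathcal{E}$, the law of the point along the southern boundary of $\mathcal{T}$ at which $\textbf{p}_1^{\text{aug}}$ first enters. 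A steepest-descent analysis of that identity should concentrate the entry point at $\big(N\zeta'(z),0\big)$ with exponential rate governed by the function $\zeta$ of \eqref{zeta1z}. The line of slope $z$ through this entry point is then the candidate tangent line to $\mathfrak{B}_{\text{SE}}$ at the point $\big(x(z),y(z)\big)\in\mathfrak{B}_{\text{SE}}$ of \eqref{x1y1z}, matching the Legendre-transform description of $\mathfrak{B}_{\text{SE}}$ observed in the excerpt.

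Converting concentration for the entry point of $\textbf{p}_1^{\text{aug}}$ into control of $\textbf{p}_2^{\text{aug}}=\textbf{p}$ is the probabilistic heart of the argument. Using the Gibbs property (conditional uniformity on finite sub-regions given their boundary data), I would show that outside $\mathcal{T}$ and along its initial trajectory, $\textbf{p}_1^{\text{aug}}$ follows a straight line of slope $z$ with fluctuations on the order of $\sqrt{N}$. Using monotonicity of the Glauber dynamics, I would construct monotone couplings between $\mathcal{E}^{\text{aug}}(z')$, $\mathcal{E}^{\text{aug}}(z'')$, and $\mathcal{E}$ for nearby $z',z''$, so that $\textbf{p}_2^{\text{aug}}$ is sandwiched between the linear portions of $\textbf{p}_1^{\text{aug}}(z')$ and $\textbf{p}_1^{\text{aug}}(z'')$ inside $\mathcal{T}$; this realizes the approximate tangency of $\textbf{p}_1^{\text{aug}}$ and $\textbf{p}_2^{\text{aug}}$. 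Varying $z$ over a $\delta$-dense net in $[0,\infty]$ and taking the convex envelope of the resulting tangent lines reconstructs $\mathfrak{B}_{\text{SE}}$ to precision $\delta$. The same couplings yield both inclusions in Definition \ref{frozenregiondefinition}: vertices $v$ with $N^{-1}v\in\text{SE}(\mathfrak{B}_{\text{SE}})$ lie strictly SE of $\textbf{p}$ with overwhelming probability and are therefore frozen, while vertices on the NW side of $\mathfrak{B}_{\text{SE}}$ are straddled by nearby paths and are therefore liquid.

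The main obstacle I anticipate is precisely this approximate tangency of $\textbf{p}_1^{\text{aug}}$ and $\textbf{p}_2^{\text{aug}}$. Because the ice model has no free-fermionic (determinantal) structure, there is no direct way to assert that distant paths decorrelate; the comparison must proceed entirely through iterated Gibbs resampling and monotone couplings, each of which loses small powers of $\delta$. Quantifying these inputs well enough to yield the stated decay $\exp(-\gamma\delta^{24}N)$, uniformly across the $\delta$-net of parameters $z$ and across all relevant scales in $\mathcal{T}$, likely accounts for the large exponent $24$ on $\delta$ and will require delicate bookkeeping throughout.
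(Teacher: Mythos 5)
Your overall strategy coincides with the paper's: reduce to the southeast boundary via the rotation symmetries behind \eqref{xequations}, augment the ensemble with an extra bottommost path, concentrate its entry point into $\mathcal{T}$ using the Cantini--Sportiello refined correlation function, and convert this into control of the original rightmost path through the Gibbs property and monotone Glauber couplings. One small slip first: the entry point $(\Phi,0)$ concentrates where the tangent line of slope $z$ meets the $x$-axis, i.e.\ at $\big(\nu(z)N,0\big)=\big(N\zeta(z)/z,0\big)$ in the notation of \eqref{nuxidefinition}, not at $\big(N\zeta'(z),0\big)$; the latter is the $x$-coordinate of the tangency point on $\mathfrak{B}_{\text{SE}}$ itself, and the line of slope $z$ through $\big(N\zeta'(z),0\big)$ is not tangent to $N\mathfrak{B}_{\text{SE}}$.

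The genuine gap is in how you pass from the augmented ensemble back to the rightmost path $\textbf{p}=\textbf{p}_1$ of $\mathcal{E}$. The uniform measure on augmented ensembles does not project to the uniform measure on the original ensembles, so $\textbf{p}_2^{\aug}$ is not equal in law to $\textbf{p}_1$; the available monotone couplings (\Cref{ensemblesnn1}) give only the one-sided bounds $\textbf{p}_2 \le \textbf{p}_2^{\aug} \le \textbf{p}_1$, obtained by integrating out an extremal path. Your proposed sandwich of $\textbf{p}_2^{\aug}$ between the linear portions of $\textbf{p}_1^{\aug}(z')$ and $\textbf{p}_1^{\aug}(z'')$ does not follow from \Cref{couplemonotone}, which orders the $i$-th path of one ensemble against the $i$-th path of another, never a first path against a second path. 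Even granting such a sandwich, the resulting control sits on $\textbf{p}_2^{\aug}$ and hence, via the coupling, on $\textbf{p}_2$ --- not on $\textbf{p}_1$, which is what the frozen-region claim requires. Closing this loop demands a proof that $\textbf{p}_1$ and $\textbf{p}_2$ stay within $O(\delta N)$ of each other with overwhelming probability; the paper obtains this in \Cref{p1p2near} from approximate-convexity estimates for both paths (\Cref{convexp1x}, \Cref{convexp2x}), an entire section of work that is absent from your outline and is precisely where much of the exponent $24$ on $\delta$ is spent.
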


In addition to identifying the arctic boundary of the ice model on the three-bundle domain, \Cref{boundaryb} can be used together with the Borel-Cantelli lemma to show a law of large numbers limit for the bottommost path of this model. More specifically, after scaling by $N^{-1}$, this path almost surely converges as $N$ tends to $\infty$ to the curve 
\begin{flalign*} 
\bigg[ (0, 0), \Big( \displaystyle\frac{1}{2} + \frac{bc}{a + c}, 0 \Big) \bigg] \cup \mathfrak{B}_{\text{SE}} \cup \bigg[ \Big( 1 + b, \displaystyle\frac{1}{2} + \displaystyle\frac{ab}{b + c}\Big), (1 + b, 1 + a) \bigg].
\end{flalign*}

\noindent One can also likely use the methods of this paper to improve the range of $\delta$ and the error probability in \Cref{boundaryb}, but we will not attempt to optimize these quantities here.

\subsection{Arctic Boundary for the Domain-Wall Ice Model on a Square} 

\label{AB0Domain}

As mentioned previously, in the case $A = B = 0$, the three-bundle domain $\mathcal{T} = \mathcal{T}_{A, B, C}$ degenerates to a $C \times C$ square subgraph of $\mathbb{Z}^2$ (in particular, it has no triangular face). Let us relabel $C$ by $N$ and define the directed subgraph\footnote{Here, we always orient the edges of $\mathbb{Z}^2$ to the north or to the east; this makes $\mathbb{Z}^2$ into a directed graph. } $\mathcal{S} = \mathcal{S}_N \subset \mathbb{Z}^2$ whose vertices consist of all lattice points in the square $[1, N] \times [1, N]$. 

A \emph{six-vertex ensemble} on $\mathcal{S}$ is again an assignment of an arrow configuration to each vertex of $\mathcal{S}$, such that each pair of neighboring arrow configurations is consistent; unlike for (defective) six-vertex ensembles on $\mathcal{T}$, there is no inconsistency condition. Under \emph{domain-wall boundary data}, $N$ paths enter $\mathcal{S}$ horizontally at sites on the line $x = 0$ and vertically exit $\mathcal{S}$ at sites on the line $y = N + 1$. We refer to the left side of \Cref{domainvertex} for an example. 

Let us describe how to use the content of \Cref{Boundary} to explicitly evaluate the arctic boundary for a uniformly random six-vertex ensemble on $\mathcal{S}$ with domain-wall boundary data. To that end, let us adopt the notation from that section and set $a = 0 = b$; further denote the square $\mathfrak{S} = [0, 1] \times [0, 1] \subset \mathbb{R}$, which is the limit of the rescaled domain $N^{-1} \mathcal{S}_N$ as $N$ tends to $\infty$. 

Then, the definition \eqref{zeta1z} for $\zeta (z)$ implies that 
\begin{flalign*}
\zeta (z) = \sqrt{z^2 + z + 1} - 1; \qquad \zeta' (z) = \displaystyle\frac{2z + 1}{2 \sqrt{z^2 + z + 1}},
\end{flalign*}

\noindent so that 
\begin{flalign*}
\big( x(z), y(z) \big) = \left(\displaystyle\frac{2z + 1}{2 \sqrt{z^2 + z + 1}}, \displaystyle\frac{2 \sqrt{z^2 + z + 1} - z - 2}{2 \sqrt{z^2 + z + 1}} \right),
\end{flalign*}

\noindent which parameterizes a part of an ellipse given by
\begin{flalign*}
\mathfrak{A} = \mathfrak{A}_{\text{SE}} = \big\{ (x, y) \in \mathbb{R}^2: (2x - 1)^2 + (2y - 1)^2 - 4(1 - x)y = 1 \big\} \cap \Bigg( \bigg[ \displaystyle\frac{1}{2}, 1 \bigg] \times \bigg[ 0, \displaystyle\frac{1}{2} \bigg] \Bigg).
\end{flalign*}

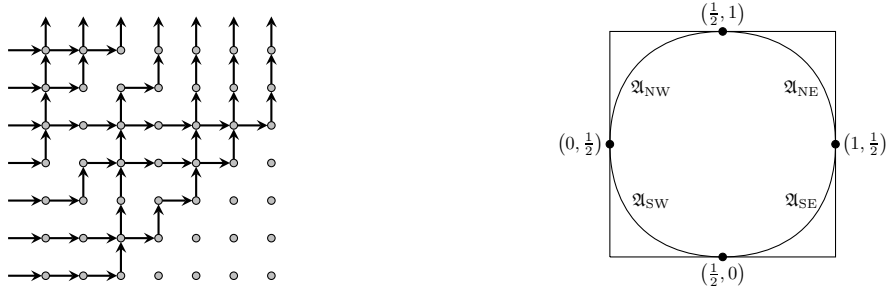
\begin{figure}[t]
	
	\begin{center}
		
		\begin{tikzpicture}[
		>=stealth,
		]

		\draw[->,black, thick] (0, .5) -- (.45, .5);
		\draw[->,black, thick] (0, 1) -- (.45, 1);
		\draw[->,black, thick] (0, 1.5) -- (.45, 1.5);
		\draw[->,black, thick] (0, 2) -- (.45, 2);
		\draw[->,black, thick] (0, 2.5) -- (.45, 2.5);
		\draw[->,black, thick] (0, 3) -- (.45, 3);
		\draw[->,black, thick] (0, 3.5) -- (.45, 3.5);

		\draw[->,black, thick] (.55, .5) -- (.95, .5);
		\draw[->,black, thick] (.55, 1) -- (.95, 1);
		\draw[->,black, thick] (.55, 1.5) -- (.95, 1.5);
		\draw[->,black, thick] (.55, 2.5) -- (.95, 2.5);
		\draw[->,black, thick] (.55, 3) -- (.95, 3);
		\draw[->,black, thick] (.55, 3.5) -- (.95, 3.5);
		\draw[->,black, thick] (.5, 2.05) -- (.5, 2.45);
		\draw[->,black, thick] (.5, 2.55) -- (.5, 2.95);
		\draw[->,black, thick] (.5, 3.05) -- (.5, 3.45);
		\draw[->,black, thick] (.5, 3.55) -- (.5, 3.95);
		
		\draw[->,black, thick] (1.05, .5) -- (1.45, .5);
		\draw[->,black, thick] (1.05, 1) -- (1.45, 1);
		\draw[->,black, thick] (1.05, 2) -- (1.45, 2);
		\draw[->,black, thick] (1.05, 2.5) -- (1.45, 2.5);
		\draw[->,black, thick] (1.05, 3.5) -- (1.45, 3.5);
		\draw[->,black, thick] (1, 1.55) -- (1, 1.95);
		\draw[->,black, thick] (1, 3.05) -- (1, 3.45);
		\draw[->,black, thick] (1, 3.55) -- (1, 3.95);
		
		\draw[->,black, thick] (1.5, .55) -- (1.5, .95);
		\draw[->,black, thick] (1.5, 1.05) -- (1.5, 1.45);
		\draw[->,black, thick] (1.5, 1.55) -- (1.5, 1.95);
		\draw[->,black, thick] (1.5, 2.05) -- (1.5, 2.45);
		\draw[->,black, thick] (1.5, 2.55) -- (1.5, 2.95);
		\draw[->,black, thick] (1.5, 3.55) -- (1.5, 3.95);
		\draw[->,black, thick] (1.55, 1) -- (1.95, 1);
		\draw[->,black, thick] (1.55, 2) -- (1.95, 2);
		\draw[->,black, thick] (1.55, 2.5) -- (1.95, 2.5);
		\draw[->,black, thick] (1.55, 3) -- (1.95, 3);

		\draw[->,black, thick] (2, 1.05) -- (2, 1.45);
		\draw[->,black, thick] (2, 3.05) -- (2, 3.45);
		\draw[->,black, thick] (2, 3.55) -- (2, 3.95);
		\draw[->,black, thick] (2.05, 1.5) -- (2.45, 1.5);
		\draw[->,black, thick] (2.05, 2) -- (2.45, 2);
		\draw[->,black, thick] (2.05, 2.5) -- (2.45, 2.5);

		\draw[->,black, thick] (2.5, 1.55) -- (2.5, 1.95);
		\draw[->,black, thick] (2.5, 2.05) -- (2.5, 2.45);
		\draw[->,black, thick] (2.5, 2.55) -- (2.5, 2.95);
		\draw[->,black, thick] (2.5, 3.05) -- (2.5, 3.45);
		\draw[->,black, thick] (2.5, 3.55) -- (2.5, 3.95);
		\draw[->,black, thick] (2.55, 2) -- (2.95, 2);
		\draw[->,black, thick] (2.55, 2.5) -- (2.95, 2.5);

		\draw[->, black, thick] (3, 2.05) -- (3, 2.45); 
		\draw[->,black, thick] (3, 2.55) -- (3, 2.95);
		\draw[->,black, thick] (3, 3.05) -- (3, 3.45);
		\draw[->,black, thick] (3, 3.55) -- (3, 3.95);
		\draw[->, black, thick] (3.05, 2.5) -- (3.45, 2.5);

		\draw[->,black, thick] (3.5, 2.55) -- (3.5, 2.95);
		\draw[->,black, thick] (3.5, 3.05) -- (3.5, 3.45);
		\draw[->,black, thick] (3.5, 3.55) -- (3.5, 3.95);

		\filldraw[fill=gray!50!white, draw=black] (.5, .5) circle [radius=.05];
		\filldraw[fill=gray!50!white, draw=black] (.5, 1) circle [radius=.05];
		\filldraw[fill=gray!50!white, draw=black] (.5, 1.5) circle [radius=.05];
		\filldraw[fill=gray!50!white, draw=black] (.5, 2) circle [radius=.05];
		\filldraw[fill=gray!50!white, draw=black] (.5, 2.5) circle [radius=.05];
		\filldraw[fill=gray!50!white, draw=black] (.5, 3) circle [radius=.05];
		\filldraw[fill=gray!50!white, draw=black] (.5, 3.5) circle [radius=.05];

		\filldraw[fill=gray!50!white, draw=black] (1, .5) circle [radius=.05];
		\filldraw[fill=gray!50!white, draw=black] (1, 1) circle [radius=.05];
		\filldraw[fill=gray!50!white, draw=black] (1, 1.5) circle [radius=.05];
		\filldraw[fill=gray!50!white, draw=black] (1, 2) circle [radius=.05];
		\filldraw[fill=gray!50!white, draw=black] (1, 2.5) circle [radius=.05];
		\filldraw[fill=gray!50!white, draw=black] (1, 3) circle [radius=.05];
		\filldraw[fill=gray!50!white, draw=black] (1, 3.5) circle [radius=.05];
		
		\filldraw[fill=gray!50!white, draw=black] (1.5, .5) circle [radius=.05];
		\filldraw[fill=gray!50!white, draw=black] (1.5, 1) circle [radius=.05];
		\filldraw[fill=gray!50!white, draw=black] (1.5, 1.5) circle [radius=.05];
		\filldraw[fill=gray!50!white, draw=black] (1.5, 2) circle [radius=.05];
		\filldraw[fill=gray!50!white, draw=black] (1.5, 2.5) circle [radius=.05];
		\filldraw[fill=gray!50!white, draw=black] (1.5, 3) circle [radius=.05];
		\filldraw[fill=gray!50!white, draw=black] (1.5, 3.5) circle [radius=.05];
		
		\filldraw[fill=gray!50!white, draw=black] (2, .5) circle [radius=.05];
		\filldraw[fill=gray!50!white, draw=black] (2, 1) circle [radius=.05];
		\filldraw[fill=gray!50!white, draw=black] (2, 1.5) circle [radius=.05];
		\filldraw[fill=gray!50!white, draw=black] (2, 2) circle [radius=.05];
		\filldraw[fill=gray!50!white, draw=black] (2, 2.5) circle [radius=.05];
		\filldraw[fill=gray!50!white, draw=black] (2, 3) circle [radius=.05];
		\filldraw[fill=gray!50!white, draw=black] (2, 3.5) circle [radius=.05];
		
		\filldraw[fill=gray!50!white, draw=black] (2.5, .5) circle [radius=.05];
		\filldraw[fill=gray!50!white, draw=black] (2.5, 1) circle [radius=.05];
		\filldraw[fill=gray!50!white, draw=black] (2.5, 1.5) circle [radius=.05];
		\filldraw[fill=gray!50!white, draw=black] (2.5, 2) circle [radius=.05];
		\filldraw[fill=gray!50!white, draw=black] (2.5, 2.5) circle [radius=.05];
		\filldraw[fill=gray!50!white, draw=black] (2.5, 3) circle [radius=.05];
		\filldraw[fill=gray!50!white, draw=black] (2.5, 3.5) circle [radius=.05];
		
		\filldraw[fill=gray!50!white, draw=black] (3, .5) circle [radius=.05];
		\filldraw[fill=gray!50!white, draw=black] (3, 1) circle [radius=.05];
		\filldraw[fill=gray!50!white, draw=black] (3, 1.5) circle [radius=.05];
		\filldraw[fill=gray!50!white, draw=black] (3, 2) circle [radius=.05];
		\filldraw[fill=gray!50!white, draw=black] (3, 2.5) circle [radius=.05];
		\filldraw[fill=gray!50!white, draw=black] (3, 3) circle [radius=.05];
		\filldraw[fill=gray!50!white, draw=black] (3, 3.5) circle [radius=.05];
		
		\filldraw[fill=gray!50!white, draw=black] (3.5, .5) circle [radius=.05];
		\filldraw[fill=gray!50!white, draw=black] (3.5, 1) circle [radius=.05];
		\filldraw[fill=gray!50!white, draw=black] (3.5, 1.5) circle [radius=.05];
		\filldraw[fill=gray!50!white, draw=black] (3.5, 2) circle [radius=.05];
		\filldraw[fill=gray!50!white, draw=black] (3.5, 2.5) circle [radius=.05];
		\filldraw[fill=gray!50!white, draw=black] (3.5, 3) circle [radius=.05];
		\filldraw[fill=gray!50!white, draw=black] (3.5, 3.5) circle [radius=.05];

		\draw[-, black] (8, .75) -- (11, .75) -- (11, 3.75) -- (8, 3.75) -- (8, .75); 	
		
		\draw[-, black] (9.5, .75) -- (9.65, .755179) -- (9.8, .771539) -- (9.95, .800633) -- (10.1, .844602) -- (10.25, .906534) -- (10.4, .991154) -- (10.475, 1.04424) -- (10.55, 1.10638) -- (10.625, 1.17971) -- (10.7, 1.26752) -- (10.775, 1.37532) -- (10.85, 1.51375) -- (10.91, 1.6634) -- (10.94, 1.76299) -- (10.97, 1.8985) -- (10.982, 1.97482) -- (10.988, 2.02385) -- (10.994, 2.08877) -- (11, 2.25);
			
		\draw[-, black] (9.5, 3.75) -- (9.35, 3.74482) -- (9.2, 3.72846) -- (9.05, 3.69937) -- (8.9, 3.6554) -- (8.75, 3.59347) -- (8.6, 3.50885) -- (8.525, 3.45576) -- (8.45, 3.39362) -- (8.375, 3.32029) -- (8.3, 3.23248) -- (8.225, 3.12468) -- (8.15, 2.98625) -- (8.09, 2.8366) -- (8.06, 2.73701) -- (8.03, 2.6015) -- (8.018, 2.52518) -- (8.012, 2.47615) -- (8.006, 2.41123) -- (8, 2.25);

		\draw[-, black] (8, 2.25) -- (8.006, 2.08877) -- (8.012, 2.02385) -- (8.018, 1.97482) -- (8.03, 1.8985) -- (8.06, 1.76299) -- (8.09, 1.6634) -- (8.15, 1.51375) -- (8.225, 1.37532) -- (8.3, 1.26752) -- (8.375, 1.17971) -- (8.45, 1.10638) -- (8.525, 1.04424) -- (8.6, .991154) -- (8.75, .906534) -- (8.9, .844602) -- (9.05, .800633) -- (9.2, .771539) -- (9.35, .755179) -- (9.5, .75);

		\draw[-, black] (9.5, 3.75) -- (9.65, 3.74482) -- (9.8, 3.72846) -- (9.95, 3.69937) -- (10.1, 3.6554) -- (10.25, 3.59347) -- (10.4, 3.50885) -- (10.475, 3.45576) -- (10.55, 3.39362) -- (10.625, 3.32029) -- (10.7, 3.23248) -- (10.775, 3.12468) -- (10.85, 2.98625) -- (10.91, 2.8366) -- (10.94, 2.73701) -- (10.97, 2.6015) -- (10.982, 2.52518) -- (10.988, 2.47615) -- (10.994, 2.41123) -- (11, 2.25);

		\filldraw[fill = black] (9.5, .75) circle[radius = .05] node[below, scale = .7]{$\big( \frac{1}{2}, 0 \big)$};
		\filldraw[fill = black] (8, 2.25) circle[radius = .05] node[left, scale = .7]{$\big( 0, \frac{1}{2} \big)$};
		\filldraw[fill = black] (11, 2.25) circle[radius = .05] node[right, scale = .7]{$\big( 1, \frac{1}{2} \big)$};
		\filldraw[fill = black] (9.5, 3.75) circle[radius = .05] node[above, scale = .7]{$\big( \frac{1}{2}, 1 \big)$};

		\filldraw[fill = black] (10.55, 1.5) circle[radius = 0] node[scale = .7]{$\mathfrak{A}_{\text{SE}}$};
		\filldraw[fill = black] (8.55, 1.5) circle[radius = 0] node[scale = .7]{$\mathfrak{A}_{\text{SW}}$};
		\filldraw[fill = black] (10.55, 3) circle[radius = 0] node[scale = .7]{$\mathfrak{A}_{\text{NE}}$};
		\filldraw[fill = black] (8.55, 3) circle[radius = 0] node[scale = .7]{$\mathfrak{A}_{\text{NW}}$};

		\end{tikzpicture}
		
	\end{center}	
	
	\caption{\label{domainvertex} A sample of the six-vertex model with domain-wall boundary data on the square $\mathcal{S}_7$ is depicted to the left. The four arctic curves $\mathfrak{A}_{\text{SE}}$, $\mathfrak{A}_{\text{SW}}$, $\mathfrak{A}_{\text{NE}}$, and $\mathfrak{A}_{\text{NW}}$ are depicted to the right. } 
\end{figure}

The curve $\mathfrak{A}_{\text{SE}}$ will form the southeast part of the arctic boundary separating the frozen and liquid regions for a typical six-vertex ensemble on $\mathcal{S}$ with domain-wall boundary data. The remaining three parts of this boundary are obtained from reflecting $\mathfrak{A}$ into the lines $x = \frac{1}{2}$ and $y = \frac{1}{2}$, or equivalently by 
\begin{flalign*}
& \mathfrak{A}_{\text{SW}} = \big\{ (x, y) \in \mathbb{R}^2: (1 - x, y) \in \mathfrak{A}_{\text{SE}} \big\}; \qquad  \mathfrak{A}_{\text{NE}} = \big\{ (x, y) \in \mathbb{R}^2: (x, 1 - y) \in \mathfrak{A}_{\text{SE}} \big\}; \\
 &  \mathfrak{A}_{\text{NW}} = \big\{ (x, y) \in \mathbb{R}^2: (1 - x, 1 - y) \in \mathfrak{A}_{\text{SE}} \big\}.
\end{flalign*}	

\noindent We refer to the right side of \Cref{domainvertex} for depictions of these curves. 

The following result, which was originally predicted in equation (13) of \cite{TLSLASM} (and then again in equation (7.5) of \cite{TACDWSVM} and equation (2.17) of \cite{ACSVMGD}), provides the arctic boundary for the uniformly random six-vertex ensemble on $\mathcal{S}_N$ with domain-wall boundary data. 

\begin{cor}
	
	\label{boundarya0b0}
	
	Let $N \ge 1$ be a positive integer, and let $\delta \in (N^{-1 / 30}, 1)$ be a real number. There exists a constant $\gamma > 0$ (independent of $N$) such that the following holds. 
	
	Let $\mathcal{E}$ be a six-vertex ensemble on the $N \times N$ square domain $[1, N] \times [1, N]$ with domain-wall boundary data, chosen uniformly at random. Then, off of an event with probability at most $\gamma^{-1} \exp \big(- \gamma \delta^{24} N \big)$, the following statement holds. 
	
	Let $v \in [1, N] \times [1, N]$ be any lattice point such that $d \big( N^{-1} v, \mathfrak{A}_{\text{\emph{SE}}} \cup \mathfrak{A}_{\text{\emph{SW}}} \cup \mathfrak{A}_{\text{\emph{NE}}} \cup \mathfrak{A}_{\text{\emph{NW}}} \big) > \delta$. If $N^{-1} v \in \text{\emph{SE}} \big( \mathfrak{A}_{\text{\emph{SE}}} \big) \cup \text{\emph{SW}} \big( \mathfrak{A}_{\text{\emph{SW}}} \big) \cup\text{\emph{NE}} \big( \mathfrak{A}_{\text{\emph{NE}}} \big) \cup \text{\emph{NW}} \big( \mathfrak{A}_{\text{\emph{NW}}} \big)$, then $v$ is in the frozen region of $\mathcal{E}$. Otherwise, $v$ is in the liquid region of $\mathcal{E}$. 
	
\end{cor}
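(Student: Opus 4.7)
The plan is to derive \Cref{boundarya0b0} as the degeneration of \Cref{boundaryb} at $(a, b, c) = (0, 0, 1)$. The first step is to verify that the geometric content of the two statements agrees under this specialization. Substituting $a = b = 0$, $c = 1$ into \eqref{zeta1z}, the terms $\tfrac{1}{2}\sqrt{(z+1)^{2}}$ and $-\tfrac{z+1}{2}$ cancel to give $\zeta(z) = \sqrt{z^{2}+z+1} - 1$, so $\zeta'(z) = (2z+1)/\bigl(2\sqrt{z^{2}+z+1}\bigr)$; the parametrization \eqref{x1y1z} then traces exactly the arc of the ellipse $(2x-1)^{2} + (2y-1)^{2} - 4(1-x)y = 1$ defining $\mathfrak{A}_{\text{SE}}$. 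The symmetry relations \eqref{xequations} collapse, in this limit, to the three reflections across the lines $x = \tfrac12$ and $y = \tfrac12$, producing $\mathfrak{A}_{\text{SW}}$, $\mathfrak{A}_{\text{NE}}$, and $\mathfrak{A}_{\text{NW}}$. Moreover, since $a = b$, \Cref{bcurves2} shows $\mathfrak{B}_{\text{NW}}^{(\text{N})} = \mathfrak{B}_{\text{NW}}^{(\text{W})}$, so no ambiguity arises in the northwest piece of the boundary.

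The second step is to transfer the probabilistic estimate from \Cref{boundaryb}. The key structural observation is that when $A = B = 0$, the index set $m \in [1, A+B]$ governing diagonal edges of $\mathcal{T}_{A, B, C}$ is empty, so $\mathcal{T}_{0,0,N}$ literally equals $\mathcal{S}_N$ as a directed graph, with no defect line and no triangular face, and the two prescriptions of domain-wall boundary data coincide. The corollary is thus the formal specialization of the theorem at $(a, b, c) = (0, 0, 1)$, and I would argue that the proof of \Cref{boundaryb} --- resting on the Cantini-Sportiello singly refined identity (which reduces at $A = B = 0$ to Zeilberger's classical formula \cite{PASMR}), the Gibbs property, and the monotonicity of the ice model under Glauber dynamics --- applies essentially verbatim at $a = b = 0$, yielding the stated bound with a constant $\gamma$ that no longer depends on positive $a$ or $b$.

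If instead one wishes to invoke \Cref{boundaryb} as a strict black box (respecting the hypothesis $a, b, c \in (0, 1)$), an alternative route is to fix a small parameter $\epsilon > 0$ (to be taken a suitable power of $\delta$, say $\epsilon = \delta^{2}$) and apply \Cref{boundaryb} to $\mathcal{T}_{A', B', C'}$ at $(a, b, c) = (\epsilon, \epsilon, 1-2\epsilon)$, with $A' = B' = \lfloor \epsilon N'\rfloor$ and $N'$ chosen so that the inner square portion of $\mathcal{T}_{A', B', C'}$ is identified with $\mathcal{S}_N$. Continuity of the functions in \Cref{zetacurves} in $(a, b, c)$ places the curves $\mathfrak{B}(\epsilon, \epsilon, 1-2\epsilon)$ within Hausdorff distance $O(\epsilon)$ of $\mathfrak{A}_{\text{SE}} \cup \mathfrak{A}_{\text{SW}} \cup \mathfrak{A}_{\text{NE}} \cup \mathfrak{A}_{\text{NW}}$, and a monotone coupling between the three-bundle ensemble restricted to its inner square and the uniform domain-wall ensemble on $\mathcal{S}_N$ transfers the frozen-region statement. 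The main obstacle in this alternative route is controlling the coupling cost so as not to destroy the exponential error probability $\gamma^{-1}\exp(-\gamma\delta^{24} N)$; this requires the $O(\epsilon)$ geometric perturbation to be absorbed into $\delta$ and a quantitative comparison of conditional laws via monotonicity, which is the genuine technical point.
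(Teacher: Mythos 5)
Your specialization of the southeast curve is correct, and your structural observation that $\mathcal{T}_{0,0,N}$ is literally $\mathcal{S}_N$ (with the refined correlation function reducing to Zeilberger's formula) is exactly how the paper handles that part of the boundary, via \Cref{p1testimate}, which is deliberately stated for $a, b \in [0,1)$ so that it covers $a = b = 0$. However, the first step of your argument contains a genuine error at the northwest arc, and this error defeats both of your routes. You cite \Cref{bcurves2} as saying that $\mathfrak{B}_{\text{NW}}^{(\text{N})} = \mathfrak{B}_{\text{NW}}^{(\text{W})}$ when $a = b$; it actually says that $\mathfrak{B}_{\text{NW}}$ is \emph{empty} when $a = b$. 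Direct substitution of $(a,b,c) = (0,0,1)$ into \eqref{xequations} confirms this: one finds $x_{\text{NW}}^{(\text{N})}(z) = -\tfrac{2z+1}{2\sqrt{z^2+z+1}} < 0$ and $y_{\text{NW}}^{(\text{W})}(z) = 1 + \tfrac{z+2}{2\sqrt{z^2+z+1}} > 1 = 1 + a$, so both branches miss $\mathfrak{T} = [0,1]^2$ entirely. Consequently the formal specialization of \Cref{boundaryb} at $(0,0,1)$ would classify every lattice point near the corner $(0, N)$ as \emph{liquid} (since $\text{NW}(\emptyset) = \emptyset$), whereas \Cref{boundarya0b0} correctly asserts that the region beyond $\mathfrak{A}_{\text{NW}}$ is frozen. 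This is not a removable singularity: for $A = B \ge 1$ the defect line prevents both northwest corners of $\mathcal{T}$ from carrying the full configuration $(1,1;1,1)$, so neither freezes, while $\mathcal{S}_N$ has no defect line and its unique northwest corner does freeze. The same discontinuity kills your perturbative alternative: $\mathfrak{A}_{\text{NW}}$ stays at distance bounded below by a positive constant from $\mathfrak{B}_{\text{SE}} \cup \mathfrak{B}_{\text{SW}} \cup \mathfrak{B}_{\text{NE}} \cup \mathfrak{B}_{\text{NW}}$ evaluated at $(\epsilon, \epsilon, 1-2\epsilon)$ as $\epsilon \to 0$, so no choice of $\epsilon$ as a power of $\delta$ makes the Hausdorff distance $O(\epsilon)$.

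The paper's own argument avoids \eqref{xequations} entirely in the square case: it applies \Cref{p1testimate} at $a = b = 0$ to locate the rightmost path, which gives the southeast arc $\mathfrak{A}_{\text{SE}}$, and then obtains the remaining three arcs from the dihedral symmetries of the domain-wall six-vertex model on the square itself (rotating $\mathcal{S}_N$ by multiples of ninety degrees and reversing arrows preserves the uniform domain-wall measure). These square symmetries are not the degenerations of the three-bundle rotation used to derive \eqref{xequations} --- that is precisely the discrepancy flagged in the footnote after \Cref{boundarya0b0} --- and replacing your appeal to \eqref{xequations} by them is the necessary repair.
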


Observe that \Cref{boundarya0b0} does not quite follow from \Cref{boundaryb} as stated, since we assume that $a, b, c > 0$ in the latter result. In fact, although the southeast curve $\mathfrak{A}_{\text{SE}}$ from \Cref{boundarya0b0} coincides with the $a = 0 = b$ case of the southeast curve $\mathfrak{B}_{\text{SE}}$ from \Cref{boundaryb}, the remaining (southwest, northeast, and northwest) curves in these two results do not coincide.\footnote{This discrepancy arises from the fact that slightly different symmetries are required to transform the southeast part of the arctic boundary to its other parts in the cases of the three-bundle domain $\mathcal{T}$ and the square domain $\mathcal{S}$.} However, it can be quickly verified that the proof of \Cref{boundaryb} can be directly applied in the case $A = 0 = B$ to yield \Cref{boundarya0b0}. 

More specifically, \Cref{p1testimate} below shows that the southeast part of the boundary between the liquid and frozen regions (which, as mentioned in \Cref{pathboundary}, is the trajectory of the rightmost directed path) of a uniformly random domain-wall (defective) six-vertex ensemble on $\mathcal{T}_{A, B, C}$ concentrates with high probability around the curve $\mathfrak{B} = \mathfrak{B}_{\text{SE}}$, including in the case when $A = 0 = B$. This confirms that the southeast boundary of the arctic curve for a typical uniformly random domain-wall six-vertex ensemble on $\mathcal{S}$ is given by $\mathfrak{A}_{\text{SE}}$. That the remaining parts of the boundary are given by $\mathfrak{A}_{\text{SW}}$, $\mathfrak{A}_{\text{NE}}$, and $\mathfrak{A}_{\text{NW}}$ follow from rotating the square and using symmetries of the domain-wall six-vertex model; we omit further details.

\subsection*{Acknowledgments}

The author heartily thanks Filippo Colomo and Andrea Sportiello for stimulating conversations and enlightening explanations, as well as Alexei Borodin for valuable encouragement and helpful discussions and suggestions. The author would also like to thank two anonymous referees for their helpful comments on an earlier draft of this paper. The author is additionally grateful to the workshop, ``Conference on Quantum Integrable Systems, Conformal Field Theories and Stochastic Processes,'' held in 2016 at the Institut d'\'{E}tudes Scientifiques de Carg\'{e}se (funded by NSF grant DMS:1637087), where he was introduced to the tangent method. This work was partially supported by the NSF Graduate Research Fellowship under grant number DGE1144152.

\section{Miscellaneous Preliminaries}

\label{TangentDomain}

In this section we collect miscellaneous definitions and reductions that will facilitate the proof of \Cref{boundaryb}. Specifically, in \Cref{SixVertexDomain} we recall certain symmetries of defective six-vertex ensembles that allow us to reinterpret the model from \Cref{Model} as a non-defective one. Then, in \Cref{Property} we define several domains and describe a Gibbs property that will be useful for the proof of \Cref{boundaryb}.

\subsection{An Equivalent Model Without Defects}

\label{SixVertexDomain}

In order to apply certain monotonicity results to be stated in \Cref{MonotonicityPaths}, it will be useful to reformulate the defective six-vertex model described in \Cref{Model} as a non-defective one, in which all pairs of adjacent arrow configurations are consistent.

\begin{figure}[t]
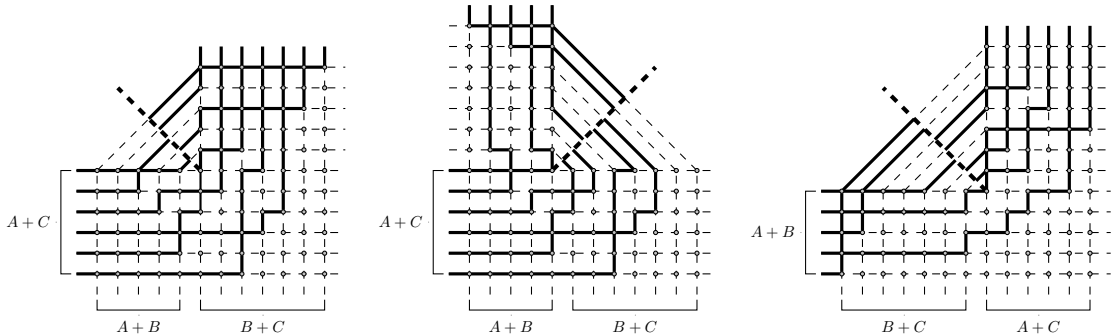

	
	\begin{center}
		

		
	\end{center}	
	
	\caption{\label{domainrotate} Depicted above is an example of how to ``rotate'' a defective six-vertex ensemble on $\mathcal{T}_{A, B, C}$ to form one on $\mathcal{T}_{B, C, A}$, as summarized in the beginning of \Cref{SixVertexDomain}. } 
\end{figure}

However, before explaining this in more detail, let us begin by recalling several symmetries of the model that allow us to ``rotate'' the three-bundle domain, thereby reducing the determination of the arctic boundary to that of its southeast part. More specifically, we fix integers $A, B, C \ge 1$, consider a defective six-vertex ensemble $\mathcal{E}$ on $\mathcal{T} = \mathcal{T}_{A, B, C}$ (as on the left side of \Cref{domainrotate}), and explain how to associate with it a defective six-vertex ensemble on $\mathcal{T}_{B, C, A}$. 

To that end, we first consider the $(B + C) \times (A + B)$ rectangle occupying the northeast corner of $\mathcal{T}_{A, B, C}$; rotate it 90 degrees counterclockwise around its southwest vertex $(A + B + 1, A + C)$; and ``reverse'' all arrow configurations in the rectangle, that is, replace each edge with (or without) an arrow with one without (or with, respectively) an arrow. This is shown in the middle of \Cref{domainrotate}; in particular, observe that the arrow reversal removes the previous defect line but creates a new one. Next, we rotate the ensemble 90 degrees counterclockwise and then reverse only the vertical arrows in the $(A + C) \times (A + 2B + C)$ rectangle in the east part of the rotated domain. This produces a defective six-vertex ensemble $\widetilde{\mathcal{E}}$ on $\mathcal{T}_{B, C, A}$. 

It is quickly verified that this sequence of transformations maps the southwest part of the arctic boundary of $\mathcal{E}$ into the southeast part of arctic boundary of $\widetilde{\mathcal{E}}$. Thus, upon scaling by $\frac{1}{N}$, we obtain the first relation in \eqref{xequations} expressing the southwest part of the arctic boundary in terms of the southeast part of associated with the cyclically shifted triple $(b, c, a)$. The other equations in \eqref{xequations} can be derived through similar rotations. Thus, in order to establish \Cref{boundaryb}, it suffices to only analyze the southeast part of the arctic boundary of $\mathcal{E}$, which (as mentioned in \Cref{pathboundary}) is given by the trajectory of its rightmost path.

Now let us describe the previously mentioned equivalence between defective six-vertex ensembles and non-defective path ensembles. We begin with the following definition.

\begin{figure}[t]
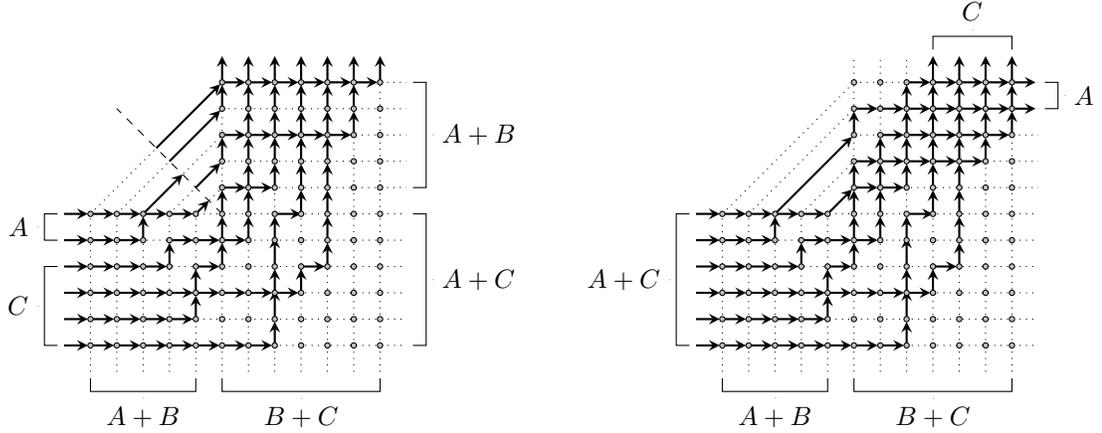

	
	\begin{center}
		

		
	\end{center}	
	
	\caption{\label{defectivenotdefectiveensemble} 	A (defective) six-vertex ensemble on the three-bundle domain $\mathcal{T}_{2, 3, 4}$ is depicted to the left. Shown to the right is the associated (non-defective) $2$-restricted directed path ensemble. } 
\end{figure}

\begin{definition} 
	
	\label{directedensemble} 
	
	A \emph{directed path ensemble} on $\mathcal{T}$ is an assignment of an arrow configuration to each vertex of $\mathcal{T}$ such that the arrow configurations of adjacent vertices are consistent. An example of such an ensemble is depicted on the right side of \Cref{defectivenotdefectiveensemble}; the fact that we do not impose an inconsistency condition implies that the ensemble does not exhibit defects. 
	
	A \emph{directed path} on $\mathcal{T}$ (or $\mathbb{Z}^2$) is a nondecreasing curve on $\mathbb{R}^2$ connecting a sequence of adjacent vertices in $\mathcal{T}$ (or $\mathbb{Z}^2$, respectively) by edges. Then, we can view a directed path ensemble on $\mathcal{T}$ as a family $\mathcal{P} = (\textbf{p}_1, \textbf{p}_2, \ldots , \textbf{p}_k)$ of directed paths on $\mathcal{T}$ that do not share edges (but might share vertices) and all enter and exit $\mathcal{T}$ through a boundary vertex of $\mathbb{Z}^2 \setminus \mathcal{T}$. We assume that these paths are ordered such that $\textbf{p}_i \subset \text{SE} (\textbf{p}_j)$ for any $1 \le i \le j \le k$; in particular, the rightmost (equivalently, the bottommost) path is $\textbf{p}_1$.

	For any integer $r \in [0, A + 2B + C]$, we refer to a directed path ensemble $\mathcal{P}$ on $\mathcal{T}$ as \emph{$r$-restricted} if the number of paths in $\mathcal{P}$ that contain a diagonal edge of $\mathcal{T}$ is equal to $r$. For instance, the ensemble on the right side of \Cref{defectivenotdefectiveensemble} is $A$-restricted (where $A = 2$).  
	
\end{definition}

\begin{rem} 
	
	In what follows, we will sometimes formally refer to $-\infty$ and $\infty$ as directed paths, which are defined to be maximally northwest and southeast, respectively.\footnote{One might view $-\infty$ and $\infty$ as consisting of the unique vertices $\big\{ (-\infty, -\infty) \big\}$ and $\big\{ (\infty, \infty) \big\}$, respectively.} So, for any directed path $\textbf{p}$, we have $\textbf{p} \in \text{SE} (-\infty) \cap \text{NW} (\infty)$. 
	
\end{rem}

As in \Cref{SixVertexDomain}, \emph{boundary data} for a directed path ensemble is prescribed by indicating which vertices of $\mathbb{Z}^2 \setminus \mathcal{T}$ are entrance and exit sites for paths. In particular, we define \emph{domain-wall boundary data} to be that in which paths enter through the $A + C$ vertices of the form $(0, m)$, with $m \in [1, A + C]$, and exit through the $C$ vertices of the form $(A + 2 B + m, 2A + B + C + 1)$, with $m \in [1, C]$, and the $A$ vertices of the form $(A + 2 B + C + 1, 2 A + B + C - m + 1)$, with $m \in [1, A]$. This is depicted on the right side of \Cref{defectivenotdefectiveensemble}. 

There is a direct correspondence between defective domain-wall six-vertex ensembles and $A$-restricted, domain-wall directed path ensembles on $\mathcal{T}$. To see this, let $\mathcal{E}$ denote an ensemble of the former type, as shown on the left side of \Cref{defectivenotdefectiveensemble}. As mentioned in \Cref{apaths}, $B$ paths of $\mathcal{E}$ must start from the defect line; this leaves $A$ sites along the defect line at which there are no exiting arrows, which are the places where a path of $\mathcal{E}$ ends at the defect line. 

In particular, if we temporarily reverse all arrows in $\mathcal{E}$, then the new topmost $A$ paths will enter from the defect line and will then exit $\mathcal{T}$ at sites of the form $(A + 2B + C + 1, 2A + B + C - m + 1)$ for $m \in [1, A]$. Thus, adding these $A$ ``dual'' paths to $\mathcal{E}$ and removing the original topmost $B$ paths from $\mathcal{E}$ yields a domain-wall directed path ensemble $\mathcal{P}$ on $\mathcal{T}$, which is $A$-restricted; see the right side of \Cref{defectivenotdefectiveensemble} for an example. This transformation can quickly be seen to produce a bijection between (defective) domain-wall six-vertex ensembles on $\mathcal{T}$ and (non-defective) $A$-restricted, domain-wall directed path ensembles on $\mathcal{T}$.

\begin{definition} 
	
	\label{definitionf}
	
	Fix $A, B, C \in \mathbb{Z}$, with $A, B \ge 0$ and $C \ge 1$. Let $\mathfrak{F} = \mathfrak{F}_{A, B, C}$ denote the set of (non-defective) $A$-restricted, domain-wall directed path ensembles on $\mathcal{T} = \mathcal{T}_{A, B, C}$. 
\end{definition} 

In view of this equivalence and the previously mentioned symmetries, in order to show \Cref{boundaryb} it suffices to establish the following theorem for the behavior of the rightmost path of a uniformly random ensemble from $\mathfrak{F}$.

\begin{thm}
	
	\label{p1testimate}
	
	Fix real numbers $a, b \in [0, 1)$ and $c \in (0, 1]$ such that $a + b + c = 1$; let $N \ge 1$ be an integer; set $A = \lfloor a N \rfloor$, $B = \lfloor b N \rfloor$, and $C = \lfloor c N \rfloor$; and let $\delta \in (N^{-1 / 30}, 1)$ be a real number. There exists a constant $\gamma = \gamma (a, b, c) > 0$ such that the following holds. 
	
	Let $\mathcal{P}$ denote a uniformly random element of $\mathfrak{F} = \mathfrak{F}_{A, B, C}$, and denote its rightmost path by $\textbf{\emph{p}}_1$. Then, off of an event of probability at most $\gamma^{-1} \exp \big( - \gamma \delta^{24} N \big)$, the following statement holds. 
	
	Let $v \in \mathcal{T}$ be any vertex such that $d \big(N^{-1} v, \mathfrak{B} \big) > \delta$. If $v \in \text{\emph{SE}} (\mathfrak{B})$ then $v \in \text{\emph{SE}} (\textbf{\emph{p}}_1)$, and if $v \in \text{\emph{NW}} (\mathfrak{B})$ then $v \in \text{\emph{NW}} (\textbf{\emph{p}}_1)$.
	
\end{thm}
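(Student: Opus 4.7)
\textbf{Proof plan for \Cref{p1testimate}.} The approach is to mathematically justify the geometric tangent method of Colomo--Sportiello. For each slope parameter $z>0$, I will introduce an augmented ensemble $\mathcal{P}^{\aug}(z)$ obtained by extending the domain $\mathcal{T}$ by attaching a strip to the southeast and adding one extra path that enters this strip from below (at some prescribed location on the bottom, far outside $\mathcal{T}$) and joins the original ensemble by crossing the bottom edge of $\mathcal{T}$ at a random point. Writing $\mathbf{p}_1^{\aug}$ for this new bottommost path and $\mathbf{p}_2^{\aug}$ for the original rightmost path (now the second rightmost in $\mathcal{P}^{\aug}$), the plan is to prove:
\begin{itemize}
\item[(a)] In the continuum limit, $N^{-1}\mathbf{p}_1^{\aug}$ is with high probability a straight segment of slope $z$ in its ``free'' portion outside $N^{-1}\mathcal{T}$, and its endpoint on the boundary of $N^{-1}\mathcal{T}$ concentrates around the deterministic point $(x(z),0)$ defined in \eqref{x1y1z}.
\item[(b)] The augmented path $\mathbf{p}_1^{\aug}$ is approximately tangent to $\mathbf{p}_2^{\aug}$, in the sense that their concatenation is approximately convex near the crossing point.
\item[(c)] $\mathbf{p}_2^{\aug}$ agrees with $\mathbf{p}_1$ of the original ensemble outside a small neighborhood of the crossing point.
\end{itemize}

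Step (a) is the enumerative input. The probability that $\mathbf{p}_1^{\aug}$ crosses the bottom edge of $\mathcal{T}$ at a site $(\lfloor uN\rfloor,1)$ is, up to normalization, the product of the singly refined correlation function for the ice model on $\mathcal{T}_{A,B,C}$ (which is known explicitly from Cantini--Sportiello, Section~4.2 of \cite{PRC}) times the combinatorial weight of the free segment. Stirling asymptotics on both factors express this probability, to leading exponential order, as $\exp\bigl(N\Phi_z(u)+o(N)\bigr)$ for an explicit concave function $\Phi_z$; the unique maximizer of $\Phi_z$ turns out to be $u=x(z)$, and the associated normalization $\max_u \Phi_z(u)$ matches $-\zeta(z)$ after an affine change of variables. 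This is exactly the Legendre relation $y=zx-\zeta(z)$, which identifies the tangent line to $\mathfrak{B}$ with slope $z$; combined with concentration of the Laplace-type sum, this yields (a).

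Step (b) is the main probabilistic obstacle, and where the Gibbs property and monotonicity from \Cref{TangentDomain} do the work. The Gibbs property lets me resample $\mathbf{p}_1^{\aug}$ in a strip between two horizontal slices (conditionally on the rest of the ensemble, it is a uniformly random up-right path with prescribed endpoints avoiding $\mathbf{p}_2^{\aug}$), and a standard concentration estimate for conditioned random walks forces $\mathbf{p}_1^{\aug}$ to be nearly linear on mesoscopic scales whenever it stays uniformly separated from $\mathbf{p}_2^{\aug}$. Monotonicity of Glauber dynamics then provides stochastic couplings between the ice model on $\mathcal{T}$ under different boundary data, which I will use to sandwich $\mathbf{p}_2^{\aug}$ between copies of $\mathbf{p}_1$ from slightly perturbed (non-augmented) ensembles, giving (c); and also to upper bound the probability that $\mathbf{p}_1^{\aug}$ drifts significantly away from the tangent line of slope $z$ before meeting $\mathbf{p}_2^{\aug}$, forcing the meeting point to lie near the predicted tangency location on $\mathfrak{B}$. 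Combining (a)--(c) and varying $z\in[0,\infty]$, the tangent-line envelope of the family $\{y=zx-\zeta(z)\}$ is exactly $\mathfrak{B}$ (as in the remark following \Cref{zetacurves}), so $\mathbf{p}_1$ concentrates on $\mathfrak{B}$ to within error $\delta$ on an event of the claimed probability.

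The hardest step will be (b): turning the soft ``tangency'' intuition of Colomo--Sportiello into a quantitative statement with the stretched-exponential error $\exp(-\gamma\delta^{24}N)$. The principal difficulty is a chicken-and-egg issue: one needs uniform separation between $\mathbf{p}_1^{\aug}$ and $\mathbf{p}_2^{\aug}$ to invoke the Gibbs property and conclude linearity of $\mathbf{p}_1^{\aug}$, but proving that separation itself seems to require knowing that $\mathbf{p}_2^{\aug}$ already lies close to $\mathfrak{B}$. I plan to resolve this by a bootstrap: first establish a weak a~priori bound on $\mathbf{p}_2^{\aug}$ using the Cohn--Kenyon--Propp type concentration of the height function (\cite{LSRT,VPT}), then iteratively sharpen it by alternating applications of the Gibbs and monotonicity inputs, with $z$ ranging over a sufficiently dense mesh to cover $\mathfrak{B}$. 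The polynomial $\delta^{24}$ in the exponent reflects the loss at each stage of this bootstrap and is almost certainly not optimal.
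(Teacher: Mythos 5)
Your overall architecture coincides with the paper's: an augmented ensemble with one extra path, a Laplace-type concentration of its entry point into $\mathcal{T}$ obtained from the Cantini--Sportiello refined correlation function, and Gibbs/monotonicity arguments to relate the added path to the original rightmost path. Steps (a) and the Gibbs-resampling half of (b) are exactly what the paper carries out. However, two parts of your plan have genuine gaps.

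First, the ``chicken-and-egg'' problem you describe in step (b) does not arise in the route the paper takes, and your proposed escape from it would not work. The paper's tangency statement is deliberately weaker than yours: it does not assert approximate convexity of the concatenation of $\textbf{p}_1^{\aug}$ and $\textbf{p}_2^{\aug}$, only that the point $(\Phi,0)$ where $\textbf{p}_1^{\aug}$ leaves the $x$-axis lies near the tangent line from $(0,-\Psi)$ to $\textbf{p}_2^{\aug}$. This is proved by conditioning on $\textbf{p}_2^{\aug}$ and on the exit point of $\textbf{p}_1^{\aug}$, after which $\textbf{p}_1^{\aug}$ is a uniform up-right walk with a one-sided barrier that, by the very definition of the tangent line, lies entirely on one side of it; the one-sided linearity estimate then applies with no a priori information about where $\textbf{p}_2^{\aug}$ is. No bootstrap is needed. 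Your proposed seed for the bootstrap --- height-function concentration of Cohn--Kenyon--Propp type --- is exactly what the paper points out does \emph{not} imply concentration of the bottommost path, so starting the iteration there is a real obstruction, not a technicality.

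Second, step (c) as you state it is not closed. The monotone couplings give only the sandwich $\textbf{p}_2 \le \textbf{p}_2^{\aug} \le \textbf{p}_1$ (one inequality per coupling, not simultaneously), and to extract anything from this you must separately prove that the two slices of bread are close, i.e.\ that $\textbf{p}_1$ and $\textbf{p}_2$ of the \emph{original} ensemble stay within distance $O(\delta N)$ of each other with the claimed probability. This is a key lemma of the paper (its proof occupies all of \Cref{PathsNear}) and goes through establishing approximate convexity of $\textbf{p}_1$ and $\textbf{p}_2$ and comparing their lower convex envelopes; nothing in your plan identifies this statement or indicates how you would prove it. Replacing $\textbf{p}_2$ by the rightmost path of a ``slightly perturbed ensemble'' does not avoid the issue, since you would then need the analogous proximity estimate between the perturbed and unperturbed rightmost paths. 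Relatedly, the final assembly for $v\in\text{NW}(\mathfrak{B})$ requires a quantitative strict-convexity bound on $\mathfrak{B}$ itself (so that a path passing southeast of $v$ must exit far across some tangent line), which is another deterministic ingredient absent from your outline.
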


Thus, for the remainder of this article, we will only consider non-defective directed path ensembles on $\mathcal{T}$ and make no further reference to defective six-vertex ensembles.

\subsection{A Gibbs Property and Augmented Domains} 

\label{Property}

In this section we state a Gibbs property satisfied by the uniform measure on directed path ensembles on $\mathcal{T} = \mathcal{T}_{A, B, C}$ and also define augmented three-bundle domains. We begin with the former.

Although this Gibbs property will hold in greater generality, we only state it for \emph{rectangular subdomains} of $\mathcal{T}$, which are defined to be directed subgraphs $\Lambda \subseteq \mathcal{T}$ induced by intersecting a rectangle in $\mathbb{Z}^2$ with $\mathcal{T}$; equivalently, these are subsets of the form $\big( [m, n] \times [s, t] \big) \cap \mathcal{T}$ for integers $1 \le m \le n \le A + 2 B + C$ and $1 \le s \le t \le 2 A + B + C$. Observe in particular that $\mathcal{T}$ is a rectangular subdomain $\mathcal{T}$, obtained when the rectangle $[m, n] \times [s, t] = [1, A + 2 B + C] \times [1, 2A + B + C]$.

We can define ($r$-restricted) directed path ensembles $\mathcal{P} = (\textbf{p}_1, \textbf{p}_2, \ldots, \textbf{p}_k)$ on any rectangular subdomain $\Lambda \subseteq \mathcal{T}$ analogously to as we did in \Cref{directedensemble} (which addresses the case $\Lambda = \mathcal{T}$). In particular, any directed path ensemble $\mathcal{P}$ on $\mathcal{T}$ restricts to one $\mathcal{P} |_{\Lambda}$ on $\Lambda$. 

Next, let us introduce some notation for boundary data on (restricted) directed path ensembles on rectangular subdomains $\Lambda \subseteq \mathcal{T}$. 

\begin{definition}
	
\label{boundarylambda} We first define the \emph{west} and \emph{east boundaries} of $\Lambda = \big( [m, n] \times [s, t] \big) \cap \mathcal{T}$ to be the sets of lattice points in $\mathbb{Z}^2 \setminus \Lambda$ adjacent to $\Lambda$ that are of the form $(m - 1, y)$ and $(n + 1, y)$, respectively, for some $y \in [s, t]$. Similarly, its \emph{south} and \emph{north boundaries} are defined to be the sets of lattice points in $\mathbb{Z}^2 \setminus \Lambda$ adjacent to $\Lambda$ of the form $(x, s - 1)$ and $(x, t + 1)$, respectively, for some $x \in [m, n]$. 

We say that an ordered pair $(\textbf{u}, \textbf{w})$ of two $k$-tuples of vertices $\textbf{u} = (u_1, u_2, \ldots , u_k)$ and $\textbf{w} = (w_1, w_2, \ldots , w_k)$ constitutes \emph{admissible boundary data} (on $\Lambda$) if the following three conditions hold. First, each $u_i$ is a vertex along either the west or south boundary of $\mathbb{Z}^2 \setminus \Lambda$. Second, each $w_i$ is a vertex along either the east or north boundary of $\mathbb{Z}^2 \setminus \Lambda$. Third, $u_i \in \text{SE} (u_j)$ and $w_i \in \text{SE} (w_j)$ for any integers $1 \le i \le j \le k$. 

\emph{Boundary data} for a (possibly restricted) directed path ensemble $\mathcal{P} = (\textbf{p}_1, \textbf{p}_2, \ldots , \textbf{p}_k)$ on $\Lambda$ is given by a pair $(\textbf{u}, \textbf{w})$ of admissible boundary data. Here, $\textbf{u} = (u_1, u_2, \ldots , u_m)$ dictates the entrance sites for the $k$ paths in the ensemble, and $\textbf{w} = (w_1, w_2, \ldots , w_k)$ dictates the $k$ exit sites (meaning that the path $\textbf{p}_i$ enters and exits $\Lambda$ through $u_i$ and $w_i$, respectively, for each $i \in [1, k]$). 
	
\end{definition}

\begin{example}
	
\label{uw1} 

Domain-wall boundary data on $\mathcal{T}$ corresponds to when $\textbf{u} =  (u_1, u_2, \ldots,  u_{A + C})$ and $\textbf{w} = (w_1, w_2, \ldots , w_{A + C})$ are defined by 
\begin{flalign*}
u_i = (0, i) \quad \text{if $i \in [1, A + C]$}; \quad & w_i = (A + 2B + C + 1, A + B + C + i) \quad \text{if $i \in [1, A]$}; \\
& w_i = (2A + 2B + C - i + 1, 2A + B + C + 1) \quad \text{if $i \in [A + 1, A + C]$}.
\end{flalign*}

\end{example} 

\begin{example}
	
	\label{uw2}

	Let $k \in [1, A + 2B + C]$ be an integer. \emph{Singly-refined domain-wall boundary data at $(k, 0)$ (on $\mathcal{T}$)} is defined by the pair $(\textbf{u}, \textbf{w})$, where $\textbf{u} =  (u_1, u_2, \ldots,  u_{A + C + 1})$ and $\textbf{w} = (w_1, w_2, \ldots , w_{A + C + 1})$ are defined by 
	\begin{flalign*}
	& u_1 = (k, 0); \qquad u_i = (0, i - 1) \quad \text{if $i \in [2, A + C + 1]$}; \\
	& w_i = (A + 2B + C + 1, A + B + C + i - 1) \quad \text{if $i \in [1, A + 1]$}; \\
	& w_i = (2A + 2B + C - i + 2, 2A + B + C + 1) \quad \text{if $i \in [A + 2, A + C + 1]$}.
	\end{flalign*}
	
	This is obtained from the domain-wall boundary data from \Cref{uw1}, but with an additional path that enters $\mathcal{T}$ at $(k, 0)$ and exits at $(A + 2B + C + 1, A + B + C)$; in particular, there are $A + C + 1$ paths in the ensemble. We refer to the left side of \Cref{figurevertexwedge2} for a depiction.
	
\end{example}

\begin{figure}[t]
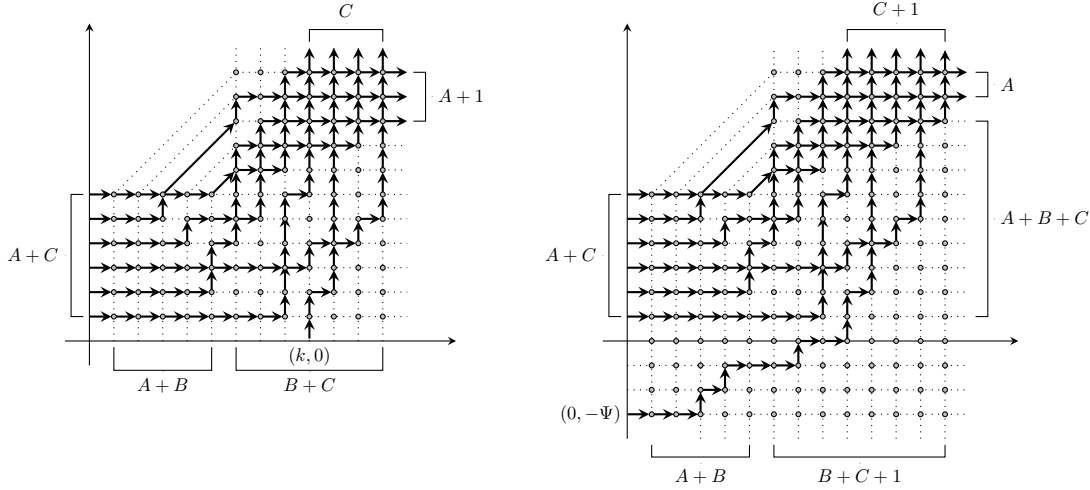

	
	\begin{center}
		

		
	\end{center}	
	
	\caption{\label{figurevertexwedge2} Depicted to the left is a directed path ensemble on $\mathcal{T} = \mathcal{T}_{2, 3, 4}$ with singly refined domain-wall boundary data at $(k, 0)$ when $k = 9$. Depicted to the right is a directed path ensemble on the augmented domain $\mathcal{X}_3$ with augmented domain-wall boundary data. } 
\end{figure}

We now define sets of (restricted) directed path ensembles on rectangular subdomains of $\mathcal{T}$ with given boundary data that are bounded to the left by some path $\textbf{f}$ and to the right by some path $\textbf{g}$. 

\begin{definition}
	
	\label{probabilitypaths}
	
	Suppose $\Lambda \subseteq \mathcal{T}$ is a rectangular subdomain; $r \ge 0$ and $k \ge 1$ are positive integers; and $(\textbf{u}, \textbf{w})$ is a pair of $k$-tuples that defines admissible boundary data on $\Lambda$. Let $\mathfrak{E}_{-\infty, \infty}^{\textbf{u}, \textbf{w}; r} = \mathfrak{E}_{-\infty, \infty; \Lambda}^{\textbf{u}, \textbf{w}; r}$ denote the set of $r$-restricted directed path ensembles $\mathcal{P} = \big( \textbf{p}_1, \textbf{p}_2, \ldots , \textbf{p}_m \big)$ on $\Lambda$, with boundary data given by $(\textbf{u}, \textbf{w})$. 
	
	Now let $\textbf{f}$ and $\textbf{g}$ be paths on $\Lambda$ such that $\textbf{g} \subset \text{SE} (\textbf{f})$. Suppose that $\textbf{f}, \textbf{g}$ enter $\Lambda$ through vertices $u_0, u_{k + 1} \in \mathbb{Z}^2 \setminus \Lambda$, respectively, and exit $\Lambda$ through $w_0, w_{k + 1} \in \mathbb{Z}^2 \setminus \Lambda$, respectively. Then let $\mathfrak{E}_{\textbf{f}, \textbf{g}}^{\textbf{u}, \textbf{w}; r} = \mathfrak{E}_{\textbf{f}, \textbf{g}; \Lambda}^{\textbf{u}, \textbf{w}; r}$ denote the set of $r$-restricted directed path ensembles $\mathcal{P} = \big( \textbf{p}_1, \textbf{p}_2, \ldots , \textbf{p}_m \big)$ on $\Lambda$, with boundary data given by $(\textbf{u}, \textbf{w})$, and such that $\textbf{p}_i \in \text{SE} (\textbf{f}) \cap \text{NW} (\textbf{g})$ for each $i \in [1, m]$. 
	
	Similarly let $\mathfrak{E}_{-\infty, \textbf{g}}^{\textbf{u}, \textbf{w}; r} = \mathfrak{E}_{-\infty, \textbf{g}; \Lambda}^{\textbf{u}, \textbf{w}; r}$ denote the set of such $\mathcal{P}$ with $\textbf{p}_i \in \text{NW} (\textbf{g})$ for each $i$, and let $\mathfrak{E}_{\textbf{f}, \infty}^{\textbf{u}, \textbf{w}; r} = \mathfrak{E}_{\textbf{f}, \infty; \Lambda}^{\textbf{u}, \textbf{w}; r}$ denote the set of such ensembles with  $\textbf{p}_i \in \text{SE} (\textbf{f})$ for each $i$. 
	
\end{definition}

In particular, if $\textbf{u}$ and $\textbf{w}$ are given by \Cref{uw1}, then $\mathfrak{E}_{-\infty, \infty; \Lambda}^{\textbf{u}, \textbf{w}; A}$ is $\mathfrak{F}$ from \Cref{definitionf}. 

The uniform measure on $\mathfrak{E}_{\textbf{f}, \textbf{g}; \Lambda}^{\textbf{u}, \textbf{w}; r}$ satisfies a \emph{Gibbs property}, which essentially states the following. If $\mathcal{P}$ is a uniformly random ensemble from $\mathfrak{E}_{\textbf{f}, \textbf{g}; \Lambda}^{\textbf{u}, \textbf{w}; r}$ and $\Lambda' \subseteq \Lambda$ is a rectangular subdomain then, conditional on the part of $\mathcal{P}$ outside $\Lambda'$, the law of $\mathcal{P}$ inside $\Lambda'$ is uniform on the set of directed path ensembles on $\Lambda'$ with specified boundary conditions. This is stated more precisely through the following lemma, whose proof is a direct consequence of \Cref{probabilitypaths} (and is therefore omitted).

\begin{lem} 

\label{property} 

Adopt the notation of \Cref{probabilitypaths}, and let $\mathcal{P} = ( \textbf{\emph{p}}_1, \textbf{\emph{p}}_2, \ldots , \textbf{\emph{p}}_k ) \in \mathfrak{E}_{\textbf{\emph{f}}, \textbf{\emph{g}}; \Lambda}^{\textbf{\emph{u}}, \textbf{\emph{w}}; r}$ be sampled uniformly at random. Let $\Lambda' \subseteq \Lambda$ denote a rectangular subdomain; $0 \le i \le j \le k$ and $0 \le r' \le r$ be integers; and $(\textbf{\emph{u}}', \textbf{\emph{w}}')$ be a pair of $(j - i + 1)$-tuples constituting admissible boundary data on $\Lambda'$, with $\textbf{\emph{u}}' = (u_1', u_2', \ldots , u_{j - i + 1}')$ and $\textbf{\emph{w}}' = (w_1', w_2', \ldots , w_{j - i + 1}')$. Now condition on the paths $\textbf{\emph{p}}_m \in \mathcal{P}$ for $m \notin [i, j]$; the restrictions $\textbf{\emph{p}}_m |_{\Lambda \setminus \Lambda'}$ for $m \in [i, j]$; the event that $\textbf{\emph{p}}_m$ enters $\Lambda'$ at $u_{m - i + 1}'$ and exits $\Lambda'$ at $w_{m - i + 1}'$ for each $m \in [i, j]$; and the event that the number of indices $m \notin [i, j]$ for which $\textbf{\emph{p}}_m$ contains a diagonal edge of $\Lambda$ is equal to $r - r'$.  

Then, the joint law of $\big( \textbf{\emph{p}}_i |_{\Lambda'}, \textbf{\emph{p}}_{i + 1} |_{\Lambda'}, \ldots , \textbf{\emph{p}}_j |_{\Lambda'} \big)$ is given by the uniform measure on $\mathfrak{E}_{\textbf{\emph{p}}_{i - 1}, \textbf{\emph{p}}_{j + 1}; \Lambda'}^{\textbf{\emph{u}}', \textbf{\emph{w}}'; r'}$, where we set $\textbf{\emph{p}}_{i - 1} = - \infty$ if $i = 1$ and $\textbf{\emph{p}}_{j + 1} = \infty$ if $j = k$.
\end{lem}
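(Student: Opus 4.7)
The statement is a structural consequence of the fact that the underlying measure on $\mathfrak{E}_{\textbf{f},\textbf{g};\Lambda}^{\textbf{u},\textbf{w};r}$ is uniform. The plan is to exhibit a bijection between the set of ensembles compatible with the conditioning and the ``interior'' set $\mathfrak{E}_{\textbf{p}_{i-1},\textbf{p}_{j+1};\Lambda'}^{\textbf{u}',\textbf{w}';r'}$, and then invoke the elementary fact that a uniform measure, when conditioned on an event that only cuts out a subfamily of equally weighted outcomes, descends to the uniform measure on each fiber.

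More precisely, let $\mathcal{A}$ denote the event being conditioned upon, which specifies (i) the paths $\textbf{p}_m$ for $m\notin[i,j]$ in their entirety, (ii) the restrictions $\textbf{p}_m|_{\Lambda\setminus\Lambda'}$ for $m\in[i,j]$, (iii) the entrance and exit sites $u_{m-i+1}'$ and $w_{m-i+1}'$ of each $\textbf{p}_m$ into $\Lambda'$, and (iv) the total number $r-r'$ of diagonal edges used by paths $\textbf{p}_m$ outside $\Lambda'$. Given a realization of $\mathcal{A}$, the map $\mathcal{P}\mapsto (\textbf{p}_i|_{\Lambda'},\ldots,\textbf{p}_j|_{\Lambda'})$ is well-defined on $\mathfrak{E}_{\textbf{f},\textbf{g};\Lambda}^{\textbf{u},\textbf{w};r}\cap\mathcal{A}$. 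The first step is to verify that its image lies in $\mathfrak{E}_{\textbf{p}_{i-1},\textbf{p}_{j+1};\Lambda'}^{\textbf{u}',\textbf{w}';r'}$: admissibility of $(\textbf{u}',\textbf{w}')$ on $\Lambda'$ follows from the ordering $\textbf{p}_i\in\text{SE}(\textbf{p}_j)$ for $i\le j$ satisfied by $\mathcal{P}$; the containment $\textbf{p}_m|_{\Lambda'}\in\text{SE}(\textbf{p}_{i-1})\cap\text{NW}(\textbf{p}_{j+1})$ (with the convention $\textbf{p}_{i-1}=-\infty$ if $i=1$ and $\textbf{p}_{j+1}=\infty$ if $j=k$) follows from the corresponding containment for $\mathcal{P}$ relative to $\textbf{f},\textbf{g}$ together with the already-fixed behavior outside $\Lambda'$; and the $r'$-restricted condition follows from (iv) above.

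Conversely, the inverse map is given by gluing: any $\mathcal{Q}=(\textbf{q}_1,\ldots,\textbf{q}_{j-i+1})\in\mathfrak{E}_{\textbf{p}_{i-1},\textbf{p}_{j+1};\Lambda'}^{\textbf{u}',\textbf{w}';r'}$ can be concatenated with the conditioned external data to produce a unique path ensemble $\mathcal{P}$ on $\Lambda$. Here one checks that arrow-configuration consistency across $\partial\Lambda'$ is guaranteed because the entrance and exit sites $u_{m-i+1}',w_{m-i+1}'$ are fixed in $\mathcal{A}$ and common to both pieces; that the global ordering and the bounding-path constraints with respect to $\textbf{f}$ and $\textbf{g}$ are preserved because the external pieces already lie in the required region and the internal pieces lie between $\textbf{p}_{i-1}$ and $\textbf{p}_{j+1}$; and that the total number of paths traversing diagonal edges is $r'+(r-r')=r$, so that $\mathcal{P}\in\mathfrak{E}_{\textbf{f},\textbf{g};\Lambda}^{\textbf{u},\textbf{w};r}$. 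These two maps are manifestly mutual inverses.

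Since the map is a bijection and since all elements of $\mathfrak{E}_{\textbf{f},\textbf{g};\Lambda}^{\textbf{u},\textbf{w};r}\cap\mathcal{A}$ have equal probability under the original uniform measure, their images in $\mathfrak{E}_{\textbf{p}_{i-1},\textbf{p}_{j+1};\Lambda'}^{\textbf{u}',\textbf{w}';r'}$ also have equal conditional probability; this is the asserted uniform Gibbs property. The only genuinely nontrivial step is the careful bookkeeping in the bijection, in particular keeping track of the decomposition $r=r'+(r-r')$ of diagonal edges across $\Lambda'$ and $\Lambda\setminus\Lambda'$ and verifying that the ordering and boundedness constraints decouple cleanly into an internal constraint (against $\textbf{p}_{i-1}$ and $\textbf{p}_{j+1}$) and an external one (against $\textbf{f}$ and $\textbf{g}$); once this is done, no further probabilistic argument is required.
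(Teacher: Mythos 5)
Your proposal is correct: the conditioning event fixes everything outside $\Lambda'$, the restriction map to $\Lambda'$ is a bijection onto $\mathfrak{E}_{\textbf{p}_{i-1}, \textbf{p}_{j+1}; \Lambda'}^{\textbf{u}', \textbf{w}'; r'}$ via the gluing inverse you describe, and uniformity therefore descends to the fiber. The paper omits the proof entirely, declaring it a direct consequence of \Cref{probabilitypaths}; what you have written out (including the bookkeeping of the diagonal-edge count $r = r' + (r - r')$ and the decoupling of the ordering constraints) is exactly that omitted routine verification.
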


We conclude this section by defining an augmented version of the three-bundle domain $\mathcal{T}$ and its boundary data, which will be useful for implementing the tangent method later in the paper.

\begin{definition}
	
\label{xpsi} Let $\Psi \ge 1$ be an integer; the \emph{augmented three-bundle domain} $\mathcal{X} = \mathcal{X}_{\Psi} = \mathcal{X}_{A, B, C; \Psi}$ is a directed graph whose vertex set is given by 
\begin{flalign*}
\mathcal{T}_{A, B, C} \cup \big( [1, A + 2 B + C + 1] \times [-\Psi, 0] \big) \cup \big( \{ A + 2B + C + 1\} \times [1, 2A + B + C] \big), 
\end{flalign*}

\noindent and whose edge set consists of the (directed) edges of $\mathcal{T}_{A, B, C}$, along with any directed edges in $\mathbb{Z}^2$ connecting vertices on $\mathcal{X}$. Stated alternatively, $\mathcal{X}$ is the domain obtained by adding a $(A + 2B + C + 1) \times \Psi$ rectangle (and all associated edges) below the graph $\mathcal{T}_{A, B, C + 1}$, and then by translating the result down by $1$. We refer to \Cref{figurevertexwedge2} for an example. If $\Psi = 0$, we set $\mathcal{X} = \mathcal{X}_0 = \mathcal{T}_{A, B, C}$. 

\end{definition}

\begin{definition} 
	
\label{domainaugmentedboundary}

\emph{Augmented domain-wall boundary data} (on $\mathcal{X} = \mathcal{X}_{\Psi}$) is defined by the pair $(\textbf{u}, \textbf{w})$, where $\textbf{u} =  (u_1, u_2, \ldots,  u_{A + C + 1})$ and $\textbf{w} = (w_1, w_2, \ldots , w_{A + C + 1})$ are defined by 
\begin{flalign*}
& u_1 = (0, -\Psi); \qquad u_i = (0, i - 1) \quad \text{if $i \in [1, A + C + 1]$}; \\
& w_i = (A + 2B + C + 2, A + B + C + i) \quad \text{if $i \in [1, A]$}; \\
& w_i = (2A + 2B + C - i + 2, 2A + B + C + 1) \quad \text{if $i \in [A + 1, A + C + 1]$}.
\end{flalign*}

This essentially coincides with domain-wall boundary data on $\mathcal{T}_{A, B, C + 1}$ (translated down by $1$), but where the entrance site of rightmost path $\textbf{p}_1$ is shifted down by $\Psi$; see the right side of \Cref{figurevertexwedge2}. 

Let $\mathfrak{G}_{\Psi}^A$ denote the set of all $A$-restricted directed path ensembles on $\mathcal{X}_{A, B, C; \Psi}$ with augmented domain-wall boundary data. In particular, setting $\Psi = 0$, $\mathfrak{G}_0^A = \mathfrak{F}_{A, B, C + 1}$ denotes the set of all $A$-restricted directed path ensembles on $\mathcal{T}_{A, B, C + 1}$ with domain-wall boundary data.

\end{definition}

\section{The Tangent Method Heuristic}

\label{Tangent}

The proof of \Cref{p1testimate} proceeds by justifying the \emph{tangent method}, which is a heuristic proposed by Colomo-Sportiello in \cite{ACSVMGD} as a way of predicting the arctic boundaries of certain exactly solvable statistical mechanical models. Thus, we begin by recalling this heuristic in \Cref{OutlineTangent}, and then we outline the proof of \Cref{p1testimate} in \Cref{Outline}.  

Throughout this section we fix real numbers $a, b, c \in [0, 1]$ such that $a + b + c = 1$; let $N$ be a positive integer; and denote $A = \lfloor a N \rfloor$, $B = \lfloor b N \rfloor$, and $C = \lfloor c N \rfloor$.

\subsection{The Heuristic} 

\label{OutlineTangent}

The goal of the tangent method is to provide a heuristic that explicitly evaluates (part of) the arctic boundary of a vertex model, assuming that one has access to certain quantities, called refined correlation functions, associated with it. In this section we outline this heuristic in the special case of the six-vertex model on the three-bundle domain, following Section 3 of \cite{ACSVMGD}. More specifically, we will explain how to derive the southeast part of the arctic boundary of a typical $A$-restricted directed path ensemble $\mathcal{P}$ on $\mathcal{T} = \mathcal{T}_{A, B, C}$, which (as explained in \Cref{pathboundary}) is the trajectory of the rightmost directed path of $\mathcal{P}$. 

To that end, first observe that for any such ensemble $\mathcal{P}$ there is a unique integer $K = K (\mathcal{P}) \in [1, A + 2B + C]$ such that the vertex $(K, 1)$ is assigned arrow configuration $(0, 1; 1, 0)$. Stated alternatively, this is the integer such that the rightmost (equivalently, the bottommost) directed path in $\mathcal{P}$ exits the line $y = 1$ at $(K, 1)$. 

For any integer $k \in [1, A + 2B + C]$, define the \emph{(singly) $k$-refined correlation function} $H (k) = H_{A, B, C} (k) = \mathbb{P} \big[ K (\mathcal{P}) = k \big]$, where the probability measure is uniform over all domain-wall $A$-restricted directed path ensembles $\mathcal{P}$ on $\mathcal{T}$. In what follows, we assume it is possible to asymptotically evaluate $H(k)$ when $k = x N$ scales linearly with $N$, namely, that
\begin{flalign*}
H ( xN ) \sim \exp \Big( - \big( \mathfrak{h} (x) + o(1) \big) N  \Big),
\end{flalign*}

\noindent for some explicit, nonnegative function $\mathfrak{h} (x) = \mathfrak{h}_{a, b, c} (x)$. This is indeed the case in our setting, since equation (59) of \cite{PRC} provides an identity for $H(k)$ as a sum of binomial coefficients; see \Cref{hkabcidentity} below. Observe that $\mathfrak{h}$ immediately provides some information about the arctic boundary, since the value $\theta$ at which $\mathfrak{h} (\theta) = 0$ will be where we expect $K \approx \theta N$ with high probability; thus, $(\theta, 0)$ is the location where the arctic boundary should be tangent to the $x$-axis. 

To obtain more precise information about the arctic boundary, we consider directed path ensembles on augmented domains. More specifically, let $\psi \ge 0$ be a real number, and denote $\Psi = \psi N$ (which we assume to be an integer). Consider a uniformly random $A$-restricted directed path ensemble $\mathcal{P}^{\text{aug}}$ on the augmented domain $\mathcal{X}_{\Psi}$ (from \Cref{xpsi}) with augmented domain-wall boundary data (from \Cref{domainaugmentedboundary}). See the left side of \Cref{ensemblex} for a depiction. 

The premise underlying the tangent method is that, with high probability, the trajectory of the rightmost path $\textbf{p}_1^{\aug}$ of $\mathcal{P}^{\text{aug}}$ will initially approximately follow a line that is tangent to the southeast part of the arctic boundary $\mathcal{P}$; see the right side of \Cref{ensemblex}. Then, upon meeting this boundary, the rightmost path of $\mathcal{P}^{\text{aug}}$ will merge with it.

\begin{figure}[t]
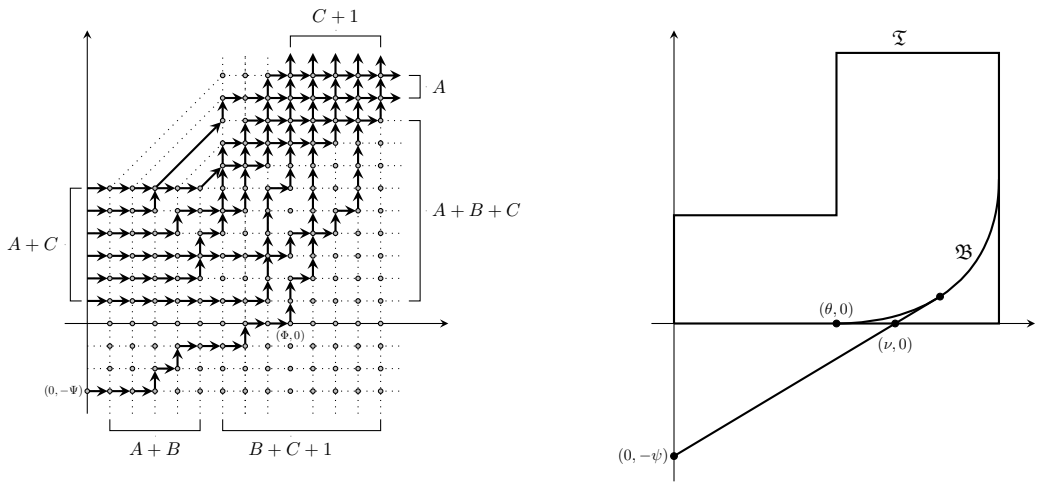

	
	\begin{center}
		

		
	\end{center}	
	
	\caption{\label{ensemblex} 	Shown to the left is a directed path ensemble on the augmented three-bundle domain $\mathcal{X} = \mathcal{X}_3 = \mathcal{X}_{2, 3, 4; 3}$; here, $\Phi = 9$. Shown to the right is the heuristic macroscopic limit of this ensemble, where the new (added) path is tangent to the arctic boundary $\mathfrak{B}$.  } 
\end{figure}

Assuming such a statement to be true, one might attempt to determine $\mathfrak{B}$ as follows. Suppose that one is able to approximately understand the location $(\Phi, 0) = \big( \Phi (\mathcal{P}^{\text{aug}}), 0 \big) \approx (\nu N, 0)$ (for some $\nu = \nu (\psi) \in \mathbb{R}$) where the path $\textbf{p}_1^{\aug}$ leaves the $x$-axis. Then, in view of the previously mentioned assumption, one would conclude that the line passing through $(0, -\psi)$ and $(\nu, 0)$ should be tangent to the limiting arctic boundary $\mathfrak{B}$; see \Cref{ensemblex}. By then varying the parameter $\psi \in \mathbb{R}_{\ge 0}$, one obtains a complete family of lines tangent to $\mathfrak{B}$; the convex envelope of these lines would then determine $\mathfrak{B}$. 

To evaluate $\nu = \nu (\psi)$, we estimate the probability that $\Phi = x N$ for some $x \in [0, a + 2b + c]$. To that end, observe that a domain-wall $A$-restricted directed path ensemble $\mathcal{P}^{\text{aug}}$ on $\mathcal{X}_{\Psi}$ such that $\Phi (\mathcal{P}^{\text{aug}}) = xN$ is the union of a directed up-right path from $(0, -\Psi)$ to $(\Phi, 0)$ and an $A$-restricted directed path ensemble on $\mathcal{T}_{A, B, C + 1}$ with singly refined domain-wall boundary data at $(xN, 0)$, as in \Cref{uw2} (after being shifted down by $1$). 
	
Since the number of such directed up-right paths is equal to $\binom{\Psi + \Phi - 1}{\Psi}$ and the number of such directed path ensembles is proportional to the refined correlation function $H_{A, B, C + 1} (xN)$, it follows that $\mathbb{P} \big[ \Phi (\mathcal{P}^{\text{aug}}) = x N \big]$ is proportional to 
\begin{flalign*}
H_{A, B, C + 1} (x N) \binom{\psi N + x N - 1}{\psi N} \approx \exp \Big( \big( g_{\psi} (x) + o(1) \big) N \Big),
\end{flalign*}

\noindent where we have denoted 
\begin{flalign*}
g_{\psi} (x) = (\psi + x) \log (\psi + x) - \psi \log \psi - x \log x - \mathfrak{h} (x),
\end{flalign*}

\noindent which is explicit if $\mathfrak{h}$ is. In particular, $g_{\psi} (x)$ will admit a unique maximum at some $\nu = \nu_{\psi} \in [0, a + 2b + c]$, which can be evaluated explicitly. Thus, with high probability, we have that $\Phi (\mathcal{P}^{\text{aug}}) \approx \nu N$. 

Under the tangency assumption, it would follow that the (explicit) line through $(0, -\psi)$ and $(\nu, 0)$ is tangent to $\mathfrak{B}$ for any $\psi \in \mathbb{R}_{\ge 0}$. As mentioned above, varying $\psi$ then yields a complete family of tangent lines to the curve $\mathfrak{B}$, which one can use to parameterize it as in \eqref{zeta1z}, \eqref{x1y1z}, and \eqref{bcurvesequations}.

\subsection{Outline of the Proof of \Cref{p1testimate}}

\label{Outline}

In this section we briefly outline the proof of \Cref{p1testimate}, which constitutes the remainder of this paper. Denote $\mathcal{P} = (\textbf{p}_1, \textbf{p}_2, \ldots, \textbf{p}_{A + C})$ and $\mathcal{P}^{\aug} = \big(\textbf{p}_1^{\aug}, \textbf{p}_2^{\aug} \ldots , \textbf{p}_{A + C + 1}^{\aug} \big)$.

\begin{enumerate} 
	
	\item \label{p2approximate} \emph{Tangency}: Letting $\ell$ denote the tangent line to $\textbf{p}_2^{\text{aug}}$ through $(-\Psi, 0)$, we show that the vertex $(-\Phi, 0)$ where $\textbf{p}_1^{\aug}$ enters the $x$-axis is close to $\ell$; see \Cref{p1p2tangent} below.

	\item \label{estimatep2} \emph{Concentration Estimate}: We use the heuristic explained in \Cref{OutlineTangent} to establish a concentration estimate for $\Phi = \Phi \big( \mathcal{P}^{\aug} \big)$; see \Cref{lzpsip1} below.
	
	\item \emph{Comparing $\textbf{\emph{p}}_1$ to $\textbf{\emph{p}}_2^{\aug}$}: We show that $\textbf{p}_1$ and $\textbf{p}_2^{\aug}$ approximately follow the same trajectory with high probability. To do this, we establish the following two statements. 
	\begin{enumerate}
		\item \label{p1p2augmented} There exist two couplings of the laws of $\mathcal{P}$ and $\mathcal{P}^{\aug}$ such that $\textbf{p}_1 \in \SE (\textbf{p}_2^{\aug})$ under the first and $\textbf{p}_2^{\aug} \in \SE (\textbf{p}_2)$ under the second; see \Cref{ensemblesnn1} below. 
		\item \label{nearp1p2} The paths $\textbf{p}_1, \textbf{p}_2 \in \mathcal{P}$ remain close to each other with high probability; see \Cref{p1p2near} below.  
	\end{enumerate}
	Together, these will essentially imply $\textbf{p}_1 \approx \textbf{p}_2^{\aug} \approx \textbf{p}_2$. 

\end{enumerate}

Given these statements, the proof of \Cref{p1testimate} will appear in \Cref{ProofBoundary}. 

Fundamental to implementing the above outline will be a monotonicity result, given by \Cref{couplemonotone} in \Cref{MonotonicityPaths} below, that essentially states the following. If the boundary data for two ice models are ordered, then these two models can be coupled in such a way that their paths are also ordered in the interior of the domain. The proof of this statement is based on monotonicity properties for the Glauber dynamics on the set of restricted directed path ensembles; similar dynamical ideas to establish monotonicity statements in related settings were used in \cite{LE,ELE}. The existence of the two couplings explained in part \ref{p1p2augmented} above will follow as a quick consequence of \Cref{couplemonotone}.

Next, in \Cref{PathLinear}, we derive a series of miscellaneous linearity estimates for random walks (possibly with boundary conditions) that follow from direct combinatorial considerations together with the monotonicity result \Cref{couplemonotone}. Using these linearity estimates; \Cref{couplemonotone}; and the Gibbs property given by \Cref{property}, we will establish \Cref{p1p2tangent} from part \ref{p2approximate} above. Although this statement is slightly weaker than the claim that $\textbf{p}_1^{\aug}$ is approximately tangent to $\textbf{p}_2^{\aug}$, it will be sufficient for our purposes. 

Then, in \Cref{PathsNearp1p2} we	 implement part \ref{nearp1p2} above to show that $\textbf{p}_1$ and $\textbf{p}_2$ are close to each other with high probability. To do this, we will first in \Cref{PathConvex} use \Cref{couplemonotone}, the linearity estimates from \Cref{PathLinear}, and \Cref{property} to show as \Cref{convexp1x} and \Cref{convexp2x} that the paths $\textbf{p}_1$ and $\textbf{p}_2$ are ``approximately convex.'' Then, we will establish \Cref{p1p2near} in \Cref{PathsNearp1p2}. 

In \Cref{Equationp1} we establish the concentration estimate for $\Phi$ given by \Cref{lzpsip1} of part \ref{estimatep2} above. This will closely follow Section 3.4 of \cite{ACSVMGD} (also explained in \Cref{OutlineTangent}), using exact results from \cite{PRC,ACSVMGD} on the refined partition function for the ice vertex model on the three-bundle domain. We then conclude the proof of \Cref{p1testimate} in \Cref{ProofBoundary}.

\section{Path Monotonicity} 

\label{MonotonicityPaths} 

In this section we derive several monotonicity properties for paths in a uniformly random directed path ensemble with general boundary data. We begin by stating the result (given by \Cref{couplemonotone}) and some of its consequences in \Cref{Monotonicity}; we then provide its proof in \Cref{ProofMonotone}. Throughout this section, we fix nonnegative integers $A, B \ge 0$ and $C \ge 1$.

\subsection{Monotonicity Results and Consequences} 

\label{Monotonicity}

In what follows (and for the remainder of this paper), it will be convenient to introduce a partial ordering on non-decreasing curves in $\mathbb{R}^2$. Specifically, if $\textbf{f}$ and $\textbf{g}$ are non-decreasing curves, we write $\textbf{f} \le \textbf{g}$ (or equivalently $\textbf{g} \ge \textbf{f}$) if, for every two points $(x, y) \in \textbf{f}$ and $(x', y) \in \textbf{g}$ on the same horizontal line, we have that $x \le x'$. Furthermore, for any two points $v = (x, y) \in \mathbb{R}^2$ and $v' = (x', y') \in \mathbb{R}^2$, we say that $v \le v'$ (or $v' \ge v$) if $v' \in \SE(v)$; this does not coincide with the lexicographic ordering on $\mathbb{R}^2$ but will be useful for us. 

Moreover, if $\textbf{v} = (v_1, v_2, \ldots , v_m)$ and $\textbf{v}' = (v_1', v_2', \ldots , v_m')$ are sequences of lattice points then we say that $\textbf{v} \ge \textbf{v}'$ if $v_i \ge v_i'$ for each $i \in [1, m]$. Additionally, for any directed path ensembles $\mathcal{P} = (\textbf{p}_1, \textbf{p}_2, \ldots , \textbf{p}_m)$ and $\mathcal{P}' = (\textbf{p}_1', \textbf{p}_2', \ldots , \textbf{p}_m')$ with the same number of paths (on some rectangular subdomain of $\mathcal{T} = \mathcal{T}_{A, B, C}$), we say that $\mathcal{P} \le \mathcal{P}'$ if $\textbf{p}_i \le \textbf{p}_i'$ for each $i \in [1, m]$. 

The following proposition, which is similar to Lemma 2.6 and Lemma 2.7 of \cite{LE} and Lemma 2.6 and Lemma 2.7 of \cite{ELE}, provides a monotone coupling between uniform measures on directed path ensembles on rectangular subdomains of $\mathcal{T}$ with different boundary data.

\begin{prop}
	
	\label{couplemonotone}
	
	Let $\Lambda \subseteq \mathcal{T}_{A, B, C}$ be a rectangular domain, and let $r, r' \ge 0$ and $m \ge 1$ be integers. Suppose that $\textbf{\emph{f}}$, $\textbf{\emph{g}}$, $\textbf{\emph{f}}'$, and $\textbf{\emph{g}}'$ are directed paths on $\Lambda$ (possibly equal to $-\infty$ or $\infty$) such that $\textbf{\emph{f}} \le \textbf{\emph{g}}$ and $\textbf{\emph{f}}' \le \textbf{\emph{g}}'$. Let $\textbf{\emph{f}}, \textbf{\emph{g}}, \textbf{\emph{f}}', \textbf{\emph{g}}'$ enter and exit $\Lambda$ at vertices $u_0, u_{m + 1}, u_0', u_{m + 1}'$ and $w_0, w_{m + 1}, w_0', w_{m + 1}'$, respectively. Further let $(\textbf{\emph{u}}, \textbf{\emph{w}})$ and $( \textbf{\emph{u}}', \textbf{\emph{w}}')$ denote two pairs of $m$-tuples of vertices on $\Lambda$ that each constitutes an admissible boundary condition on $\Lambda$. Assume that $\mathfrak{E}_{\textbf{\emph{f}}, \textbf{\emph{g}}}^{\textbf{\emph{u}}, \textbf{\emph{w}}; r}$ and $\mathfrak{E}_{\textbf{\emph{f}}', \textbf{\emph{g}}'}^{\textbf{\emph{u}}', \textbf{\emph{w}}'; r'}$ are both nonempty. 
	
	If $\textbf{\emph{f}} \le \textbf{\emph{f}}'$, $\textbf{\emph{g}} \le \textbf{\emph{g}}'$, $\textbf{\emph{u}} \le \textbf{\emph{u}}'$, $\textbf{\emph{w}} \le \textbf{\emph{w}}'$, and $r \ge r'$, then it is possible to couple the uniform measures on $\mathfrak{E}_{\textbf{\emph{f}}, \textbf{\emph{g}}}^{\textbf{\emph{u}}, \textbf{\emph{w}}; r}$ and $\mathfrak{E}_{\textbf{\emph{f}}', \textbf{\emph{g}}'}^{\textbf{\emph{u}}', \textbf{\emph{w}}'; r'}$ on a common probability space such that the following holds. If the pair $(\mathcal{P}, \mathcal{P}') \in \mathfrak{E}_{\textbf{\emph{f}}, \textbf{\emph{g}}}^{\textbf{\emph{u}}, \textbf{\emph{w}}; r} \times \mathfrak{E}_{\textbf{\emph{f}}', \textbf{\emph{g}}'}^{\textbf{\emph{u}}', \textbf{\emph{w}}'; r'}$ is chosen with respect to this coupled measure, then $\mathcal{P} \le \mathcal{P}'$ almost surely. 
\end{prop}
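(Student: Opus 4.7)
The plan is a standard monotone-Glauber-dynamics argument, in the spirit of the proofs of Lemma 2.6 and Lemma 2.7 in \cite{LE,ELE}. I would define an irreducible continuous-time Markov chain on $\mathfrak{E}_{\textbf{f}, \textbf{g}}^{\textbf{u}, \textbf{w}; r}$ with uniform stationary distribution, construct an explicit monotone grand coupling of two copies of this chain (one for each parameter set), initialize the two chains at ordered extremal configurations, and take the large-time limit to obtain the required coupling.

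First I would specify the local dynamics. Attach an independent rate-$1$ Poisson clock to each unit square face of $\Lambda$, as well as to the triangular face adjacent to the defect line. When a clock rings at a face $F$, resample the arrow configuration on the edges interior to $F$ uniformly among all configurations consistent with the arrows on the remaining edges, the boundary data $(\textbf{u}, \textbf{w})$, the bounding-path constraints imposed by $\textbf{f}$ and $\textbf{g}$, and the restriction parameter $r$. At an ordinary square face there are at most two such configurations, corresponding to a ``corner flip'' of a single path; at the triangular face the admissible local reconfigurations are enumerated directly from the (in)consistency conditions of \Cref{ModelDomain}. Because no corner flip alters the number of paths traversing the defect line, and the triangular-face moves are chosen to preserve this count, $r$ is conserved. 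Irreducibility of the chain on $\mathfrak{E}_{\textbf{f}, \textbf{g}}^{\textbf{u}, \textbf{w}; r}$ follows from the classical fact that any two non-crossing path ensembles with common boundary data are connected by finite sequences of corner flips, and symmetry of the updates implies that the uniform measure is stationary.

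I would then couple the two chains using a single collection of Poisson clocks attached to the faces of $\mathcal{T}$, together with a single Uniform$[0,1]$ variable at each ring to drive the resampling in both chains via the inverse-CDF construction with respect to a fixed total ordering on local configurations that refines $\le$. Monotonicity of the update then reduces to a finite case check at each face: if $\mathcal{P}(t^-) \le \mathcal{P}'(t^-)$ just before a ring, the post-ring configurations satisfy $\mathcal{P}(t^+) \le \mathcal{P}'(t^+)$, with the bounding paths entering only by forbidding certain outcomes in one of the two chains, and the hypotheses $\textbf{f} \le \textbf{f}'$, $\textbf{g} \le \textbf{g}'$ guaranteeing that no such restriction can produce a violation of the ordering. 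The remaining assumptions $\textbf{u} \le \textbf{u}'$, $\textbf{w} \le \textbf{w}'$, and $r \ge r'$ ensure that the unique minimal element of $\mathfrak{E}_{\textbf{f}, \textbf{g}}^{\textbf{u}, \textbf{w}; r}$ lies below the unique maximal element of $\mathfrak{E}_{\textbf{f}', \textbf{g}'}^{\textbf{u}', \textbf{w}'; r'}$; initializing both chains at these extremes, applying the grand coupling, and letting $t \to \infty$ yields a coupling of the two uniform measures with $\mathcal{P} \le \mathcal{P}'$ almost surely. The main obstacle I anticipate is the analysis at the triangular face: one must verify both the monotonicity of the local update there and that the allowed moves are rich enough to connect any two elements of the $r$-restricted set, and both verifications reduce to finite case enumerations of the arrow configurations compatible with a given configuration outside the defect triangle.
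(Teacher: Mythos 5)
Your proposal is correct and follows essentially the same route as the paper: Glauber dynamics with unit-rate clocks on the faces, uniform stationarity via detailed balance plus irreducibility under corner flips, a grand coupling preserving the order at each update, and ordered initial data sent to the large-time limit. The only substantive difference is at the triangular face, which you propose to resample subject to conserving $r$; since any flip there toggles whether a path uses a diagonal edge, the only $r$-preserving update is trivial, so the paper's simpler choice --- assign no clock to the triangular face at all and check that square-face flips alone connect the $r$-restricted set --- dissolves the "main obstacle" you anticipate.
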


\begin{rem}

\label{couplemonotonedomains}

Observe that \Cref{couplemonotone} also applies to rectangular subdomains $[m, n] \times [s, t]$ of $\mathbb{Z}^2$ or an augmented three-bundle domain $\mathcal{X}_{\Psi}$ (recall \Cref{xpsi}), since any such domain is a rectangular subdomain of $\mathcal{T}_{A, B, C}$ (after a suitable shift) for sufficiently large integers $A, B, C$. 

\end{rem} 

\begin{rem} 
	
\label{monotonepathf}

If $\textbf{f}$ does not pass through a diagonal edge of $\Lambda$, then the region between $\textbf{f}$ and $\textbf{g}$ lies to the right of the triangular face of $\mathcal{T} = \mathcal{T}_{A, B, C}$. Therefore, we can replace $\Lambda$ with a rectangular subdomain of $\mathbb{Z}^2$ (as opposed to of $\mathcal{T}$); the same holds if $(\textbf{f}, \textbf{g})$ is replaced by $(\textbf{f}', \textbf{g}')$. This allows us to compare random directed path ensembles on (rectangular subdomains of) $\mathcal{T}$ to those on (rectangular subdomains of) $\mathbb{Z}^2$. 
\end{rem}

The following corollary states that it is possible to couple uniformly chosen domain-wall, $A$-restricted directed path ensembles on $\mathcal{T}$ and $\mathcal{X}_{\Psi}$ in two different ways; the first bounds the second path in $\mathcal{X}_{\Psi}$ ensemble from right, and the latter bounds it from the left. In what follows, we recall the sets $\mathfrak{F} = \mathfrak{F}_{A, B, C}$ and $\mathfrak{G}_{\Psi}^A$ from \Cref{definitionf} and \Cref{domainaugmentedboundary}, respectively.

\begin{cor} 
	
	\label{ensemblesnn1}
	
	Let $\Psi$ denote a positive integer. It is possible to couple the uniform measures on $\mathfrak{F} = \mathfrak{F}_{A, B, C}$ and on $\mathfrak{G}_{\Psi}^A$ such that, if $\big(\mathcal{P}, \mathcal{P}^{\aug} \big) \in \mathfrak{F} \times \mathfrak{G}_{\Psi}^A$ is chosen with respect to this coupled measure, then the following holds. Denoting $\mathcal{P} = \big( \textbf{\emph{p}}_1, \textbf{\emph{p}}_2, \ldots , \textbf{\emph{p}}_{A + C} \big)$ and $\mathcal{P}^{\aug} = \big( \textbf{\emph{p}}_1^{\aug}, \textbf{\emph{p}}_2^{\aug}, \ldots , \textbf{\emph{p}}_{A + C + 1}^{\aug} \big)$, we have that $\textbf{\emph{p}}_2^{\aug} \le \textbf{\emph{p}}_1$ almost surely. It is also possible to couple these measures in such a way that, under the same notation, $\textbf{\emph{p}}_2 \le \textbf{\emph{p}}_2^{\aug}$ almost surely. 
\end{cor}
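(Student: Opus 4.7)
My plan is to derive both couplings from \Cref{couplemonotone} combined with the Gibbs property \Cref{property}, working on the augmented domain $\mathcal{X}_\Psi$ (which is valid by \Cref{couplemonotonedomains}). The principal obstacle is that $\mathcal{P}$ has $A+C$ paths while $\mathcal{P}^{\aug}$ has $A+C+1$, so the two ensembles cannot be directly compared via the partial order of \Cref{couplemonotone}. I resolve this by adjoining to $\mathcal{P}$ a deterministic ``phantom'' path so that both ensembles become $(A+C+1)$-path ensembles on $\mathcal{X}_\Psi$, and then matching them by index.

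For the first coupling ($\textbf{p}_2^{\aug} \le \textbf{p}_1$), I take the phantom $\textbf{p}_0$ to be the unique path from $(0,-\Psi)$ running east along the bottom row $y = -\Psi$ of $\mathcal{X}_\Psi$ to the SE-most boundary exit $(A+2B+C+2,-\Psi)$; since any directed path with those endpoints is forced to stay on $y = -\Psi$, $\textbf{p}_0$ is deterministic and adjoining it does not affect the law of the remaining paths. I also extend each east-exiting path $\textbf{p}_i$ of $\mathcal{P}$ by one east step to reach $\mathcal{X}_\Psi$'s outer boundary. A direct case analysis over the four exit regimes — phantom ($i=1$), east-exits ($i\in[2,A]$), the transitional exit ($i=A+1$), and top exits ($i\ge A+2$) — verifies that the resulting boundary data $(\widehat{\textbf{u}},\widehat{\textbf{w}})$ satisfies $\widehat{\textbf{u}} = \textbf{u}^{\mathcal{P}^{\aug}}$ and $\widehat{w}_i \ge w_i^{\mathcal{P}^{\aug}}$ componentwise in the $\le$-partial order. \Cref{couplemonotone} then furnishes a coupling under which $\textbf{p}_i^{\aug} \le \widehat{\textbf{p}}_i$ for each $i$; specializing to $i=2$ (so that $\widehat{\textbf{p}}_2 = \textbf{p}_1$) yields $\textbf{p}_2^{\aug} \le \textbf{p}_1$.

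For the second coupling ($\textbf{p}_2 \le \textbf{p}_2^{\aug}$), I use the index matching $\textbf{p}_i \leftrightarrow \textbf{p}_i^{\aug}$ for $i \in [1,A+C]$. A short computation shows $u_i^{\mathcal{P}} = (0,i) \le u_i^{\mathcal{P}^{\aug}} = (0,i-1)$, and an analogous case analysis shows $w_i^{\mathcal{P}} \le w_i^{\mathcal{P}^{\aug}}$ componentwise. To absorb the unmatched path $\textbf{p}_{A+C+1}^{\aug}$, I adjoin a topmost deterministic phantom path to $\mathcal{P}$ on a slightly enlarged ambient domain with additional west-boundary sites above $y = A+C$, whose entrance and exit lie NW of $u_{A+C+1}^{\mathcal{P}^{\aug}}$ and $w_{A+C+1}^{\mathcal{P}^{\aug}}$ respectively. \Cref{couplemonotone} then yields $\widehat{\textbf{p}}_i \le \textbf{p}_i^{\aug}$ for all $i$, whose $i=2$ specialization is $\textbf{p}_2 \le \textbf{p}_2^{\aug}$.

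The main technical obstacle in both parts is verifying that after removing the deterministic phantom and restricting to $\mathcal{T}_{A,B,C}$, the marginal distribution of the enlarged ensemble coincides with the uniform law on $\mathfrak{F}_{A,B,C}$. The path extensions past $\mathcal{T}_{A,B,C}$'s boundary into $\mathcal{X}_\Psi$'s outer boundary are in general not unique, due to possible ``wiggling'' of the paths through the extra column $x = A+2B+C+1$ of $\mathcal{X}_\Psi$; this will be handled by applying the Gibbs property \Cref{property} on the rectangular subdomain $\mathcal{T}_{A,B,C} \subset \mathcal{X}_\Psi$ and integrating out the wiggling via a further monotonicity argument, ensuring that the desired coupling descends from the enlarged setting to the original one.
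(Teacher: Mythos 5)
Your route is genuinely different from the paper's. The paper never inflates $\mathcal{P}$ to an $(A+C+1)$-path ensemble: instead it uses the Gibbs property (\Cref{property}) to integrate out one path of $\mathcal{P}^{\aug}$ — conditioning on $\textbf{p}_1^{\aug} = \textbf{g}$ for the first coupling and on $\textbf{p}_{A+C+1}^{\aug} = \textbf{f}$ for the second — so that both sides become $(A+C)$-path ensembles on $\mathcal{X}_\Psi$ with the \emph{same} (respectively, ordered) boundary data, differing only by the presence of the barrier $\textbf{g}$ (resp.\ $\textbf{f}$); \Cref{couplemonotone} is then applied with $\textbf{g}$ versus $\infty$ (resp.\ $\textbf{f}$ versus $-\infty$), and the barrier is averaged out at the end. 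This in particular makes your entire topmost-phantom construction for the second coupling unnecessary, and avoids the attendant problems it creates: the enlarged ambient domain you introduce is no longer obviously a rectangular subdomain of a three-bundle domain as \Cref{couplemonotone} requires, and $\mathcal{P}^{\aug}$ would have to be reinterpreted as an ensemble on that enlarged domain before the two laws can even be compared.

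The more serious issue is the step you defer. For \Cref{couplemonotone} to apply, the law of $\widehat{\mathcal{P}}$ must be the \emph{uniform} measure on some $\mathfrak{E}^{\widehat{\textbf{u}}, \widehat{\textbf{w}}; A}_{\textbf{f}, \textbf{g}; \mathcal{X}_\Psi}$ as in \Cref{probabilitypaths}. Your deterministic ``one east step'' extension maps $\mathfrak{F}_{A,B,C}$ injectively into $\mathfrak{E}^{\widehat{\textbf{u}}, \widehat{\textbf{w}}; A}_{-\infty,\infty;\mathcal{X}_\Psi}$, but not onto it: an ensemble with exit $\widehat{w}_{j+1}$ on the east boundary need not reach the column $x = A+2B+C+1$ at the exit height from the west — it may enter that column lower down and travel north inside it before exiting. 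The image of your map is exactly the set of ensembles using no vertical edge of the added column, and this constraint is \emph{not} of the form $\mathcal{Q} \le \textbf{g}$ or $\mathcal{Q} \ge \textbf{f}$ (it is not even a monotone event in the partial order on ensembles, and it cannot be imposed path-by-path through a single barrier). Consequently the law of $\widehat{\mathcal{P}}$ is the uniform measure conditioned on an event that \Cref{couplemonotone} cannot absorb, and the sentence ``integrating out the wiggling via a further monotonicity argument'' is a placeholder rather than a proof: you would need either to exhibit a measure-preserving, order-compatible bijection between $\mathfrak{F}_{A,B,C}$ and the full set $\mathfrak{E}^{\widehat{\textbf{u}}, \widehat{\textbf{w}}; A}_{-\infty,\infty;\mathcal{X}_\Psi}$, or to show that the conditioning can only push $\widehat{\textbf{p}}_2$ in the favorable direction, and neither is done. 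Since identifying the law of a $\mathcal{T}$-ensemble with that of an $\mathcal{X}_\Psi$-ensemble is precisely the delicate content of this corollary, the proposal is incomplete at its central point.
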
 

\begin{proof}
	
	Let us denote the uniform measures on $\mathfrak{F}$ and $\mathfrak{G}_{\Psi}^A$ by $\mathbb{P}$ and $\mathbb{P}^{\aug}$, respectively. 
	
	We begin by establishing the first statement of the corollary. To that end, let $(\textbf{u}, \textbf{w})$ denote any pair constituting admissible boundary data on $\mathcal{X}$, and let $\textbf{g}$ be a path that enters and exits $\mathcal{X} = \mathcal{X}_{A, B, C; \Psi}$ at $u_0' = (0, -\Psi)$ and $w_0' = (A + 2B + C + 2, A + B + C + 1)$, respectively. In particular, $\textbf{g}$ enters and exits $\mathcal{X}$ in the same way as does $\textbf{p}_1^{\aug} \in \mathcal{P}^{\aug}$. Then, \Cref{couplemonotone} and \Cref{couplemonotonedomains} together imply that it is possible to couple the uniform measures on $\mathfrak{E} = \mathfrak{E}_{-\infty, \infty; \mathcal{X}}^{\textbf{u}, \textbf{w}; A}$ and $\mathfrak{E}' = \mathfrak{E}_{-\infty, \textbf{g}; \mathcal{X}}^{\textbf{u}, \textbf{w}; A}$ such that the following holds. If $(\mathcal{D}', \mathcal{D}) \in \mathfrak{E}' \times \mathfrak{E}$ is a pair of directed path ensembles on $\mathcal{X}$ chosen from this coupled measure, then $\mathcal{D}' \le \mathcal{D}$. 
	
	We in particular let $(\textbf{u}, \textbf{w})$ be obtained by removing the rightmost path from augmented domain-wall boundary data on $\mathcal{X}$ (recall \Cref{domainaugmentedboundary}). Stated alternatively, $(\textbf{u}, \textbf{w})$ is defined so that $\big( \textbf{u} \cup \{ u_0' \}, \textbf{w} \cup \{ w_0' \} \big)$ coincides with augmented domain-wall boundary data on $\mathcal{X}$. Observe that the law of the directed path ensemble $\mathcal{D}$, when restricted to $\mathcal{T}$, coincides with that of $\mathcal{P}$. 
	
	Now, average $\textbf{g}$ over the marginal of $\textbf{p}_1^{\aug} \in \mathcal{P}^{\aug}$ under $\mathbb{P}^{\aug}$. Then, after this averaging, the law of $\mathcal{D}'$ coincides with that of $\big( \textbf{p}_2^{\aug}, \textbf{p}_3^{\aug}, \ldots , \textbf{p}_{A + C + 1}^{\aug} \big)$. In view of this and the fact that $\mathcal{D}|_{\mathcal{T}}$ is has the same law as $\mathcal{P}$, upon averaging the coupling between $\mathcal{D}'$ and $\mathcal{D}$ in the same way and then comparing rightmost directed paths, we deduce the existence of a coupling between $\mathcal{P}$ and $\mathcal{P}^{\aug}$ such that $\textbf{p}_2^{\aug} \le \textbf{p}_1$ almost surely. This establishes the first statement of the corollary. 
	
	To establish the second statement of the corollary, we proceed similarly, except that we now average over the leftmost path in $\mathcal{P}^{\aug}$ instead of over the rightmost one. More specifically, let $A'' \le A$ denote any nonnegative integer, and let $\textbf{f}$ denote any path that enters and exits $\mathcal{X}$ at  $u_{A + C + 1}'' = (0, A + C)$ and  $w_{A + C + 1}'' = (A + 2B + 1, 2A + B + C + 1)$, respectively. In particular, $\textbf{f}$ enters and exits $\mathcal{X}$ in the same way as does $\textbf{p}_{A + C + 1}^{\aug} \in \mathcal{P}^{\aug}$.
	
	Define $\textbf{u}$ and $\textbf{w}$ as above (from augmented domain-wall boundary data on $\mathcal{X}$ with the rightmost path removed). Further let $(\textbf{u}'', \textbf{w}'')$ be obtained from removing the leftmost path in augmented domain-wall boundary data on $\mathcal{X}$. Stated alternatively, $(\textbf{u}'', \textbf{w}'')$ is defined so that $\big( \textbf{u}'' \cup \{ u_{A + C + 1}'' \}, \textbf{w}'' \cup \{ w_{A + C + 1}'' \} \big)$ coincides with augmented domain-wall boundary data on $\mathcal{X}$. Observe that $\textbf{u}'' \ge \textbf{u}$ and $\textbf{w}'' \ge \textbf{w}$. 
	
	Then, \Cref{couplemonotone} and \Cref{couplemonotonedomains} together imply that it is possible to couple the uniform measures on $\mathfrak{E} = \mathfrak{E}_{-\infty, \infty; \mathcal{X}}^{\textbf{u}, \textbf{w}; A}$ and $\mathfrak{E}'' = \mathfrak{E}_{\textbf{f}, \infty; \mathcal{X}}^{\textbf{u}'', \textbf{w}''; A''}$ such that the following holds. If $(\mathcal{D}'', \mathcal{D}) \in \mathfrak{E}'' \times \mathfrak{E}$ is a pair of directed path ensembles on $\mathcal{X}$ chosen from this coupled measure, then $\mathcal{D} \le \mathcal{D}''$. 
	
	Now, once again average $\textbf{f}$ over the marginal of $\textbf{p}_{A + C + 1}^{\aug} \in \mathcal{P}^{\aug}$ under $\mathbb{P}^{\aug}$, and set $A'' = \max \{ A - 1, 0 \}$ (which is the number of paths in $\mathcal{D}''$ that contain a diagonal edge). Then, the law of $\mathcal{D}''$ after this averaging coincides with that of $\big( \textbf{p}_1^{\aug}, \textbf{p}_2^{\aug}, \ldots , \textbf{p}_{A + C}^{\aug} \big)$. It follows as previously (using the fact that $\mathcal{D}|_{\mathcal{T}}$ is has the same law as $\mathcal{P}$) that averaging the above coupling between $\mathcal{D}$ and $\mathcal{D}''$ with respect to $\textbf{f}$ in this way yields a coupling between $\mathcal{P}$ and $\mathcal{P}^{\aug}$ such that $\textbf{p}_2 \le \textbf{p}_2^{\aug}$ almost surely. This establishes  the second statement of the corollary. 
\end{proof}

\subsection{Proof of \Cref{couplemonotone}} 

\label{ProofMonotone} 

The proof of \Cref{couplemonotone} will be similar to those of Lemma 2.6 and Lemma 2.7 of \cite{LE} and Lemma 2.6 and Lemma 2.7 of \cite{ELE}. In particular, it will use the Glauber dynamics on directed path ensembles on $\mathcal{T}$, which can be described as follows. We first define two switching operations on the quadrilateral faces of $\mathcal{T}$.

\begin{definition} 
	
	\label{movepathsdefinition}

Let $F$ be a quadrilateral face of $\mathcal{T}$, and denote its vertices by $v_1, v_2, v_3, v_4$, listed in counterclockwise order with $v_1$ to the southwest; see the left side of \Cref{ensemblemovepath}. The upwards and downwards switching operations on $F$ are defined to be procedures that alter the arrow configurations at the vertices of $F$. 

More specifically, if our path ensemble contains the two arrows connecting $(v_1, v_2)$ and $(v_2, v_3)$ but does not contain either of the arrows connecting $(v_1, v_4)$ and $(v_4, v_3)$, then the applying the \emph{upwards switching operation} to $F$ removes the former two arrows and inserts the latter two arrows (as shown in the left side of \Cref{ensemblemovepath}). Otherwise, applying this operation leaves the arrows along $F$ unchanged.

Similarly, if our path ensemble contains the two arrows connecting $(v_1, v_4)$ and $(v_4, v_3)$ but does not contain either of the arrows connecting $(v_1, v_2)$ and $(v_2, v_3)$, then the \emph{downwards switching operation} again removes the former two arrows and inserts the latter two arrows (see the right side of \Cref{ensemblemovepath}). Otherwise, it leaves the arrows along $F$ unchanged.

\end{definition}

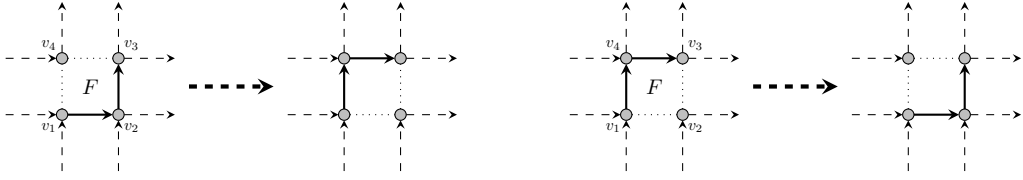
\begin{figure}[t]
	
	\begin{center}
		
		\begin{tikzpicture}[
		>=stealth,
		scale = .75
		]
		
		\draw[->, black, thick] (.1, 0) -- (.9, 0);
		\draw[->, black, thick] (1, .1) -- (1, .9);
		
		\draw[-, black, dotted] (.1, 1) -- (.9, 1);
		\draw[-, black, dotted] (0, .1) -- (0, .9);
				
		\draw[->, black, thick] (5, .1) -- (5, .9);
		\draw[->, black, thick] (5.1, 1) -- (5.9, 1);

		\draw[-, black, dotted] (5.1, 0) -- (5.9, 0);
		\draw[-, black, dotted] (6, .1) -- (6, .9);
		
		\draw[->, black, dashed] (4, 0) -- (4.9, 0);
		\draw[->, black, dashed] (4, 1) -- (4.9, 1);
		\draw[->, black, dashed] (6.1, 0) -- (7, 0);
		\draw[->, black, dashed] (6.1, 1) -- (7, 1);
		\draw[->, black, dashed] (5, -1) -- (5, -.1);
		\draw[->, black, dashed] (6, -1) -- (6, -.1);
		\draw[->, black, dashed] (5, 1.1) -- (5, 2);
		\draw[->, black, dashed] (6, 1.1) -- (6, 2);
		
		\filldraw[fill=gray!50!white, draw=black] (5, 0) circle [radius=.1];
		\filldraw[fill=gray!50!white, draw=black] (5, 1) circle [radius=.1];
		\filldraw[fill=gray!50!white, draw=black] (6, 0) circle [radius=.1];
		\filldraw[fill=gray!50!white, draw=black] (6, 1) circle [radius=.1];
		
		\draw[->, black, dashed] (-1, 0) -- (-.1, 0);
		\draw[->, black, dashed] (-1, 1) -- (-.1, 1);
		\draw[->, black, dashed] (1.1, 0) -- (2, 0);
		\draw[->, black, dashed] (1.1, 1) -- (2, 1);
		\draw[->, black, dashed] (0, -1) -- (0, -.1);
		\draw[->, black, dashed] (1, -1) -- (1, -.1);
		\draw[->, black, dashed] (0, 1.1) -- (0, 2);
		\draw[->, black, dashed] (1, 1.1) -- (1, 2);
		
		\filldraw[fill=gray!50!white, draw=black] (0, 0) circle [radius=.1] node[below = 5, left = 0, scale = .6]{$v_1$};
		\filldraw[fill=gray!50!white, draw=black] (1, 0) circle [radius=.1] node[below = 5, right = 0, scale = .6]{$v_2$};
		\filldraw[fill=gray!50!white, draw=black] (1, 1) circle [radius=.1] node[above = 5, right = 0, scale = .6]{$v_3$};
		\filldraw[fill=gray!50!white, draw=black] (0, 1) circle [radius=.1] node[above = 5, left = 0, scale = .6]{$v_4$};
		
		\draw[->, dashed, ultra thick] (2.25, .5) -- (3.75, .5);
		
		\filldraw[fill=gray!50!white, draw=black] (.5, .5) circle [radius=0] node[scale = .8]{$F$};

		\draw[-, black, dotted] (10.1, 0) -- (10.9, 0);
		\draw[-, black, dotted] (11, 0.1) -- (11, 0.9);
		
		\draw[-, black, dotted] (15.1, 1) -- (15.9, 1);
		\draw[-, black, dotted] (15, 0.1) -- (15, 0.9);
		
		\draw[->, black, thick] (15.1, 0) -- (15.9, 0);
		\draw[->, black, thick] (16, 0.1) -- (16, 0.9);
		
		\draw[->, black, dashed] (14, 0) -- (14.9, 0);
		\draw[->, black, dashed] (14, 1) -- (14.9, 1);
		\draw[->, black, dashed] (16.1, 0) -- (17, 0);
		\draw[->, black, dashed] (16.1, 1) -- (17, 1);
		\draw[->, black, dashed] (15, -1) -- (15, -.1);
		\draw[->, black, dashed] (16, -1) -- (16, -.1);
		\draw[->, black, dashed] (15, 1.1) -- (15, 2);
		\draw[->, black, dashed] (16, 1.1) -- (16, 2);
		
		\filldraw[fill=gray!50!white, draw=black] (15, 0) circle [radius=.1];
		\filldraw[fill=gray!50!white, draw=black] (15, 1) circle [radius=.1];
		\filldraw[fill=gray!50!white, draw=black] (16, 0) circle [radius=.1];
		\filldraw[fill=gray!50!white, draw=black] (16, 1) circle [radius=.1];
		
		\draw[->, black, thick] (10, 0.1) -- (10, 0.9);
		\draw[->, black, thick] (10.1, 1) -- (10.9, 1);
		
		\draw[->, black, dashed] (9, 0) -- (9.9, 0);
		\draw[->, black, dashed] (9, 1) -- (9.9, 1);
		\draw[->, black, dashed] (11.1, 0) -- (12, 0);
		\draw[->, black, dashed] (11.1, 1) -- (12, 1);
		\draw[->, black, dashed] (10, -1) -- (10, -.1);
		\draw[->, black, dashed] (11, -1) -- (11, -.1);
		\draw[->, black, dashed] (10, 1.1) -- (10, 2);
		\draw[->, black, dashed] (11, 1.1) -- (11, 2);
		
		\filldraw[fill=gray!50!white, draw=black] (10, 0) circle [radius=.1] node[below = 5, left = 0, scale = .6]{$v_1$};
		\filldraw[fill=gray!50!white, draw=black] (11, 0) circle [radius=.1] node[below = 5, right = 0, scale = .6]{$v_2$};
		\filldraw[fill=gray!50!white, draw=black] (11, 1) circle [radius=.1] node[above = 5, right = 0, scale = .6]{$v_3$};
		\filldraw[fill=gray!50!white, draw=black] (10, 1) circle [radius=.1] node[above = 5, left = 0, scale = .6]{$v_4$};
		
		\draw[->, dashed, ultra thick] (12.25, 0.5) -- (13.75, 0.5);
		
		\filldraw[fill=gray!50!white, draw=black] (10.5, 0.5) circle [radius=0] node[scale = .8]{$F$};

		\end{tikzpicture}
		
	\end{center}	
	
	\caption{\label{ensemblemovepath} Depicted above are the two potential results of applying a switching operation to a (quadrilateral) face $F$ of $\mathcal{T}$. } 
\end{figure}

Now we can define the Glauber dynamics on the set $\mathfrak{E}_{\textbf{f}, \textbf{g}; \Lambda}^{\textbf{u}, \textbf{w}; r}$ from \Cref{probabilitypaths}.

\begin{definition} 
	
	\label{ensemblemove}
	
	Adopt the notation of \Cref{probabilitypaths} and assign two exponential clocks, each of unit rate, to every quadrilateral face $F$ of $\Lambda \subseteq \mathcal{T}$ between $\textbf{f}$ and $\textbf{g}$ and that does not share an edge with either $\textbf{f}$ or $\textbf{g}$ (although it might share vertices with these two paths); we call the first clock the \emph{upwards clock} and the second the \emph{downwards clock} associated with $F$. If the triangular face is in $\Lambda$, it is not assigned a clock. 
	
	The \emph{Glauber dynamics} is defined to be the Markov chain on $\mathfrak{E}_{\textbf{f}, \textbf{g}; \Lambda}^{\textbf{u}, \textbf{w}; r}$ that applies the upwards (or downwards) switching operation to a quadrilateral face $F$ of $\Lambda$ whenever its upwards (or downwards, respectively) clock rings. 
	
\end{definition}

Observe that, since the triangular face of $\mathcal{T}$ is never switched, the local switching operation of \Cref{movepathsdefinition} preserves the set of $r$-restricted directed path ensembles on $\Lambda$, and so the Glauber dynamics indeed acts on $\mathfrak{E}_{\textbf{f}, \textbf{g}; \Lambda}^{\textbf{u}, \textbf{w}; r}$. The following lemma indicates that the stationary measure for these dynamics is uniform. 

\begin{lem}

\label{stationaryt}

Adopt the notation of \Cref{ensemblemove}. The unique stationary measure for the Glauber dynamics on $\mathfrak{E}_{\textbf{\emph{f}}, \textbf{\emph{g}}; \Lambda}^{\textbf{\emph{u}}, \textbf{\emph{w}}; r}$ is the uniform one. 
	
\end{lem}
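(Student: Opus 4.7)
The plan is to verify two standard facts: (i) the Glauber dynamics is irreducible on $\mathfrak{E}_{\textbf{f}, \textbf{g}; \Lambda}^{\textbf{u}, \textbf{w}; r}$, and (ii) its transition rates are symmetric, so that the uniform measure satisfies detailed balance. Together with the finiteness of the state space, these give that the uniform measure is the unique invariant measure.

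First I would check symmetry. Inspecting \Cref{movepathsdefinition}, the upwards and downwards switching operations on a quadrilateral face $F$ are mutual inverses: the only configurations on $F$ that are altered are the two local patterns shown in \Cref{ensemblemovepath}, and each operation converts one into the other while leaving the remaining four possible local configurations untouched. Because each such face is equipped with one upwards clock and one downwards clock, both of rate $1$ (and the triangular face has no clock and is never altered), the rate at which a given ensemble $\mathcal{P}$ transitions to a neighboring ensemble $\mathcal{P}'$ (differing by a single switch at some face $F$) equals the rate at which $\mathcal{P}'$ transitions to $\mathcal{P}$. Moreover, one must verify that every switch preserves membership in $\mathfrak{E}_{\textbf{f}, \textbf{g}; \Lambda}^{\textbf{u}, \textbf{w}; r}$: the boundary data $(\textbf{u}, \textbf{w})$ is untouched because the clocked faces share no edges with $\textbf{f}$ or $\textbf{g}$; the bounding constraint $\textbf{f} \le \textbf{p}_i \le \textbf{g}$ is preserved for the same reason; the number of paths using diagonal edges is preserved because the triangular face is never flipped; and the $r$-restriction is therefore maintained. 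Given these two symmetric-rate observations, the detailed balance relation with respect to the counting measure is immediate, so the uniform measure is reversible and hence stationary.

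The main obstacle is irreducibility. Here I would argue as follows. Associate to each $\mathcal{P} \in \mathfrak{E}_{\textbf{f}, \textbf{g}; \Lambda}^{\textbf{u}, \textbf{w}; r}$ a height function $h_{\mathcal{P}}$ on the faces of $\Lambda$ (for instance, by counting the number of paths passing weakly to the left of a given face, or equivalently as the standard six-vertex height function). Two ensembles $\mathcal{P}, \mathcal{P}'$ with the same boundary data $(\textbf{u}, \textbf{w})$ and the same bounding paths $(\textbf{f}, \textbf{g})$ produce height functions that agree on $\partial \Lambda$ and lie between $h_{\textbf{f}}$ and $h_{\textbf{g}}$. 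An upwards (resp.\ downwards) switch corresponds to a local increment (resp.\ decrement) of $h$ by a unit at a single face $F$. Define the pointwise maximum $h^+ = \max(h_{\mathcal{P}}, h_{\mathcal{P}'})$; standard reasoning shows $h^+$ is itself the height function of some valid ensemble $\mathcal{P}^+ \in \mathfrak{E}_{\textbf{f}, \textbf{g}; \Lambda}^{\textbf{u}, \textbf{w}; r}$ (here one uses that the diagonal edges, and hence the number of paths crossing them, are shared by all ensembles, so the $r$-restriction is automatic). One can then successively raise $h_{\mathcal{P}}$ to $h^+$ by repeatedly finding a face $F$ at which $h_{\mathcal{P}} < h^+$ and the configuration admits an upwards switch at $F$, each such operation being available since the $r$-restriction and boundary constraints persist throughout. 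The same argument applied in reverse lowers $h^+$ down to $h_{\mathcal{P}'}$. Concatenating these two sequences of switches connects $\mathcal{P}$ to $\mathcal{P}'$, proving irreducibility.

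With irreducibility and symmetric rates established, the Markov chain on the finite set $\mathfrak{E}_{\textbf{f}, \textbf{g}; \Lambda}^{\textbf{u}, \textbf{w}; r}$ has a unique invariant distribution, which coincides with the uniform one by the detailed balance computation above. The delicate point worth special care is the step of verifying that the ``join'' ensemble $\mathcal{P}^+$ truly lies in $\mathfrak{E}_{\textbf{f}, \textbf{g}; \Lambda}^{\textbf{u}, \textbf{w}; r}$ — in particular, the claim that taking pointwise maxima of height functions respects the $r$-restriction across the defect line; this is ultimately a consequence of the fact that whether a path of $\mathcal{P}$ crosses a diagonal edge of $\Lambda$ is entirely determined by the boundary data $(\textbf{u}, \textbf{w})$ (together with $r$), and thus cannot change under switches that avoid the triangular face.
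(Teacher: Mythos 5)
Your overall skeleton is the same as the paper's: the chain is a finite-state Markov chain, so it suffices to check irreducibility (for uniqueness) and reversibility of the uniform measure via detailed balance with unit switching rates (for stationarity). The paper dispatches irreducibility with a one-line appeal to "induction on $m = |\textbf{u}|$" (peel off the extremal path, connect it to its extremal position, recurse), whereas you argue via the height-function lattice structure: form the pointwise join $h^+$ and connect each ensemble to it by monotone sequences of flips. Both are standard and both work; yours has the advantage of making explicit why intermediate configurations stay inside $\mathfrak{E}_{\textbf{f}, \textbf{g}; \Lambda}^{\textbf{u}, \textbf{w}; r}$ (squeezing between $h_{\mathcal{P}}$ and $h^+$), at the cost of two asserted-but-unproven steps — that the join of two admissible height functions is again admissible, and that whenever $h_{\mathcal{P}} < h^+$ somewhere an upward-flippable face exists (the usual "extremal deficient face is flippable" argument, which should be checked to go through at the quadrilateral faces carrying diagonal edges).

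One justification you give is not quite right as stated: whether a given path uses a diagonal edge is \emph{not} determined by the boundary data alone — a path can reach the upper block either by jumping across a diagonal edge or by travelling right past the triangular face and then turning up, so the $r$-restriction is a genuine interior constraint. The correct reason your join argument survives is that the number of occupied diagonal edges equals a boundary-determined constant minus the height at the triangular face, so the $r$-restriction is equivalent to pinning the height function at that single interior face; this value is preserved by pointwise maxima (the max of two equal values) and by every switch (the triangular face is never flipped). With that repair your argument is complete.
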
 

\begin{proof}
	
	The uniqueness of the stationary measure follows from the fact the Glauber dynamics is an irreducible Markov chain on the finite set $\mathfrak{E}_{\textbf{f}, \textbf{g}; \Lambda}^{\textbf{u}, \textbf{w}; r}$ (which is quickly verified by induction on $m = |\textbf{u}|$). That the uniform measure is stationary follows from its reversibility with respect to the Glauber dynamics, which is a consequence of the detailed balance condition and the fact that all switchings occur at unit rate. We refer to Section 1.6 and Section 3.3.2 of \cite{CMT} for more details. 
\end{proof}

To state the next lemma, first observe that the Glauber dynamics can be simultaneously coupled on all of the sets $\mathfrak{E}_{\textbf{f}, \textbf{g}; \Lambda}^{\textbf{u}, \textbf{w}; r}$ as follows. Each quadrilateral face of $\mathcal{T}$ is assigned an exponential clock of unit rate and, when the upwards (or downwards) clock assigned to a quadrilateral face $F$ rings, the Glauber dynamics on $\mathfrak{E}_{\textbf{f}, \textbf{g}; \Lambda}^{\textbf{u}, \textbf{w}; r}$ applies the upwards (or downwards, respectively) switching operation to $F$ if and only if $F$ is in $\Lambda$; is between $\textbf{f}$ and $\textbf{g}$; and does not share an edge with either of these two paths. If the face $F$ does not satisfy these conditions, then the Glauber dynamics on $\mathfrak{E}_{\textbf{f}, \textbf{g}; \Lambda}^{\textbf{u}, \textbf{w}; r}$ ignores this clock ring, and $F$ is not changed. 

The following lemma shows that monotonicity is preserved under the Glauber dynamics. 

\begin{lem}

\label{dynamicalmonotone}

Adopt the notation of \Cref{couplemonotone}, let $t \ge 0$ be a real number, and assume that $\mathcal{P} \in \mathfrak{E}_{\textbf{\emph{f}}, \textbf{\emph{g}}; \Lambda}^{\textbf{\emph{u}}, \textbf{\emph{w}}; r}$ and $\mathcal{P}' \in \mathfrak{E}_{\textbf{\emph{f}}', \textbf{\emph{g}}'; \Lambda}^{\textbf{\emph{u}}', \textbf{\emph{w}}'; r'}$ are directed path ensembles on $\Lambda$ such that $\mathcal{P} \le \mathcal{P}'$. 

Apply the Glauber dynamics on $\mathfrak{E}_{\textbf{\emph{f}}, \textbf{\emph{g}}; \Lambda}^{\textbf{\emph{u}}, \textbf{\emph{w}}; r}$ and $\mathfrak{E}_{\textbf{\emph{f}}', \textbf{\emph{g}}'; \Lambda}^{\textbf{\emph{u}}', \textbf{\emph{w}}'; r'}$, coupled as above. If $\textbf{\emph{G}}_t \mathcal{P}$ and $\textbf{\emph{G}}_t \mathcal{P}'$ denote the images of $\mathcal{P}$ and $\mathcal{P}'$, respectively, after running these dynamics for time $t$, then $\textbf{\emph{G}}_t \mathcal{P} \le \textbf{\emph{G}}_t \mathcal{P}'$ almost surely. 

\end{lem}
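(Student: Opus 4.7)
The plan is to reduce the claim to a one-step monotonicity statement via the finiteness of the Poisson clock rings on $[0,t]$, and then to prove that one-step statement through the standard height-function encoding of directed path ensembles. Since each of the finitely many faces of $\Lambda$ carries two unit-rate exponential clocks, almost surely only finitely many clocks ring during $[0,t]$ and these rings occur at distinct times. By the Markov property of the coupled Glauber dynamics and iteration, it therefore suffices to prove: if $\mathcal{P}\le \mathcal{P}'$ immediately before a single clock rings at some face $F$, then $\mathcal{P}\le \mathcal{P}'$ immediately after the (possibly trivial) coupled updates associated to that ring.

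To any directed path ensemble $\mathcal{Q}$ on $\Lambda$ I would associate the standard six-vertex height function $h_{\mathcal{Q}}$, an integer-valued function on the centers of the quadrilateral faces of $\Lambda$ whose values at any two adjacent face centers differ by exactly $\pm 1$, with the sign at each edge dictated by whether that edge is occupied by an arrow. The parity of $h_{\mathcal{Q}}(F)$ at each fixed face $F$ is a purely geometric quantity (inherited from the checkerboard structure of $\Lambda$ and independent of $\mathcal{Q}$), so for two ensembles on the same domain the difference $h_{\mathcal{P}}(F) - h_{\mathcal{P}'}(F)$ is always even. With an appropriate choice of the additive normalization determined by the boundary data, the partial order $\mathcal{P}\le \mathcal{P}'$ becomes equivalent to the pointwise inequality $h_{\mathcal{P}}\ge h_{\mathcal{P}'}$ on the face centers of $\Lambda$; the hypotheses $\textbf{u}\le \textbf{u}'$, $\textbf{w}\le \textbf{w}'$, $\textbf{f}\le \textbf{f}'$, $\textbf{g}\le \textbf{g}'$ and $r\ge r'$ together encode exactly this inequality on the frozen faces, which include those adjacent to the triangular face, on which the Glauber dynamics never acts and so $r$ and $r'$ are time-invariant. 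Under this encoding the upward switch at $F$ raises $h_{\mathcal{Q}}(F)$ by $2$ and is applicable iff $F$ is a strict local minimum of $h_{\mathcal{Q}}$ relative to its four neighbors, while the downward switch lowers $h_{\mathcal{Q}}(F)$ by $2$ and is applicable iff $F$ is a strict local maximum.

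The core local claim is then: if $h_{\mathcal{P}}\ge h_{\mathcal{P}'}$ everywhere and the upward switch is applicable to $\mathcal{P}'$ at $F$, then the inequality is preserved after the coupled updates. If $h_{\mathcal{P}}(F) > h_{\mathcal{P}'}(F)$, the parity observation gives $h_{\mathcal{P}}(F) \ge h_{\mathcal{P}'}(F) + 2$, so incrementing $h_{\mathcal{P}'}(F)$ by $2$ still leaves $h_{\mathcal{P}}(F) \ge h_{\mathcal{P}'}(F)$. If instead $h_{\mathcal{P}}(F) = h_{\mathcal{P}'}(F)$, then at every neighbor $F'$ of $F$,
\begin{equation*}
h_{\mathcal{P}}(F') \;\ge\; h_{\mathcal{P}'}(F') \;=\; h_{\mathcal{P}'}(F) + 1 \;=\; h_{\mathcal{P}}(F) + 1,
\end{equation*}
while $h_{\mathcal{P}}(F') - h_{\mathcal{P}}(F) \in \{-1,+1\}$; combining these forces $h_{\mathcal{P}}(F') = h_{\mathcal{P}}(F) + 1$ at every neighbor, so $F$ is also a strict local minimum of $h_{\mathcal{P}}$. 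The upward switch is therefore applicable to $\mathcal{P}$ as well, both heights increment simultaneously by $2$, and the equality is maintained. The analogous assertion for downward switches is strictly symmetric, so every possible one-step update preserves $h_{\mathcal{P}}\ge h_{\mathcal{P}'}$.

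The main obstacle I expect is the bookkeeping required to realize the height-function comparison consistently across the two (generally distinct) boundary-path pairs $(\textbf{f},\textbf{g})$ and $(\textbf{f}',\textbf{g}')$ and the two boundary-site pairs $(\textbf{u},\textbf{w})$ and $(\textbf{u}',\textbf{w}')$, since the ``active'' switching regions of $\mathcal{P}$ and $\mathcal{P}'$ do not quite coincide. Once the normalizations are chosen to match parities at a common reference face, the path-level hypotheses translate mechanically into boundary-height comparisons on the frozen region; the local claim above then propagates this comparison into the interior, and iterating the one-step monotonicity across the almost surely finite sequence of ring times in $[0,t]$ completes the proof.
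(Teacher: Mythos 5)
Your proof is correct, but it takes a genuinely different route from the paper's. The paper argues directly at the level of paths and arrows: assuming $\mathcal{S}(\mathcal{P}) \le \mathcal{S}(\mathcal{P}')$ fails after an upward ring at $F$, it deduces that the switch must have moved some path $\textbf{p}_k'$ while the same local pattern $(v_1,v_2),(v_2,v_3)$ was present in $\textbf{p}_k$, and then that the edges $(v_1,v_4),(v_4,v_3)$ must also be vacant in $\mathcal{P}$ (otherwise $\textbf{p}_{k+1} \le \textbf{p}_{k+1}'$ would already have failed before the ring), so the switch applies to $\mathcal{P}$ as well --- a contradiction. You instead pass to the Lieb-type height function, where the one-step claim becomes: a face at which the two heights agree and which is a strict local extremum for one ensemble is automatically a strict local extremum of the same type for the other, by the Lipschitz constraint, while the parity/integrality step handles the case of strict inequality at $F$. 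The two arguments encode the same combinatorial fact; yours buys a cleaner and more symmetric case analysis (the parity step replaces the paper's ad hoc appeal to the neighboring path $\textbf{p}_{k+1}$) at the cost of the bookkeeping you yourself flag: one must verify that the height function is consistently defined across the diagonal edges and around the (frozen) triangular face of $\Lambda$, and that the path-by-path order $\mathcal{P}\le\mathcal{P}'$ is equivalent to the pointwise height inequality once the normalizations are matched through the ordered boundary data. One point that both you and the paper leave implicit: when the switch fires for $\mathcal{P}'$ at $F$ and the local pattern is also present in $\mathcal{P}$, one must still check that $F$ lies in the active region for the $\mathcal{P}$-dynamics (between $\textbf{f}$ and $\textbf{g}$ and sharing no edge with either), since the coupled dynamics only performs the flip there; this does follow from $\textbf{f}\le\textbf{f}'$, $\textbf{g}\le\textbf{g}'$ and the fact that $\textbf{p}_k$ occupies the south and east edges of $F$, but it deserves an explicit sentence in either write-up.
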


\begin{proof}
	
	It suffices to show that monotonicity is preserved under each clock ring. More specifically, let $F$ be any quadrilateral face of $\mathcal{T}$, and let $\mathcal{S} (\mathcal{P}) = \mathcal{S}_F (\mathcal{P}) \in \mathfrak{E}_{\textbf{f}, \textbf{g}; \Lambda}^{\textbf{u}, \textbf{w}; r}$ and $\mathcal{S} (\mathcal{P}') = \mathcal{S}_F (\mathcal{P}') \in \mathfrak{E}_{\textbf{f}', \textbf{g}'; \Lambda}^{\textbf{u}', \textbf{w}'; r'}$ denote the directed path ensembles obtained from $\mathcal{P}$ and $\mathcal{P}'$, respectively, after one of the two clocks assigned to $F$ rings; let us assume that it was an upwards clock, as the case of a downwards clock is entirely analogous. We claim that $\mathcal{S} (\mathcal{P}) \le \mathcal{S} (\mathcal{P}')$.

	Assume to the contrary that this is false. Denote the vertices of $F$ by $v_1, v_2, v_3, v_4$, ordered counterclockwise with $v_1$ is in the southwest corner. Since $\mathcal{P} \le \mathcal{P}'$ and since an upwards switching cannot increase any path in a directed path ensemble, we must have that $\mathcal{S} (\mathcal{P}') \ne \mathcal{P}'$. This implies that there are arrows in $\mathcal{P}'$ connecting $(v_1, v_2)$ and $(v_2, v_3)$ but no arrow connecting either $(v_1, v_4)$ or $(v_4, v_3)$; after the switching, the new directed path ensemble $\mathcal{S} (\mathcal{P}')$ contains the two arrows connecting $(v_1, v_4)$ and $(v_4, v_3)$ but no arrow connecting either $(v_1, v_2)$ or $(v_2, v_3)$. 
	
	The two arrows connecting $(v_1, v_2)$ and $(v_2, v_3)$ are part of some path $\textbf{p}_k' \in \mathcal{P}'$; denote the image of this path under the switching by $\mathcal{S} (\textbf{p}_k') \in \mathcal{S} (\mathcal{P}')$. Similarly define $\mathcal{S} (\textbf{p}_k) \in \mathcal{S} (\mathcal{P})$ to be the image of $\textbf{p}_k \in \mathcal{P}$ after this switching. Then, since $\textbf{p}_k \le \textbf{p}_k'$, if it does not hold that $\mathcal{S} (\textbf{p}_k) \le \mathcal{S} (\textbf{p}_k')$, then the arrows connecting $(v_1, v_2)$ and $(v_2, v_3)$ must be in $\textbf{p}_k$. 
	
	Therefore, neither arrow connecting $(v_1, v_4)$ or $(v_4, v_3)$ can be $\mathcal{P}$ since then $\textbf{p}_{k + 1} \le \textbf{p}_{k + 1}'$ would not hold. Thus, the upwards switching operation on $\mathcal{P}$ also moves the path $\textbf{p}_k$ upwards, meaning that $\mathcal{S} (\textbf{p}_k) \le \mathcal{S} (\textbf{p}_k')$. This is a contradiction, which implies that $\mathcal{S} (\mathcal{P}) \le \mathcal{S} (\mathcal{P}')$. Repeating this for each clock ring, we deduce the lemma. 	 
\end{proof}

\begin{proof}[Proof of \Cref{couplemonotone}]
	
	One can quickly verify (by induction on $m = |\textbf{u}|$, for example) that if $\mathfrak{E} = \mathfrak{E}_{\textbf{f}, \textbf{g}; \Lambda}^{\textbf{u}, \textbf{w}; r}$ and $\mathfrak{E}' = \mathfrak{E}_{\textbf{f}', \textbf{g}'; \Lambda}^{\textbf{u}', \textbf{w}'; r'}$ are both nonempty then there exist directed path ensembles $\mathcal{D} \in \mathfrak{E}$ and $\mathcal{D}' \in \mathfrak{E}'$ on $\Lambda$ such that $\mathcal{D} \le \mathcal{D}'$. Now apply the Glauber dynamics on $\mathfrak{E}$ and $\mathfrak{E}'$, coupled in the way described above \Cref{dynamicalmonotone}. This produces a Markov chain on $\mathfrak{E} \times \mathfrak{E}'$. For any real number $t > 0$, let $\textbf{G}_t (\mathcal{D}, \mathcal{D}') \in \mathfrak{E} \times \mathfrak{E}'$ denote the image of the pair $(\mathcal{D}, \mathcal{D}') \in \mathfrak{E} \times \mathfrak{E}'$ after running these dynamics for $t$.
	
	Since $\mathfrak{E} \times \mathfrak{E}'$ is finite, the sequence $\big\{ \textbf{G}_t (\mathcal{D}, \mathcal{D}') \big\}_{t > 0}$ has a limit point $(\mathcal{P}, \mathcal{P}')$, which is a random variable on $\mathfrak{E} \times \mathfrak{E}'$. By \Cref{stationaryt} and the finiteness of $\mathfrak{E} \times \mathfrak{E}'$, the marginal distributions of $\mathcal{P}$ and $\mathcal{P}'$ are uniform on $\mathfrak{E}$ and $\mathfrak{E}'$, respectively; this therefore produces a coupling between these two uniform probability measures. Applying \Cref{dynamicalmonotone} now implies that $\mathcal{P} \le \mathcal{P}'$ almost surely, from which we deduce the proposition. 
\end{proof}

\section{Tangency of the Added Path}

\label{TangentPath}

In this section we prove \Cref{p1p2tangent} below, which provides a sense in which the rightmost path in a augmented domain-wall $A$-restricted directed path ensemble on $\mathcal{X}$ is approximately tangent to the second path in this ensemble, with high probability. We establish this result in \Cref{TangentEstimate}, after providing some linearity estimates in \Cref{PathLinear}.

\subsection{Linearity of Random Walks} 

\label{PathLinear}

In this section we provide various estimates for the linear behavior of random directed paths. We begin with the following (known) bound on binomial coefficients that will be useful for establishing such estimates. Below, we let $\ell (v_1, v_2)$ denote the line through $v_1$ and $v_2$, for any $v_1, v_2 \in \mathbb{R}^2$.

\begin{lem} 
	
\label{estimateexponenial1} 

Let $M$ be a positive integer and $x, y, R, S \ge 0$ be integers such that $R + S = M$, $x \le R$, and $y \le S$. Let $d$ denote the distance from $(x, y) \in \mathbb{Z}^2$ to the line $\ell = \ell \big( (0, 0), (R, S) \big) \subset \mathbb{R}^2$ connecting $(0, 0)$ to $(R, S)$, that is, $d = d \big( (x, y), \ell \big)$. Then, 
\begin{flalign}
\label{xyrsestimate1} 
\binom{x + y}{x} \binom{R + S - x - y}{R - s} \binom{R + S}{R}^{-1} \le 48 M \exp \left( - \displaystyle\frac{d^2}{4 M} \right).
\end{flalign}
\end{lem}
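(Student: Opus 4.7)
The plan is to recognize the left-hand side of \eqref{xyrsestimate1} as a hypergeometric point probability and then apply a Hoeffding-type tail bound for sampling without replacement. First I would rewrite the quotient by the Vandermonde identity in the form
\begin{flalign*}
\binom{x+y}{x}\binom{R+S-x-y}{R-x}\binom{R+S}{R}^{-1} = \frac{\binom{R}{x}\binom{S}{y}}{\binom{M}{n}}, \qquad n = x+y.
\end{flalign*}
The right side is precisely the probability that a uniformly random lattice path from $(0,0)$ to $(R,S)$ passes through $(x,y)$; equivalently, it is $\mathbb{P}[X = x]$ for the hypergeometric variable $X$ counting east-steps among the first $n$ steps of that path, which has mean $\mathbb{E}[X] = nR/M$.

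Next I would apply Hoeffding's inequality for sampling without replacement, which gives
\begin{flalign*}
\mathbb{P}[X = x] \;\le\; \mathbb{P}\bigl[|X - \mathbb{E}[X]| \ge t \bigr] \;\le\; 2\exp\!\bigl(-2t^2/n\bigr), \qquad t := \bigl|x - nR/M\bigr|.
\end{flalign*}
A direct computation yields $t = |Sx - Ry|/M$, and the elementary point-line distance formula gives $d = |Sx - Ry|/\sqrt{R^2+S^2}$, so $t = d \sqrt{R^2+S^2}/M$. Combining these with $R^2+S^2 \ge M^2/2$ (a consequence of $(R-S)^2 \ge 0$) and $n \le M$ produces
\begin{flalign*}
\frac{2t^2}{n} \;=\; \frac{2 d^2 (R^2 + S^2)}{M^2 n} \;\ge\; \frac{d^2}{n} \;\ge\; \frac{d^2}{M}.
\end{flalign*}
Therefore $\mathbb{P}[X = x] \le 2\exp(-d^2/M)$, and since $2 \exp(-d^2/M) \le 48M \exp(-d^2/(4M))$ for every $M \ge 1$ and every $d \ge 0$, the stated bound follows.

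The proof is essentially mechanical once the hypergeometric reformulation is made, so I do not anticipate a real obstacle. The one step requiring care is the constant bookkeeping in the comparison $\exp(-d^2/M) \le 24 M \exp(-d^2/(4M))$, which amounts to $\exp(-3d^2/(4M)) \le 24M$ — this is immediate since the left side is bounded by $1$ and $24M \ge 1$. If one prefers to avoid invoking Hoeffding for sampling without replacement, an alternative route is to apply Stirling's formula directly to each of the four binomial coefficients: after the standard entropy expansion $\log\binom{m}{k} = m H(k/m) - \tfrac{1}{2}\log\bigl(2\pi k (1 - k/m)\bigr) + O(1/k)$ and a Taylor expansion of $H$ about the straight-line trajectory, the exponential factor collapses to a Gaussian decay in $d^2/M$ while the $\sqrt{\,\cdot\,}$ prefactors account for the polynomial loss $48M$; this route is longer but yields the same constant explicitly.
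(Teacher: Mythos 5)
Your argument is correct, but it takes a genuinely different route from the paper's. You reorganize the ratio via Vandermonde into the hypergeometric point probability $\binom{R}{x}\binom{S}{y}\binom{M}{n}^{-1}$ with $n = x+y$, and then import Hoeffding's concentration inequality for sampling without replacement; the translation $t = |Sx-Ry|/M = d\sqrt{R^2+S^2}/M$ together with $R^2+S^2 \ge M^2/2$ and $n \le M$ then gives the even stronger bound $2\exp(-d^2/M)$, from which \eqref{xyrsestimate1} follows trivially. (The only degenerate case, $n=0$, forces $x=y=d=0$ and the left side equals $1$, so nothing breaks.) The paper instead works directly with the explicit two-sided Stirling bound \eqref{mestimate1}, writes the logarithm of the ratio as $F_{y;R,S}(x)$ plus boundary terms, and shows $F_{y;R,S}'' \le -1/(2M)$ with the critical point at $x = (R/S)y$ on the line $\ell$; integrating twice yields the Gaussian decay, and the prefactor $48M$ is exactly what the Stirling constants cost. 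Your route is shorter and conceptually cleaner, at the price of invoking a nontrivial external theorem (Hoeffding's without-replacement bound, whose proof rests on a convex-ordering or martingale argument); the paper's route is self-contained modulo the elementary Stirling estimate and is the same calculus that reappears in the asymptotic analysis of \Cref{Equationp1}, which is presumably why the author chose it. Either proof is acceptable here.
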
 

\begin{proof}

	Let us assume that $S \ge R$ (as the alternative case $R \ge S$ is entirely analogous), so that $S \ge \frac{M}{2}$. Denote the left side of \eqref{xyrsestimate1} by $P$. Then, the bound 
	\begin{flalign}
	\label{mestimate1} 
	(2 \pi M)^{1 / 2} \left( \displaystyle\frac{M}{e} \right)^M \le M! \le 2 (2 \pi M)^{1 / 2} \left( \displaystyle\frac{M}{e} \right)^M, 
	\end{flalign}
	
	\noindent which holds for any positive integer $M$ (see equations (1) and (2) of \cite{AR}), implies that 
	\begin{flalign}
	\label{pestimate1} 
	\begin{aligned}
	\log P & \le F_{y; R, S} (x)  - y \log y - (S - y) \log (S - y) \\
	& \qquad  - (R + S) \log (R + S) + R \log R + S \log S + \log (48 M), 
	\end{aligned}
	\end{flalign}
	
	\noindent where 
	\begin{flalign*}
	F_{y; R, S} (x) & = (x + y) \log (x + y) - x \log x \\
	& \qquad + (R + S - x - y) \log (R + S - x - y) - (R - x) \log (R - x).
	\end{flalign*}
	
	\noindent In particular, denoting $\theta = \frac{R}{S}$, we have that 
	\begin{flalign*}
	&  F_{y; R, S} (\theta y) = y \log y + (S - y) \log (S - y) +  (R + S) \log (R + S) - R \log R - S \log S; \\
	&	 F_{y; R, S}' (\theta y) = 0; \qquad F_{y; R, S}'' (x)  =   \displaystyle\frac{y - S}{(R - x) (R + S - x - y)} - \displaystyle\frac{y}{x(x + y)} \le - \displaystyle\frac{S}{M^2} \le -\frac{1}{2M},
	\end{flalign*}
	
	\noindent where we have used the bounds $S \ge \frac{M}{2}$; $x (x + y), (R - x)(R + S - x - y) \le M^2$; and $0 \le y \le S$. Integrating, it follows that for any $z \in \mathbb{R}$ such that $\theta y + z \in [0, S]$, we have $F_{y; R, S}' (\theta y + z) \le - \frac{|z|}{2 M}$, and so $F_{y; R, S} (\theta y + z) \le F_{y; R, S} (\theta y) - \frac{z^2}{4 M}$. Setting $x = \theta y + z$ and inserting this bound into \eqref{pestimate1} gives $\log P \le \log (48 M) - \frac{z^2}{4 M}$. Since $x = \theta y + z$ implies $z \ge d$, this yields the lemma.
\end{proof}

\Cref{linearrandom} quickly implies the following (also known) corollary, which states that random walks on $\mathbb{Z}^2$ with fixed endpoints are approximately linear with high probability. 

\begin{cor} 
	
\label{linearrandom}

Let $M \ge 1$ and $R, S \ge 0$ be integers such that $R + S = M$, and let $D > 0$ be a real number. Fix vertices $v_1 = (x_1, y_1) \in \mathbb{Z}^2$ and $v_2 = (x_2, y_2) \in \mathbb{Z}^2$ such that $x_2 - x_1 = R$ and $y_2 - y_1 = S$. If $\mathcal{P}$ denotes a uniformly random (up-right) directed path in $\mathbb{Z}^2$ starting at $v_1$ and ending at $v_2$, then 
\begin{flalign*}
\mathbb{P} \Big[ \displaystyle\max_{u \in \mathcal{P}} d \big( u, \ell (v_1, v_2) \big) \ge D \Big] < 48 M^3 \exp \left( - \displaystyle\frac{D^2}{4 M} \right).
\end{flalign*}
\end{cor}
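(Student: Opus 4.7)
The plan is to bound the event $\{\max_{u \in \mathcal{P}} d(u, \ell(v_1,v_2)) \ge D\}$ by a union bound over all lattice points in the rectangle $[x_1, x_2] \times [y_1, y_2]$ whose distance to $\ell = \ell(v_1, v_2)$ is at least $D$. For each such lattice point $u = (x_1 + x, y_1 + y)$ with $x \in [0, R]$ and $y \in [0, S]$, the probability that a uniformly random up-right path from $v_1$ to $v_2$ passes through $u$ equals
\begin{flalign*}
\mathbb{P}[u \in \mathcal{P}] = \binom{x+y}{x} \binom{R+S - x - y}{R - x} \binom{R+S}{R}^{-1},
\end{flalign*}
since the numerator counts pairs consisting of a path from $v_1$ to $u$ and a path from $u$ to $v_2$, and the denominator counts all paths from $v_1$ to $v_2$. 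This is exactly the quantity estimated in \Cref{estimateexponenial1}, applied after the translation sending $v_1$ to the origin (which does not change the distance from the point to the line).

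By \Cref{estimateexponenial1}, for every such $u$ we have $\mathbb{P}[u \in \mathcal{P}] \le 48M \exp\bigl(-d(u,\ell)^2 / (4M)\bigr)$. Applying the union bound over all lattice points $u$ in the rectangle $[x_1, x_2] \times [y_1, y_2]$ with $d(u, \ell) \ge D$, and using that this rectangle contains at most $(R+1)(S+1) \le M^2$ lattice points in total, yields
\begin{flalign*}
\mathbb{P}\Bigl[\max_{u \in \mathcal{P}} d\bigl(u, \ell(v_1, v_2)\bigr) \ge D\Bigr] \le \sum_{\substack{u \in [x_1, x_2] \times [y_1, y_2] \\ d(u, \ell) \ge D}} 48 M \exp\!\left(- \frac{d(u,\ell)^2}{4M}\right) \le 48 M^3 \exp\!\left(-\frac{D^2}{4M}\right),
\end{flalign*}
which is the claimed bound.

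There is no real obstacle here; the result is a routine combination of the explicit binomial count for uniform lattice paths with the pointwise Gaussian-type bound already established in \Cref{estimateexponenial1}. The only minor point to verify is that one may replace the sharper sum $\sum_u \exp(-d(u,\ell)^2/(4M))$ by the cruder count $M^2$ times the maximum summand; since the stated error $48 M^3 \exp(-D^2/(4M))$ already tolerates this polynomial loss, this suffices.
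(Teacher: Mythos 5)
Your proof is correct and follows essentially the same route as the paper's: a union bound over lattice points at distance at least $D$ from $\ell(v_1,v_2)$, the explicit binomial formula for $\mathbb{P}[u \in \mathcal{P}]$, the pointwise bound from \Cref{estimateexponenial1}, and the crude count $|\Lambda| \le M^2$. The only cosmetic difference is that the paper first reduces to $v_1 = (0,0)$, $v_2 = (R,S)$ with $R, S \ne 0$ (the degenerate case being trivial), which also makes the estimate $(R+1)(S+1) \le M^2$ immediate.
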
 

\begin{proof}
	
	We may assume that $v_1 = (0, 0)$, that $v_2 = (R, S)$, and that $R, S \ne 0$. Let $\Lambda = \Lambda_{v_1; v_2} = \big( [0, R] \times [0, S] \big) \cap \mathbb{Z}^2$ and, for any lattice point $u \in \Lambda$, define the event $E_u = \{ u \in \mathcal{P} \big\}$. Then, 
	\begin{flalign}
	\label{pdeltaestimate} 
	\mathbb{P} \Big[ d \big( \mathcal{P}, \ell (v_1, v_2) \big) \ge D \Big] < \displaystyle\sum_{d (u, \ell (v_1, v_2)) \ge D} \mathbb{P} [E_u],
	\end{flalign}
	
	\noindent where $u$ is summed over all lattice points in $\Lambda$ of distance greater than $D$ from the line $\ell (v_1, v_2)$. Denoting $u = (x, y)$ and applying \eqref{xyrsestimate1}, we find that
	\begin{flalign*}
	\mathbb{P} [E_u] = \binom{x + y}{x} \binom{R + S - x - y}{R - x} \binom{R + S}{R}^{-1} \le 48 M \exp \left( - \frac{D^2}{4M} \right),
	\end{flalign*}
	
	\noindent from which the lemma follows upon insertion into \eqref{pdeltaestimate} and the bound $|\Lambda| \le M^2$. 
\end{proof}

The next proposition shows that a random directed path on $\mathbb{Z}^2$ with fixed endpoints, conditioned to lie between two paths $\text{f}$ and $\textbf{g}$, is approximately linear if $\textbf{f}$ and $\textbf{g}$ do not ``push'' the path too far to the left or right (see the left side of \Cref{pathshift} for a depiction); its proof uses \Cref{couplemonotone} and \Cref{linearrandom}. In what follows, we define for any $M, D \in \mathbb{R}$ the quantity 
\begin{flalign}
\label{pmd}
p = p(M, D) = \exp \left( - \displaystyle\frac{D^2}{4 M} \right).
\end{flalign}

\begin{prop}
	
\label{linearrandomf}

Let $M \ge 1$ and $R, S \ge 0$ be integers such that $R + S = M$, and let $D, \Delta > 0$ be real numbers. Fix vertices $v_1 = (x_1, y_1) \in \mathbb{Z}^2$ and $v_2 = (x_2, y_2) \in \mathbb{Z}^2$ such that $x_2 - x_1 = R$ and $y_2 - y_1 = S$. Abbreviate $\ell = \ell (v_1, v_2)$ and $p = p (M, D)$. 

Let $\textbf{\emph{f}}$ and $\textbf{\emph{g}}$ denote directed paths on the rectangle $[x_1, x_2] \times [y_1, y_2]$ such that $\textbf{\emph{f}} \le \textbf{\emph{g}}$ and 
\begin{flalign*}  
\displaystyle\max_{u \in \textbf{\emph{f}} \cap \SE (\ell)} d ( u, \ell ) \le \Delta; \qquad \displaystyle\max_{u \in \textbf{\emph{g}} \cap \NW (\ell)} d ( u, \ell) \le \Delta. 
\end{flalign*}

\noindent Let $\textbf{\emph{p}}$ denote a uniformly random (up-right) directed path in $\mathbb{Z}^2$ starting at $v_1$ and ending at $v_2$, conditional on the event that $\textbf{\emph{f}} \le \textbf{\emph{p}} \le \textbf{\emph{g}}$. Then,
\begin{flalign*}
\mathbb{P} \left[ \displaystyle\max_{u \in \textbf{\emph{p}}} d (u, \ell) \ge 2 D + \Delta \right] < 192 M^3 p.
\end{flalign*}
	
\end{prop}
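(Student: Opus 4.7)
The strategy is a two-stage monotone reduction using \Cref{couplemonotone}: I first sandwich $\mathbf{p}$ between two walks having only one of the two boundary constraints, and then compare each such one-sided walk to one constrained by a lattice approximation of $\ell$ shifted by $\Delta$, which can be analyzed combinatorially via Lindström--Gessel--Viennot-type determinantal counting. Only this last combinatorial step is non-automatic; the two monotone couplings both follow immediately from \Cref{couplemonotone}.

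Concretely, I would apply \Cref{couplemonotone} (via \Cref{couplemonotonedomains}) on the rectangular subdomain $[x_1, x_2] \times [y_1, y_2]$ of $\mathbb{Z}^2$, with $r = r' = 0$ and singleton boundary data $\mathbf{u} = (v_1)$, $\mathbf{w} = (v_2)$. Coupling the uniform measure on $\mathbf{p}$ against those on $\mathbf{p}^{-}$ (conditioned only on $\mathbf{p}^{-} \le \mathbf{g}$, obtained by replacing $\mathbf{f}$ by $-\infty$) and on $\mathbf{p}^{+}$ (conditioned only on $\mathbf{p}^{+} \ge \mathbf{f}$, obtained by replacing $\mathbf{g}$ by $\infty$), I obtain $\mathbf{p}^{-} \le \mathbf{p} \le \mathbf{p}^{+}$ almost surely. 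Any vertex of $\mathbf{p}$ at Euclidean distance $> 2D + \Delta$ from $\ell$ lies in $\NW(\ell)$ or $\SE(\ell)$, forcing the analogous event on $\mathbf{p}^{-}$ or $\mathbf{p}^{+}$ respectively; so it suffices to bound each of these one-sided events by $96\, M^{3} p$.

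Second, I would focus on $\mathbf{p}^{+}$, the case of $\mathbf{p}^{-}$ being symmetric. The hypothesis $\max_{u \in \mathbf{f} \cap \SE(\ell)} d(u, \ell) \le \Delta$ permits the construction of a lattice up-right path $\widetilde{\mathbf{f}}$ that enters the rectangle at a vertex NW of $v_1$ and exits NW of $v_2$, satisfies $\mathbf{f} \le \widetilde{\mathbf{f}}$, and stays within Euclidean distance $\Delta + O(1)$ of $\ell$ throughout (one takes a lattice staircase approximating the parallel line $\Delta$ SE of $\ell$, truncated near $v_1$ and $v_2$ so that the set of $v_1 \to v_2$ walks lying above $\widetilde{\mathbf{f}}$ is nonempty). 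A second application of \Cref{couplemonotone} couples $\mathbf{p}^{+}$ with a uniform walk $\widetilde{\mathbf{p}}$ from $v_1$ to $v_2$ conditioned on $\widetilde{\mathbf{p}} \ge \widetilde{\mathbf{f}}$, so that $\mathbf{p}^{+} \le \widetilde{\mathbf{p}}$ almost surely. It thus suffices to show that $\widetilde{\mathbf{p}}$ visits some $u \in \SE(\ell)$ with $d(u, \ell) > 2D + \Delta$ only with probability at most $48\, M^{3} p$.

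The main obstacle is this last step, which concerns a walk pinned at both endpoints on $\ell$ and conditioned to stay SE of a near-parallel line at distance $\Delta$. I would handle it by the Lindström--Gessel--Viennot lemma: the number of up-right walks from $v_1$ to $v_2$ lying above $\widetilde{\mathbf{f}}$ is a $2 \times 2$ determinant of binomial coefficients (involving $v_1$, $v_2$ and the endpoints of $\widetilde{\mathbf{f}}$), and likewise for such walks passing through any fixed vertex $u$. Taking the ratio to compute $\Pr[u \in \widetilde{\mathbf{p}}]$, the common $\Delta$-dependent exponential factors from the near-straightness of $\widetilde{\mathbf{f}}$ cancel between numerator and denominator---this is why the resulting bound is independent of $\Delta$---leaving a pointwise bound of the form $48\, M \exp\bigl(-(2D)^{2}/(4M)\bigr)$ via \Cref{estimateexponenial1} applied to the $u$-term in the numerator. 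Summing over the at most $M^{2}$ candidate vertices $u$ yields $48\, M^{3} p$, and combining with the symmetric NW-bound for $\mathbf{p}^{-}$ gives the claimed $192\, M^{3} p$.
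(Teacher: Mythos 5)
Your first reduction (coupling $\mathbf{p}^{-}\le\mathbf{p}\le\mathbf{p}^{+}$ via \Cref{couplemonotone} and splitting the deviation event into its $\NW(\ell)$ and $\SE(\ell)$ halves) is sound and matches the spirit of the paper's proof, which likewise treats the two sides separately and strips one boundary at a time. The gap is in your final step. First, the enumeration you invoke does not exist: the number of up-right walks from $v_1$ to $v_2$ staying weakly on one side of a \emph{fixed} staircase $\widetilde{\mathbf{f}}$ is not a $2\times 2$ Lindstr\"{o}m--Gessel--Viennot determinant (LGV counts families of non-intersecting paths with all paths free; a fixed barrier of general slope $S/R$ admits no such determinantal or reflection-principle formula). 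Second, and more fundamentally, your construction makes the conditioning event \emph{rare} rather than typical: you keep the endpoints $v_1, v_2$ on $\ell$ but force $\widetilde{\mathbf{p}}$ to stay SE of a barrier at distance $\approx\Delta$ SE of $\ell$. Since $\Delta$ can be much larger than $\sqrt{M}$ (in the applications it is of order $\delta^2 N$), the denominator in your ratio $\Pr[u\in\widetilde{\mathbf{p}}]$ is exponentially small, there is entropic repulsion pushing the conditioned walk well beyond the barrier, and the asserted ``cancellation of the $\Delta$-dependent exponential factors'' is exactly the assertion one needs to prove; it cannot be extracted from \Cref{estimateexponenial1} applied to the numerator alone.

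The paper's mechanism, which is what is missing here, inverts your construction: instead of tightening the barrier while fixing the endpoints, it \emph{shifts the endpoints}. For the SE half one sets $v_i' = v_i + w$ with $w$ orthogonal to $\ell$, pointing SE, of length $D+\Delta$, and compares $\mathbf{p}$ (monotonically, via \Cref{couplemonotone}) to the walk $\mathbf{q}$ from $v_1'$ to $v_2'$ conditioned only on $\mathbf{q}\ge\mathbf{f}$. A point of $\SE(\ell)$ at distance $\ge 2D+\Delta$ from $\ell$ lies at distance $\ge D$ from $\ell'=\ell(v_1',v_2')$, so it suffices to bound $\mathbb{P}[\max_{u\in\mathbf{q}} d(u,\ell')\ge D]$. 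The key point is that for the \emph{unconditioned} walk $\mathbf{r}$ from $v_1'$ to $v_2'$, the event $\{\mathbf{r}\ge\mathbf{f}\}$ contains $\{\max_u d(u,\ell')\le D\}$ and hence has probability at least $\tfrac12$ by \Cref{linearrandom} (assuming $48M^3p\le\tfrac12$, the complementary case being trivial), so one may simply write $\mathbb{P}[\,\cdot\mid \mathbf{r}\ge\mathbf{f}\,]\le \mathbb{P}[\,\cdot\,]/\mathbb{P}[\mathbf{r}\ge\mathbf{f}]\le 96M^3p$. No enumeration of constrained walks is needed. I would suggest replacing your second stage with this endpoint-shift argument.
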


\begin{figure}[t]
	
	\begin{center}
		
		\begin{tikzpicture}[
		>=stealth,
		scale = .6
		]

		\draw[-, black] (0, 0) -- (0, 7) --  (8, 7) -- (8, 0) -- (0, 0);
		\draw[-, black, ultra thick] (0, 3) -- (1, 3) -- (1, 4) -- (2, 4) -- (3, 4) -- (4, 4) -- (5, 4) -- (5, 5) -- (5, 6) -- (5, 7);
		\draw[-, black, ultra thick] (2, 0) -- (2, 1) -- (2, 2) -- (3, 2) -- (4, 2) -- (5, 2) -- (6, 2) -- (6, 3) -- (7, 3) -- (7, 4) -- (8, 4);
		
		\draw[-, black, thick] (0, 1) -- (1, 1) -- (1, 2) -- (1, 3) -- (2, 3) -- (3, 3) -- (4, 3) -- (5, 3) -- (5, 4) -- (6, 4) -- (6, 5) -- (6, 6) -- (6, 7);
		
		\draw[-, dotted] (5, 4) -- (4, 5);
		
		\draw[] (3.3, 5) circle[radius = 0] node[above, scale = .75]{$\ell (v_1, v_2)$};
		\draw[] (4.6, 4.8) circle[radius = 0] node[scale = .75]{$\Delta$};
		\draw[] (0, 6) circle[radius = 0] node[left, scale = .75]{$\Lambda$};
		
		\draw[] (1, 4) circle[radius = 0] node[above, scale = .75]{$\textbf{f}$};
		\draw[] (6, 2) circle[radius = 0] node[below, scale = .75]{$\textbf{g}$};
		\draw[] (6, 5.5) circle[radius = 0] node[right, scale = .75]{$\textbf{p}$};
		
		\filldraw[fill = black] (0, 1) circle[radius = .071] node[left, scale = .75]{$v_1$};
		\filldraw[fill = black] (6, 7) circle[radius = .071] node[above, scale = .75]{$v_2$};
		\draw[-, dashed] (0, 1) -- (6, 7);

		\draw[-, black, ultra thick] (11, 3) -- (12, 3) -- (12, 4) -- (13, 4) -- (14, 4) -- (15, 4) -- (16, 4) -- (16, 5) -- (16, 6) -- (16, 7);
		
		\draw[-, black, thick] (13, -1) -- (14, -1) -- (14, 0) -- (15, 0) -- (15, 1) -- (15, 2) -- (16, 2) -- (16, 3) -- (17, 3) -- (18, 3) -- (18, 4) -- (19, 4) -- (19, 5);

		\draw[] (12, 4) circle[radius = 0] node[above, scale = .75]{$\textbf{f}$};
		\draw[] (18, 3.5) circle[radius = 0] node[right, scale = .75]{$\textbf{p}'$};
		
		\filldraw[fill = black] (11, 1) circle[radius = .071] node[left, scale = .75]{$v_1$};
		\filldraw[fill = black] (17, 7) circle[radius = .071] node[above, scale = .75]{$v_2$};
		
		\draw[->, black, dotted] (11, 1) -- (12.95, -.95);
		\draw[->, black, dotted] (17, 7) -- (18.95, 5.05);
		
		\draw[] (11.4, -.6) circle[radius = 0] node[above, scale = .75]{$D + \Delta$};
		\draw[] (18.6, 6.6) circle[radius = 0] node[below, scale = .75]{$D + \Delta$};
		
		\filldraw[fill = black] (13, -1) circle[radius = .071] node[below, scale = .75]{$v_1'$};
		\filldraw[fill = black] (19, 5) circle[radius = .071] node[right, scale = .75]{$v_2'$};
		
		\draw[-, dashed] (13, -1) -- (19, 5);

		\end{tikzpicture}
		
	\end{center}	
	
	\caption{\label{pathshift} The setting of \Cref{linearrandomf} is depicted to the left. Shown to the right is the shifting procedure used in the proof of that result. } 
\end{figure}
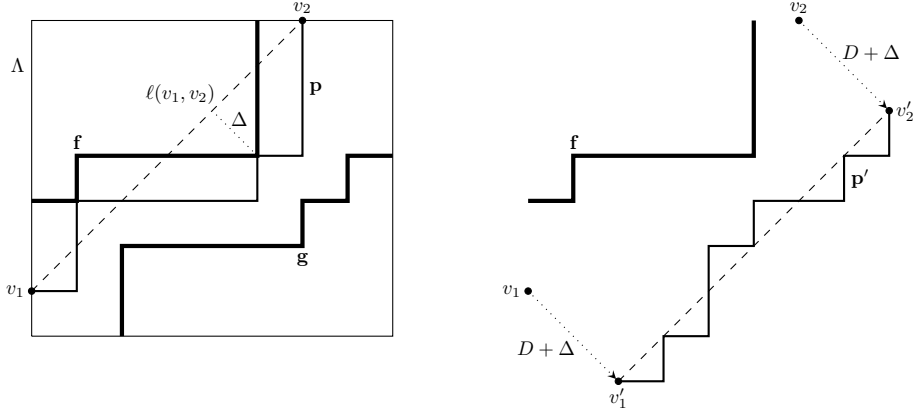

\begin{proof}
	
	As in the proof of \Cref{linearrandom}, we may assume that $v_1 = (0, 0)$ and that $v_2 = (R, S)$; set $\Lambda = \Lambda_{v_1; v_2} = \big( [0, R] \times [0, S] \big) \cap \mathbb{Z}^2$. We will show that 
	\begin{flalign}
	\label{pestimateab}
	\begin{aligned}
	& \mathbb{P} \left[ \displaystyle\max_{u \in \textbf{p} \cap \NW (\ell)} d ( u, \ell ) \ge 2 D + \Delta \right] < 96 M^3 p; \qquad \mathbb{P} \left[ \displaystyle\max_{u \in \textbf{p} \cap \SE (\ell)} d ( u, \ell ) \ge 2 D + \Delta \right] < 96 M^3 p,
	\end{aligned}
	\end{flalign}
	
	\noindent from which the result would follow from a union bound. We may assume in what follows that $48 M^3 p \le \frac{1}{2}$, for otherwise both terms on the right sides of \eqref{pestimateab} are greater than $1$. 
	
	Let us only establish the latter estimate in \eqref{pestimateab}, since the proof of the former is entirely analogous. To that end, we first produce a new random walk $\textbf{q}$ on $\mathbb{Z}^2$ as follows. Let $w = (w_1, w_2) \in \SE \big( (0, 0) \big)$ denote the vector orthogonal to $\ell$ in the southeast quadrant such that $|w| = D + \Delta$; for convenience, we assume that $w \in \mathbb{Z}^2$ (although it can be quickly seen that this is not necessary to implement the proof below). Let $\textbf{q}$ denote the uniformly random (up-right) directed walk on $\mathbb{Z}^2$ starting at $v_1' = v_1 + w = (w_1, w_2)$ and ending at $v_2' = v_2 + w = (R + w_1, S + w_2)$, conditional on the event that $\textbf{q} \ge \textbf{f}$. Observe in particular that $\textbf{q}$ is obtained by shifting the endpoints of $\textbf{p}$ to the southeast by $w$ and then by removing the east boundary $\textbf{g}$ for $\textbf{p}$. 
	
	Then, by \Cref{couplemonotone} and \Cref{couplemonotonedomains}, it is possible to couple the laws of $\textbf{p}$ and $\textbf{q}$ such that $\textbf{p} \le \textbf{q}$ almost surely. Denoting $\ell' = \ell (v_1', v_2')$, this implies that 
	\begin{flalign}
	\label{ddeltapestimatep}
	\begin{aligned}
	\mathbb{P} \left[ \displaystyle\max_{\substack{u \in \textbf{p} \cap \SE (\ell)}} d \big( u, \ell \big) \ge 2 D + \Delta \right] & \le \mathbb{P} \left[ \displaystyle\max_{\substack{u \in \textbf{q} \cap \SE (\ell)}} d \big( u, \ell \big) \ge 2 D + \Delta \right] \le \mathbb{P} \left[ \displaystyle\max_{u \in \textbf{q}} d \big( u, \ell' \big) \ge D \right], 
	\end{aligned}
	\end{flalign}
	
	\noindent where we used that, if $u \in \SE (\ell)$ is such that $d \big( u, \ell \big) \ge 2 D + \Delta$, then $d \big( u, \ell' \big) \ge D$. 
	
	Now let $\textbf{r}$ denote a uniformly random directed path on $\mathbb{Z}^2$ from $v_1'$ to $v_2'$. Let $\Omega_1$ denote the event on which $\max_{u \in \textbf{r}} d \big( u, \ell' \big) \le D$, and let $\Omega_2$ denote the event on which $\textbf{r} \ge \textbf{f}$; then, $\Omega_1 \subseteq \Omega_2$. Denoting the complement of $\Omega_i$ by $\Omega_i^c$ for $i \in \{ 1, 2 \}$, we deduce from \Cref{linearrandom} that $\mathbb{P} [\Omega_1^c] \le 48 M^3 p \le \frac{1}{2}$, and so $\mathbb{P} [\Omega_2] \ge \frac{1}{2}$. 
	
	Thus, since the law of $\textbf{q}$ is that of $\textbf{r}$, conditional on $\Omega_2$, it follows that
	\begin{flalign*}
	\mathbb{P} \left[ \displaystyle\max_{u \in \textbf{q}} d \big( u, \ell' \big) \ge D \right] \le \displaystyle\frac{\mathbb{P}[\Omega_1^c]}{\mathbb{P}[\Omega_2]} \le 96 M^3 p,
	\end{flalign*}
	
	\noindent and so the first estimate in \eqref{pestimateab} follows from \eqref{ddeltapestimatep}. As mentioned previously, the proof of the second estimate in \eqref{pestimateab} is very similar, and the corollary follows from applying a union bound.  	
\end{proof}

\begin{rem} 
	
	\label{tconvexrandomfg} 
	
	Similar to \Cref{monotonepathf}, it can be quickly seen that \Cref{linearrandomf} also holds if $\textbf{p}$ is a directed random path on a rectangular subdomain $\mathcal{X} = \mathcal{X}_{A, B, C; \Psi}$ for any integers $A, B, \Psi \ge 0$ and $C \ge 1$ (instead of on $\mathbb{Z}^2$), if we assume that $\textbf{f}$ does not contain a diagonal edge of $\mathcal{X}$. Indeed, in this case, the region between $\textbf{f}$ and $\textbf{g}$ lies to the right of the triangular face of $\mathcal{X}$. Thus, we can view $\textbf{p}$ as a random directed path on $\mathbb{Z}^2$ instead of on $\mathcal{X}$, in which setting \Cref{linearrandomf} applies.
	
\end{rem}

\subsection{Tangency Estimates} 

\label{TangentEstimate}

The following proposition (see \Cref{figuretangent}) considers a uniformly random augmented domain-wall $A$-restricted directed path ensemble $\mathcal{P}^{\aug}$ on $\mathcal{X}_{\Psi}$ and shows that the location where the rightmost path exits the $x$-axis is with high probability near tangent line through $(0, -\Psi)$ to the second curve in $\mathcal{P}^{\aug}$. 

In what follows, for $v_1, v_2 \in \mathbb{R}^2$, we recall that $\ell (v_1, v_2)$ denotes the line through $v_1$ and $v_2$. 

\begin{prop} 
	
	\label{p1p2tangent} 
	
	Let $N, C \ge 2$ and $A, B, \Psi \ge 0$ be integers such that $A + B + C = N$, and let $D > 1$ be a real number. Let $\mathcal{P}^{\aug} = (\textbf{\emph{p}}_1^{\aug}, \textbf{\emph{p}}_2^{\aug}, \ldots , \textbf{\emph{p}}_{A + C + 1}^{\aug})$ denote a uniformly random directed path ensemble from the set $\mathfrak{G}_{\Psi}^A$. Let $v \in \textbf{\emph{p}}_2^{\aug}$ be some vertex such that, if $\ell = \ell \big( (0, -\Psi), v \big)$, then $\textbf{\emph{p}}_2^{\aug} \subset \NW ( \ell)$. 
	
	Now let $\Phi \in [1, A + 2B + C + 1]$ denote the integer such that $\textbf{\emph{p}}_1^{\aug}$ contains an edge from $(\Phi, 0)$ to $(\Phi, 1)$. Then, 
	\begin{flalign*}
	\mathbb{P} \Big[ d \big( (\Phi, 0), \ell \big) \ge 2D + 1 \Big] \le 2^{24} (N + \Psi)^5 \exp \left( - \displaystyle\frac{D^2}{32 (N + \Psi)} \right).
	\end{flalign*}
	
\end{prop}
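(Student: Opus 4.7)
The plan is to combine the Gibbs property (\Cref{property}), the monotone coupling (\Cref{couplemonotone}), and the random-walk linearity estimate (\Cref{linearrandomf}). Let $w = (w_x, v_y)$ denote the first vertex of $\textbf{p}_1^{\aug}$ at height $v_y$. Because $\textbf{p}_1^{\aug} \ge \textbf{p}_2^{\aug}$ and $v \in \textbf{p}_2^{\aug}$, we have $w_x \ge v_x$ automatically. Denote by $\eta = \Psi v_x/(\Psi + v_y)$ the $x$-intercept of $\ell$ with the $x$-axis; the distance $d\big((\Phi, 0), \ell\big)$ is $\cos\theta \cdot |\Phi - \eta|$ for the angle $\theta$ of $\ell$, so it suffices to bound $|\Phi - \eta|$.

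For the lower bound $\Phi \ge \eta - O(D)$, I introduce an auxiliary up-right directed path $\textbf{q}$ sampled uniformly from $(0, -\Psi)$ to $v$ subject to $\textbf{q} \ge \textbf{p}_2^{\aug}$. By \Cref{property}, conditional on the other paths in $\mathcal{P}^{\aug}$ and on $w$, the sub-path $\textbf{p}_1^{\aug}|_{[(0, -\Psi), w]}$ is uniform over directed paths from $(0, -\Psi)$ to $w$ satisfying $\ge \textbf{p}_2^{\aug}$. Since $w_x \ge v_x$ and the endpoints are otherwise aligned, \Cref{couplemonotone} produces a coupling under which $\textbf{p}_1^{\aug}|_{[(0, -\Psi), w]} \ge \textbf{q}$. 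Applying \Cref{linearrandomf} to $\textbf{q}$ with $\textbf{f} = \textbf{p}_2^{\aug}$, $\textbf{g} = \infty$, and $\Delta = 0$ (using $\textbf{p}_2^{\aug} \subset \NW(\ell)$) gives that $\textbf{q}$ remains within $2D$ of $\ell$ off an event of probability at most $192 M^3 \exp\!\big({-}D^2/(4M)\big)$ with $M \le 2(N + \Psi)$. Reading off at $y = 0$ yields $\Phi \ge \Phi_{\textbf{q}} \ge \eta - 2D$ on the favorable event.

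For the upper bound $\Phi \le \eta + O(D)$, I apply \Cref{linearrandomf} directly to $\textbf{p}_1^{\aug}|_{[(0, -\Psi), w]}$. Letting $\ell' = \ell\big((0, -\Psi), w\big)$, the relation $\textbf{p}_2^{\aug} \subset \NW(\ell) \subseteq \NW(\ell')$ holds because $w_x \ge v_x$ swings $\ell'$ clockwise about $(0, -\Psi)$ relative to $\ell$. Thus the sub-path lies within $2D$ of $\ell'$, placing $(\Phi, 0)$ within $2D$ of $\ell'$ and giving $\Phi \le \Psi w_x/(\Psi + v_y) + 2D$. To convert this to a bound involving $\eta$, I must show $w_x - v_x$ is small. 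This is obtained by a second invocation of \Cref{property} and \Cref{linearrandomf} on the upper portion $\textbf{p}_1^{\aug}|_{[w, (A+2B+C+2, A+B+C+1)]}$, which is uniform subject to $\ge \textbf{p}_2^{\aug}$; the fixed NE endpoint combined with linearity of this upper portion around its own endpoint-to-endpoint line constrains $w_x$ from above. A union bound over the polynomially many bad events, each of which has prefactor $O(M^3)$ and exponential rate $D^2/(4M)$ with $M \le 2(N+\Psi)$, produces the stated polynomial factor $(N+\Psi)^5$ and exponent $D^2/(32(N+\Psi))$, the $1/32$ absorbing the constants from the successive halvings in the coupling and linearity arguments.

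The principal obstacle is the upper bound $w_x - v_x = O(D(\Psi + v_y)/\Psi)$: since no path in $\mathcal{P}^{\aug}$ constrains $\textbf{p}_1^{\aug}$ from the east, the value of $w_x$ is not directly bounded by any comparison path, and the control must emerge from the global consistency between the lower and upper portions of $\textbf{p}_1^{\aug}$. An excursion making $w_x$ too large would force the upper portion into a near-vertical trajectory inconsistent with the endpoint $(A + 2B + C + 2, A + B + C + 1)$ under random-walk linearity, yielding the required two-sided control of $\Phi - \eta$.
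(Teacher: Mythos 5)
Your toolkit (Gibbs property, monotone coupling, the linearity estimate \Cref{linearrandomf}) is the right one, and your lower-bound half — coupling $\textbf{p}_1^{\aug}$ below height $v_y$ against an auxiliary walk $\textbf{q}$ from $(0,-\Psi)$ to $v$ conditioned to stay southeast of $\textbf{p}_2^{\aug}$ — is sound. But the upper-bound half has a genuine gap, and you have correctly located it yourself: you need $w_x - v_x$ small, where $w$ is the first vertex of $\textbf{p}_1^{\aug}$ at height $v_y$, and the mechanism you propose cannot deliver this. Linearity of the upper portion $\textbf{p}_1^{\aug}|_{[w,\,\text{exit}]}$ around its own endpoint-to-endpoint line places no constraint on $w_x$: for every admissible value of $w_x$ there is a perfectly straight segment from $(w_x, v_y)$ to the exit site $(A+2B+C+2, A+B+C+1)$, so a large eastward excursion at height $v_y$ is entirely consistent with an approximately linear upper portion (it merely steepens the slope). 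The only constraints on $\textbf{p}_1^{\aug}$ from other paths push it east, not west, so no comparison path bounds $w_x$ from above either.

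The paper avoids this by choosing $w$ differently. Rather than the first vertex at height $v_y$, it takes $w \in \textbf{p}_1^{\aug} \cap \NE(v) \cap \SE(\ell)$ with $d(w,\ell) \le 1$. Such a $w$ exists by an intermediate-value argument: $\textbf{p}_1^{\aug} \ge \textbf{p}_2^{\aug}$ forces $\textbf{p}_1^{\aug}$ into $\SE(\ell)$ at the height of $v$, while the exit sites of $\textbf{p}_1^{\aug}$ and $\textbf{p}_2^{\aug}$ are adjacent and $\textbf{p}_2^{\aug} \subset \NW(\ell)$, so $\textbf{p}_1^{\aug}$ must return to within distance $1$ of $\ell$ somewhere in $\NE(v)$. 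Because this $w$ is within distance $1$ of $\ell$ and both lines pass through $(0,-\Psi)$, the chord $\ell' = \ell\big((0,-\Psi), w\big)$ stays within distance $1$ of $\ell$ on all of $\SW(w)$, so a single application of the Gibbs property and \Cref{linearrandomf} to $\textbf{p}_1^{\aug}|_{\SW(w)}$ (with $\textbf{f} = \textbf{p}_2^{\aug}$, $\Delta = 0$) controls $d\big((\Phi,0), \ell\big)$ from both sides at once, after a union bound over the at most $|\mathcal{X}| \le M^2$ choices of $w$. To repair your argument you would need to replace your $w$ with a vertex of $\textbf{p}_1^{\aug}$ that is provably close to $\ell$, which is exactly the adjacent-exit-sites observation you are missing.
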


\begin{figure}[t]
	
	\begin{center}
		
		\begin{tikzpicture}[
		>=stealth,
		scale = .7
		]

		\draw[->] (1.5, 0) -- (10, 0); 
		\draw[->] (2, -2) -- (2, 6.5);

		\draw[-, black, thick] (2, -1.5) -- (2.45, -1.5);
		\draw[-, black, thick] (2.55, -1.5) -- (2.95, -1.5);
		\draw[-, black, thick] (3.05, -1.5) -- (3.45, -1.5);
		\draw[-, black, thick] (3.5, -1.45) -- (3.5, -1.05);
		\draw[-, black, thick] (3.55, -1) -- (3.95, -1);
		\draw[-, black, thick] (4, -.95) -- (4, -.55);
		\draw[-, black, thick] (4.05, -.5) -- (4.45, -.5);
		\draw[-, black, thick] (4.55, -.5) -- (4.95, -.5);	
		\draw[-, black, thick] (5.05, -.5) -- (5.45, -.5);
		\draw[-, black, thick] (5.5, -.45) -- (5.5, -.05);
		\draw[-, black, thick] (5.55, 0) -- (5.95, 0);
		\draw[-, black, thick] (6.05, 0) -- (6.45, 0);
		\draw[-, black, thick] (6.5, .05) -- (6.5, .45); 
		\draw[-, black, thick] (6.5, .55) -- (6.5, .95); 
		\draw[-, black, thick] (6.5, 1.05) -- (6.5, 1.45); 
		\draw[-, black, thick] (6.55, 1.5) -- (6.95, 1.5); 
		\draw[-, black, thick] (7, 1.55) -- (7, 1.95); 
		\draw[-, black, thick] (7.05, 2) -- (7.45, 2);  
		\draw[-, black, thick] (7.5, 2.05) -- (7.5, 2.45); 
		\draw[-, black, thick] (7.55, 2.5) -- (7.95, 2.5); 
			\draw[-, black, thick] (8, 2.55) -- (8, 2.95); 
		\draw[-, black, thick] (8, 3.05) -- (8, 3.45); 
		\draw[-, black, thick] (8, 3.55) -- (8, 3.95); 
		\draw[-, black, thick] (8, 4.05) -- (8, 4.45); 
		\draw[-, black, thick] (8.05, 4.5) -- (8.45, 4.5); 
		\draw[-, black, thick] (8.5, 4.55) -- (8.5, 4.95); 
		\draw[-, black, thick] (8.5, 5.05) -- (8.5, 5.45);
		
		\draw[->, black, thick] (8.55, 5) -- (8.95, 5);
		\draw[->, black, thick] (8.55, 5.5) -- (8.95, 5.5);
		
		\draw[-, black, very thick] (2, .5) -- (6, .5) -- (6, 1.5) -- (6.5, 1.5) -- (6.5, 2) -- (7, 2) -- (7, 4) -- (7.5, 4) -- (7.5, 4.5) -- (8, 4.5) -- (8, 5) -- (8.5, 5);
		
		\draw[->, black, dashed] (.875, -2.0625) -- (9.3, 2.15) node[right, scale = .7]{$\ell$};	
		
		\draw[->, dotted] (.875, -2) --  (9.3125, 1.75) node[right, scale = .7]{$\ell'$}; 
		
		\filldraw[fill=gray!50!white, draw=black] (2, -1.5) circle [radius = 0] node[above = 2, left = 0, scale = .7]{$(0, -\Psi)$};
		
		\filldraw[fill=gray!50!white, draw=black] (6, .5) circle [radius = 0] node[above = 4, left = 0, scale = .7]{$v$};
		\filldraw[fill=gray!50!white, draw=black] (7, 4) circle [radius = 0] node[above, scale = .7]{$\textbf{p}_2^{\aug}$};
		\filldraw[fill=gray!50!white, draw=black] (5, -.5) circle [radius = 0] node[below, scale = .7]{$\textbf{p}_1^{\aug}$};
		
		\filldraw[fill=gray!50!white, draw=black] (2.5, -1.5) circle [radius=.05];
		\filldraw[fill=gray!50!white, draw=black] (2.5, .5) circle [radius=.05];

		\filldraw[fill=gray!50!white, draw=black] (3, -1.5) circle [radius=.05];
		\filldraw[fill=gray!50!white, draw=black] (3, .5) circle [radius=.05];

		\filldraw[fill=gray!50!white, draw=black] (3.5, -1.5) circle [radius=.05];
		\filldraw[fill=gray!50!white, draw=black] (3.5, -1) circle [radius=.05];
		\filldraw[fill=gray!50!white, draw=black] (3.5, .5) circle [radius=.05];

		\filldraw[fill=gray!50!white, draw=black] (4, -1) circle [radius=.05];
		\filldraw[fill=gray!50!white, draw=black] (4, -.5) circle [radius=.05];
		\filldraw[fill=gray!50!white, draw=black] (4, .5) circle [radius=.05];

		\filldraw[fill=gray!50!white, draw=black] (4.5, -.5) circle [radius=.05];
		\filldraw[fill=gray!50!white, draw=black] (4.5, .5) circle [radius=.05];

		\filldraw[fill=gray!50!white, draw=black] (5, -.5) circle [radius=.05];
		\filldraw[fill=gray!50!white, draw=black] (5, .5) circle [radius=.05];

		\filldraw[fill=gray!50!white, draw=black] (5.5, -.5) circle [radius=.05];
		\filldraw[fill=gray!50!white, draw=black] (5.5, 0) circle [radius=.05];
		\filldraw[fill=gray!50!white, draw=black] (5.5, .5) circle [radius=.05];

		\filldraw[fill=gray!50!white, draw=black] (6, 0) circle [radius=.05];
		\filldraw[fill=gray!50!white, draw=black] (6, .5) circle [radius=.05];
		\filldraw[fill=gray!50!white, draw=black] (6, 1) circle [radius=.05];
		\filldraw[fill=gray!50!white, draw=black] (6, 1.5) circle [radius=.05];

		\filldraw[fill=gray!50!white, draw=black] (6.5, 0) circle [radius=.05] node[below = 1, scale = .7]{$(\Phi, 0)$};
		\filldraw[fill=gray!50!white, draw=black] (6.5, .5) circle [radius=.05] node[below = 1, right = 1, scale = .7]{$w$};
		\filldraw[fill=gray!50!white, draw=black] (6.5, 1) circle [radius=.05];
		\filldraw[fill=gray!50!white, draw=black] (6.5, 1.5) circle [radius=.05];
		\filldraw[fill=gray!50!white, draw=black] (6.5, 2) circle [radius=.05];

		\filldraw[fill=gray!50!white, draw=black] (7, 1.5) circle [radius=.05];
		\filldraw[fill=gray!50!white, draw=black] (7, 2) circle [radius=.05];
		\filldraw[fill=gray!50!white, draw=black] (7, 2.5) circle [radius=.05];
		\filldraw[fill=gray!50!white, draw=black] (7, 3) circle [radius=.05];
		\filldraw[fill=gray!50!white, draw=black] (7, 3.5) circle [radius=.05];
		\filldraw[fill=gray!50!white, draw=black] (7, 4) circle [radius=.05];

		\filldraw[fill=gray!50!white, draw=black] (7.5, 2) circle [radius=.05];
		\filldraw[fill=gray!50!white, draw=black] (7.5, 2.5) circle [radius=.05];	
		\filldraw[fill=gray!50!white, draw=black] (7.5, 4) circle [radius=.05];
		\filldraw[fill=gray!50!white, draw=black] (7.5, 4.5) circle [radius=.05];

		\filldraw[fill=gray!50!white, draw=black] (8, 2.5) circle [radius=.05];
		\filldraw[fill=gray!50!white, draw=black] (8, 3) circle [radius=.05];
		\filldraw[fill=gray!50!white, draw=black] (8, 3.5) circle [radius=.05];
		\filldraw[fill=gray!50!white, draw=black] (8, 4) circle [radius=.05];
		\filldraw[fill=gray!50!white, draw=black] (8, 4.5) circle [radius=.05];
		\filldraw[fill=gray!50!white, draw=black] (8, 5) circle [radius=.05];

		\filldraw[fill=gray!50!white, draw=black] (8.5, 4.5) circle [radius=.05];
		\filldraw[fill=gray!50!white, draw=black] (8.5, 5) circle [radius=.05];
		\filldraw[fill=gray!50!white, draw=black] (8.5, 5.5) circle [radius=.05];

		\end{tikzpicture}
		
	\end{center}	
	
	\caption{\label{figuretangent} 	The setting for \Cref{p1p2tangent} and its proof is depicted above.}
\end{figure}

\begin{proof}
	
	Throughout this proof, we abbreviate $\mathcal{P} = \mathcal{P}^{\aug}$ and $\textbf{p}_i = \textbf{p}_i^{\aug}$ for each $i \in [1, A +  C + 1]$. To establish this proposition, we will essentially let $w \in \mathcal{X}$ denote the location where $\textbf{p}_1$ ``almost intersects $\ell$'' and use \Cref{property} and \Cref{linearrandomf} to show that $\textbf{p}_1$ is approximately linear between $(0, -\Psi)$ and $w$. This will imply that $\textbf{p}_1$ is close to $\ell$ and thus that $(\Phi, 0) \in \textbf{p}_1$ is near $\ell$.

	To make this precise, we set $M = 8 (\Psi + N)$, which is an upper bound for the diameter of $\mathcal{X} = \mathcal{X}_{A, B, C; \Psi}$; further abbreviate $p = p (M, D)$ (recall \eqref{pmd}). Now define the event 
	\begin{flalign*} 
	 E = \Big\{ d \big( (\Phi, 0), \ell \big) \ge 2 D + 1 \Big\},
	 \end{flalign*}
	 
	 \noindent and let $E^c$ denote the complement of $E$. Furthermore, for any $w \in \SE (\ell) \cap \mathcal{X}$, also define the event 
	\begin{flalign} 
	\label{omegaw}
	\Omega (w) = \{ w \in \textbf{p}_1 \} \cap \big\{ \textbf{p}_2 \subset \NW (\ell') \big\} \cap \left\{ \displaystyle\max_{u \in \textbf{p}_1 \cap \SW (w)} d \big( u, \ell' \big) \ge 2D \right\},
	\end{flalign}
	
	\noindent where we have abbreviated $\ell' = \ell \big( (0, -\Psi), w \big)$. See \Cref{figuretangent} for an example.
	
	We claim that 
	\begin{flalign}
	\label{fw1}
	E \subseteq \bigcup_{w \in \mathcal{X}} \Omega (w); \qquad \displaystyle\max_{w \in \mathcal{X}} \mathbb{P} \big[ \Omega(w) \big] \le 192 M^3 p. 
	\end{flalign} 
	
	Let us first establish the former statement in \eqref{fw1}, to which end we restrict to $E$ and show that $\bigcup_{w \in \mathcal{X}} \Omega (w)$ holds. To do this, recall that $v \in \textbf{p}_2 \cap \ell$, that $\textbf{p}_2 \le \ell$, and that $\textbf{p}_1 \ge \textbf{p}_2$. Thus, since the sites at which $\textbf{p}_1$ and $\textbf{p}_2$ exit $\mathcal{X}$ are of distance one from each other, there must exist a vertex $w \in \SE (\ell) \cap \textbf{p}_1 \cap \NE (v)$ such that $d (w, \ell) \le 1$.
	 
	Restricting to $E$, we will show that $\Omega (w)$ holds. To that end, observe that $w \in \textbf{p}_1$ and $\textbf{p}_2 \subset \NW (\ell')$, the latter since $v \in \ell$, $\textbf{p}_2 \subset \NW (\ell)$, $w \in \NE (v) \cap \SE (\ell)$, and $\ell \cap \ell' = \big\{ (0, -\Psi) \big\}$. 
	
	Thus, it remains to verify the last condition in \eqref{omegaw}; we will in fact show that it holds with $u = (\Phi, 0)$. Under this choice of $u$, we have that $d (u, \ell) \ge 2D + 1$ since we are restricting to $E$. Since $d (w, \ell) \le 1$ and $u \in \SW (w)$, it follows that $d (u, \ell') \ge d (u, \ell) - 1 \ge 2D$. This confirms the last statement of \eqref{omegaw} and thus the first claim of \eqref{fw1}. 
	
	To establish the second claim of \eqref{fw1}, fix some $w = (x, y) \in \SE (\ell) \cap \mathcal{X}$, and define the rectangular subdomain $\Lambda = \Lambda_w = \big( [0, x] \times [-\Psi, y] \big) \cap \mathcal{X}$ of $\mathcal{X}$. Now restrict the directed path ensemble $\mathcal{P}$ to $\Lambda$, and condition on its $A + C$ north paths $(\textbf{p}_2, \textbf{p}_3, \ldots , \textbf{p}_{A + C + 1})$, as well as on the event that $\textbf{p}_1$ exits $\Lambda$ through $w$. 
	
	In view of the Gibbs property \Cref{property}, the law of $\textbf{p}_1 |_{\Lambda}$ is given by that of a uniformly random walk on $\Lambda$ conditioned to enter at $(0, -\Psi)$, exit at $w$, and satisfy $\textbf{p}_1 |_{\Lambda} \ge \textbf{p}_2 |_{\Lambda}$. Since $\textbf{p}_2 \subset \NW (\ell)$ and $\textbf{p}_2$ is to the right of the triangular face of $\mathcal{X}$ (since $C \ge 2$), \Cref{linearrandomf} and \Cref{tconvexrandomfg} (applied with the $\textbf{f}$, $\textbf{g}$, and $\Delta$ there equal to $\textbf{p}_2$, $\infty$, and $0$ here, respectively) together yield the second estimate in \eqref{fw1}. 

	Now the proposition follows from \eqref{fw1}, a union bound, and the fact that $|\mathcal{X}| \le M^2$. 
\end{proof}

\section{Proximity of \texorpdfstring{$\textbf{p}_1$}{} and \texorpdfstring{$\textbf{p}_2$}{}}

\label{PathsNear}

In this section we show that if $\mathcal{P} = (\textbf{p}_1, \textbf{p}_2, \ldots , \textbf{p}_{A + C}) \in \mathfrak{F}$ (recall \Cref{definitionf}) is sampled uniformly at random then $\textbf{p}_1$ and $\textbf{p}_2$ are close to each other with high probability. We begin in \Cref{PathConvex} by introducing a notion of approximate convexity and showing that $\textbf{p}_1$ and $\textbf{p}_2$ are approximately convex (with high probability) with respect to this notion; then we use this to establish the proximity statement between $\textbf{p}_1$ and $\textbf{p}_2$ in \Cref{PathsNearp1p2}.

\subsection{Convexity Estimates}

\label{PathConvex}

We begin with the following definition. 

\begin{definition} 
	
\label{xidefinition} 

For any $u, v \in \mathbb{R}^2$ with $v \in \NE (u)$, define the sets
\begin{flalign}
\label{setsuv}
\NW (u, v) = \NE (u) \cap \SW (v) \cap \NW \big( \ell (u, v) \big); \qquad \SE (u, v) = \NE (u) \cap \SW (v) \cap \SE \big( \ell (u, v) \big).
\end{flalign} 

\noindent Stated alternatively, if $u = (x, y)$ and $v = (x', y')$, then $\NW (u, v)$ is the part of the rectangle $[x, x'] \times [y, y']$ above (northwest) of the line $\ell (u, v)$; a similar statement holds for $\SE (u, v)$. 

Now, for any directed up-right path $\textbf{p}$ on a rectangular subdomain of some three-bundle domain $\mathcal{T}_{A, B, C}$, define the quantity
\begin{flalign}
\label{xip}
\Xi (\textbf{p}) = \displaystyle\max_{u \in \textbf{p}} \displaystyle\max_{ v \in \textbf{p} \cap \NE (u)} \displaystyle\max_{w \in \textbf{p} \cap \NW (u, v)} d \big( w, \ell (u, v) \big). 
\end{flalign}

\end{definition}

One might view $\Xi (\textbf{p})$ as some measure of ``approximate convexity'' for the path $\textbf{p}$; see the left side of \Cref{figureconvex} for a depiction. Observe in particular that if $\Xi (\textbf{p}) = 0$ then $\textbf{p}$ is (weakly) convex; a more general estimate in terms of the lower convex envelope of $\textbf{p}$ is given by \eqref{dxi} below. 

We would like to bound $\Xi (\textbf{p}_1)$ and $\Xi (\textbf{p}_2)$ if $\mathcal{P} = (\textbf{p}_1, \textbf{p}_2, \ldots , \textbf{p}_{A + C}) \in \mathfrak{F}$ is sampled uniformly at random. The first lemma below bounds the former, and the second bounds the latter.

\begin{figure}[t]
	
	\begin{center}
		
		\begin{tikzpicture}[
		>=stealth,
		scale = .5
		]

		\draw[-, black, thick] (5, 1) -- (6, 1) -- (7, 1) -- (8, 1) -- (8, 2) -- (9, 2) -- (9, 3) -- (9, 4) -- (9, 5) -- (10, 5) -- (11, 5) -- (12, 5) -- (12, 6) -- (13, 6) -- (13, 7) -- (13, 8) node[scale = .7, above]{$\textbf{p}$};

		\draw[-, black, dotted] (9, 5) -- (10.5, 3.5);
		\draw[<->, black, dashed] (7, 0) -- (14, 7) node[scale = .7, right]{$\ell (u, v)$};
		
		\draw[] (10.5, 4.4) circle[radius = 0] node[scale = .7]{$\Xi (\textbf{p})$};
		
		\filldraw[fill = black] (9, 2) circle[radius = .075] node[scale = .7, right]{$u$};
		\filldraw[fill = black] (12, 5) circle[radius = .075] node[scale = .7, right]{$v$};
		\filldraw[fill = black] (9, 5) circle[radius = .075] node[scale = .7, above]{$w$};

		\draw[-, black] (20, 0) -- (20, 8) --  (28, 8) -- (28, 0) -- (20, 0);
		\draw[-, black, ultra thick] (20, 3) -- (21, 3) -- (21, 4) -- (22, 4) -- (23, 4) -- (24, 4) -- (25, 4) -- (25, 5) -- (25, 6) -- (25, 7) -- (25, 8);
		\draw[-, black, ultra thick] (22, 0) -- (22, 1) -- (22, 2) -- (23, 2) -- (24, 2) -- (25, 2) -- (26, 2) -- (26, 3) -- (27, 3) -- (27, 4) -- (28, 4);
		
		\draw[-, black, thick] (21, 0) -- (21, 1) -- (21, 2) -- (22, 2) -- (22, 3) -- (23, 3) -- (24, 3) -- (25, 3) -- (25, 4) -- (26, 4) -- (27, 4) -- (27, 5) -- (27, 6) -- (27, 7) -- (28, 7);
		
		\draw[-, dotted] (22, 2) -- (22.5, 1.5) node[right = 3, below, scale = .75]{$\Delta$};
		
		\draw[] (20, 6) circle[radius = 0] node[left, scale = .75]{$\Lambda$};
		
		\draw[] (21, 4) circle[radius = 0] node[above, scale = .75]{$\textbf{p}_3$};
		\draw[] (26, 2) circle[radius = 0] node[below, scale = .75]{$\textbf{p}_1$};

		\draw[] (27, 7) circle[radius = 0] node[left, scale = .75]{$\textbf{p}_2$};
		
		\filldraw[fill = black] (21, 0) circle[radius = .071] node[below, scale = .75]{$v_1$};
		\filldraw[fill = black] (28, 7) circle[radius = .071] node[right, scale = .75]{$v_2$};
		\draw[-, dashed] (21, 0) -- (28, 7);
		
		\end{tikzpicture}
		
	\end{center}	
	
	\caption{\label{figureconvex} Depicted to the left is the quantity $\Xi (\textbf{p})$ for some directed path $\textbf{p}$. Depicted to the right is the setting for the proof of \Cref{convexp2x}. } 
\end{figure}
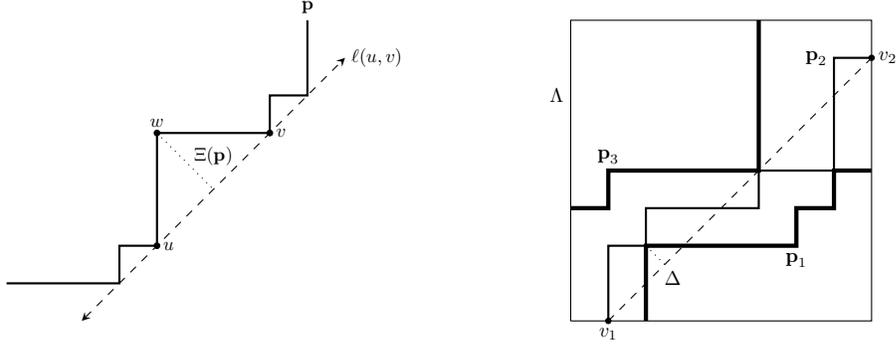

\begin{lem}
	
	\label{convexp1x}

	Let $A, B \ge 0$ and $N, C \ge 3$ be integers such that $A + B + C = N$, and let $D > 0$ be a real number. Further let $\mathcal{P} = (\textbf{\emph{p}}_1, \textbf{\emph{p}}_2, \ldots , \textbf{\emph{p}}_{A + C})$ denote a uniformly random directed path ensemble from $\mathfrak{F} = \mathfrak{F}_{A, B, C}$. Then, 
	\begin{flalign*}
	\mathbb{P} \big[ \Xi (\textbf{\emph{p}}_1) \ge 2 D \big] \le 2^{18} N^7 \exp \left( - \displaystyle\frac{D^2}{16 N} \right).
	\end{flalign*}
	
\end{lem}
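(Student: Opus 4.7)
My plan is to fix a pair $u, v \in \mathcal{T}$ with $v \in \NE(u)$, bound the conditional probability that $u, v \in \textbf{p}_1$ and the portion of $\textbf{p}_1$ between $u$ and $v$ deviates by at least $2D$ to the northwest of $\ell = \ell(u, v)$, and then union bound over the $|\mathcal{T}|^2 \leq 2^6 N^4$ such pairs. The key observation is that $\textbf{p}_1$ is the SE-most path of the ensemble and is bounded from the northwest only by $\textbf{p}_2$; since this constraint pushes $\textbf{p}_1$ further to the SE, the event ``$\textbf{p}_1$ reaches far into $\NW(\ell)$'' is \emph{decreasing} in the partial order on paths, and therefore less likely under this conditioning than in the unconstrained random-walk model to which \Cref{linearrandom} applies directly.

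To execute this, I would set $\Lambda = ([x(u), x(v)] \times [y(u), y(v)]) \cap \mathcal{T}$ and condition on the event $\{u, v \in \textbf{p}_1\}$ together with the remaining paths $(\textbf{p}_2, \ldots, \textbf{p}_{A + C})$. The Gibbs property \Cref{property} identifies the conditional law of $\textbf{p}_1|_\Lambda$ with the uniform measure on $\mathfrak{E}^{(u), (v); r_1}_{\textbf{p}_2|_\Lambda, \infty; \Lambda}$ for some $r_1 \in \{0, 1\}$. By \Cref{couplemonotone}, \Cref{couplemonotonedomains}, and \Cref{monotonepathf} (the last applying under the generic assumption that $\textbf{p}_2|_\Lambda$ uses no diagonal edge), this uniform law can be coupled with the uniform law on unconstrained up-right walks $\tilde{\textbf{p}}$ from $u$ to $v$ in $[x(u), x(v)] \times [y(u), y(v)] \subset \mathbb{Z}^2$ so that $\tilde{\textbf{p}} \leq \textbf{p}_1 |_\Lambda$ almost surely. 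Since the event in question is decreasing in the path ordering---a path further NW penetrates $\NW(\ell)$ at least as deeply at every height---this coupling gives
\begin{equation*}
\mathbb{P}\!\left[\max_{w \in \textbf{p}_1 \cap \NW(\ell)} d(w, \ell) \geq 2D \,\Big|\, u, v \in \textbf{p}_1, \text{rest}\right] \leq \mathbb{P}\!\left[\max_{w \in \tilde{\textbf{p}}} d(w, \ell) \geq 2D\right] \leq 48 (4N)^3 \exp\!\left(-\frac{D^2}{4N}\right),
\end{equation*}
the last bound being \Cref{linearrandom} with its $D$ replaced by $2D$ and $M \leq 4N$. Union bounding over the $\leq 2^6 N^4$ pairs $(u, v) \in \mathcal{T}^2$ then yields the claimed $\mathbb{P}[\Xi(\textbf{p}_1) \geq 2D] \leq 2^{18} N^7 \exp(-D^2/(16N))$.

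The main obstacle is the triangular face of $\mathcal{T}$ and the associated diagonal edges: if $\Lambda$ contains this face, then $\textbf{p}_2|_\Lambda$ may use a diagonal edge, $\Lambda$ is not a rectangular subdomain of $\mathbb{Z}^2$ (since $\mathcal{T}$ omits the vertices NW of the triangle), and $\textbf{p}_1|_\Lambda$ itself may traverse a diagonal, so that neither \Cref{monotonepathf} nor a direct stochastic-domination comparison to a $\mathbb{Z}^2$-walk is immediately available. I plan to handle this by casework on whether $\textbf{p}_1$ traverses a diagonal between $u$ and $v$: if not, I replace $\textbf{p}_2|_\Lambda$ by a diagonal-free surrogate $\widehat{\textbf{p}}_2 \leq \textbf{p}_2 |_\Lambda$ obtained by rerouting the diagonal around the triangular face to the NW, which preserves the lower-bound structure and permits \Cref{monotonepathf} to reduce to a $\mathbb{Z}^2$-walk (using that $\textbf{p}_1$ does not in fact enter the excluded NW region); if $\textbf{p}_1$ does traverse a diagonal, I would split the analysis at the two endpoints of that diagonal edge into sub-intervals on which the previous reasoning applies. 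Both sub-cases incur only constant-factor losses that are absorbed in the prefactor $2^{18}$.
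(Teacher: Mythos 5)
Your proposal is correct and follows essentially the same route as the paper: fix a pair $(u,v)$, condition via the Gibbs property, use the monotone coupling to dominate $\textbf{p}_1|_{\Lambda}$ from below by an unconstrained $\mathbb{Z}^2$-walk from $u$ to $v$, apply the Gaussian-type bound (\Cref{linearrandom}, or equivalently \Cref{linearrandomf} with trivial bounding paths as the paper does), and union bound over the at most $16 N^4$ pairs. The only substantive difference is that your casework around the triangular face is unnecessary: in an $A$-restricted ensemble the $A$ paths containing diagonal edges are forced to be the $A$ leftmost ones, so for $C \ge 3$ neither $\textbf{p}_1$ nor $\textbf{p}_2$ ever traverses a diagonal and \Cref{monotonepathf} applies directly --- this one-line observation replaces both your surrogate construction and your (in fact vacuous, and as sketched not obviously workable) second sub-case in which $\textbf{p}_1$ crosses a diagonal.
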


\begin{proof}
	
	Define the event $E = \big\{ \Xi (\textbf{p}_1) \ge 2D \big\}$. Furthermore, for any $u = (x, y) \in \mathcal{T} = \mathcal{T}_{A, B, C}$ and $v = (x', y') \in \mathcal{T} \cap \NE(u)$, define the rectangular subdomain $\Lambda = \Lambda_{u, v} = \big( [x, x'] \times [y, y'] \big) \cap \mathcal{T} \subseteq mathcal{T}$. For any directed path $\textbf{p}$ on $\Lambda$ that enters and exits through $u$ and $v$, respectively, define the event
	\begin{flalign*}
	E (u, v) = E_{\textbf{p}} (u, v) = \{ u, v \in \textbf{p} \} \cap \left\{ \displaystyle\max_{w \in \NW (u, v) \cap \textbf{p}} d ( w, \ell) \ge 2 D \right\},
	\end{flalign*} 
	
	\noindent where we abbreviated $\ell = \ell (u, v)$. Observe that $E \subseteq \bigcup_{u \in \mathcal{T}} \bigcup_{v \in \mathcal{T} \cap \NE (u)} E_{\textbf{p}_1} (u, v)$. Thus, we will estimate $\mathbb{P} \big[ E_{\textbf{p}_1} (u, v) \big]$, to which end we will use \Cref{property}, \Cref{couplemonotone}, and \Cref{linearrandomf}.
	
	More specifically, let us fix vertices $u \in \mathcal{T}$ and $v \in \mathcal{T} \cap \NE (u)$ and then condition on the paths $(\textbf{p}_2, \textbf{p}_3, \ldots , \textbf{p}_{A + C})$; on the restriction $\textbf{p}_1 |_{\mathcal{T} \setminus \Lambda}$ (namely, the part of $\textbf{p}_1$ not in $\Lambda$); and on the event that $\textbf{p}_1$ enters and exits $\Lambda$ through $u$ and $v$, respectively. Then by \Cref{property}, the conditional law of $\textbf{p}_1 |_{\Lambda}$ is given by the uniform measure on the set of $0$-restricted directed paths that enter and exit $\Lambda$ through $u$ and $v$, respectively, and satisfy $\textbf{p}_1 \ge \textbf{p}_2$. The fact that $\textbf{p}_1$ must be $0$-restricted is due to the fact that it lies to the right of the triangular face of $\mathcal{T}$, since $C \ge 2$.
	
	Now let $\textbf{q}$ denote a uniformly random $0$-restricted directed path on $\mathbb{Z}^2$, conditioned to start and end at $u$ and $v$, respectively. In view of \Cref{couplemonotone} and \Cref{monotonepathf}, it is possible to couple the laws of $\textbf{p}_1$ and $\textbf{q}$ such that $\textbf{p}_1 \ge \textbf{q}$. Therefore, $\mathbb{P} \big[ E_{\textbf{p}_1} (u, v) \big] \le \mathbb{P}\big[ E_{\textbf{q}} (u, v) \big]$.
	
	To bound $\mathbb{P}\big[ E_{\textbf{q}} (u, v) \big]$, we apply \Cref{linearrandomf} and \Cref{tconvexrandomfg} with the $(v_1, v_2)$ there equal to the $(u, v)$ here, the $(\textbf{f}, \textbf{g})$ there equal to $(-\infty, \infty)$ here, the $\Delta$ there equal to $0$ here, and the $M$ bounded above by $4N$ here. This yields 
	\begin{flalign}
	\label{xip1d}
	\mathbb{P} \big[ E_{\textbf{q}} (u, v) \big] \le 2^{14} N^3 \exp \left( - \displaystyle\frac{D^2}{16 N} \right).
	\end{flalign}
	
		Therefore, combining the previously mentioned estimate $\mathbb{P} \big[ E_{\textbf{p}_1} (u, v) \big] \le \mathbb{P}\big[ E_{\textbf{q}} (u, v) \big]$ and containment of events $E \subseteq \bigcup_{u \in \mathcal{T}} \bigcup_{v \in \mathcal{T} \cap \NE (u)} E_{\textbf{p}_1} (u, v)$ with \eqref{xip1d} yields through a union bound that
	\begin{flalign*}
	\mathbb{P} [E] \le \displaystyle\sum_{u \in \mathcal{T}} \displaystyle\sum_{v \in \mathcal{T} \cap \NE (u)} \mathbb{P} \big[ E_{\textbf{p}_1} (u, v) \big] & \le \displaystyle\sum_{u \in \mathcal{T}} \displaystyle\sum_{v \in \mathcal{T} \cap \NE (u)} \textbf{P} \big[ E_{\textbf{q}} (u, v) \big] \le 2^{14} N^3 |\mathcal{T}|^2 \exp \left( - \displaystyle\frac{D^2}{16 N} \right),
	\end{flalign*} 
	
	\noindent from which we deduce the lemma since $|\mathcal{T}| \le 4 N^2$. 
\end{proof}

\begin{lem}
	
	\label{convexp2x}
	
	Adopting the notation of \Cref{convexp1x}, we have that 
	\begin{flalign*}
	\mathbb{P} \big[ \Xi (\textbf{\emph{p}}_2) \ge 4D \big] \le 2^{19} N^7 \exp \left( - \displaystyle\frac{D^2}{16 N} \right).
	\end{flalign*}
	
\end{lem}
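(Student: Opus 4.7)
I will follow the same template as the proof of \Cref{convexp1x}, quantifying over pairs $u = (x, y), v = (x', y') \in \mathcal{T}$ with $v \in \NE(u)$ and considering the analogous rectangular subdomains $\Lambda_{u,v} = ([x, x'] \times [y, y']) \cap \mathcal{T}$ together with the events $E_{\textbf{p}_2}(u,v) = \{u, v \in \textbf{p}_2\} \cap \big\{\max_{w \in \NW(u,v) \cap \textbf{p}_2} d(w, \ell(u,v)) \ge 4D \big\}$. Then $\{\Xi(\textbf{p}_2) \ge 4D\} \subseteq \bigcup_{u,v} E_{\textbf{p}_2}(u,v)$, so it suffices to estimate $\sum_{u,v} \mathbb{P}[E_{\textbf{p}_2}(u,v)]$. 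The main new difficulty relative to \Cref{convexp1x} is that $\textbf{p}_2$ is now sandwiched between $\textbf{p}_3$ (from the NW) and $\textbf{p}_1$ (from the SE) in the ensemble; comparing to a fully unconstrained random walk no longer directly applies, since removing the upper bound $\textbf{p}_1$ moves the reference path stochastically more SE, the wrong direction for bounding NW deviation.

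To handle this, I decompose using the event already controlled by \Cref{convexp1x}:
\begin{flalign*}
\mathbb{P}[\Xi(\textbf{p}_2) \ge 4D] \le \mathbb{P}[\Xi(\textbf{p}_1) \ge 2D] + \sum_{u, v} \mathbb{P}\big[ E_{\textbf{p}_2}(u,v) \cap \{ \Xi(\textbf{p}_1) < 2D \} \big].
\end{flalign*}
The first term is directly estimated by \Cref{convexp1x}. For each $(u,v)$ in the second sum, I condition on all $\textbf{p}_i$ with $i \ne 2$, on $\textbf{p}_2|_{\mathcal{T} \setminus \Lambda_{u,v}}$, and on $\textbf{p}_2$ entering and exiting $\Lambda_{u,v}$ at $u$ and $v$. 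By \Cref{property}, $\textbf{p}_2|_{\Lambda_{u,v}}$ is uniformly distributed over directed paths from $u$ to $v$ with $\textbf{p}_3|_{\Lambda_{u,v}} \le \textbf{p}_2|_{\Lambda_{u,v}} \le \textbf{p}_1|_{\Lambda_{u,v}}$. Applying \Cref{couplemonotone}, I couple this with a uniformly random directed path $\textbf{q}$ from $u$ to $v$ with upper bound $\textbf{p}_1|_{\Lambda_{u,v}}$ and no lower bound, so that $\textbf{q} \le \textbf{p}_2|_{\Lambda_{u,v}}$; monotonicity of the NW-deviation event then yields $\mathbb{P}[E_{\textbf{p}_2}(u,v) \mid \text{cond.}] \le \mathbb{P}[E_{\textbf{q}}(u,v) \mid \text{cond.}]$. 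I invoke \Cref{linearrandomf} with $\textbf{f} = -\infty$ and $\textbf{g} = \textbf{p}_1|_{\Lambda_{u,v}}$, taking $\Delta = 2D$; since $E_{\textbf{q}}(u,v)$ demands NW deviation at least $4D = 2D + \Delta$, this gives a bound of order $N^3 \exp(-D^2/(16 N))$ on $\mathbb{P}[E_{\textbf{q}}(u,v) \mid \text{cond.}]$, provided the NW deviation of $\textbf{p}_1|_{\Lambda_{u,v}}$ from $\ell(u,v)$ is at most $2D$. Summing over at most $|\mathcal{T}|^2 \le 16 N^4$ pairs and adding the \Cref{convexp1x} bound then yields the advertised $2^{19} N^7 \exp(-D^2/(16 N))$ estimate.

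The main technical obstacle is the last step: justifying that on $\{\Xi(\textbf{p}_1) < 2D\}$, the path $\textbf{p}_1|_{\Lambda_{u,v}}$ has NW deviation at most $2D$ from the chord $\ell(u,v)$, whose endpoints lie on $\textbf{p}_2$ rather than $\textbf{p}_1$. The key geometric observation is that since $\textbf{p}_1 \ge \textbf{p}_2$ and $u, v \in \textbf{p}_2$, the entry and exit points $u^*, v^*$ of $\textbf{p}_1$ on $\partial \Lambda_{u,v}$ satisfy $u^* \ge u$ and $v^* \ge v$; a direct computation then shows that the chord $\ell(u^*, v^*)$ lies below $\ell(u,v)$ on the relevant abscissa range. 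Consequently, every $w \in \textbf{p}_1 \cap \NW(\ell(u,v)) \cap \Lambda_{u,v}$ also lies in $\NW(u^*, v^*)$ with vertical distance to $\ell(u^*, v^*)$ at least its vertical distance to $\ell(u,v)$, and the bound $\Xi(\textbf{p}_1) < 2D$ then controls the latter. The subtlety lies in passing from vertical to Euclidean distance, since $\ell(u,v)$ and $\ell(u^*, v^*)$ may have substantially different slopes; keeping track of this slope-dependent factor is the delicate part of the argument and is where the room between $2D$ (for \Cref{convexp1x}) and $4D$ (for \Cref{convexp2x}) is consumed.
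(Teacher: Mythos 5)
Your proposal is correct and follows essentially the same route as the paper's proof: the same decomposition off the event $\{\Xi(\textbf{p}_1) \ge 2D\}$ controlled by \Cref{convexp1x}, the same Gibbs conditioning and monotone coupling that removes the lower barrier $\textbf{p}_3$, and the same application of \Cref{linearrandomf} with $(\textbf{f}, \textbf{g}) = (-\infty, \textbf{p}_1)$ and $\Delta = 2D$. The one "subtlety" you flag at the end in fact dissolves: for $w \in \textbf{p}_1 \cap \NW\big(\ell(u,v)\big) \cap \Lambda$ one has the clean Euclidean inequality $d\big(w, \ell(u,v)\big) \le d\big(w, \ell(u^*, v^*)\big) \le \Xi(\textbf{p}_1)$, since the foot of the perpendicular from $w$ to $\ell(u^*, v^*)$ lies on the segment between $u^*$ and $v^*$ (the rectangle $\NE(u^*) \cap \SW(v^*)$ is contained in the perpendicular slab through its two corners) and that segment lies in $\SE\big(\ell(u,v)\big)$, so the segment from $w$ to this foot crosses $\ell(u,v)$; hence no slope-dependent factor is consumed and the constants $2D$ and $4D$ work exactly as in your plan.
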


\begin{proof}
	
	The proof of this lemma will be similar to that of \Cref{convexp1x}.
	
	As in that proof, we first define the events $E = \big\{ \Xi (\textbf{p}_2) \ge 4 D \big\}$ and $F = \big\{ \Xi (\textbf{p}_1) \ge 2 D \big\}$; let $F^c$ denote the complement of $F$. By \Cref{convexp1x}, we have that 
	\begin{flalign}
	\label{probabilityep1p2}
	\mathbb{P} [F] \le 2^{18} N^7 \exp \left( - \displaystyle\frac{D^2}{16 N} \right).
	\end{flalign}
	
	Moreover, for any $u = (x, y) \in \mathcal{T} = \mathcal{T}_{A, B, C}$ and $v = (x', y') \in \mathcal{T} \cap \NE (u)$, denote the rectangular subdomain $\Lambda = \Lambda_{u, v} = \big( [x, x'] \times [y, y'] \big) \cap \mathcal{T} \subseteq \mathcal{T}$ (as in the proof of \Cref{convexp1x}). For any directed path $\textbf{p}$ on $\Lambda_{u, v}$ that enters and exits through $u$ and $v$, respectively, define the event
	\begin{flalign*}
	E_{\textbf{p}} (u, v) = \{ u, v \in \textbf{p} \} \cap  \left\{ \displaystyle\max_{w \in \NW (u, v) \cap \textbf{p}} d (w, \ell) \ge 4D \right\},
	\end{flalign*}
	
	\noindent where we have abbreviated $\ell = \ell (u, v)$. 
	
	Since 
	\begin{flalign}
	\label{evfe}
	E \subseteq \bigcup_{u \in \mathcal{T}} \bigcup_{v \in \mathcal{T} \cap \NE (u)} E_{\textbf{p}_2} (u, v)  = F \cup \bigcup_{u \in \mathcal{T}} \bigcup_{v \in \mathcal{T} \cap \NE (u)} \big( E_{\textbf{p}_2} (u, v) \cap F^c \big),
	\end{flalign}
	
	\noindent we will estimate the probability $\mathbb{P} \big[ E_{\textbf{p}_2} (u, v) \cap F^c \big]$ for each $u$ and $v$.
	
	To that end, let us fix vertices $u \in \mathcal{T}$ and $v \in \mathcal{T} \cap \NE (u)$, and then condition on the paths $(\textbf{p}_1, \textbf{p}_3, \ldots , \textbf{p}_{A + C})$; on the restriction $\textbf{p}_2 |_{\mathcal{T} \setminus \Lambda}$ (namely, the part of $\textbf{p}_2$ not in $\Lambda$); and on the event that $\textbf{p}_2$ enters and exits $\Lambda$ through $u$ and $v$, respectively. Then by \Cref{property}, the conditional law of $\textbf{p}_2 |_{\Lambda}$ is given by the uniform measure on the set of $0$-restricted directed path ensembles that enter and exit $\Lambda$ through $u$ and $v$, respectively, and satisfy $\textbf{p}_3 \le \textbf{p}_2 \le \textbf{p}_1$. 
	
	Now let $\textbf{q}$ denote a uniformly random $0$-restricted directed path on $\mathbb{Z}^2$, conditioned on the event that $\textbf{q} \le \textbf{p}_1$ and also conditioned to start and end at $u$ and $v$, respectively. In view of \Cref{couplemonotone} and \Cref{monotonepathf} (recall that $\textbf{p}_3$ lies to the right of the triangular face of $\mathcal{T}$, since $C \ge 3$), it is possible to couple the laws of $\textbf{p}_2$ and $\textbf{q}$ such that $\textbf{p}_1 \ge \textbf{q}$. Therefore, 
	\begin{flalign} 
	\label{fp2q} 
	\mathbb{P} \big[ E_{\textbf{p}_2} (u, v) \cap F^c \big] \le \mathbb{P} \big[ E_{\textbf{q}} (u, v) \cap F^c \big].
	\end{flalign}
	
	To bound the right side of \eqref{fp2q}, observe that $\textbf{1}_{F^c} \Xi (\textbf{p}_1) \le 2D$, which since $\textbf{p}_1 \ge \textbf{q}$ implies that $\textbf{1}_{F^c} \max_{u \in \textbf{p}_1 \cap \NW (\ell)} d (u, \ell) \le 2D$. Thus, we may apply \Cref{linearrandomf} and \Cref{tconvexrandomfg} with the $(v_1, v_2)$ there equal to $(v_1, v_2)$ here; the $\textbf{p}$ there equal to $\textbf{q}$ here; the $(\textbf{f}, \textbf{g})$ there equal to $(-\infty, \textbf{p}_1)$ here; the $\Delta$ there equal to $2D$ here; and the $M$ there bounded above by $4N$ here, to deduce that
	\begin{flalign}
	\label{xip1d2}
	\mathbb{P} \big[ E_{\textbf{q}} (u, v) \cap F^c \big] \le 2^{14} N^3 \exp \left( - \displaystyle\frac{D^2}{16 N} \right). 
	\end{flalign}
	
	\noindent Combining \eqref{probabilityep1p2}, \eqref{evfe}, \eqref{fp2q}, and \eqref{xip1d2} yields
	\begin{flalign*}
	\mathbb{P} [E] & \le \mathbb{P} [F] + \displaystyle\sum_{u \in \mathcal{T}} \displaystyle\sum_{v \in \mathcal{T} \cap \NE (u)} \mathbb{P} \big[ E_{\textbf{p}_2} (u, v) \cap F^c \big] \\
	& \le \displaystyle\sum_{u \in \mathcal{T}} \displaystyle\sum_{v \in \mathcal{T} \cap \NE (u)} \mathbb{P} \big[ E_{\textbf{q}} (u, v) \cap F^c \big] + 2^{18} N^7 \exp \left( - \displaystyle\frac{D^2}{16 N} \right) \\
	& \le 2^{14} N^3 |\mathcal{T}|^2 \exp \left( - \displaystyle\frac{D^2}{16 N} \right) + 2^{18} N^7 \exp \left( - \displaystyle\frac{D^2}{16 N} \right) \le 2^{19} N^7 \exp \left( - \displaystyle\frac{D^2}{16 N} \right),
	\end{flalign*} 
	
	\noindent where we have used the fact that $|\mathcal{T}| \le 4 N^2$. This implies the lemma. 
\end{proof}

\subsection{Proximity Estimates}

\label{PathsNearp1p2}

In this section we show that, if $\mathcal{P} = (\textbf{p}_1, \textbf{p}_2, \ldots , \textbf{p}_{A + C})$ is a directed path ensemble chosen uniformly at random from $\mathfrak{F}$, then any point on the curve $\textbf{p}_1$ is close to $\textbf{p}_2$ with high probability. 

Before doing so, however, it will be useful to recall the notion of a lower convex envelope. Let $\Lambda = \big( [x_1, x_2] \times [y_1, y_2] \big) \cap \mathbb{Z}^2$ denote a rectangular subdomain of $\mathbb{Z}^2$, and let $\textbf{p}$ denote an (up-right) directed path on $\Lambda$. The \emph{lower convex envelope} $\textbf{h} (\textbf{p}) \subset [x_1, x_2] \times [y_1, y_2]$ of $\textbf{p}$ is defined to be the lower boundary of $\Lambda \cap \bigcup_{u, v \in \textbf{p}} \NW  (u, v)$ (where in the union we assume that $v \in \NE (u)$). Equivalently, it is the topmost convex curve (restricted to $\Lambda$) that lies below $\textbf{p}$. We refer to the left side of \Cref{u1w1p} for a depiction. 

Recalling the definition of $\Xi (\textbf{p})$ from \eqref{xip}, observe for any directed path $\textbf{p}$ on some rectangular subdomain $\Lambda \subset \mathbb{Z}^2$ that 
\begin{flalign}
\label{dxi}
\displaystyle\max_{u \in \textbf{p}} d \big( u, \textbf{h} (\textbf{p}) \big) + \displaystyle\max_{v \in \textbf{h} (p)} d (v, \textbf{p}) \le 2 \Xi (\textbf{p}).
\end{flalign}

Now we have the following proposition stating that $\textbf{p}_1$ and $\textbf{p}_2$ are ``close'' in a certain sense. 

\begin{prop}
	
	\label{p1p2near} 
	
	Adopting the notation of \Cref{convexp1x}, we have that 
	\begin{flalign*}
	\mathbb{P} \left[ \displaystyle\max_{v_1 \in \textbf{\emph{p}}_1}  (v_1, \textbf{\emph{p}}_2) \ge 26 D + 1 \right] \le 2^{21} N^7 \exp \left( - \displaystyle\frac{D^2}{16 N} \right).
	\end{flalign*}
\end{prop}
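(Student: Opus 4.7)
The plan is to mirror the proof structure of \Cref{convexp1x} and \Cref{convexp2x}: restrict to a high-probability event on which both $\textbf{p}_1$ and $\textbf{p}_2$ are approximately convex, decompose the ``far apart'' event into local sub-events indexed by vertex pairs, and bound each sub-event via the Gibbs property and random-walk linearity. Accordingly, I would set $F = \{\Xi(\textbf{p}_1) \le 2D\} \cap \{\Xi(\textbf{p}_2) \le 4D\}$ so that by \Cref{convexp1x}, \Cref{convexp2x}, and a union bound, $\mathbb{P}[F^c] \le 3 \cdot 2^{19} N^7 \exp\bigl(-D^2/(16N)\bigr)$. Since $\textbf{p}_2 \le \textbf{p}_1$, the closest point of $\textbf{p}_2$ to any $v_1 \in \textbf{p}_1$ lies in $\NW(v_1)$, so
\begin{equation*}
F \cap \Bigl\{ \max_{v_1 \in \textbf{p}_1} d(v_1, \textbf{p}_2) \ge 26 D + 1 \Bigr\} \subseteq \bigcup_{(v_1, v_2)} E(v_1, v_2),
\end{equation*}
where the union runs over lattice pairs $(v_1, v_2) \in \mathcal{T}^2$ with $v_2 \in \NW(v_1)$ and $d(v_1, v_2) \ge 26 D + 1$, and $E(v_1, v_2) = F \cap \{v_1 \in \textbf{p}_1\} \cap \{v_2 \in \textbf{p}_2\}$. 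Since $|\mathcal{T}|^2 \le 16 N^4$, it would suffice to show $\mathbb{P}[E(v_1, v_2)] \le 2^{14} N^3 \exp\bigl(-D^2/(16N)\bigr)$ for each such pair.

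To bound $\mathbb{P}[E(v_1, v_2)]$, I would condition on the paths $\textbf{p}_3, \ldots, \textbf{p}_{A+C}$ and on the restriction $\textbf{p}_2 |_{\mathcal{T} \setminus \Lambda}$ for a rectangular subdomain $\Lambda \subseteq \mathcal{T}$ with $v_1, v_2 \in \Lambda$, chosen using the approximate convexity on $F$ so that the entry and exit points of $\textbf{p}_2$ on $\partial \Lambda$ lie within $O(D)$ of the corresponding crossings of $\textbf{p}_1$. By the Gibbs property \Cref{property}, the conditional law of $\textbf{p}_2 |_{\Lambda}$ is that of a uniform directed walk from its entry to its exit, subject to $\textbf{p}_3 \le \textbf{p}_2 \le \textbf{p}_1$. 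Using \Cref{couplemonotone} and \Cref{monotonepathf} (noting that $\textbf{p}_2$ lies to the right of the triangular face of $\mathcal{T}$ since $C \ge 3$), I would couple $\textbf{p}_2 |_{\Lambda}$ with an unconstrained walk $\textbf{q}$ on $\mathbb{Z}^2$ from the same endpoints so that $\textbf{q} \le \textbf{p}_2$. Applying \Cref{linearrandomf} and \Cref{tconvexrandomfg} with $\textbf{g} = \textbf{p}_1 |_{\Lambda}$ and $\Delta = O(D)$ (provided by the $\Xi$-control on $F$), one finds that $\textbf{q}$ stays within $O(D)$ of the chord of its endpoints with probability at least $1 - 2^{14} N^3 \exp\bigl(-D^2/(16 N)\bigr)$. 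Since $\textbf{q} \le \textbf{p}_2 \le \textbf{p}_1$, the vertical separation between $\textbf{p}_2$ and $\textbf{p}_1$ at any $x$-coordinate inside $\Lambda$ is at most the separation between $\textbf{q}$ and $\textbf{p}_1$, which under this event is $O(D)$ and contradicts $d(v_1, v_2) \ge 26 D + 1$.

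The main obstacle will be orchestrating the choice of $\Lambda$ and verifying that, on $F$, the boundary-crossing points of $\textbf{p}_1$ and $\textbf{p}_2$ are indeed close. Since $\Xi$ only controls NW deviations from chords of pairs of points on the same path, one must work with the chord between two anchor points on the lower convex envelope $\textbf{h}(\textbf{p}_1)$ flanking $v_1$ (and similarly for $\textbf{h}(\textbf{p}_2)$); these anchor points then provide natural candidates for the corners of $\Lambda$. The constant $26 D + 1$ would then arise by combining: the $4 D$ slack from $\Xi(\textbf{p}_1) \le 2 D$ via \eqref{dxi}; the $8 D$ slack from $\Xi(\textbf{p}_2) \le 4 D$ via \eqref{dxi}; two applications of \Cref{linearrandomf} contributing $2 (2 D + \Delta) = O(D)$ further slack; and a unit-scale buffer accounting for lattice discreteness and the one-unit shift between corresponding endpoints of $\textbf{p}_1$ and $\textbf{p}_2$. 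After summing over the $\le 16 N^4$ pairs, the stated bound $2^{21} N^7 \exp\bigl(-D^2 / (16 N)\bigr)$ emerges.
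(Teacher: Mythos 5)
Your setup (restricting to the event $F$ on which $\Xi(\textbf{p}_1)\le 2D$ and $\Xi(\textbf{p}_2)\le 4D$, then decomposing into local events and resampling via \Cref{property}, \Cref{couplemonotone}, and \Cref{linearrandomf}) matches the paper's strategy, but the core of your local estimate has a genuine gap: you resample the wrong path. You apply the linearity estimate to $\textbf{p}_2$ (with $\textbf{g}=\textbf{p}_1$) and conclude that $\textbf{p}_2$ stays within $O(D)$ of the chord of its endpoints; to deduce that $\textbf{p}_2$ is then close to $v_1\in\textbf{p}_1$ you need $\textbf{p}_1$ to be within $O(D)$ of that same chord. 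But $\Xi$ (see \eqref{xip}) only controls deviations of a path to the \emph{northwest} of chords of its own points; on $F$ the path $\textbf{p}_1$ may still make a convex corner arbitrarily far to the \emph{southeast} of the chord (an L-shaped path has $\Xi=0$). In exactly that configuration, $\textbf{p}_2$ hugging its chord is consistent with $d(v_1,\textbf{p}_2)$ being large, so the conditional probability of the bad event given $\textbf{p}_1$ is not small and no amount of resampling of $\textbf{p}_2$ alone can close the argument. The paper's proof addresses precisely this scenario by resampling $\textbf{p}_1$: it locates anchor points $u,w\in\textbf{p}_1$ whose chord approximates the separating line $\ell_1$ through the nearest point of $\textbf{h}_2$, notes that $\textbf{h}_2\subset\NW(\ell_1)$ forces $\textbf{p}_2$ to lie within $4D$ of that chord on the southeast side (supplying the $\Delta$ in \Cref{linearrandomf}), and then shows the $\ge 6D$ southeast excursion of $\textbf{p}_1$ between $u$ and $w$ is improbable. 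The convexity of $\textbf{h}_2$ and the fact that the entrance/exit sites of the two paths differ by one lattice unit are what guarantee the anchors $u,w$ exist on both sides of $v_1$; your sketch does not supply a substitute for this step.

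A secondary issue: your events $E(v_1,v_2)=F\cap\{v_1\in\textbf{p}_1\}\cap\{v_2\in\textbf{p}_2\}$ are not rare as literally defined, since nothing ties $v_2$ to being the closest point of $\textbf{p}_2$ to $v_1$ (for $v_1$ near the start of $\textbf{p}_1$ and $v_2$ near the end of $\textbf{p}_2$ this event has order-one probability). The decomposition should be over single vertices, $E(v_1)=F\cap\{v_1\in\textbf{p}_1\}\cap\{d(v_1,\textbf{p}_2)\ge 26D+1\}$, or over pairs of anchor points as in the paper. This is fixable notation, but the missing control on the southeast excursion of $\textbf{p}_1$ described above is not.
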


\begin{figure}[t]
	
	\begin{center}
		
		\begin{tikzpicture}[
		>=stealth,
		scale = .57
		]

		\draw[-, black] (5, 1) -- (6, 1) -- (7, 1) -- (8, 1) -- (8, 2) -- (9, 2) -- (10, 2) -- (10, 3) -- (11, 3) -- (12, 3) -- (12, 4) -- (12, 5) -- (12, 6) -- (13, 6) -- (13, 7) -- (13, 8);

		\draw[-, black] (9, 2) circle[radius = 0] node[scale = .7, above]{$\textbf{p}$};
		
		\draw[-, black] (12.7, 5) circle[radius = 0] node[scale = .7, right]{$\textbf{h}$};
		
		\draw[-, black, dashed, ultra thick] (5, 1) -- (8, 1) -- (10, 2) -- (12, 3) -- (13, 6) -- (13, 8);

		\draw[-, black] (20, 0) -- (20, 8) --  (30, 8) -- (30, 0) -- (20, 0);
		\draw[-, black, thick] (20, 3) -- (21, 3) -- (21, 4) -- (22, 4) -- (23, 4) -- (24, 4) -- (25, 4) -- (25, 5) -- (25, 6) -- (25, 7) -- (26, 7) -- (26, 8);
		\draw[-, black, thick] (20, 1) -- (21, 1) -- (21, 2) -- (22, 2) -- (23, 2) -- (24, 2) -- (24, 3) -- (25, 3) -- (26, 3) -- (26, 4) -- (27, 4) -- (28, 4) -- (28, 8);

		\draw[-, black, dashed, ultra thick] (20, 3) -- (21, 3)  -- (25, 4) -- (26, 7) -- (26, 8);
		\draw[-, black, dashed, ultra thick] (20, 1) -- (21, 1) -- (24, 2) -- (26, 3) -- (28, 4) -- (28, 8);

		\draw[] (20, 6) circle[radius = 0] node[left, scale = .75]{$\Lambda$};
		
		\draw[] (21, 4) circle[radius = 0] node[above, scale = .75]{$\textbf{p}_2$};
		\draw[] (21, 2) circle[radius = 0] node[left, scale = .75]{$\textbf{p}_1$};
		
		\draw[] (21.5, .5) circle[radius = 0] node[right, scale = .75]{$\ell_1$};

		\filldraw[fill = black] (25, 4) circle[radius = .071] node[above = 4, left, scale = .75]{$v_2$};
		\filldraw[fill = black] (26, 3) circle[radius = .071] node[right = 3, below = 4, scale = .75]{$v = v_1$};

		\filldraw[fill = black] (25.7, 6) circle[radius = 0] node[right, scale = .75]{$\textbf{h}_2$};
		\filldraw[fill = black] (27, 3.2) circle[radius = 0] node[right, scale = .75]{$\textbf{h}_1$};

		\filldraw[fill = black] (22.6, 1.6) circle[radius = .071] node[below = 2, scale = .75]{$u_1$};
		
		\filldraw[fill = black] (22, 2) circle[radius = .071] node[above, scale = .75]{$u$};
		
		\filldraw[fill = black] (28, 7) circle[radius = .071] node[right, scale = .75]{$w_1 = w$};
		
		\draw[-, dashed] (21, 0) -- (29, 8);
		
		\draw[->, dotted] (25, 4) -- (25.95, 3.05);
		
		\filldraw[fill = black] (25.3, 3.6) circle[radius = 0] node[below, scale = .75]{$s$};
		
		\end{tikzpicture}
		
	\end{center}	
	
	\caption{\label{u1w1p} Depicted to the left is the lower convex envelope $\textbf{h} = \textbf{h} (\textbf{p})$ of a directed path $\textbf{p}$. Depicted to the right is the setting for the proof of \Cref{p1p2near}.} 
\end{figure}
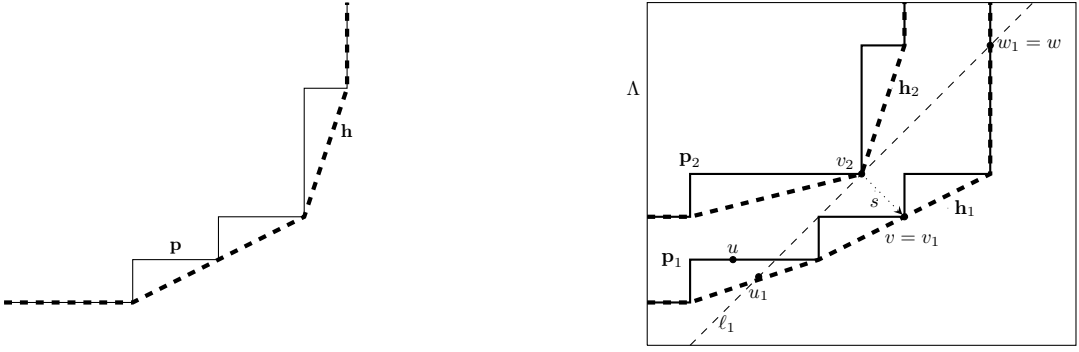

\begin{proof} 
	
	Define the events 
	\begin{flalign*} 
	E = \left\{ \displaystyle\max_{v_1 \in \textbf{p}_1}  (v_1, \textbf{p}_2) \ge 26 D + 1 \right\}; \quad F_1 = \big\{ \Xi (\textbf{p}_1) \ge 2D \big\}; \quad F_2 = \big\{ \Xi (\textbf{p}_2) \ge 4 D \big\}; \quad F = F_1 \cup F_2. 
	\end{flalign*}
	
	\noindent Further let $F^c$ denote the complement of $F$. Then, \Cref{convexp1x} and \Cref{convexp2x} together imply that 
	\begin{flalign}
	\label{gestimate2}
	\mathbb{P} [F] \le 2^{20} N^7 \exp \left( - \displaystyle\frac{D^2}{16 N}\right).
	\end{flalign}
	
	\noindent Now let $\textbf{h}_1 = \textbf{h} (\textbf{p}_1)$ and $\textbf{h}_2 = \textbf{h} (\textbf{p}_2)$ denote the lower convex envelopes of $\textbf{p}_1$ and $\textbf{p}_2$, respectively. Then \eqref{dxi} implies that
	\begin{flalign}
	\label{gcdeltah1p1h2p2}
	\textbf{1}_{F^c} \left( \displaystyle\max_{u \in \textbf{p}_1} d(u, \textbf{h}_1) + \displaystyle\max_{v \in \textbf{h}_1} d (v, \textbf{p}_1) \right) \le 4D; \qquad \textbf{1}_{F^c} \left( \displaystyle\max_{u \in \textbf{p}_2} d(u, \textbf{h}_2) + \displaystyle\max_{v \in \textbf{h}_2} d (v, \textbf{p}_2) \right) \le 8D.
	\end{flalign}
	
	\noindent Next, for any $u \in \mathcal{T}$ and $w \in \mathcal{T} \cap \NE (u)$, we define the event
	\begin{flalign} 
	\label{omegauw} 
	\begin{aligned}
	\Omega (u, w) = \big\{ u, w \in \textbf{p}_1 \big\} \cap \left\{ \displaystyle\max_{v \in \textbf{p}_2 \cap \SE (u, w)} d \big( v, \ell (u, w) \big) \le 4 D \right\}  \cap \left\{ \displaystyle\max_{v \in \textbf{p}_1 \cap \text{R} (u, w)} d \big( v, \ell (u, w) \big) \ge 6D \right\},
	\end{aligned} 
	\end{flalign}
	
	\noindent where we have set $\text{R} (u, w) = \SE (u, w) \cup \NW (u, w)$ and recalled the definitions of $\NW (u, w)$ and $\SE (u, w)$ from \eqref{setsuv}; observe that $\text{R} (u, w)$ denotes the rectangle whose southwest and northeast corners are at $u$ and $v$, respectively. Thus, $\Omega (u, w)$ denotes the event on which $u$ and $w$ are on $\textbf{p}_1$; $\textbf{p}_2$ is not too far to the right of $\ell (u, w)$ when restricted to $\text{R} (u, w)$; and the path $\textbf{p}_1$ is not approximately linear on $\text{R} (u, w)$. 
	
	We claim that 
	\begin{flalign}
	\label{uvwf}
	E \cap F^c \subseteq \bigcup_{u \in \mathcal{T}} \bigcup_{w \in \mathcal{T} \cap \NE (u)} \Omega (u, w); \qquad \displaystyle\max_{u \in \mathcal{T}} \displaystyle\max_{w \in \mathcal{T} \cap \NE (u)} \mathbb{P}\big[ \Omega (u, w) \big] \le 2^{14} N^3 \exp \left( - \frac{D^2}{16 N}\right). 
	\end{flalign}
	
	\noindent To establish the first statement of \eqref{uvwf}, we first for any $v \in \mathcal{T}$ define the event 
	\begin{flalign*} 
	E (v) = \big\{ v \in \textbf{h}_1 \big\} \cap \big\{ d (v, \textbf{h}_2) \ge 14 D + 1 \big\}.
	\end{flalign*}
	
	\noindent Then \eqref{gcdeltah1p1h2p2} implies that $E \cap F^c \subseteq \bigcup_{v \in \mathcal{T}} \big( E (v) \cap F^c \big)$; thus it suffices to show that
	\begin{flalign}
	\label{evfomega} 
	E(v) \cap F^c \subseteq  \bigcup_{u \in \mathcal{T}} \bigcup_{w \in \mathcal{T} \cap \NE (u)} \Omega (u, w),
	\end{flalign}
	
	\noindent for any $v \in \mathcal{T}$.
	
	To that end, let us fix $v_1 = (x_1, y_1) \in \textbf{h}_1$ and assume that $E(v_1) \cap F^c$ holds. Let $v_2 = (x_2, y_2) \in \textbf{h}_2$ denote a point on $\textbf{h}_2$ such that $d(v_1, \textbf{h}_2) = d(v_1, v_2) \ge 14 D + 1$, and let $s = v_1 - v_2 = (x_1 - x_2, y_1 - y_2) \in \mathbb{R}_{\ge 0} \times \mathbb{R}_{\le 0}$ denote the vector from $v_2$ to $v_1$; further let $\ell_1$ denote the line through $v_2$ orthogonal to $s$. Then, since $d(v_1, v_2) = d(v_1, \textbf{h}_2)$ and $\textbf{h}_2$ is convex, we must have that $\textbf{h}_2 \subset \NW (\ell_1)$. See the right side of \Cref{u1w1p} for a depiction. 
	
	Since the vertices at which $\textbf{p}_1$ and $\textbf{p}_2$ enter (and exit) $\mathcal{T}$ are of distance one from each other, the same holds for $\textbf{h}_1$ and $\textbf{h}_2$. Combining this with the facts that $v_1 \in \textbf{h}_1 \cap \SE (\ell_1)$ and $\textbf{h}_2 \subset \NW (\ell_1)$, we deduce that there must exist points $u_1 \in \textbf{h}_1 \cap \SW (v_1)$ and $w_1 \in \textbf{h}_1 \cap \NE (v_1)$ such that $\max \big\{ d (u_1, \ell_1), d (w_1, \ell_1) \big\} \le 1$. Typically, $u_1$ and $w_1$ will lie on $\ell_1$, as depicted in \Cref{u1w1p}
	
	Now let $u$, $v$, and $w$ denote the closest vertices in $\textbf{p}_1$ to $u_1$, $v_1$, and $w_1$, respectively (in \Cref{u1w1p} we have drawn $v = v_1$ and $w = w_1$ for convenience, but we do not assume this to necessarily be the case). We claim that $\Omega (u, w)$ holds. To that end, first observe that 
	\begin{flalign}
	\label{v1u1w1}
	\begin{aligned}
	& \min \big\{ d (v_1, u_1), d (v_1, w_1) \big\} \ge 14 D; \qquad \max \big\{ d(u, u_1), d(v, v_1), d (w, w_1) \big\} \le 4 D; \\
	& \min \big\{ d (v, u), d (v, w) \big\} \ge 6 D.
	\end{aligned}
	\end{flalign}
	
	Indeed, the first estimate in \eqref{v1u1w1} follows from the facts that $d (v_1, \ell_1) \ge 14 D + 1$ and that $\max \big\{ d (u_1, \ell_1), d (w_1, \ell_1) \big\} \le 1$. Furthermore, the second is a consequence of the first bound in \eqref{gcdeltah1p1h2p2}, and the third follows from the first and second bounds in \eqref{v1u1w1}. Thus, since $u_1 \in \SW (v_1)$ and $w_1 \in \NE (v_1)$, we have that $u \in \SW (v)$ and $w \in \NE (v)$. 
	
	Now observe that the first condition in \eqref{omegauw} holds. The second condition in \eqref{omegauw} also holds since
	\begin{flalign*}
	\displaystyle\max_{z \in \textbf{p}_2 \cap \SE (u, w)} d (z, \ell) \le \displaystyle\max \big\{ d (u, u_1), d (w, w_1) \big\} \le 4D,
	\end{flalign*}
	
	\noindent where we have used the second estimate in \eqref{v1u1w1} and the fact that $\textbf{p}_2$ is north of $\textbf{h}_2$, which is north of $\ell_1 = \ell (u_1, w_1)$ (since $\textbf{h}_2$ is convex). 
	
	To verify the third constraint in \eqref{omegauw}, observe that since $d ( v_1, \ell_1 ) \ge 14 D + 1$ we have that $d (v, \ell_1) \ge 10 D + 1$ by the second estimate in \eqref{v1u1w1}. Applying this bound again, and the fact that $\max \big\{ d (u_1, \ell_1), d (w_1, \ell_1) \big\} \le 1$, we deduce that $d (v, \ell) \ge 6 D$. This yields the third condition in \eqref{omegauw} and therefore the first statement of \eqref{uvwf}. 
	
	Now let us establish the second statement of \eqref{uvwf}; this will follow from a suitable application of \Cref{property} and \Cref{linearrandomf}. More specifically, let us fix vertices $u = (x, y) \in \mathcal{T}$ and $w = (x', y') \in \mathcal{T} \cap \NE (u)$, and define the rectangular subdomain $\Lambda = \Lambda_{u, w} = \big( [x, y] \times [x', y'] \big) \cap \mathcal{T}$ of $\mathcal{T}$. Next, we condition on the paths $(\textbf{p}_2, \textbf{p}_3, \ldots , \textbf{p}_{A + C})$; on the restriction $\textbf{p}_1 |_{\mathcal{T} \setminus \Lambda}$; and on the event that $\textbf{p}_1$ enters and exits $\Lambda$ through $u$ and $w$, respectively. 
		
	Then by \Cref{property}, the conditional law of $\textbf{p}_1 |_{\Lambda}$ is uniform on the set of $0$-restricted path ensembles that enter and exit $\Lambda$ through $u$ and $v$, respectively, and satisfy $\textbf{p}_1 \ge \textbf{p}_2$. The fact that $\textbf{p}_1$ must be $0$-restricted is due to the fact that it lies to the right of the triangular face of $\mathcal{T}$, since $C \ge 2$. The second bound in \eqref{uvwf} then follows from applying \Cref{linearrandomf} and \Cref{tconvexrandomfg} with the $\textbf{p}$ there equal to $\textbf{p}_1$ here; the $(v_1, v_2)$ there equal to the $(u, w)$ here; the $(\textbf{f}, \textbf{g})$ there equal to $(\textbf{p}_2, \infty)$ here; the $M$ bounded above by $4N$ here; and the $\Delta$ equal there equal to $4 D$ here. 
	
	Now the proposition follows from \eqref{gestimate2}, \eqref{uvwf}, a union bound, and the fact that $|\mathcal{T}| \le 4N^2$. 
\end{proof}

\section{Exit Location of \texorpdfstring{$\textbf{p}_1^{\aug}$}{} From the \texorpdfstring{$x$}{}-Axis} 

\label{Equationp1} 

In this section we locate where the rightmost path $\textbf{p}_1^{\aug}$ of a uniformly random, $A$-restricted directed path ensemble $\mathcal{P}^{\aug}$ on $\mathcal{X}_{A, B, C; \Psi}$ with augmented domain-wall boundary data leaves the $x$-axis. We state this result as \Cref{lzpsip1} in \Cref{LocationAugmentedPath} and reduce it to \Cref{estimater} below; we then establish the latter proposition in \Cref{REstimate}.

\subsection{Location of \texorpdfstring{$\Phi \big(\mathcal{P}^{\aug} \big)$}{}}

\label{LocationAugmentedPath}

We begin with the following definition that introduces several functions; the relevance of these functions to the arctic boundary $\mathfrak{B}$ is explained by \Cref{lztangentb} below.

\begin{definition} 
	
	\label{nuzsigmaz} 
	
	For any real number $z > 0$, we recall $\zeta (z)$ from \eqref{zeta1z} and define the function $\nu (z) = z^{-1} \zeta (z)$. In particular,
	\begin{flalign}
	\label{nuxidefinition}
	\nu (z) = \displaystyle\frac{\sqrt{\big( z(b + c) + a + c\big)^2 - 4abz} + bz - cz - a - c}{2z} + \displaystyle\frac{\sqrt{z^2 + z + 1} - 1}{z}.
	\end{flalign}

\end{definition}

\begin{lem}
	
\label{lztangentb} 

Let $z \ge 0$ be a real number, and define the point $v = \big( x(z), y(z) \big) \in \mathfrak{B}$ (recall \Cref{zetacurves}). If $\ell = \ell_z$ denotes the tangent line to $\mathfrak{B}$ at $v$, then the equation for $\ell$ is given by $y = z x - \zeta (z)$, where $\zeta (z)$ is given by \eqref{nuxidefinition}.
\end{lem}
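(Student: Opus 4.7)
The plan is to verify two things: that the point $v = (x(z), y(z))$ lies on the line $\ell: y = zx - \zeta(z)$, and that the slope of the parameterized curve $\mathfrak{B}$ at $v$ equals $z$. Both are immediate from the definitions in \eqref{x1y1z}, since that parameterization is precisely the Legendre-transform parameterization of $\zeta$.

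First I would observe that by \eqref{x1y1z},
\begin{flalign*}
z \cdot x(z) - \zeta(z) = z \zeta'(z) - \zeta(z) = y(z),
\end{flalign*}
so $v \in \ell$ holds with no computation beyond substitution.

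Next, to confirm that $\ell$ is indeed the tangent line to $\mathfrak{B}$ at $v$, I would compute the slope of $\mathfrak{B}$ at $v$ via the parametric derivatives. Using \eqref{x1y1z}, we have
\begin{flalign*}
\frac{dx}{dz} = \zeta''(z), \qquad \frac{dy}{dz} = \zeta'(z) + z \zeta''(z) - \zeta'(z) = z \zeta''(z),
\end{flalign*}
so provided $\zeta''(z) \neq 0$, the slope of $\mathfrak{B}$ at $v$ equals $\frac{dy/dz}{dx/dz} = z$, which matches the slope of $\ell$. Together with the previous step, this shows $\ell$ is tangent to $\mathfrak{B}$ at $v$.

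The only point requiring a short verification is $\zeta''(z) \neq 0$ on the relevant range (so that $\mathfrak{B}$ is locally a graph over the $x$-axis and the tangent slope is well defined). I expect this to follow from strict convexity of $\zeta$, which is visible from the explicit formula \eqref{zeta1z}: each of the summands $\sqrt{z^2+z+1}$ and $\frac{1}{2}\sqrt{(zb+zc+a+c)^2 - 4abz}$ is strictly convex on $(0, \infty)$ (for $a, b, c \ge 0$ with $a+b+c=1$ and the radicands positive), while the remaining terms are linear in $z$; hence $\zeta''(z) > 0$. This is the only nonroutine part of the argument, though it is still a direct computation. Once this is in hand, the lemma follows.
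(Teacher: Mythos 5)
Your proof is correct and is precisely the routine Legendre-duality calculation that the paper has in mind: the paper explicitly omits the proof of \Cref{lztangentb}, stating it ``follows quickly from \Cref{zetacurves} and \eqref{nuxidefinition},'' and the remark preceding it already identifies $\mathfrak{B}$ as the Legendre transform of $\zeta$. Your verification that $v \in \ell$ and that the parametric slope equals $z$ (together with $\zeta'' > 0$, which indeed holds since $2ff''-(f')^2 = 3$ for the first radicand and $16abc$ for the second) is exactly the intended argument.
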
 

The proof of this lemma follows quickly from \Cref{zetacurves} and \eqref{nuxidefinition} and is therefore omitted. 

The goal of this section is to establish the following proposition; in what follows, we will use the fact that $\zeta (z)$ is invertible on $\mathbb{R}_{\ge 0}$; see \Cref{sigmamuzincreasing} below. 

\begin{prop} 
	
	\label{lzpsip1}

	Let $N > 0$ be an integer, and fix real numbers $a, b, c \in [0, 1]$ and $\psi > 0$ with $a + b + c = 1$; denote $A = \lfloor aN \rfloor$, $B = \lfloor bN \rfloor$, $C = \lfloor cN \rfloor$, and $\Psi = \lfloor \psi N \rfloor$. Further let $D > 0$ be some real number, and let $z \ge 0$ be such that $\zeta = \zeta (z) = \psi$. Set $\nu = \nu (z)$.
	
	Consider an $A$-restricted random directed path ensemble $\mathcal{P}^{\aug} = \big( \textbf{\emph{p}}_1^{\aug}, \textbf{\emph{p}}_2^{\aug}, \ldots , \textbf{\emph{p}}_{A + C + 1}^{\aug} \big)$ on the augmented three-bundle domain $\mathcal{X} = \mathcal{X}_{A, B, C; \Psi}$ with augmented domain-wall boundary data, chosen uniformly at random. Let $\Phi = \Phi (\mathcal{P}^{\aug}) \in [1, A + 2B + C]$ denote the integer such that $\textbf{\emph{p}}_1^{\aug}$ contains an edge from $(\Phi, 0)$ to $(\Phi, 1)$. 
	
	Then, 
	\begin{flalign*}
	\mathbb{P} \big[ | \Phi - \nu N | \ge D \big] \le 2^{96} (\Psi + N)^{20} \exp \left( - \displaystyle\frac{\psi D^2}{16 (\Psi + N + 1)} \right). 
	\end{flalign*} 
	
\end{prop}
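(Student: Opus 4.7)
The plan is to implement quantitatively the tangent method heuristic outlined in \Cref{OutlineTangent}. The starting point is the elementary combinatorial decomposition: an $A$-restricted augmented ensemble $\mathcal{P}^{\aug} \in \mathfrak{G}_\Psi^A$ with $\Phi(\mathcal{P}^{\aug}) = k$ is uniquely the concatenation of a directed up-right walk in $\mathbb{Z}^2$ from $(0, -\Psi)$ to $(k, 0)$ (whose number equals $\binom{\Psi + k - 1}{\Psi - 1}$) with an $A$-restricted directed path ensemble on $\mathcal{T}_{A, B, C+1}$ having singly refined domain-wall boundary data at $(k, 0)$ as in \Cref{uw2} (whose cardinality equals $|\mathfrak{F}_{A,B,C+1}| \cdot H_{A,B,C+1}(k)$, where $H$ is the singly refined correlation function of \Cref{OutlineTangent}). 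This yields the exact identity
\[
\mathbb{P}\bigl[\Phi(\mathcal{P}^{\aug}) = k\bigr] = \frac{|\mathfrak{F}_{A, B, C+1}|}{|\mathfrak{G}_\Psi^A|} \cdot \binom{\Psi + k - 1}{\Psi - 1} \cdot H_{A, B, C+1}(k),
\]
which reduces the problem to sharp control of the right-hand side as a function of $k$.

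The core analytic step, which I would isolate as Proposition \texttt{estimater} and assume for this proof, provides a sharp exponential upper bound on $H_{A, B, C+1}(k)$, of the form $\exp\!\bigl(-N \mathfrak{h}(k/N) + O(\log N)\bigr)$, where $\mathfrak{h}$ is an explicit strictly convex rate function. This estimate comes from a Laplace-type analysis of the exact identity in equation~(59) of \cite{PRC}, which expresses $H_{A, B, C+1}(k)$ as a finite sum of products of binomial coefficients. Assuming \texttt{estimater} and combining it with Stirling's formula applied to the binomial $\binom{\Psi + k - 1}{\Psi - 1}$, we obtain
\[
\frac{\mathbb{P}[\Phi = k]}{\mathbb{P}[\Phi = \lfloor \nu N \rfloor]} \le (\Psi + N)^{O(1)} \exp\bigl(N \cdot (g_\psi(k/N) - g_\psi(\nu))\bigr),
\]
where $g_\psi(x) = (\psi + x)\log(\psi + x) - \psi\log\psi - x\log x - \mathfrak{h}(x)$ is the total rate function appearing in \Cref{OutlineTangent}.

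The remainder of the proof is the analysis of $g_\psi$. A direct computation using \eqref{zeta1z} and \eqref{nuxidefinition} shows that the critical-point equation $g_\psi'(x) = 0$ is equivalent to the system $\psi = \zeta(z)$, $x = \nu(z)$; hence $x = \nu$ is the unique maximizer of $g_\psi$. Computing the second derivative,
\[
g_\psi''(\nu) = -\frac{\psi}{\nu(\psi + \nu)} - \mathfrak{h}''(\nu),
\]
and using $\nu \le a + 2b + c \le 2$ together with $\mathfrak{h}''(\nu) \ge 0$, one obtains a lower bound of the form $-g_\psi''(\nu) \ge c_0 \psi / (\Psi + N + 1)$, where the $\psi/(\Psi+N+1)$ scaling mirrors the prefactor in the desired estimate. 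Substituting this into the exponential bound, summing the resulting Gaussian-type tail over $|k - \nu N| \ge D$, and absorbing the $(\Psi+N)^{O(1)}$ prefactor into $2^{96}(\Psi + N)^{20}$ yields the stated bound.

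The main obstacle is Proposition \texttt{estimater}: extracting the rate function $\mathfrak{h}$ from the Cantini-Sportiello sum identity with enough uniformity to verify that the critical point of $g_\psi$ is \emph{exactly} $\nu(z)$ from \eqref{nuxidefinition}, and controlling the approximation in the relevant intermediate regime $k/N \in (0, a + 2b + c)$ including near the boundary where the binomial term degenerates. The downstream concentration argument sketched above is then a routine application of Laplace's method once this input is in hand.
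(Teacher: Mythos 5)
Your proposal is correct and follows essentially the same route as the paper: the same product decomposition of $\mathbb{P}[\Phi = k]$ into the free-walk binomial times the Cantini--Sportiello refined correlation function (this is \Cref{xipsiabckidentity}), the same total rate function $g_\psi(x) = f_\psi(x) - \mathfrak{h}(x)$, the same identification of its maximizer with $\nu(z)$ where $\zeta(z) = \psi$, and the same strict-concavity/Gaussian-tail conclusion. The one organizational difference is worth noting because it dissolves what you call the main obstacle. The paper never extracts a one-variable rate function $\mathfrak{h}$ for the sum $H_{A,B,C+1}(k) \propto \sum_Y R(k,Y)$; instead its \Cref{estimater} bounds each \emph{individual} summand $R(X,Y)\binom{\Psi+X-1}{\Psi}$ of the double sum against the single summand at $(\lfloor\nu N\rfloor, \lfloor\sigma N\rfloor)$, using joint strict concavity of the two-variable function $h(x,y)+f(x)$ (\Cref{fhsum}, whose Hessian eigenvalue bound $-\psi/(2(\psi+2))$ matches your $f''$ computation) together with the explicit solution of the two-variable critical-point system (\Cref{nusigmafhmaximum}, which is your reduction to $\psi = \zeta(z)$, $x = \nu(z)$ once the inner equation $\partial_y h = 0$ is adjoined). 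The normalization is then handled by the trivial observation that $Z$ is at least the maximal summand because the total probability is at most $1$. This removes any need for a matching lower bound on $H$, for uniform Laplace asymptotics of the sum over $Y$, or for care near the boundary where the binomials degenerate --- exactly the uniformity issues you flag. So your deferred key lemma is best formulated at the level of individual terms of the double sum, where it reduces to Stirling's formula plus a Hessian bound, rather than at the level of $H(k)$ itself.
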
 

To establish this proposition, we first evaluate the probability $\mathbb{P} \big[ \Phi (\mathcal{P}^{\aug}) = k \big]$. This quantity can be expressed in terms of $k$-refined correlation functions, which are defined as follows.

\begin{definition} 
	
	\label{hkabc} 
	
	For any integers $A, B, C \ge 0$ and $k \in [1, A + 2B + C]$, define the \emph{(singly) $k$-refined correlation function} $H (k) = H_{A, B, C} (k)$ to be the probability that a uniformly random $A$-restricted directed path ensemble $\mathcal{P} = (\textbf{p}_1, \textbf{p}_2, \ldots , \textbf{p}_{A + C})$ on $\mathcal{T}_{A, B, C}$ contains an arrow from $(k, 1)$ to $(k, 2)$ in $\textbf{p}_1$. 

\end{definition} 

In our specific setting on the three-bundle domain $\mathcal{T} = \mathcal{T}_{A, B, C}$, we have the following proposition that explicitly evaluates the $k$-refined correlation function; it was originally due to (59) of \cite{PRC} but appears as below as equation (5.1) of \cite{ACSVMGD} (see also Appendix D of \cite{ACSVMGD}), with the $C + 1$ here equal to $C$ from \cite{ACSVMGD}. In what follows, for any integers $X \in [1, A + 2B + C + 1]$ and $Y \in [1, X]$, we define the quantity
\begin{flalign}
\label{rxydefinition} 
R (X, Y) = \binom{2N - Y + 1}{N} \binom{N + Y - 1}{N} \binom{C + X - Y}{C} \binom{A + B - X + Y - 1}{A - 1}.
\end{flalign}

\begin{prop}[{\cite[Equation (5.1)]{ACSVMGD}}]
	
\label{hkabcidentity} 
	
Let $A, B, C \ge 0$ and $N, k \ge 1$ be integers such that $A + B + C = N$ and $1 \le k \le A + 2B + C + 1$. Then, 
\begin{flalign*}
H_{A, B, C + 1} (k) = \binom{3 N + 1}{N}^{-1} \binom{N}{B}^{-1} \displaystyle\sum_{Y = 1}^k & R(k, Y). 
\end{flalign*}

\end{prop}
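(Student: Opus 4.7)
The identity \Cref{hkabcidentity} is originally due to Cantini--Sportiello as equation~(59) of \cite{PRC} and is restated as equation~(5.1) of \cite{ACSVMGD}, so my plan is to indicate how one would reconstruct their argument rather than produce a genuinely new derivation. The starting point is the bijection from \Cref{SixVertexDomain} of the present paper, which recasts $H_{A,B,C+1}(k)$ as the fraction of $A$-restricted directed path ensembles $\mathcal{P} \in \mathfrak{F}_{A,B,C+1}$ whose bottommost path $\textbf{p}_1$ contains the vertical edge from $(k,1)$ to $(k,2)$. The identity is then simply the ratio of two combinatorial counts: the total $|\mathfrak{F}_{A,B,C+1}|$, appearing in the prefactor, and the count of ensembles satisfying the edge constraint, appearing as $\sum_Y R(k,Y)$.

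For the denominator, I would invoke \cite[Section~4]{PRC}. At ice point the six-vertex partition function on the three-bundle domain admits an Izergin--Korepin-type determinantal representation, and Cantini--Sportiello show that this representation collapses via the Yang--Baxter equation to the explicit product $\binom{3N+1}{N}\binom{N}{B}$, generalizing the alternating sign matrix enumerations of Zeilberger and Kuperberg. After accounting for the normalization introduced by the bijection in \Cref{SixVertexDomain}, this reproduces the prefactor $\binom{3N+1}{N}^{-1}\binom{N}{B}^{-1}$.

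For the numerator, I would introduce a secondary integer statistic $Y \in [1,k]$, chosen as in Appendix~D of \cite{ACSVMGD} so that fixing $Y$ together with $k$ freezes the arrow configurations along a family of cuts in $\mathcal{T}_{A,B,C+1}$. Once these arrows are frozen, the domain decomposes into four rectangular subregions with completely prescribed boundary data, and on each subregion the remaining paths form a uniformly random non-intersecting lattice path ensemble whose cardinality is a single binomial. Direct inspection of the four geometries then produces exactly the factors $\binom{2N-Y+1}{N}$, $\binom{N+Y-1}{N}$, $\binom{C+k-Y}{C}$, and $\binom{A+B-k+Y-1}{A-1}$ appearing in $R(k,Y)$. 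Summing over $Y$ gives the numerator, and dividing by the denominator yields the claimed identity.

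The main obstacle is the last step: verifying that the four-region factorization is exact, that is, that conditioning on $(k,Y)$ really does decouple the ensemble into four statistically independent sub-ensembles. This requires a careful case analysis of the possible arrow configurations along the cuts separating the four subregions and a check that each such configuration is forced by $(k,Y)$. Because this verification is lengthy and already carried out in \cite{PRC,ACSVMGD}, in the body of the present paper we simply cite those works rather than reproduce the full derivation.
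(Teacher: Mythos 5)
The paper offers no proof of this identity at all --- it is imported verbatim from equation (59) of \cite{PRC} (equivalently equation (5.1) and Appendix D of \cite{ACSVMGD}), with only the relabeling of $C+1$ noted --- so your bottom-line move of citing those works is exactly the paper's treatment. One caution: do not present your sketched reconstruction as the cited argument, since the claims that the prefactor is literally $|\mathfrak{F}_{A,B,C+1}|$ and that conditioning on $(k,Y)$ exactly decouples the domain into four independently counted rectangles are speculative (the proposition only pins down the ratio, and the derivation in \cite{PRC} proceeds through integrability/determinantal identities rather than a frozen-region factorization); but since you defer the actual verification to the references, this matches what the paper does.
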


Now we have the following result that evaluates $\mathbb{P}[\Phi = X]$ explicitly; its proof proceeds by first expressing this probability through a $k$-refined correlation function and then using \Cref{hkabcidentity} to evaluate the latter explicitly. 

\begin{cor}

\label{xipsiabckidentity} 

Let $A, B, C, \Psi \ge 0$ and $N, X \ge 1$ be integers such that $A + B + C = N$ and $1 \le X \le A + 2B + C + 1$. Recalling the definition of $\Phi$ from \Cref{lzpsip1} and of $R(X, Y)$ from \eqref{rxydefinition}, there exists a constant $Z = Z(A, B, C, \Psi) > 0$ (independent of $X$) such that 
\begin{flalign} 
\label{omegak}
\begin{aligned}
\mathbb{P} [\Phi = X] & = Z^{-1} \binom{\Psi + X - 1}{\Psi}  \displaystyle\sum_{Y = 1}^X R (X, Y). 
\end{aligned}
\end{flalign}

\end{cor}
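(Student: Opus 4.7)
The plan is to count $N_X$, the number of $A$-restricted ensembles on $\mathcal{X} = \mathcal{X}_{A,B,C;\Psi}$ with augmented domain-wall boundary data and $\Phi = X$, and then normalize by $Z = \sum_{X'} N_{X'}$ to obtain the probability. First I would decompose each such ensemble at the forced edge $(X,0) \to (X,1)$ of the rightmost path $\textbf{p}_1^{\aug}$ into two pieces. One piece is the portion of $\textbf{p}_1^{\aug}$ lying in the augmented rectangle $[1, A+2B+C+1] \times [-\Psi, 0]$: this is an up-right path entering through the edge $(0,-\Psi) \to (1,-\Psi)$ and ending at $(X,0)$, consisting of $X-1$ east and $\Psi$ north steps and contributing a factor of $\binom{X+\Psi-1}{\Psi}$. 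The other piece is an $A$-restricted ensemble on $\mathcal{X}|_{y \ge 1}$ in which $\textbf{p}_1^{\aug}$ enters $(X,1)$ from below through the forced edge and the remaining paths enter at $(0,1), (0,2), \ldots, (0,A+C)$ with the usual augmented exits.

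The main step would be a bijection between ensembles of the second type and $A$-restricted domain-wall ensembles on $\mathcal{T}_{A,B,C+1}$ whose rightmost path $\textbf{p}_1$ contains the arrow $(X,1) \to (X,2)$. The key observation is that, after translating $y$ upward by one, $\mathcal{X}|_{y \ge 1}$ agrees exactly with $\mathcal{T}_{A,B,C+1}$ with its bottom row $y = 1$ removed. In any domain-wall ensemble on $\mathcal{T}_{A,B,C+1}$, the paths other than $\textbf{p}_1$ enter at $y \ge 2$ and are monotone up-right, so none of them visits the row $y=1$; if $\textbf{p}_1$ is additionally required to contain the arrow $(X,1) \to (X,2)$, its trace on the row $y=1$ is forced to be the horizontal segment from $(0,1)$ to $(X,1)$. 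Deleting this segment and translating the remainder down by one yields an ensemble of the second type, and the inverse simply prepends this segment; since the segment contains no diagonal edge, the $A$-restriction is preserved throughout.

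Invoking \Cref{hkabcidentity} then identifies the number of domain-wall ensembles on $\mathcal{T}_{A,B,C+1}$ carrying the arrow $(X,1) \to (X,2)$ in $\textbf{p}_1$ as $|\mathfrak{F}_{A,B,C+1}| \cdot H_{A,B,C+1}(X) = |\mathfrak{F}_{A,B,C+1}| \binom{3N+1}{N}^{-1} \binom{N}{B}^{-1} \sum_{Y=1}^X R(X,Y)$. Multiplying by the binomial from the first step and absorbing all $X$-independent factors (including $|\mathfrak{G}_{\Psi}^A|^{-1}$) into $Z$ yields the claimed identity. The principal obstacle will be executing the bijection rigorously: in particular, verifying that the exit boundary data on $\mathcal{X}|_{y \ge 1}$ are carried to those of $\mathcal{T}_{A,B,C+1}$ under the vertical translation, and confirming that diagonal-edge accounting is indeed preserved under the removal and reinstatement of the bottom row.
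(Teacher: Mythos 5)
Your proposal is correct and follows essentially the same route as the paper: decompose the ensemble at the forced edge at $(X,0)$ into the lower up-right path (counted by $\binom{\Psi+X-1}{\Psi}$) and an upper piece identified with domain-wall ensembles on $\mathcal{T}_{A,B,C+1}$ counted via $H_{A,B,C+1}(X)$ and \Cref{hkabcidentity}. The only difference is that you spell out explicitly the bijection (deleting the forced horizontal segment in the bottom row) that the paper compresses into the phrase ``singly refined domain-wall boundary data at $(X,0)$.''
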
 

\begin{proof}

Observe that a domain-wall $A$-restricted directed path ensemble $\mathcal{P}^{\text{aug}}$ on $\mathcal{X}_{\Psi}$ such that $\Phi (\mathcal{P}^{\text{aug}}) = X$ is the union of a directed up-right path from $(0, - \Psi)$ to $(X, 0)$ and an $A$-restricted directed path ensemble on $\mathcal{T}_{A, B, C + 1}$ with singly refined domain-wall boundary data at $(X, 0)$ (recall \Cref{uw2}). Since the number of such directed up-right paths is equal to $\binom{\Psi + X - 1}{\Psi}$ and the number of such directed path ensembles is proportional to the refined correlation function $H_{A, B, C + 1} (X)$, it follows that $\mathbb{P} \big[ \Phi (\mathcal{P}^{\text{aug}}) = X \big]$ is proportional to $\binom{\Psi + X - 1}{\Psi} H_{A, B, C + 1} (X)$. Now the corollary follows from \Cref{hkabcidentity}.
\end{proof}

In view of \eqref{omegak}, we would like to estimate $R(X, Y)$; this is given by the following proposition, which will be established in \Cref{REstimate}.

\begin{prop}
	
	\label{estimater}
	
	Adopt the notation of \Cref{lzpsip1}. For any integers $X \in [1, A + 2B + C + 1]$ and $Y \in [1, X]$, define $R(X, Y)$ as in \eqref{rxydefinition}, and set 
	\begin{flalign} 
	\label{zrho} 
	\sigma (z) = \frac{\sqrt{z^2 + z + 1} - 1}{z}.
	\end{flalign}
	
	\noindent If $y \in [0, a + 2b + c]$ and $x \in [0, 1 + a]$ satisfy $Nx, Ny \in \mathbb{Z}$ and $d \big( (x N, y N), (\nu N, \sigma N) \big) \ge D$, then 
	\begin{flalign*}
	\displaystyle\frac{R \big( x N, y N \big) \binom{\Psi + x N - 1}{\Psi}}{ R \big( \lfloor \nu N \rfloor, \lfloor \sigma N \rfloor \big) \binom{\Psi + \lfloor \nu N \rfloor - 1}{\Psi} } \le	 2^{94} (\Psi + N)^{18} \exp \left( - \displaystyle\frac{\psi D^2}{4 (\psi + 2) N} \right).
	\end{flalign*}

\end{prop}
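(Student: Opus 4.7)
The approach is a two-dimensional Laplace (saddle-point) analysis of $R(xN, yN)\binom{\Psi + xN - 1}{\Psi}$ viewed as a function of $(x, y)$ on the rectangle $\mathfrak{R} = [0, 1+a] \times [0, a + 2b + c]$. First I would apply Stirling's approximation $\log n! = n\log n - n + \tfrac{1}{2}\log(2\pi n) + O(1/n)$ to every factorial in the product. Since each binomial coefficient is ``balanced'' (its top and bottom entries carry matching mass), the explicit $\log N$ contributions cancel and one obtains
\[
R(xN, yN)\binom{\Psi + xN - 1}{\Psi} = Q(x, y; N)\,\exp\bigl(N\, G(x, y)\bigr),
\]
where $Q(x, y; N) = O\bigl((\Psi + N)^{O(1)}\bigr)$ comes from the $\tfrac{1}{2}\log(2\pi n)$ residues and
\begin{align*}
G(x, y) &= (2-y)\log(2-y) - (1-y)\log(1-y) + (1+y)\log(1+y) - y\log y \\
&\quad + (c+x-y)\log(c+x-y) - (x-y)\log(x-y) \\
&\quad + (a+b-x+y)\log(a+b-x+y) - (b-x+y)\log(b-x+y) \\
&\quad + (\psi+x)\log(\psi+x) - x\log x.
\end{align*}

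Next I would verify that $(x, y) = (\nu(z), \sigma(z))$ is a critical point of $G$ when $\zeta(z) = \psi$. The conditions $\partial_x G = 0$ and $\partial_y G = 0$ read
\[
(c+x-y)(b-x+y)(\psi+x) = x(x-y)(a+b-x+y),
\]
\[
(x-y)(a+b-x+y) = (z+1)(c+x-y)(b-x+y),
\]
where the second has been simplified using the identity $(1-\sigma^2)/\bigl((2-\sigma)\sigma\bigr) = 1/(z+1)$, which follows from $(1 + z\sigma)^2 = z^2 + z + 1$. Writing $\psi + \nu = \nu(z + 1)$ and $\nu - \sigma = \rho/z$ with $\rho = \tfrac{1}{2}\bigl(Q + (b-c)z - (a+c)\bigr)$ and $Q = \sqrt{(z(b+c)+a+c)^2 - 4abz}$, both equations collapse to the single identity
\[
\rho(Q - \rho) = bc\,z(z+1),
\]
which in turn follows from the elementary computation $Q^2 - (Q - 2\rho)^2 = 4bcz(z+1)$ obtained by expanding $Q^2$ and using $Q - 2\rho = (a+c) - (b-c)z$.

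Then I would compute the Hessian of $G$. All the entries are elementary rational functions of $(x, y)$: both $\partial_x^2 G$ and $\partial_y^2 G$ are manifestly \emph{negative} sums of terms of the form $-c/\bigl((c + x - y)(x - y)\bigr)$, while $\partial_{xy}^2 G$ is exactly the common \emph{positive} part of $-\partial_x^2 G$ and $-\partial_y^2 G$. Consequently $-H$ is positive definite with $\det(-H)$ bounded below by the product of the strictly $\psi$-dependent diagonal contribution $\psi/\bigl((\psi+\nu)\nu\bigr) \gtrsim \psi/(\psi+2)$ with the $y$-only contribution $1/\bigl((2-\sigma)(1-\sigma)\bigr) + 1/\bigl((1+\sigma)\sigma\bigr) \gtrsim 1$; combined with a uniform upper bound on $\mathrm{tr}(-H)$, this gives $\lambda_{\min}(-H) \ge \psi/\bigl(2(\psi+2)\bigr)$ at $(\nu, \sigma)$. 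Strict concavity of $G$ on $\mathfrak{R}$ together with this Hessian estimate upgrades the local bound to the global quadratic inequality
\[
G(\nu, \sigma) - G(x, y) \ge \frac{\psi}{4(\psi+2)}\bigl|(x, y) - (\nu, \sigma)\bigr|^2.
\]
Combining with Stirling's representation and the hypothesis $|(xN, yN) - (\nu N, \sigma N)| \ge D$ yields $N\bigl(G(x, y) - G(\nu, \sigma)\bigr) \le -\psi D^2/\bigl(4(\psi+2)N\bigr)$, and absorbing the ratio $Q(x, y; N)/Q(\lfloor\nu N\rfloor, \lfloor\sigma N\rfloor; N)$ (along with the $O(1/N)$ discrepancy between $\nu N$ and $\lfloor\nu N\rfloor$) into the prefactor $2^{94}(\Psi+N)^{18}$ completes the proof.

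The main obstacle is the critical-point verification: while both first-order equations eventually collapse to the clean identity $\rho(Q - \rho) = bcz(z+1)$, isolating this identity demands nontrivial manipulation of $Q$, $\rho$, $\sigma$, $\nu$ and mirrors the stationary-phase cancellations of the tangent-method derivation in \cite{ACSVMGD}. A secondary difficulty is extending the local Hessian bound to a concavity statement that stays uniform up to the boundary of $\mathfrak{R}$ --- where several logarithmic terms of $G$ degenerate and Stirling's remainder worsens --- which is typically handled by separate polynomial estimates on the extreme values of $R(X, Y)$.
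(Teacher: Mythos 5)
Your proposal follows essentially the same route as the paper's proof in \Cref{REstimate}: a Stirling reduction of $R(xN, yN)\binom{\Psi + xN - 1}{\Psi}$ to $\exp\big(N(h + f)\big)$ up to polynomial prefactors (the paper's \eqref{rhf}), identification of $\big(\nu(z), \sigma(z)\big)$ with $\zeta(z) = \psi$ as the unique critical point of the exponent (\Cref{nusigmafhmaximum}), and a negative bound on the Hessian eigenvalues yielding quadratic decay (\Cref{fhsum}). Your verification of the critical point via the identity $\rho(Q - \rho) = bc\,z(z+1)$ is correct and is a slightly more direct substitution than the paper's, which instead solves the quadratics $z + 1 = (2\rho - \rho^2)/(1 - \rho^2)$ and \eqref{muidentity} and selects the positive roots; the content is the same.

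The one step that does not hold as written is the globalization of the Hessian bound. You establish $\lambda_{\min}(-\textbf{H}) \ge \psi/\big(2(\psi+2)\big)$ only \emph{at} $(\nu, \sigma)$ and then assert that strict concavity ``upgrades the local bound to the global quadratic inequality.'' That inference is invalid: a strictly concave function whose Hessian is very negative at its maximizer can decay arbitrarily slowly away from it, so a second-order bound at a single point yields no global quadratic estimate. What is needed --- and what \Cref{fhsum} proves --- is that $\lambda_{\max}\big(\textbf{H}(x,y)\big) \le -\psi/\big(2(\psi+2)\big)$ at \emph{every} point of the domain, after which integrating along the segment joining $(x, y)$ to $(\nu, \sigma)$ gives \eqref{fxynusigma}. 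Your own structural decomposition of the Hessian supplies the uniform bound with no extra work: writing $\textbf{H} = \operatorname{diag}(-p, -q) - T \big[ \begin{smallmatrix} 1 & -1 \\ -1 & 1 \end{smallmatrix} \big]$ with $p = \psi/\big(x(\psi + x)\big) \ge \psi/\big(2(\psi+2)\big)$, $q = S \ge 1$, and $T \ge 0$ pointwise, the positive semidefiniteness of the perturbation immediately gives $\lambda_{\max}(\textbf{H}) \le -\min\{p, q\}$ everywhere. This also replaces your determinant-over-trace argument, which as stated only yields $\lambda_{\min}(-\textbf{H}) \ge pq/\operatorname{tr}(-\textbf{H})$ and degrades both where $T$ blows up (near $x - y = 0$ or $b - x + y = 0$) and even at $(\nu, \sigma)$, where $\operatorname{tr}(-\textbf{H}) = p + q + 2T > q$ so the claimed constant does not literally follow.
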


Assuming \Cref{estimater}, we can now establish \Cref{lzpsip1}. 

\begin{proof}[Proof of \Cref{lzpsip1} Assuming \Cref{estimater}]
	
Recalling the constant $Z = Z(A, B, C, \Psi)$ from \Cref{xipsiabckidentity}, we have that 
\begin{flalign*}
\mathbb{P} & \big[ |\Phi - \nu N| \ge D \big] \\
& = Z^{-1} \displaystyle\sum_{\substack{|X - \nu N| \ge D \\ X \in [1, A + 2B + C + 1]}} \displaystyle\sum_{Y = 1}^X R(X, Y) \binom{\Psi + X - 1}{\Psi} \\
& \le Z^{-1}R \big(\lfloor \nu N \rfloor, \lfloor \sigma N \rfloor \big) \binom{\Psi + \lfloor \nu N \rfloor - 1}{\Psi} \Bigg(2^{96} (\Psi + N)^{20}  \exp \bigg( - \displaystyle\frac{\psi D^2}{16 (\Psi + N + 1)} \bigg) \Bigg) \\
& \le 2^{96} (\Psi + N)^{20}  \exp \bigg( - \displaystyle\frac{\psi D^2}{16 (\Psi + N + 1)} \bigg),
\end{flalign*}

\noindent where we have used \Cref{estimater} and the fact that $Z \ge R \big(\lfloor \nu N \rfloor, \lfloor \sigma N \rfloor \big) \binom{\Psi + \lfloor \nu N \rfloor - 1}{\Psi}$ (which holds since the left side of \eqref{omegak} is at most equal to $1$). 
\end{proof}

\subsection{Proof of \Cref{estimater}} 

\label{REstimate} 

In this section we establish \Cref{estimater}. To that end, we first define the domain 
\begin{flalign*}
\mathcal{D} = \mathcal{D}_{\psi} = \Big\{ (x, y) \in \mathbb{R}_{> 0}^2: x < 1 + b; x - b < y <	 \min \{ x, 1 \} \Big\}  \subset \mathbb{R}^2.
\end{flalign*}

\noindent Define the functions $h (x, y) = h_{a, b, c} (x, y)$ and $f (x) = f_{\psi} (x)$ on $\mathcal{D}$ and $(0, 1 + b)$, respectively, by
\begin{flalign}
\label{definitionh}
\begin{aligned}
h (x, y) & =  (2 - y) \log (2 - y) - (1 - y) \log (1 - y) + (1 + y) \log (1 + y) - y \log y \\
& \qquad + (c + x - y) \log (c + x - y) - c \log c - (x - y) \log (x - y) \\
& \qquad + (a + b + y - x) \log (a + b + y - x) - a \log a - (b + y - x) \log (b + y - x); \\
f (x) & = (\psi + x) \log (\psi + x) - \psi \log \psi - x \log x.
\end{aligned}
\end{flalign}

\noindent Under this notation, we deduce from \eqref{mestimate1} and \eqref{rxydefinition} that
\begin{flalign}
\label{rhf}
\begin{aligned}
& N h(x, y) - 7 \log (32 N) \le \log R \big( \lfloor x N \rfloor, \lfloor y N \rfloor \big) \le  N h(x, y) + 7 \log (32 N); \\
& N f (x) - 2 \log \big( 64 (\Psi + N) \big) \le \log \binom{\Psi + \lfloor x N \rfloor - 1}{\Psi} \le N f (x) + 2 \log \big (64 (\Psi + N) \big), 
\end{aligned}
\end{flalign}

\noindent for any $(x, y) \in \mathcal{D}$ as above. 

The following proposition now shows that $h(x, y) + f (x)$ is concave on $\mathcal{D}$; in the below, we denote the Hessian of $f + h$ by 
  \begin{flalign*}
 \textbf{H} = \textbf{H} (x, y) = \left[ \begin{array}{cc} \displaystyle\frac{\partial^2 (f + h)}{\partial^2 x} & \displaystyle\frac{\partial^2 (f + h)}{\partial x \partial y}  \\ \displaystyle\frac{\partial^2 (f + h)}{\partial x \partial y}  & \displaystyle\frac{\partial^2 (f + h)}{\partial^2 y}  \end{array}  \right].
 \end{flalign*}

 \begin{lem} 
 	
 \label{fhsum} 
 
 Letting $\lambda_1, \lambda_2$ denote the eigenvalues of $\textbf{\emph{H}}$, we have that $\max\{ \lambda_1, \lambda_2 \} \le - \frac{\psi}{2 (\psi + 2)}$. Furthermore, there is a unique pair $(\varphi, \rho) \in \mathcal{D}$ satisfying 
 \begin{flalign}
 \label{nusigma}
 \displaystyle\frac{\partial (f + h)}{\partial x} (\varphi, \rho) = 0; \qquad \displaystyle\frac{\partial (f + h)}{\partial y} (\varphi, \rho) = 0,
 \end{flalign}
 
 \noindent which is the unique maximum of $f + h$. In particular, for any $d > 0$ and $(x, y) \in \mathcal{D}$ such that $d \big( (x, y), (\varphi, \rho) \big) \ge d$, we have that
 \begin{flalign}
 \label{fxynusigma} 
 f (x, y) + h(x, y)  \le f(\varphi, \rho) + h(\varphi, \rho)  - \displaystyle\frac{\psi d^2}{4 (\psi + 2)}.
 \end{flalign}
 
 \end{lem}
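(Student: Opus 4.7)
The plan is to deduce all three assertions from a direct computation of the Hessian $\textbf{H}(x,y)$ of $f+h$ and a uniform spectral bound on it. First I will differentiate $h$ term by term using \eqref{definitionh}. The eight logarithmic summands in $h$ depending on $x$ naturally pair up, and a short calculation yields the two algebraic identities $\partial^2 h/(\partial x \partial y) = -\partial^2 h/\partial x^2$ and $\partial^2 h/\partial y^2 = \partial^2 h/\partial x^2 - E(y)$, where
\begin{equation*}
E(y) = \frac{1}{(1-y)(2-y)} + \frac{1}{y(1+y)}.
\end{equation*}
Setting $A_1 = -\partial^2 h/\partial x^2$ (which equals $[(x-y)^{-1} - (c+x-y)^{-1}] + [(b+y-x)^{-1} - (a+b+y-x)^{-1}]$, hence is positive on $\mathcal{D}$) and $B_1 = -f''(x) = \psi/[x(\psi+x)] > 0$, the Hessian $\textbf{H}$ has diagonal entries $(-A_1 - B_1, -A_1 - E)$ and off-diagonal entry $A_1$.

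Completing the square produces the identity $v^{T}\textbf{H} v = -B_1 v_1^2 - A_1(v_1 - v_2)^2 - E v_2^2$ for every $v = (v_1, v_2) \in \mathbb{R}^2$, so $\textbf{H} \preceq -\min\{B_1, E\} \cdot I$. The uniform lower bound $B_1 \ge \psi/[2(\psi+2)]$ follows from the observation that $x \le 1 + b \le 2$ on $\mathcal{D}$, while the inequality $E \ge 1$ uses only that $y \in (0, 1)$ on $\mathcal{D}$, since each summand in $E$ is bounded below by $1/2$ on that interval. Together these bounds yield the eigenvalue estimate $\max\{\lambda_1, \lambda_2\} \le -\psi/[2(\psi+2)]$.

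For existence and uniqueness of the critical point, strict concavity of $f + h$ (which is immediate from the negative-definiteness of $\textbf{H}$) already guarantees that there is at most one critical point in $\mathcal{D}$ and that any critical point is the unique global maximum of $f + h$. I will establish existence by ruling out boundary maximizers: the two partial derivatives
\begin{equation*}
\frac{\partial(f+h)}{\partial x} = \log \frac{(\psi+x)(c+x-y)(b+y-x)}{x(x-y)(a+b+y-x)}, \qquad \frac{\partial h}{\partial y} = \log \frac{(1-y^2)(x-y)(a+b+y-x)}{(2-y)y(c+x-y)(b+y-x)}
\end{equation*}
each diverge to $\pm \infty$ along each of the boundary segments $\{y = x\}$, $\{y = x - b\}$, $\{x = 1 + b\}$, $\{y = 1\}$ (where one of the arguments of the logarithms vanishes), with a sign that pushes $(x, y)$ back into $\mathcal{D}$. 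Hence $f + h$, which extends continuously to $\overline{\mathcal{D}}$, cannot attain its maximum on $\partial \mathcal{D}$, so its maximum is a unique interior critical point $(\varphi, \rho)$. Finally, the quadratic decay estimate \eqref{fxynusigma} follows from the eigenvalue bound by a second-order Taylor expansion of $f + h$ about $(\varphi, \rho)$ along the segment to $(x, y)$ (convexity of $\mathcal{D}$ keeps the segment inside): combining $\nabla(f+h)(\varphi, \rho) = 0$ with $\textbf{H}(\xi) \preceq -\psi/[2(\psi+2)] \cdot I$ for every $\xi$ on that segment yields the decay coefficient $\psi/[4(\psi+2)]$ stated in \eqref{fxynusigma}.

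The main obstacle is the boundary analysis for existence, since $\mathcal{D}$ has four distinct edges and at each one a different subset of the arguments $x$, $y$, $x - y$, $c + x - y$, $b + y - x$, $a + b + y - x$, $1 - y$, $1 + y$, $2 - y$ degenerates. However, each case reduces to a routine sign check once one identifies which logarithmic term in $\partial(f+h)/\partial x$ or $\partial h/\partial y$ dominates at the relevant edge.
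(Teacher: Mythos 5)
Your argument is essentially the paper's proof: the Hessian decomposition is identical (your $A_1$, $B_1$, $E$ are the paper's $T$, $\psi/(x(\psi+x))$, $S$, and your completing-the-square identity is the Rayleigh-quotient form of the paper's ``diagonal minus $T\cdot$PSD'' decomposition), and existence/uniqueness via strict concavity plus boundary blowup and the quadratic-decay Taylor bound are likewise the same. One small slip in the boundary analysis: the edge list should include $\{y=0\}$ (the lower boundary when $x\le b$, where $\partial h/\partial y\to+\infty$) rather than $\{x=1+b\}$, which is not a boundary segment but only the corner point $(1+b,1)$ of $\overline{\mathcal{D}}$; also only one of the two partial derivatives diverges on each edge (e.g.\ $\partial(f+h)/\partial x$ stays bounded as $y\to 1$), though this does not affect the conclusion since the blowup that occurs always points into $\mathcal{D}$.
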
 

\begin{proof}
	
Define 
\begin{flalign*}
& S = \frac{1}{(2 - y)(1 - y)} + \frac{1}{y (1 + y)} \ge 1; \quad T =  \frac{a}{(a + b - x + y) (b - x + y)} +  \frac{c}{(c + x - y) (x - y)} \ge 0, 
\end{flalign*}

\noindent where the first estimate above is due to the fact that $y \in [0, 1]$ and the second is due to the fact that $y \le x \le b + y$. Then the definitions \eqref{definitionh} quickly yield  
 \begin{flalign*}
\textbf{H} = \left[ \begin{array}{cc}  - T - \displaystyle\frac{\psi}{x (\psi + x)} & T  \\ T &  - S - T \end{array}  \right] = \left[ \begin{array}{cc}  - \displaystyle\frac{\psi}{x(\psi + x)} & 0 \\ 0 &  - S \end{array}  \right] - T \left[ \begin{array}{cc}  1 & - 1 \\ - 1 & 1 \end{array}  \right].
\end{flalign*}

\noindent Since $S \ge 1$, $x \le 2$, $\frac{\psi}{2 (\psi + 2)} \le 1$, and the matrix $\big[ \begin{smallmatrix} 1 & -1 \\ -1 & 1 \end{smallmatrix} \big]$ nonnegative semi-definite, we deduce that both eigenvalues of $\textbf{H}$ are at most $- \frac{\psi}{2 (\psi + 2)}$. This confirms the first part of the lemma, which implies that $f (x) + h (x, y)$ is concave on $\mathcal{D}$. 

The fact that $f + h$ admits a unique maximum $(\varphi, \rho)$ on $\mathcal{D}$ satisfying \eqref{nusigma} follows from the fact that $f (x) + h(x, y)$ is concave on $\mathcal{D}$ that $f + h$ is smooth on $\mathcal{D}$, and that
\begin{flalign*}  
& \displaystyle\lim_{y \rightarrow \max \{ x - b, 0 \} } \frac{\partial (f + h)}{\partial y} (x, y) = \infty; \qquad \qquad \displaystyle\lim_{y \rightarrow \min \{ x, 1\} } \frac{\partial (f + h)}{\partial y} (x, y) = -\infty; \\
& \displaystyle\lim_{x \rightarrow y} \frac{\partial (f + h)}{\partial x} (x, y) = \infty; \qquad \qquad \qquad \qquad \displaystyle\lim_{x \rightarrow b + y } \frac{\partial (f + h)}{\partial x} (x, y) = -\infty.
\end{flalign*}

\noindent The last estimate \eqref{fxynusigma} then follows from the fact that both eigenvalues of $\textbf{H}$ are bounded above by $- \frac{\psi}{2 (\psi + 2)}$. 
\end{proof}

The following corollary explicitly evaluates the maximum $(\varphi, \rho)$ of $f + h$; in what follows, we observe from \Cref{sigmamuzincreasing} below that $\zeta (z)$ (from \eqref{nuxidefinition}) is invertible on $\mathbb{R}_{\ge 0}$.  

\begin{cor}
	
\label{nusigmafhmaximum} 

Adopting the notation of \Cref{fhsum}, we have that $\varphi = \nu (z_{\psi})$ and $\rho = \sigma (z_{\psi})$, where $z_{\psi}$ is such that $\zeta (z_{\psi}) = \psi$; here, $\nu$ and $\zeta$ were defined in \eqref{nuxidefinition}, and $\sigma$ was defined in \eqref{zrho}.

\end{cor}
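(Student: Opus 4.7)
The plan is to invoke the uniqueness clause of \Cref{fhsum}: it suffices to check that $\big(\nu(z_\psi), \sigma(z_\psi)\big)$ lies in $\mathcal{D}$ and satisfies both equations in \eqref{nusigma} at $\psi = \zeta(z_\psi)$. Direct differentiation of the explicit formulas \eqref{definitionh} for $f$ and $h$ converts these critical-point conditions into the pair of polynomial identities
\begin{align*}
(\psi + x)(c + x - y)(b + y - x) &= x(x - y)(a + b + y - x), \\
(1 - y)(1 + y)(x - y)(a + b + y - x) &= (2 - y) y (c + x - y)(b + y - x),
\end{align*}
so the task reduces to verifying each of these at $x = \nu(z)$, $y = \sigma(z)$, and $\psi = z\nu(z)$, where I abbreviate $z = z_\psi$.

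I would introduce the convenient shorthand $\mu = \nu(z) - \sigma(z)$, so that $x - y = \mu$, $b + y - x = b - \mu$, $c + x - y = c + \mu$, and $a + b + y - x = a + b - \mu$. Since $\zeta(z) = z\nu(z)$ by definition of $\nu$, substituting $\psi = z\nu$ into the first identity and dividing through by $\nu$ reduces it to
\begin{align*}
(z+1)(c+\mu)(b-\mu) = \mu(a+b-\mu),
\end{align*}
which expands to the quadratic $z\mu^2 + \big( a + c + (c-b) z \big)\mu - (z+1) b c = 0$. Rewriting this as $2z\mu - (b-c)z + (a+c) = \sqrt{(z(b+c)+a+c)^2 - 4abz}$ and squaring recovers exactly the displayed quadratic, so the first identity is an automatic consequence of the formula for $\mu$ read off from \eqref{nuxidefinition}.

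For the second identity, the key input is the relation $z\sigma^2 + 2\sigma = z+1$, which is immediate from $(z\sigma + 1)^2 = z^2 + z + 1$, i.e., the definition of $\sigma$ in \eqref{zrho}. Rearranging gives $(z+1)(1 - \sigma^2) = \sigma(2 - \sigma)$. Multiplying the (already established) first identity $\mu(a+b-\mu) = (z+1)(c+\mu)(b-\mu)$ by $(1-\sigma^2)$ on both sides and substituting this relation then yields the second identity after cancelling the nonvanishing factor $(c+\mu)(b-\mu)$.

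Finally, to verify the membership $(\nu(z_\psi), \sigma(z_\psi)) \in \mathcal{D}$, I would use the bounds $1 < \sqrt{z^2+z+1} < z+1$ to conclude $\sigma \in (0, 1)$, and evaluate the quadratic $z\mu^2 + (a + c + (c-b)z)\mu - (z+1)bc$ at $\mu = 0$ and $\mu = b$ (obtaining values of opposite sign) to conclude $\mu \in (0, b)$; this places $\nu = \sigma + \mu \in (\sigma, \sigma + b)$ inside the required range for $\mathcal{D}$. The only possible obstacle is the algebraic bookkeeping, but once the substitution $\mu = \nu - \sigma$ is made and the identity $\zeta = z\nu$ is invoked, both critical-point equations collapse to the defining quadratic relations for $\sigma$ and $\mu$, respectively.
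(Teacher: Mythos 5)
Your argument is correct and is essentially the paper's proof run in reverse: the paper starts from the critical point $(\varphi,\rho)$, sets $z=\psi/\varphi$, and solves the two critical-point equations to obtain $\rho=\sigma(z)$ and $\varphi-\rho=\nu(z)-\sigma(z)$, whereas you substitute the candidate $(\nu(z_\psi),\sigma(z_\psi))$ and verify the same two identities, invoking the uniqueness from \Cref{fhsum}. The underlying algebra — the reduction via $\mu=\varphi-\rho$ and $\psi=z\nu(z)$ to the quadratic $z\mu^2+(a+c+(c-b)z)\mu-(z+1)bc=0$ and the relation $z\sigma^2+2\sigma=z+1$ — is identical, so this is the same approach up to the direction of the logic (with the small extra cost on your side of checking membership in $\mathcal{D}$, which you handle correctly).
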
 

\begin{proof}

The first and second equations in \eqref{nusigma} yield 
\begin{flalign}
\label{nusigmaidentities}
\displaystyle\frac{(\varphi - \rho) (a + b - \varphi + \rho)}{(c + \varphi - \rho) (b - \varphi + \rho) }   = \displaystyle\frac{2 \rho - \rho^2}{1 - \rho^2 }; \qquad 1 + \displaystyle\frac{\psi}{\varphi} = \displaystyle\frac{(\varphi - \rho) (a + b - \varphi + \rho)}{(b - \varphi + \rho) (c + \varphi - \rho)}, 
\end{flalign}

\noindent respectively. Thus, denoting $z = \frac{\psi}{\varphi}$ and combining the two identities in \eqref{nusigmaidentities} yields $z + 1 = \frac{2 \rho - \rho^2}{1 - \rho^2}$. Solving this equation, we find that $\rho = \sigma (z)$ (where we have selected the positive solution of this equation, since $\rho > 0$). Letting $\mu = \varphi - \rho$, we find from the second identity in \eqref{nusigmaidentities} that 
\begin{flalign}
\label{muidentity} 
\displaystyle\frac{\mu (a + b - \mu)}{(b - \mu)(c + \mu)} = 1 + z.
\end{flalign}

\noindent Solving this equation, we find that $\mu = \nu(z) - \sigma (z)$ (where we have selected the positive solution of \eqref{muidentity} since $\mu = \varphi - \rho > 0$). Thus, $\varphi = \nu (z)$, so the fact that $z = \frac{\psi}{\varphi}$ implies that $\zeta (z) = \psi$, which yields $z = z_{\psi}$. Since $\varphi = \nu (z)$ and $\rho = \sigma (z)$, this implies the corollary.
\end{proof}

\begin{rem} 

\label{sigmamuzincreasing} 

From \eqref{zrho}, $\sigma (z)$ is increasing in $z$. Furthermore, \eqref{muidentity} implies that $\mu$ is increasing in $z$. It in particular follows that, $\nu (z) = \rho + \mu$ is increasing in $z$, so that $\zeta (z) = z \nu (z)$ is also in $z$. 

Furthermore, $\lim_{z \rightarrow 0} \zeta (0) = 0$ and $\lim_{z \rightarrow \infty} \zeta (z) = \infty$ (since $\lim_{z \rightarrow 0} \nu (z) = \frac{1}{2} + \frac{bc}{a + c} < \infty$, $\lim_{z \rightarrow \infty} \nu (z) = 1 + b > 0$, and $\zeta (z) = z \nu (z)$). Thus, for any $\psi \in \mathbb{R}_{\ge 0}$ there exists a unique $z = z_{\psi} \in \mathbb{R}_{\ge 0}$ such that $\zeta (z) = \psi$.

\end{rem} 

Now we can establish \Cref{estimater}.

\begin{proof}[Proof of \Cref{estimater}]

Denoting $g_{\psi} (x, y) = h (x, y) + f_{\psi} (x)$, we have from \eqref{rhf} that
\begin{flalign*}
 \log & \Bigg( R \big(  x N ,   y N  \big) \binom{\Psi +  x N - 1}{\Psi} \Bigg) - \log \Bigg( R \big( \lfloor \nu N \rfloor, \lfloor \sigma N \rfloor \big) \binom{\Psi + \lfloor \nu N \rfloor - 1}{\Psi} \Bigg) \\
& \le N \big(  g_{\psi} (x, y) - g_{\psi} (\nu, \sigma) \big) + 18 \log  (\Psi + N) + 94 \log 2,
\end{flalign*}

\noindent upon which the lemma follows by exponentiation and \eqref{fxynusigma}. 
\end{proof}

\section{Proof of Theorem \ref{p1testimate}}

\label{ProofBoundary}

In this section we establish \Cref{p1testimate}. We do this by first addressing the case when $v \in \SE (\mathfrak{B})$ in \Cref{BSouthEast} and then addressing the case when $v \in \NW (\mathfrak{B})$ in \Cref{BNorthWest}. 

\subsection{The Case When \texorpdfstring{$v \in \SE (\mathfrak{B})$}{}}

\label{BSouthEast}

In this section we establish the following result, which confirms \Cref{p1testimate} when $v \in \SE (\mathfrak{B})$.

\begin{prop}

\label{curvepabove}

Adopt the notation of \Cref{p1testimate}, where we assume that $\delta > N^{-1 / 5}$ (instead of $\delta > N^{-1 / 30}$), and fix some $v \in \SE (\mathfrak{B}) \cap \mathcal{T}$ satisfying $d \big( N^{-1} v, \mathfrak{B} \big) > \delta$. Then there exists a constant $\gamma > 0$ such that, off of an event of probability at most $\gamma^{-1} \exp \big( - \gamma \delta^4 N \big)$, we have that $v \notin \textbf{\emph{p}}_1$. 

\end{prop}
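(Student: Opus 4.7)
The plan is to implement the tangent method in a form tailored to ruling out $v \in \textbf{p}_1$ when $v$ lies far in the southeast of $\mathfrak{B}$. Since $\mathfrak{B} = \mathfrak{B}_{\SE}$ is convex by \Cref{bcurves1} and $N^{-1}v \in \SE(\mathfrak{B})$ with $d(N^{-1}v, \mathfrak{B}) > \delta$, I would begin by selecting $z \in (0, \infty)$ so that the tangent line $\ell_z$ to $\mathfrak{B}$ at $\big(x(z), y(z)\big)$, whose equation is $y = zx - \zeta(z)$ by \Cref{lztangentb}, passes on the northwest side of $N^{-1}v$ at perpendicular distance comparable to $\delta$. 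Set $\psi = \zeta(z)$, $\Psi = \lfloor \psi N \rfloor$, and $\nu = \nu(z) = \psi/z$, and consider the associated augmented ensemble $\mathcal{P}^{\aug} = (\textbf{p}_1^{\aug}, \ldots, \textbf{p}_{A+C+1}^{\aug})$ on $\mathcal{X}_{A, B, C; \Psi}$, whose rightmost-path exit location $\Phi$ is sharply concentrated at $\nu N$ by \Cref{lzpsip1}.

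The next step is to combine the two monotone couplings of \Cref{ensemblesnn1} with the proximity estimate of \Cref{p1p2near} to locate, on a high-probability event, a vertex $v'' \in \textbf{p}_2^{\aug}$ close to $v$. Concretely, on the event that $v \in \textbf{p}_1$ and $\max_{u \in \textbf{p}_1} d(u, \textbf{p}_2) \leq 26D+1$, \Cref{p1p2near} produces $v' \in \textbf{p}_2$ with $|v' - v| = O(D)$, and the coupling $\textbf{p}_2 \leq \textbf{p}_2^{\aug}$ yields $v'' \in \textbf{p}_2^{\aug}$ at height $v'_y$ with $v''_x \geq v'_x$. Since $v$ lies at unscaled perpendicular distance at least $\delta N$ from the tangent line (written in unscaled form, this is the line through $(0, -\Psi)$ and $(\nu N, 0)$), taking $D$ small compared to $\delta N$ ensures that $v''$ itself lies at distance at least $\delta N / 2$ on the southeast side of this line.

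The final step is to exploit this $v''$ through \Cref{p1p2tangent}. Let $\ell^{\ast}$ be the supporting line of $\textbf{p}_2^{\aug}$ from $(0, -\Psi)$ on the southeast side, with tangency point $w \in \textbf{p}_2^{\aug}$, and let $s^{\ast}$ denote its slope. Because $v'' \in \textbf{p}_2^{\aug}$ lies perpendicular distance $\geq \delta N / 2$ below $\ell_z$, a direct slope computation gives $s^{\ast} \leq (v''_y + \Psi)/v''_x \leq z - c\delta$ for a constant $c = c(a, b, c, z) > 0$, so $\ell^{\ast}$ crosses the $x$-axis at $\Psi/s^{\ast} \geq \nu N + c'\delta N$. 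Applying \Cref{p1p2tangent} at $w$ yields $d\bigl((\Phi, 0), \ell^{\ast}\bigr) \leq 2D+1$, hence $\Phi \geq \nu N + c'\delta N - O(D)$, contradicting the bound $|\Phi - \nu N| \leq D$ supplied by \Cref{lzpsip1}. Choosing $D$ as an appropriate power of $\delta$ times $N$ strictly smaller than $\delta N$, and taking a union bound over the three error events from \Cref{p1p2near}, \Cref{p1p2tangent}, and \Cref{lzpsip1}, yields the claimed probability bound $\gamma^{-1} \exp(-\gamma \delta^4 N)$.

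The main obstacle is the quantitative uniformity in the first step: one must show that the conversion from $d(v'', \ell_z) \sim \delta N$ into the slope gap $z - s^{\ast} \gtrsim \delta$ has a constant that stays bounded away from zero as $v$ ranges over the bulk of $\mathcal{T}$, including near the two endpoints of $\mathfrak{B}$ where the tangent is nearly horizontal (small $z$) or nearly vertical (large $z$). In those regimes the prefactor $\psi$ inside the exponent of \Cref{lzpsip1}, or equivalently the normalizer $\Psi + N$ inside the exponent of \Cref{p1p2tangent}, is what degrades a natural $\exp(-c\delta^2 N)$ estimate to the weaker $\exp(-\gamma\delta^4 N)$ advertised in the statement. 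A secondary but routine point is constructing a single probability space on which both couplings of \Cref{ensemblesnn1} hold simultaneously, which can be arranged by sampling $\mathcal{P}$ first and then using each coupling to extend to $\mathcal{P}^{\aug}$ via a suitable conditional distribution.
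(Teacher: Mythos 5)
Your proposal follows essentially the same route as the paper: the same tangent line $\ell$ through $(0,-\Psi)$ and $(\nu N,0)$ chosen via the closest point of $\mathfrak{B}$, the same four ingredients (\Cref{p1p2near}, the coupling $\textbf{p}_2 \le \textbf{p}_2^{\aug}$ from \Cref{ensemblesnn1}, \Cref{p1p2tangent}, and \Cref{lzpsip1}), and the same contradiction, merely arranged in contrapositive form (you assume $v \in \textbf{p}_1$ and push a witness $v''$ onto $\textbf{p}_2^{\aug}$ far southeast of $\ell$, whereas the paper shows that on the complementary good event $\textbf{p}_2^{\aug}$ stays close to $\NW(\ell)$). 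The one substantive difference is how the geometric comparison between $\ell$ and the supporting line $\ell^{\ast}$ of $\textbf{p}_2^{\aug}$ is quantified. You convert $d(v'',\ell) \gtrsim \delta N$ into a slope gap $z - s^{\ast} \gtrsim \delta$ and then into a shift of the $x$-intercept; but the slope gap is really controlled by the \emph{angle} between the two lines at $(0,-\Psi)$, which in the worst case (when $\psi \sim \delta^{-1}$, i.e.\ near the endpoints of $\mathfrak{B}$) is only of order $\delta^2$, and passing from angles to slopes and from slopes to $x$-intercepts introduces further $\delta$-dependent factors. Chasing these through your chain forces $D$ to be a higher power of $\delta$ times $N$ than $\delta N$, which would degrade the exponent below the stated $\delta^4 N$. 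The paper avoids this entirely by never leaving the language of perpendicular distances: since $\ell$ and $\ell^{\ast}$ share the point $(0,-\Psi)$, the distance from any point of $\ell \cap \mathcal{T}$ to $\ell^{\ast}$ is a \emph{universally bounded} multiple of $d\big((\nu N,0),\ell^{\ast}\big)$ (a similar-triangles ratio bounded using $\nu \ge \tfrac12$ and $\operatorname{diam}\mathcal{T} \lesssim N$), so $D \sim \delta N$ survives and $\psi \ge \delta/4$ in \Cref{lzpsip1} yields exactly $\exp(-\gamma\delta^4 N)$. I recommend replacing your slope computation with this projective distance comparison. A minor point: only the coupling $\textbf{p}_2 \le \textbf{p}_2^{\aug}$ is needed here, so there is no need to realize both couplings of \Cref{ensemblesnn1} on a common probability space.
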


Before we establish \Cref{curvepabove}, let us observe that the following corollary follows from \Cref{curvepabove}, a union bound over $v \in \SE (\mathfrak{B}) \cap \mathcal{T}$, and the fact that $\textbf{p}_1$ is nondecreasing and continuous.  

\begin{cor}
	
	\label{curvepabove2}
	
	Adopt the notation of \Cref{p1testimate}, but where we assume that $\delta > N^{-1 / 5}$ (instead of $\delta > N^{-1 / 30}$). Then, there exists a constant $\gamma > 0$ such that the following holds off of an event of probability at most $\gamma^{-1} \exp \big( - \gamma \delta^4 N \big)$. For any $v \in \SE (\mathfrak{B}) \cap \mathcal{T}$ satisfying $d \big( N^{-1} v, \mathfrak{B} \big) > \delta$, we have that $v \in \SE (\textbf{\emph{p}}_1)$.
	
\end{cor}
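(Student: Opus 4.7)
The plan is to realize the tangent method outline of \Cref{Outline}: locate a supporting tangent line $\ell_z$ to $\mathfrak{B}$ separating $v$ from $\mathfrak{B}$, introduce the associated augmented ensemble with $\Psi = \lfloor \zeta(z) N \rfloor$, and transfer information from its second path $\textbf{p}_2^{\aug}$ to $\textbf{p}_1$ via the monotone coupling $\textbf{p}_2 \le \textbf{p}_2^{\aug}$ from the second part of \Cref{ensemblesnn1}, combined with the proximity estimate \Cref{p1p2near} relating $\textbf{p}_1$ and $\textbf{p}_2$.

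\textbf{Selecting the tangent.} Since $\mathfrak{B}$ is strictly convex and $N^{-1} v$ lies strictly SE of $\mathfrak{B}$, I would select $z = z(v) \in [0, \infty]$ such that the tangent line $\ell_z : y = z x - \zeta(z)$ separates $N^{-1} v$ from $\mathfrak{B}$ with perpendicular distance at least $\gamma_0 \delta^2$, for some constant $\gamma_0 = \gamma_0(a, b, c) > 0$. The quadratic (rather than linear) scaling in $\delta$ is forced by the tangencies of $\mathfrak{B}$ with the coordinate axes at its two endpoints recorded in \Cref{bcurves1}: $\mathfrak{B}$ is locally parabolic there, so the maximum separation from a point at distance $\delta$ from $\mathfrak{B}$ to a supporting tangent line scales at worst like $\delta^2$.

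\textbf{Tangency of $\textbf{p}_2^{\aug}$.} Set $\Psi = \lfloor \zeta(z) N \rfloor$, take $\mathcal{P}^{\aug} \in \mathfrak{G}_\Psi^A$ uniformly at random and coupled with $\mathcal{P}$ so that $\textbf{p}_2 \le \textbf{p}_2^{\aug}$, and choose $u^* \in \textbf{p}_2^{\aug}$ maximizing the slope $u_x / (u_y + \Psi)$; then $\ell' = \ell\big((0, -\Psi), u^*\big)$ is supporting in the sense that $\textbf{p}_2^{\aug} \subset \NW(\ell')$ by construction. Taking $D = c_1 \delta^2 N$ for a small constant $c_1 > 0$, \Cref{lzpsip1} gives $|\Phi - \nu(z) N| \le D$ and \Cref{p1p2tangent} gives $d\big((\Phi, 0), \ell'\big) \le 2D + 1$, each off an event of probability at most $\gamma^{-1} \exp(-\gamma \delta^4 N)$. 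Both $\ell'$ and the scaled tangent $N \ell_z$ pass through $(0, -\Psi)$; by the two estimates their intersections with the line $\{y = 0\}$ lie within $O(\delta^2 N)$ of each other, so the horizontal separation between $\ell'$ and $N\ell_z$ at every height $y \in [-\Psi, 2N]$ is also $O(\delta^2 N)$.

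\textbf{Conclusion and main difficulty.} For $c_1$ sufficiently small, $v$ then lies at horizontal distance at least $\tfrac{1}{2} \gamma_0 \delta^2 N$ SE of $\ell'$, so the containment $\textbf{p}_2^{\aug} \subset \NW(\ell')$ forces $\textbf{p}_2^{\aug}$ to pass strictly NW of $v$ at the height of $v$ by the same amount. The coupling propagates this to $\textbf{p}_2$, and a final application of \Cref{p1p2near} with error parameter $c_2 \delta^2 N$ (chosen smaller than the gap divided by $27$) upgrades it to the corresponding bound for $\textbf{p}_1$, yielding $v \notin \textbf{p}_1$; the total error is $\gamma^{-1} \exp(-\gamma \delta^4 N)$ as claimed. \emph{The main obstacle} is the geometric step in the second paragraph: the quadratic separation $\delta^2$ (rather than $\delta$) to the supporting tangent line is what drives the $\delta^4$ in the exponent, and justifying it rigorously requires explicit analysis of $\mathfrak{B}$ near its tangencies with the axes of $\mathfrak{T}$. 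Once this geometric input is in place, the remainder of the argument is a mechanical assembly of the previously established monotone couplings, Gibbs convexity estimates, tangency proposition, and refined-correlation asymptotics.
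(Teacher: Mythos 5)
Your architecture is the same as the paper's (the paper isolates the fixed-$v$ statement as \Cref{curvepabove} — tangent line, augmented ensemble with $\Psi=\zeta(z)N$, the coupling $\textbf{p}_2\le\textbf{p}_2^{\aug}$ from \Cref{ensemblesnn1}, then \Cref{p1p2tangent}, \Cref{lzpsip1} and \Cref{p1p2near} — and then derives the corollary by a union bound over the $O(N^2)$ vertices $v\in\SE(\mathfrak{B})\cap\mathcal{T}$ together with the monotonicity and continuity of $\textbf{p}_1$). You omit that last finishing step, which is needed both for the uniformity in $v$ and to upgrade your conclusion ``$v\notin\textbf{p}_1$'' to the asserted ``$v\in\SE(\textbf{p}_1)$''; this is minor but it is literally the content this corollary carries in the paper.

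The genuine gap is your geometric step and the error accounting built on it. The separation of $N^{-1}v$ from the correct tangent line is \emph{linear} in $\delta$, not quadratic: taking $u\in\mathfrak{B}$ to be the point nearest $N^{-1}v$ and $\ell_z$ the tangent there, convexity gives $\mathfrak{B}\subset\NW(\ell_z)$ and perpendicularity of the segment from $v$ to $Nu$ gives $d(v,N\ell_z)=d(v,N\mathfrak{B})=\delta N$ exactly; your ``locally parabolic, hence $\delta^2$'' reasoning conflates the curve-to-tangent gap with the point-to-tangent gap and is not needed. The quantity that \emph{does} degenerate near the endpoints is the slope $z$, hence $\psi=\zeta(z)$: one only has $\delta/4\le\psi\le 4/\delta$, so $\Psi$ can be as large as $4N/\delta$. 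This is where the exponent $\delta^4$ actually comes from: with $D\asymp\delta N$, the bound of \Cref{lzpsip1} is $\exp\big(-\psi D^2/16(\Psi+N+1)\big)\le\exp(-c\,\delta^2\cdot D^2/N)=\exp(-c'\delta^4N)$. Your choice $D=c_1\delta^2N$, combined with the untracked factor $\psi/(\Psi+N+1)\gtrsim\delta^2/N$, would only yield $\exp(-c\delta^6N)$, which is strictly weaker than the stated corollary (and would break the downstream application in \Cref{curvepbelow}, where this corollary is invoked with $\delta$ replaced by $\delta^6$ to produce the $\delta^{24}$ of \Cref{p1testimate}). A related slip: comparing $\ell'$ to $N\ell_z$ by \emph{horizontal} separation costs a factor of the inverse slope, which can be of order $1/\delta$; the paper instead compares perpendicular distances via the bounded ratio of distances from the common apex $(0,-\Psi)$, losing only an absolute constant.
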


Now let us establish \Cref{curvepabove}. 

\begin{proof}[Proof of \Cref{curvepabove}]
	
	Set $D = \frac{\delta N - 5}{40} > 0$; we may assume that $N$ is sufficiently large so that $D > 0$. Also let $u = (x, y) \in \mathbb{R}_{\ge 0}^2$ denote the vertex in $N \mathfrak{B}$ closest to $v$; since $N^{-1} u \in \mathfrak{B}$, there exists some $z \in \mathbb{R}_{\ge 0}$ such that $u = \big( N x(z), N y(z) \big)$ (recall \Cref{zetacurves}). Let $\ell = N \ell_z$ denote the tangent line to $N \mathfrak{B}$; let it meet the $x$-axis and $y$-axis at $(\nu N, 0)$ and $(0, -\psi N)$, respectively, for some real numbers $\nu \in \big( \frac{1}{2}, 1 + b \big)$ and $\psi > 0$. In view of \Cref{lztangentb}, we have that $\nu = \nu (z)$ and $\psi = \zeta (z)$, where $\nu (z)$ and $\zeta (z)$ are given by \eqref{nuxidefinition}. 

	Denote $\Psi = \psi N$; in what follows, it will be convenient to assume that $\Psi$ is an integer (although one can replace $\Psi$ with $\lfloor \Psi \rfloor$, and the estimates below will continue to hold). Then the convexity of $\mathfrak{B}$; the fact that $d (N^{-1} v, \mathfrak{B}) \ge \delta$; the fact that $\nu \ge \frac{1}{2}$; and the fact that $1 + a, 1 + b \le 2$ together imply that $\frac{\delta}{4} \le \psi \le \frac{4}{\delta}$. Thus, $\Psi \le \frac{4 N}{\delta}$. 
	
	Now define the event $E = \big\{ v \in \textbf{p}_1 \big\}$, and also the events
	\begin{flalign*}
	F = \left\{ \displaystyle\max_{v_1 \in \textbf{p}_1} d (v_1, \textbf{p}_2) \ge 26 D + 1 \right\}; \qquad G = \left \{ \displaystyle\max_{w \in \textbf{p}_2 \cap \SE (\ell)} d (w, \ell) \ge 12D + 4 \right\}.
	\end{flalign*}
	
	We claim that $E \subseteq F \cup G$. So, let $E^c$, $F^c$, and $G^c$ denote the complements of $E$, $F$, and $G$, respectively; restricting to $F^c \cap G^c$, we will show that $E^c$ holds. To that end, restrict to the former event; there exists some $v_1 \in \textbf{p}_1$ such that $d (v, v_1) = d (v, \textbf{p}_1)$ and some $v_2 \in \textbf{p}_2$ such that $d (v_1, v_2) = d (v_1, \textbf{p}_2)$. Under this notation, we have that $d (v, \textbf{p}_1) = d (v, v_1) > d (v, v_2) - 26D - 1$, since we are restricting to $F^c$. 
	
	Since we are also restricting to $G^c$, we have that $d \big( v_2, \NW(\ell) \big) < 12 D + 4$, meaning that $d (v, \textbf{p}_1) > d \big( v, \NW (\ell) \big) - 38D - 5$. Now, since $N \mathfrak{B}$ is convex, $v \in \SE (\ell)$, and $u \in N \mathfrak{B}$ satisfies $d (v, N \mathfrak{B}) = d(v, u)$, we deduce that $d \big( v, \NW (\ell) \big) = d (v, u) = \delta N$. Thus, $d (v, \textbf{p}_1) > \delta N - 38D - 5 = 2D > 0$, meaning that $v \notin \textbf{p}_1$, so $E^c$ holds. Thus, $E \subseteq F \cup G$. 
		
	Since $\mathbb{P} [F]$ is bounded by \Cref{p1p2near}, we must bound $\mathbb{P}[G]$. We will do so by comparing it to the probability of a similar event with respect to a directed path ensemble on an augmented three-bundle domain. 
	
	More specifically, recall the augmented three-bundle domain $\mathcal{X} = \mathcal{X}_{A, B, C; \Psi}$ from \Cref{xpsi}, and consider a $A$-restricted directed path ensemble $\mathcal{P}^{\aug} = \big(\textbf{p}_1^{\aug}, \textbf{p}_2^{\aug}, \ldots , \textbf{p}_{A + C + 1}^{\aug} \big)$ on $\mathcal{X}$ with augmented domain-wall boundary data (recall \Cref{domainaugmentedboundary}), chosen uniformly at random. Let $\Phi \in [1, A + 2B + C]$ denote the integer such that there is an edge in $\textbf{p}_1^{\aug}$ from $(\Phi, 0)$ to $(\Phi, 1)$. Further let $\ell'$ denote the line through $(0, -\Psi)$ that passes through a vertex $v_2' \in \textbf{p}_2^{\aug}$ such that $\textbf{p}_2^{\aug} \subset \NW (\ell')$ (see \Cref{figuretangent}, where the $\ell$ there is the $\ell'$ here). 
	
	Define the events
	\begin{flalign*}
	\Omega_1 = \big\{ |\Phi - \nu N| \ge D \big\}; \qquad \Omega_2 = \Big\{ d \big( (\Phi, 0), \ell' \big) \ge 2D + 1 \Big\}; \qquad \Omega = \Omega_1 \cup \Omega_2,
	\end{flalign*}
	
	\noindent and let $\Omega^c$ denote the complement of $\Omega$. 
	
	We claim that $\mathbb{P} [G] \le \mathbb{P} [\Omega]$. To that end, let us restrict to the event $\Omega^c$; then, $d \big( (\nu N, 0), \ell' \big) < 3 D + 1$. Since $\nu > \frac{1}{2}$, $1 + b \le 2$, and $\ell$ and $\ell'$ intersect the $y$-axis at the same point $(0, -\Psi)$, we have that $d (w, \ell') <  12 D + 4$ for any $w \in \ell \cap \mathcal{T}$. Thus, since $\textbf{p}_2^{\aug} \subset \NW \big( \ell' \big)$, it follows that 
	\begin{flalign}
	\label{lestimatefc}
	\textbf{1}_{\Omega^c} \displaystyle\max_{w \in \textbf{p}_2^{\aug} \cap \SE (\ell)} d (w, \ell) < 12D + 4.
	\end{flalign}
	
	Now recall from \Cref{ensemblesnn1} that there is a coupling between the random directed path ensembles $\mathcal{P}$ and $\mathcal{P}^{\aug}$ such that $\textbf{p}_2 \le \textbf{p}_2^{\aug}$ almost surely. Thus, \eqref{lestimatefc} implies that $\mathbb{P} [G] \le \mathbb{P} [\Omega]$. 
	
	Hence, $\mathbb{P} [E] \le \mathbb{P} [F] + \mathbb{P} [G] \le \mathbb{P} [F] + \mathbb{P} [\Omega]$. By \Cref{p1p2near}, \Cref{p1p2tangent}, \Cref{lzpsip1}, and the fact that $\frac{\delta}{4} \le \psi \le \frac{4}{\delta}$ we have that 
	\begin{flalign}
	\label{fprobability}
	\mathbb{P} [F] \le 2^{21} N^7 \left( - \displaystyle\frac{D^2}{16 N} \right); \qquad \mathbb{P} [\Omega] \le 2^{150} \delta^{-20} N^{20} \exp \left( - \displaystyle\frac{\delta^2 D^2}{320 N} \right).
	\end{flalign} 
	
	 Now the proposition follows from summing the two estimates in \eqref{fprobability}. 
\end{proof}

\subsection{The Case When \texorpdfstring{$v \in \NW(\mathfrak{B})$}{}}

\label{BNorthWest}

In this section we establish \Cref{p1testimate} in the case when $v \in \NW (\mathfrak{B})$, from which we deduce (using \Cref{curvepabove2}) that \Cref{p1testimate} holds. 

We begin with the following lemma, which is a deterministic statement that quantifies the convexity of the curve $\mathfrak{B}$. In what follows, we let $\theta = \frac{1}{2} + \frac{bc}{a + c}$ and $\lambda = \frac{1}{2} + \frac{ab}{b + c}$, which are the real numbers such that $\mathfrak{B}$ is tangent to the $x$-axis at $(\theta, 0)$ and to the line $x = 1 + b$ at $(1 + b, \lambda)$.

\begin{lem}
	
	\label{vertexvdistance} 
	
	Let $\delta > 0$ be a real number, and let $v = (x, y) \in \mathbb{R}_{\ge 0}^2 \cap \NW (\mathfrak{B})$ be such that $d (v, \mathfrak{B}) > \delta$, $x \in [\theta + \delta, 1 + b - \delta]$, and $y \in [\delta, \lambda - \delta]$. Let $u \in \mathfrak{B}$ be such that $d (v, \mathfrak{B}) = d (v, u)$, and let $\ell$ denote the tangent line to $\mathfrak{B}$ at $u$.
	
	Then there exists a (small) constant $\varpi = \varpi (a, b, c) > 0$ such that, for any $w \in \mathfrak{B} \cap \big( \mathbb{R}^2 \setminus \SE (v) \big)$, we have that $d (w, \ell) > \varpi \delta^5$. 
	
\end{lem}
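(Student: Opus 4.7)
The plan is to parameterize $u = (x(z), y(z))$ for some $z \in [0, \infty]$ via \Cref{zetacurves}, and then exploit the strict convexity of $\mathfrak{B}$ together with the identity $y'(s) = s\, x'(s)$ (which follows from $y(s) = s\zeta'(s) - \zeta(s)$). Since $v$ lies on the NW-pointing unit normal to $\mathfrak{B}$ at $u$ at distance $\delta$, one may write $v = u + \tfrac{\delta}{\sqrt{z^2+1}}(-z, 1)$, so that $v_x = x(z) - \tfrac{\delta z}{\sqrt{z^2+1}}$ and $v_y = y(z) + \tfrac{\delta}{\sqrt{z^2+1}}$. As $x(\cdot)$ and $y(\cdot)$ are both strictly increasing, $\SE(v) \cap \mathfrak{B}$ coincides with the arc $\{(x(s), y(s)) : s \in [z_L, z_R]\}$, where $z_L < z < z_R$ are uniquely determined by $x(z_L) = v_x$ and $y(z_R) = v_y$. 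Its two endpoints are $w_L = (v_x, y(z_L))$ and $w_R = (x(z_R), v_y)$.

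Writing the tangent line as $\ell : y = zx - \zeta(z)$ and integrating $y'(s) = s\, x'(s)$ yields the key formula
\begin{align*}
d\bigl((x(z'), y(z')),\, \ell\bigr) \;=\; \frac{\bigl|\int_z^{z'} (s-z)\, x'(s)\, ds\bigr|}{\sqrt{z^2+1}},
\end{align*}
which is strictly increasing in $|z'-z|$ (since $x'(s) > 0$). Hence the minimum of $d(w, \ell)$ over $w \in \mathfrak{B} \setminus \SE(v)$ is attained at $w_L$ or $w_R$, and it suffices to lower bound each of $d(w_L, \ell)$ and $d(w_R, \ell)$. The defining equations for $z_L, z_R$ can be rewritten as $\int_{z_L}^z x'(s)\, ds = \tfrac{\delta z}{\sqrt{z^2+1}}$ and $\int_z^{z_R} s\, x'(s)\, ds = \tfrac{\delta}{\sqrt{z^2+1}}$.

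The next step is to extract from \eqref{zeta1z} the asymptotics $x(z) - \theta \asymp z$ and $x'(z) \asymp 1$ as $z \to 0$, as well as $\lambda - y(z) \asymp z^{-1}$ and $x'(z) \asymp z^{-2}$ as $z \to \infty$, with implicit constants depending only on $(a,b,c)$. The hypothesis $v_x \ge \theta + \delta$ then forces $z \gtrsim \delta$, while $v_y \le \lambda - \delta$ forces $z \lesssim \delta^{-1}$, so $z \in [\gamma_1 \delta, \gamma_2 \delta^{-1}]$ for some $\gamma_1, \gamma_2 > 0$ depending only on $(a, b, c)$. Solving the above integral constraints for $z_R - z$ and $z - z_L$ (separating the subcases where these differences are small or comparable to $z$) and substituting into the distance formula gives
\begin{align*}
d(w_R, \ell) \;\gtrsim\; \frac{\delta^2}{z^2\, x'(z)\, (z^2+1)^{3/2}}, \qquad d(w_L, \ell) \;\gtrsim\; \frac{\delta^2 z^2}{x'(z)\, (z^2+1)^{3/2}}.
\end{align*}

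A direct case analysis over $z \in [\gamma_1 \delta,\, \gamma_2 \delta^{-1}]$ then shows that both right-hand sides exceed $\varpi \delta^5$ for an appropriate $\varpi = \varpi(a, b, c) > 0$. The extremal case is $d(w_R, \ell)$ at $z \asymp \delta^{-1}$, where $z^2 x'(z) \asymp 1$ and $(z^2+1)^{3/2} \asymp \delta^{-3}$, producing exactly order $\delta^2 \cdot \delta^3 = \delta^5$; all other regimes give bounds of at least order $\delta^4$ (the worst such being $d(w_L, \ell) \asymp \delta^4$ at $z \asymp \delta$). The main technical obstacle will be justifying the asymptotics of $x(z), y(z), x'(z)$ at the degenerate endpoints $z \to 0$ and $z \to \infty$ from the explicit but somewhat unwieldy formulas in \eqref{zeta1z}, though this amounts to a routine Taylor expansion.
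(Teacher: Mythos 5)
Your reduction to the two points where $\mathfrak{B}$ exits $\SE (v)$ is exactly the paper's first step (your $w_L, w_R$ are its $v_2, v_1$), and quantifying convexity through the tangent-slope parameter is the same underlying idea; the execution, however, contains a concrete error. Since $y(s) = s \zeta' (s) - \zeta (s)$ gives $y' (s) = s\, x'(s)$, your two claims $\lambda - y(z) \asymp z^{-1}$ and $x'(z) \asymp z^{-2}$ as $z \to \infty$ are mutually inconsistent: the second would force $y'(s) \asymp s^{-1}$, so $\lambda - y(z) = \int_z^{\infty} y'$ would diverge. Expanding \eqref{zeta1z} shows the correct rates are $1 + b - x(z) \asymp z^{-2}$ and $x'(z) \asymp z^{-3}$ (the $z^{-1}$ coefficient in the square-root term of $\zeta'$ cancels identically once one uses $a + b + c = 1$: the numerator constant $-ab + ac + bc + c^2$ and the subleading coefficient of the square root in the denominator are both equal to $c - ab$). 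Consequently at $z \asymp \delta^{-1}$ one has $z^2 x'(z) \asymp \delta$, not $\asymp 1$, so your ``extremal case'' arithmetic does not verify the bound as written. Fortunately the error points in the harmless direction: with the corrected rate your formula gives $d(w_R, \ell) \gtrsim \delta^4$ there, and every regime then produces at least order $\delta^4$, which beats the required $\varpi \delta^5$. A second soft spot is that your two displayed lower bounds are the local quadratic approximations, valid only when $z_R - z$ (resp.\ $z - z_L$) is small relative to the scale on which $x'$ varies; at the corners of the range (e.g.\ $w_R$ with $z \asymp \delta$, where $z_R - z \asymp 1 \gg z$, or $w_L$ with $z \asymp \delta^{-1}$, where $z_L$ is of order one) the ``comparable'' subcase must be run separately and requires two-sided control of $x'$ on all of $[z_L, z_R]$, not just at $z$. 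This is doable but is where most of the real work lies, and it is only gestured at.

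For comparison, the paper's proof avoids all endpoint asymptotics. After the same reduction to the horizontal and vertical exit points, it uses only the uniform bound $0 \le x'(t) \le \varpi_1^{-1}$: the horizontal gap $x(z) - x(z') \gtrsim \delta^2$ forces a parameter gap $z - z' \gtrsim \delta^2$, hence a chord-slope deficit $\gtrsim \delta^2$ below the tangent slope $z$, hence a vertical separation $\gtrsim \delta^4$ from $\ell$, and finally a perpendicular distance $\gtrsim \delta^5$ after dividing by $\sqrt{1 + z^2} \lesssim \delta^{-1}$. That route needs no Taylor expansion at $z = 0, \infty$ and no case analysis, at the cost of the (non-sharp) exponent $5$. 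If you repair the asymptotics and carry out the subcases honestly, your argument works and in fact yields the stronger exponent $4$, but the paper's version is the shorter path to the stated lemma.
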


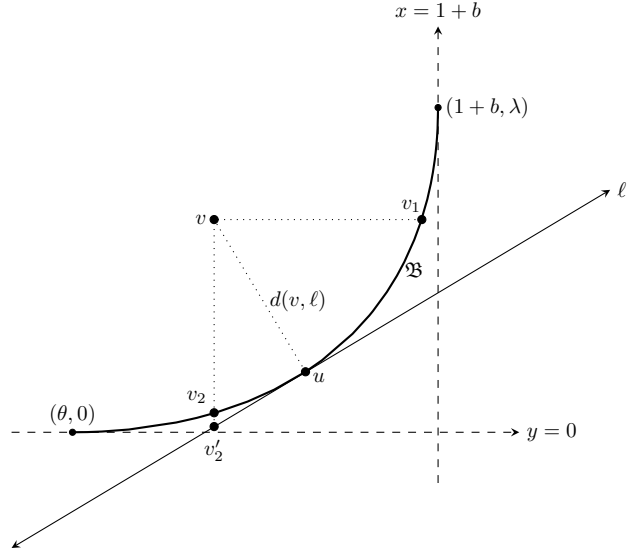
\begin{figure}[t]
	
	\begin{center}
		
		\begin{tikzpicture}[
		>=stealth,
		scale = 1.35
		]

		\draw[->, black, dashed] (22.2, -.5) -- (22.2, 4) node[above, scale = .8]{$x = 1 + b$};
		\draw[->, black, dashed] (18, 0) -- (23, 0) node[right, scale = .8]{$y = 0$};

		\draw[-, black, thick] (18.6, 0) -- (18.7472, 0.00182795) -- (18.8887, 0.00712272) -- (19.1547, 0.0269822) -- (19.6211, 0.0962246) -- (19.8235, 0.141696) -- (20.0074, 0.192169) -- (20.3251, 0.302868) -- (20.5863, 0.41996) -- (20.8947, 0.596203) -- (21.0572, 0.709709) -- (21.3576, 0.96995) -- (21.5234, 1.1514) -- (21.5575, 1.19315) -- (21.6957, 1.38218) -- (21.7946, 1.54232) -- (21.8676, 1.67872) -- (21.9656, 1.89696) -- (22.0262, 2.06259) -- (22.0939, 2.29544) -- (22.1287, 2.4503) -- (22.1488, 2.56034) -- (22.1662, 2.67609) -- (22.176, 2.75652) -- (22.1836, 2.83198) -- (22.1897, 2.90688) -- (22.1949, 2.99173) -- (22.2, 3.2);

		\draw[black] (21.8, 1.6) circle [radius = 0] node[right, scale = .8]{$\mathfrak{B}$};

		\draw[black, fill = black] (18.6, 0) circle [radius = .03] node[above, scale=  .8]{$(\theta, 0)$};
		
		\draw[black, fill = black] (20.8947, 0.596203) circle [radius = .04] node[below = 2, right, scale=  .8]{$u$}; 
		\draw[black, fill = black] (19.9947, 2.096203) circle [radius = .04] node[left, scale=  .8]{$v$}; 
		
		\draw[-, black, dotted] (20.8947, 0.596203) -- (19.9947, 2.096203);
		\draw[-, black, dotted] (19.9947, .056203) -- (19.9947, 2.096203);
		\draw[-, black, dotted] (22.04, 2.096203) -- (19.9947, 2.096203);

		\draw[black] (20.47, 1.3) circle [radius = 0] node[right = 0, scale = .8]{$d (v, \ell)$};

		\draw[black, fill = black] (19.9947, .192169) circle [radius = .04] node[above = 6, left = 0, scale=  .8]{$v_2$}; 
		\draw[black, fill = black] (19.9947, .056203) circle [radius = .04] node[below = 2, scale=  .8]{$v_2'$}; 	
		\draw[black, fill = black] (22.04, 2.096203) circle [radius = .04] node[left = 4, above = 0, scale=  .8]{$v_1$}; 
		
		\draw[black, fill = black] (22.2, 3.2) circle [radius = .03] node[right, scale=  .8]{$(1 + b, \lambda)$};
		
		\draw[<->, black] (18, -1.14062) -- (23.8947, 2.389203) node[right, scale = .8]{$\ell$};

		\end{tikzpicture}
		
	\end{center}	
	
	\caption{\label{bvertexv} 	The setting for \Cref{vertexvdistance} and its proof is depicted above.} 
\end{figure}

\begin{proof}
	
	Define $v_1 = (x', y)$ and $v_2 \in (x, y')$ to be the points on $\mathfrak{B}$ such that $\ell (v, v_1)$ and $\ell (v, v_2)$ are parallel to the $x$-axis and $y$-axis, respectively; see \Cref{bvertexv}. The fact that $\mathfrak{B}$ is nondecreasing and convex then quickly implies that $d(w, \ell) \ge \min \big\{ d(v_1, \ell), d (v_2, \ell) \big\}$. Thus, it suffices to establish the lemma in the case when $w \in \{ v_1, v_2 \}$. Let us assume that $w = v_2$, since the case $w = v_1$ is entirely analogous.
		
	To that end, recall from \Cref{zetacurves} that there exist positive real numbers $z' < z$ such that $u = \big( x(z), y (z) \big)$ and $v_2 = \big( x (z'), y (z') \big)$; then, $\ell = \ell_z$ (from \Cref{lztangentb}). Now, since $v \in [\theta + \delta, 1 + b - \delta] \times [\delta, \lambda - \delta]$, $\theta > \frac{1}{2}$, and $1 + b < 2$, the convexity of $\mathfrak{B}$ implies that the slope of $\ell$ is between $\frac{\delta}{4}$ and $\frac{4}{\delta}$. From this and the fact that $d (v, \ell) > \delta$, we deduce that that $ x(z) - x(z') > \frac{\delta^2}{4}$. 
	
	Next, from the equations \eqref{zeta1z} and \eqref{x1y1z} defining $x(t)$, we deduce the existence of a constant $\varpi_1 = \varpi_1 (a, b, c) > 0$ such that $0 \le x' (t) \le \varpi_1^{-1}$. Hence, $\frac{\partial}{\partial r} \big( x^{-1} (r) \big) \ge \varpi_1$. Since $t$ denotes the slope of the line tangent to $\mathfrak{B}$ at $\big( x(t), y(t) \big)$, this implies that the slope of $\ell (u, v_2)$ is at most $z - \frac{\varpi_1 \delta^2}{8}$. This and the fact that $x (z) - x (z') > \frac{\delta^2}{4}$ together imply that $d (v_2, v_2') > \frac{\varpi \delta^4}{32}$, where $v_2' = (x, y'')$ is the point where $\ell (v, v_2)$ meets $\ell$ (see \Cref{bvertexv}).

	Now, since the slope of $\ell$ is between $\frac{\delta}{4}$ and $\frac{4}{\delta}$, we deduce the existence of a constant $\varpi = \varpi (a, b, c) > 0$ such that $d (v_2, \ell) > \varpi \delta^5$. This implies the lemma. 
\end{proof}

Using this lemma, we can establish the following proposition.

\begin{prop}
	
	\label{curvepbelow}

	Adopt the notation of \Cref{p1testimate}, and fix some $v \in \NW (\mathfrak{B}) \cap \mathcal{T}$ such that $v \in [\theta + \delta, 1 + b - \delta] \times [\delta, \lambda - \delta]$ and $d \big( N^{-1} v, \mathfrak{B} \big) > \delta$. Then, there exists a constant $\gamma = \gamma (a, b, c) > 0$ such that $N^{-1} v \in \NW (\textbf{\emph{p}}_1)$ holds off of an event of probability at most $\gamma^{-1} \exp \big( - \gamma \delta^{24} N \big)$. 
	
\end{prop}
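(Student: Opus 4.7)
The plan is to mirror the strategy of \Cref{curvepabove}, but now invoking the \emph{other} coupling from \Cref{ensemblesnn1} (namely $\textbf{p}_2^{\aug} \le \textbf{p}_1$) so as to obtain a lower bound on the rightmost path rather than an upper one. First I would choose $z \ge 0$ so that $u := \big(Nx(z), Ny(z)\big) \in N\mathfrak{B}$ is the closest point on $N \mathfrak{B}$ to $v$; by the perpendicular-foot property this forces $v \in \NW(\ell)$ with $d(v, \ell) = \delta N$, where $\ell := N \ell_z$ is the tangent line from \Cref{lztangentb}. Setting $\Psi = \lfloor \zeta(z) N \rfloor$, the line $\ell$ passes through $(0, -\Psi)$ and touches $N\mathfrak{B}$ only near $Nu$, in the strong quantitative sense of \Cref{vertexvdistance}: points of $N\mathfrak{B}$ at macroscopic distance from $Nu$ lie at distance $\gtrsim \varpi \delta^5 N$ from $\ell$.

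Next, I couple the augmented ensemble $\mathcal{P}^{\aug}$ on $\mathcal{X}_\Psi$ to $\mathcal{P}$ via the first part of \Cref{ensemblesnn1}, so that $\textbf{p}_2^{\aug} \le \textbf{p}_1$ almost surely; it is then enough to exhibit a point of $\textbf{p}_2^{\aug}$ lying in $\SE(v)$. Applying \Cref{lzpsip1}, with probability at least $1 - \gamma^{-1} \exp\big(-\gamma \delta^{24} N\big)$ the exit point $\Phi$ of $\textbf{p}_1^{\aug}$ from the $x$-axis is within distance $D$ of $\nu(z) N$ for an appropriate choice of $D$ (to be tuned polynomially in $\delta$, of order $\delta^{12} N^{1/2}$). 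Combining this with \Cref{p1p2tangent} applied to $\mathcal{P}^{\aug}$, the supporting line $\ell'$ from $(0, -\Psi)$ to $\textbf{p}_2^{\aug}$ is uniformly $O(D)$-close to $\ell$ throughout $\mathcal{T}$, and $\textbf{p}_2^{\aug}$ is tangent to $\ell'$ at some point $v_2' \in \textbf{p}_2^{\aug} \cap \ell'$.

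The crux of the proof is to show that $v_2'$ lies within distance $\delta N / 2$ of $Nu$; once this is done, the conclusion is immediate, since then $v_2' \in \SE(v)$ (as $v$ is at perpendicular distance $\delta N$ from $Nu$ on the NW side of $\ell$), so $v \in \NW(v_2') \subset \NW(\textbf{p}_2^{\aug}) \subset \NW(\textbf{p}_1)$. To locate $v_2'$ I would argue as follows. On one hand $v_2' \in \ell'$ and $\ell'$ is within $O(D)$ of $\ell$, so $v_2'$ is essentially on $\ell$. On the other hand, using the coupling $\textbf{p}_2 \le \textbf{p}_2^{\aug}$ from \Cref{ensemblesnn1}, together with the proximity of $\textbf{p}_1, \textbf{p}_2$ given by \Cref{p1p2near} and the southeast bound \Cref{curvepabove2} on $\textbf{p}_1$, the random path $\textbf{p}_2^{\aug}$ stays within $O(\delta N + D)$ of $N \mathfrak{B}$ on the appropriate side. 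Hence $v_2'$ is simultaneously on $\ell$ and within a small neighborhood of $N\mathfrak{B}$. Invoking the quantitative convexity of \Cref{vertexvdistance} then confines $v_2'$ to the preimage $\ell \cap (N\mathfrak{B} + O(\delta N + D))$, which by the $r \mapsto \varpi \delta^5 r^2 / N$ quadratic lower bound on $d(\cdot, \ell)$ along $N \mathfrak{B}$ forces $|v_2' - Nu| \lesssim \delta^{-5/2} \sqrt{N(\delta N + D)}$; after optimizing $D$ this is bounded by $\delta N / 2$ for $\delta \gg N^{-1/30}$.

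The main obstacle is precisely the assertion that $\textbf{p}_2^{\aug}$ does not stray too far NW of $N\mathfrak{B}$. The coupling $\textbf{p}_2^{\aug} \le \textbf{p}_1$ combined with \Cref{curvepabove2} gives an upper bound (that $\textbf{p}_2^{\aug}$ is not too far SE of $N \mathfrak{B}$), but the complementary lower bound requires joining the two distinct couplings of \Cref{ensemblesnn1}, which live on different probability spaces. This is what drives the $\delta^{24}$ exponent in the final error: it arises from combining the error $\delta^{12} N^{1/2}$ in $\Phi$ from \Cref{lzpsip1}, the $\delta^{-5}$ loss from the quantitative convexity \Cref{vertexvdistance}, and the additional $\delta^{24} N$ exponential rate from Propositions \ref{p1p2tangent} and \ref{lzpsip1} after taking a union bound over the choice of tangent parameter $z$.
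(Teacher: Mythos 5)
Your overall architecture matches the paper's: tangent line $\ell$ at the nearest point $Nu \in N\mathfrak{B}$, the coupling $\textbf{p}_2^{\aug} \le \textbf{p}_1$ from \Cref{ensemblesnn1}, concentration of $\Phi$ via \Cref{lzpsip1}, the tangency statement \Cref{p1p2tangent}, the quantitative convexity \Cref{vertexvdistance}, and \Cref{curvepabove2} to close the argument. However, the step you single out as "the main obstacle" — bounding how far $\textbf{p}_2^{\aug}$ can stray to the \emph{northwest} of $N\mathfrak{B}$ — is left unresolved, and the route you sketch for it is circular: the chain $\textbf{p}_2 \le \textbf{p}_2^{\aug}$ plus \Cref{p1p2near} would reduce it to showing that $\textbf{p}_1$ is not far northwest of $N\mathfrak{B}$, which is exactly the content of the proposition being proved; \Cref{curvepabove2} only controls $\textbf{p}_1$ from the southeast side. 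The resolution is that this bound is never needed. Since $N\mathfrak{B}$ is convex and $\ell$ is tangent to it at $Nu$, every point of $\ell$ in the relevant window already lies in $\SE(N\mathfrak{B})$, so a point of $\textbf{p}_2^{\aug}$ that is $O(D/\delta)$-close to $\ell$ cannot be far northwest of $N\mathfrak{B}$. The paper runs the argument as a contradiction: on the event $v \in \SE(\textbf{p}_2^{\aug})$, the tangency point $v_2$ must lie in the closure of $\mathbb{R}^2 \setminus \SE(v)$ and hence near a point $w \in \ell \setminus \SE(v)$; \Cref{vertexvdistance} places such $w$ at distance $> \varpi \delta^5 N$ \emph{southeast} of $N\mathfrak{B}$, and that event is excluded by applying \Cref{curvepabove2} at scale $\varepsilon = \delta^6$ through the single coupling $\textbf{p}_2^{\aug} \le \textbf{p}_1$. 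Only the southeast containment is ever used, and only one coupling is needed.

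The second genuine problem is your choice of scale $D \sim \delta^{12} N^{1/2}$. The error probabilities in \Cref{p1p2tangent} and \Cref{lzpsip1} are of the form $\exp(-c\,\psi D^2/(\Psi+N))$, which does not decay in $N$ when $D = O(N^{1/2})$; with that choice the whole bound is vacuous. One must take $D$ proportional to $N$ — the paper uses $D \asymp \varpi\delta^6 N$ — so that $\psi D^2/(\Psi+N) \gtrsim \delta^{13} N$ (using $\psi \ge \delta/4$), while the application of \Cref{curvepabove2} at scale $\varepsilon = \delta^6$ contributes $\exp(-\gamma\varepsilon^4 N) = \exp(-\gamma\delta^{24}N)$; this last term is what dictates the $\delta^{24}$ exponent, not a union bound over the tangent parameter $z$ (which is deterministic given $v$ and requires no union bound). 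With these two corrections — dropping the spurious northwest bound and taking $D$ linear in $N$ — your argument coincides with the paper's.
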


\begin{proof}	
	
	Let $\varepsilon = \delta^6 > N^{-1 / 5}$ and $D = \frac{\varpi \delta^6 N - \varepsilon \delta N - 25}{60}$, where $\varpi = \varpi (a, b, c) > 0$ denotes the constant from \Cref{vertexvdistance}; we may assume that $N$ is sufficiently large so that $D > 0$. 

	As in the proof of \Cref{curvepabove}, let $u = (x, y) \in \mathbb{R}_{\ge 0}^2$ denote the vertex in $N \mathfrak{B}$ closest to $v$; since $N^{-1} u \in \mathfrak{B}$, there exists some $z \in \mathbb{R}_{\ge 0}$ such that $u = \big( N x(z), N y(z) \big)$. Let $\ell = N \ell_z$ denote the tangent line to $N \mathfrak{B}$ at $u$; let it meet the $x$-axis and $y$-axis at $(\nu N, 0)$ and $(0, -\psi N)$, respectively, where $\nu = \nu (z) \in \big( \frac{1}{2}, 1 + b \big)$ and $\psi = \zeta (z) > 0$. Observe that the slope of $\ell$ is between $\frac{\delta}{4}$ and $\frac{4}{\delta}$. Denote $\Psi = \psi N$, which we assume to be an integer. 
	
	Next define the event $E = \big\{ v \in \SE (\textbf{p}_1) \big\}$, and recall the augmented three-bundle domain $\mathcal{X} = \mathcal{X}_{A, B, C; \Psi}$. Let $\mathcal{P}^{\aug} = \big( \textbf{p}_1^{\aug}, \textbf{p}_2^{\aug}, \ldots , \textbf{p}_{A + C + 1}^{\aug} \big)$ denote a uniformly random $A$-restricted directed path ensemble on $\mathcal{X}$ with augmented domain-wall boundary data. By \Cref{ensemblesnn1}, there is a coupling between $\mathcal{P}$ and $\mathcal{P}^{\aug}$ such that $\textbf{p}_1 \ge \textbf{p}_2^{\aug}$ almost surely. Therefore, defining the event $F = \big\{ v \in \SE (\textbf{p}_2^{\aug}) \big\}$, we find that $\mathbb{P} [E] \le \mathbb{P} [F]$. Hence, it suffices to bound $\mathbb{P} [F]$. 
	
	To that end, as in the proof of \Cref{curvepabove}, let $\Phi \in [1, A + 2B + C + 1]$ denote the integer such that there is an edge in $\textbf{p}_1^{\aug}$ from $(\Phi, 0)$ to $(\Phi, 1)$. Further let $\ell'$ denote the line through $(0, -\Psi)$ that passes through a vertex $v_2 \in \textbf{p}_2^{\aug}$ such that $\textbf{p}_2^{\aug} \subset \NW (\ell')$.  Define the events 
	\begin{flalign*}
	& G = \big\{ v_2 \in \SE (N \mathfrak{B}) \big\} \cap \big\{  d (v_2, N \mathfrak{B}) \ge \varepsilon N \big\}; \\
	& \Omega_1 = \big\{ |\Phi - \nu N| \ge D \big\}; \qquad \Omega_2 = \Big\{ d \big( (\Phi, 0), \ell' \big) \ge 2D + 1 \Big\}; \qquad \Omega = \Omega_1 \cup \Omega_2.
	\end{flalign*}
	
	 We claim that $F \subseteq G \cup \Omega$. Letting $\Omega^c$ denote the complement of $\Omega$, it suffices to show that $F \cap \Omega^c \subseteq G$. So, let us restrict to $F \cap \Omega^c$ and show that $G$ holds.
	 
	 To that end, observe that $d \big( (\nu N, 0), \ell' \big) \le 3 D + 1$, since we are restricting to $\Omega^c$. Due to the facts that $\nu > \frac{1}{2}$ and $1 + b \le 2$, this implies that $d (v_2, \ell) < 12 D + 4$. Moreover, since we are also restricting to $F$, we have that $v \in \SE \big( \textbf{p}_2^{\aug} \big)$. Thus, no vertex of $\textbf{p}_2^{\aug}$ is in the interior of $\SE (v)$, which in particular implies that $v_2$ is in the closure of $\mathbb{R}^2 \setminus \SE (v)$. The fact that $d (v_2, \ell) < 12 D + 4$ and the fact that the slope of the line $\ell$ is between $\frac{\delta}{4}$ and $\frac{4}{\delta}$ then yields the existence of a vertex $w \in \ell \cap \big( \mathbb{R}^2 \setminus \SE (v) \big)$ such that $d(v_2, w) < \frac{5}{\delta} (12 D + 4)$.  
	 
	 By \Cref{vertexvdistance}, we have that $w \in \SE (N \mathfrak{B})$ and $d (w, N \mathfrak{B}) > \varpi \delta^5 N$. Therefore, since $\varpi \delta^5 N > \frac{5}{\delta} (12 D + 4) + \varepsilon N$, we have that $v_2 \in \textbf{p}_2^{\aug} \cap \SE (N \mathfrak{B})$ and $d (v_2, N \mathfrak{B}) > \varepsilon N$. Therefore, the event $G$ holds, and so $F \subseteq G \cup \Omega$. 
	 
	 Hence, $\mathbb{P} [E] \le \mathbb{P} [F] \le \mathbb{P} [G] + \mathbb{P} [\Omega]$. Thus, the proposition follows from using \Cref{curvepabove2} (applied with the $\delta$ there equal to the $\varepsilon$  here) to bound $\mathbb{P} [F]$ and \Cref{p1p2tangent} and \Cref{lzpsip1} to bound $\mathbb{P} [\Omega]$ (as in the proof of \Cref{curvepabove}). 
\end{proof}

Now we can establish \Cref{p1testimate}. 

\begin{proof}[Proof of \Cref{p1testimate}]
	
	If $v \in \SE (\mathfrak{B})$, the result follows from \Cref{curvepabove2}, so assume that $v \in \NW (\mathfrak{B})$. By \Cref{curvepbelow} and a union bound we deduce that there exists a constant $\gamma = \gamma (a, b, c) > 0$ such that, off of an event of probability at most $\gamma^{-1} \exp \big( - \gamma \delta^{24} N \big)$, we have that $v \in \NW (\textbf{p}_1)$ for each $v \in \NW (\mathfrak{B}) \cap \mathcal{T}$ such that $N^{-1} v \in [\theta + \delta, 1 + b - \delta] \times [\delta, \lambda - \delta]$ and $d (N^{-1} v, \mathfrak{B}) > \delta$. Now that the same statement holds for all $v \in \NW(\mathfrak{B}) \cap \mathcal{T}$ follows from the fact that $\textbf{p}_1$ is nondecreasing. 
\end{proof}

\end{document}